\theoremstyle{plain}
\newtheorem{thm}{Theorem}[section]
\newtheorem{cor}[thm]{Corollary}
\newtheorem{lem}[thm]{Lemma}
\newtheorem{prop}[thm]{Proposition}
\numberwithin{equation}{section}
\newtheorem{conjecture}{Conjecture}  
\newtheorem{conj}[conjecture]{Conjecture}  
\theoremstyle{definition}
\newtheorem{defn}[thm]{Definition}
\newtheorem{example}[thm]{Example}
\newtheorem{lemma}[thm]{Lemma}
\newtheorem{rmk}[thm]{Remark}
\newtheorem{question}[thm]{Question}
\theoremstyle{remark}
\newcommand{\BC}{{\mathbb{C}}}
\newcommand{\BE}{{\mathbb{E}}}
\newcommand{\BG}{{\mathbb{G}}}
\newcommand{\BH}{{\mathbb{H}}}
\newcommand{\BQ}{{\mathbb{Q}}}
\newcommand{\BR}{{\mathbb{R}}}
\newcommand{\BZ}{{\mathbb{Z}}}
\newcommand{\CA}{{\mathcal A}}
\newcommand{\CC}{{\mathcal C}}
\newcommand{\CD}{{\mathcal D}}
\newcommand{\CE}{{\mathcal E}}
\newcommand{\CF}{{\mathcal F}}
\newcommand{\CH}{{\mathcal H}}
\newcommand{\CL}{{\mathcal L}}
\newcommand{\CM}{{\mathcal M}}
\newcommand{\CO}{{\mathcal O}}
\newcommand{\CR}{{\mathcal R}}
\newcommand{\Ft}{{\mathfrak{t}}}
\newcommand{\Fz}{{\mathfrak{z}}}
\newcommand{\pt}{{\mathsf{p}}}
\DeclareFontFamily{OT1}{rsfs}{}
\DeclareFontShape{OT1}{rsfs}{n}{it}{<-> rsfs10}{}
\DeclareMathAlphabet{\curly}{OT1}{rsfs}{n}{it}
\renewcommand\hom{\curly H\!om}
\newcommand\End{\operatorname{End}}
\newcommand{\Aut}{\operatorname{Aut}}
\newcommand{\p}{\mathbb{P}}
\newcommand{\Mbar}{{\overline M}}
\newcommand{\vir}{{\text{vir}}}
\newcommand{\Pic}{\mathop{\rm Pic}\nolimits}
\newcommand{\GW}{\mathsf{GW}}
\newcommand{\A}{\mathrm{Aff}}
\newcommand\ev{\operatorname{ev}}
\newcommand{\AHMod}{\mathsf{AHMod}}
\newcommand{\QMod}{\mathsf{QMod}}
\newcommand{\Mod}{\mathsf{Mod}}
\newcommand{\RR}{\mathbb{R}}
\newcommand{\ZZ}{\mathbb{Z}}
\newcommand{\Orth}{\mathrm{O}^+}
\newcommand{\Orthnoplus}{\mathrm{O}}
\newcommand{\id}{\mathrm{id}}
\newcommand{\Tr}{\mathrm{Tr}}
\newcommand{\ch}{\mathsf{ch}}
\newcommand{\wt}{\mathsf{wt}}
\newcommand{\AHJ}{\mathsf{AHJac}}
\newcommand{\QJac}{\mathsf{QJac}}
\newcommand{\Jac}{\mathsf{Jac}}
\newcommand{\SL}{\mathrm{SL}}
\newcommand{\GL}{\mathrm{GL}}
\newcommand{\ct}{\mathrm{ct}}
\newcommand{\RM}{R^{\textup{Maa\ss}}}
\newcommand{\LM}{L^{\textup{Maa\ss}}}
\newcommand{\LJ}{L^{\textup{Jac}}}
\newcommand{\RO}{\mathsf{R}}
\newcommand{\LO}{\mathsf{L}}
\newcommand{\Mon}{\mathrm{Mon}}
\newcommand{\DMon}{\mathrm{DMon}}
\newcommand{\taut}{{\mathrm{taut}}}
\newcommand{\tr}{{\mathrm{tr}}}
\newcommand{\rk}{{\mathrm{rk}}}
\newcommand{\Lift}{{\mathrm{Lift}}}
\newcommand{\Fricke}{{\mathrm{Fr}}}
\renewcommand{\Im}{{\mathrm{Im}}}
\begin{document}
	\baselineskip=14.5pt
	\title[Quasi-modular forms for the orthogonal group]{Quasi-modular forms for the orthogonal group and Gromov-Witten theory of Enriques surfaces}

	\author{Georg Oberdieck}
\address{Universit\"at Heidelberg, Mathematisches Institut}
\email{georgo@uni-heidelberg.de}
	
	\author{Brandon Williams}
\address{Universit\"at Heidelberg, Mathematisches Institut}
\email{bwilliams@mathi.uni-heidelberg.de}

	\date{\today}

\begin{abstract}
We develop the theory of almost-holomorphic and quasimodular forms for orthogonal groups of a lattice of signature $(2,n)$ through orthogonal lowering and raising operators.
The interactions with the regularized theta lift of Borcherds is a central theme.
Our main results are: (i) 
the constant-term morphism, which sends an almost-holomorphic modular form to its associated quasimodular form, is an isomorphism,
(ii) description of spaces of quasimodular forms in terms of vector-valued modular forms,
(iii) the lowering and raising operators satisfy equivariance properties with the theta lift,
(iv) a weight-depth inequality which is a necessary and sufficient criterion for the theta lift of an almost-holomorphic modular form to be almost-holomorphic,
(v) an explicit formula for the series expansion of the lift of any almost-holomorphic modular form,
(vi) the Fourier-Jacobi coefficients of an orthogonal quasimodular form are quasi-Jacobi forms.

As a geometric application, we conjecture that the Gromov-Witten potentials of Enriques and bielliptic surfaces are orthogonal quasimodular forms and satisfy holomorphic anomaly equations with respect to the lowering operators on quasimodular forms. We show that parallel statements for an arbitrary K3 or abelian-surface fibration do not hold.
\end{abstract}

	\maketitle
	
	\setcounter{tocdepth}{1} 
	\tableofcontents

\section{Introduction}
In this paper we develop the theory of quasimodular forms for the orthogonal group with a view towards the theta lift of Borcherds and with applications to the Gromov-Witten theory of Enriques and bielliptic surfaces.
The first part of the introduction concerns the theory of quasimodular forms, the second part the Gromov-Witten theory.

\subsection{Quasimodular forms for orthogonal groups}
Quasimodular forms for $\SL_2$ are generalizations of classical modular forms on the upper half plane $\BH = \{ \tau \in \BC| \Im(\tau) > 0 \}$ that were introduced by Kaneko and Zagier \cite{KZ}. They can either be defined in an ad-hoc way as polynomials in the second Eisenstein series
\[ G_2(\tau) = -\frac{1}{24} + \sum_{n \geq 1} \sum_{d|n} d q^n, \quad q = e^{2 \pi i \tau} \]
in which the coefficients are usual modular forms,
or intrinsically as the constant term (or holomorphic part) of almost-holomorphic modular forms. We refer to Section~\ref{sec:classical case}
for a review. 

The algebra of quasimodular forms has very useful properties. It is isomorphic to the algebra of almost-holomorphic modular forms under the morphism that takes the constant term. The algebra is closed under taking derivatives and the derivative operator extends to an $\mathfrak{sl}_2$ Lie algebra action, that is induced by 
the Maa{\ss} lowering and raising operators on smooth functions on the upper half plane.
In fact, a function on the upper half plane is almost-holomorphic if it is annihilated by a sufficiently high power of the Maa{\ss} lowering operator
\[ \LM = -2 i \mathrm{Im}(\tau)^2 \frac{\partial}{\partial \overline{\tau}}. \]
Quasimodular forms appear naturally as Taylor coefficients of theta series, or more generally of Jacobi forms \cite{EZ} and as 0-th Fourier coefficients of elliptic functions \cite{HAE, GM}, and therefore appear in the theory of partitions \cite{ZagierPartitions,vI}, in representation theory \cite{BO}, in mirror symmetry \cite{Dijkgraaf} and many other instances of enumerative geometry, e.g. \cite{OP,HAE}

More generally, modular forms can be defined in the setting of a Hermitian symmetric space $G/K$ with a discrete subgroup $\Gamma \le G$.
In this paper, we consider Hermitian symmetric spaces of Cartan type IV, which are associated to the groups $O(2,n)$. Concretely, let $M$ be an integral lattice of signature $(2, n)$, $n \in \mathbb{N}$.
The symmetric space can be realized as the period domain
\[ \CD = \{ Z \in \p(M_{\BC}) | Z \cdot Z = 0, Z \cdot \overline{Z} > 0 \}^{+}, \]
where the $+$ stands for one of the two components.
Consider the two Hodge bundles $\CL$ and $\CE$ on $\CD$ of rank $1$ and $n$, respectively,
whose fibers over the point $Z \in \CD$ are given by
\[ \CL|_{Z} = \BC Z, \quad \CE|_{Z} = \BC Z^{\perp} / \BC Z. \]
The modular group is a finite-index subgroup $\Gamma \le \mathrm{O}(M)$ preserving $\CD$, and modular forms of weight $k$ and rank $s$ are defined as $\Gamma$-invariant sections of $\CL^{\otimes k} \otimes \CE^{\otimes s}$. (When $n \leq 2$, a condition at cusps is also required, cf. \cite{Ma}.)
A significant difference between the theory for $\SL_2$ and $O(2,n)$ is the presence of the second Hodge bundle $\CE$, whose sections give rise to vector-valued modular forms. Since $\CE$ is closely related to the tangent bundle of $\CD$ (Lemma~\ref{lemma:tangent bundle}, sections of powers of $\CE$ naturally appear when differentiating scalar-valued (i.e. rank $0$) modular forms and therefore play a central role in our approach to the raising and lowering operators. In other words, even if one is only interested in scalar-valued quasimodular forms, the theory of vector-valued modular forms is crucial.

To define quasimodular forms for orthogonal groups, we first need to make sense of almost-holomorphic modular forms. In Section 2, we use the canonical $G$-invariant K\"ahler metric on $\CD$ to define lowering and raising operators $\LO$ and $\RO$ on the spaces of smooth sections of $\CL^{\otimes k} \otimes \CE^{\otimes s}$. By definition, almost-holomorphic modular forms are then $\Gamma$-invariant sections that are annihilated after sufficiently many applications of $\LO$. In particular, they are nearly-holomorphic modular forms in the general sense of Shimura \cite{Shimura2000}.
Quasimodular forms are then defined as their constant term (or holomorphic part) in an appropriate sense. For $n=1$, where orthogonal modular forms are just classical modular forms for $\SL_2$ due to the isogeny from $\SL_2(\BR)$ to $\mathrm{SO}(2,1)$, our definition specializes to the usual notion of lowering operator and quasimodular forms for $\SL_2$.

When $M$ admits a direct sum decomposition $M \cong U \oplus L$, where $U = \binom{0 \ 1}{1\ 0}$ is the hyperbolic plane, these definitions can be made more explicit in the tube domain model $\CC$ for $\CD$, which is provided by the biholomorphic map $\varphi: \CC \to \CD$, $z \mapsto (-\frac{1}{2} z^2,1, z)$, where
\[ \CC = \{ z \in L_{\BC} | \mathrm{Im}(z)^2 > 0 \}^{+}. \]
With respect to the global coordinates $z$, a function is almost-holomorphic if and only if it is a polynomial in the non-holomorphic variables
\[ \nu_j = \frac{\partial}{\partial z_j} \log ( \mathrm{Im}(z)^2 ), \quad j=1,\ldots,n \] with holomorphic functions as coefficients.
Taking the holomorphic part then simply corresponds to taking the constant term of this polynomial.
Similar results also hold for the expansion at general $0$-dimensional cusps of $\CD$, i.e. for isotropic vectors in $M$ that do not necessarily split a hyperbolic plane (Section~\ref{subsec:general cusp}).

We prove a series of results about quasimodular forms analogous to the classical case of $\SL_2$-quasimodular forms.
The first says that an almost-holomorphic modular form is determined by its holomorphic part:

\begin{thm}[Theorem~\ref{thm:constant term}] The constant-term morphism
\[ \ct : \AHMod_{k,s}(\Gamma) \to \QMod_{k,s}(\Gamma), \quad F \mapsto F|_{\nu=0}, \]
that sends a almost-holomorphic orthogonal modular form to its associated quasimodular form, is an isomorphism.
\end{thm}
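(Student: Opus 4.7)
The surjectivity of $\ct$ is tautological: by definition (as recalled just above), quasimodular forms are precisely the constant terms (holomorphic parts) of almost-holomorphic modular forms. The content of the theorem is therefore the injectivity assertion, namely that any $F \in \AHMod_{k,s}(\Gamma)$ with $\ct(F) = 0$ must vanish identically.

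To set up the argument, I would fix a $0$-dimensional cusp (e.g.\ one where $M \cong U \oplus L$) and work in the corresponding tube-domain chart. By the structural description recalled in Section~2, any almost-holomorphic $F$ admits a finite expansion $F = \sum_{I \in \mathbb{N}^n} f_I(z)\,\nu^I$ with holomorphic coefficients $f_I$, and $\ct(F) = f_0$. Let $d = \max\{|I| : f_I \neq 0\}$ denote the depth. The proof would proceed by induction on $d$; the base case $d = 0$ is immediate. For the inductive step, the plan is to exploit two pieces of structure in tandem. First, the lowering operator $\LO$ acts on the $\nu$-polynomial expansion, up to a canonical twist by the Hodge bundle $\CE$, as a partial differentiation in the variables $\nu_j$, and so maps $\AHMod^{\leq d}$ into $\AHMod^{\leq d-1}$ with weight shifted by $-2$ and rank shifted accordingly. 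Second, the $\Gamma$-equivariance of $F$ forces a cascade of relations among the $f_I$: under any $\gamma \in \Gamma$ not lying in the stabiliser of the chosen cusp, each $\nu_j$ picks up a non-trivial holomorphic correction depending on $\gamma$ and $z$, and equating coefficients of $\nu^I$ in the modularity identity $F|_{\gamma} = F$ expresses the top-depth coefficients $\{f_I\}_{|I|=d}$ as a $\Gamma$-equivariant section of a suitable twist of $\CL$ and $\CE$ (i.e.\ a genuine vector-valued modular form), and each lower-depth $f_I$ as a polynomial-differential expression in the strictly higher-depth coefficients. The hypothesis $f_0 = 0$ then propagates through this cascade and forces $f_I = 0$ for every $I$.

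The principal obstacle I anticipate is the multivariate bookkeeping: unlike the classical $\SL_2$ situation with a single variable $\nu = 1/\Im(\tau)$, here one has $n$ variables $\nu_j$ that are mixed non-trivially by the Levi factor of the cusp stabiliser, and the transformation cocycles take values in the Hodge bundle $\CE$ rather than merely in the line bundle $\CL$. A conceptually cleaner alternative, which I would pursue in parallel, is to build an explicit right inverse $\Lift \colon \QMod_{k,s}(\Gamma) \to \AHMod_{k,s}(\Gamma)$ to $\ct$ by iteratively applying the raising operator $\RO$ to a given quasimodular form with appropriate combinatorial coefficients. Such a lift would simultaneously establish injectivity and furnish the explicit expansion formula alluded to in item~(v) of the abstract, so it is plausibly the route actually taken in the paper.
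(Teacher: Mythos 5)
Your framing is right --- surjectivity is definitional and the whole content is injectivity --- and your starting point (expand $F=\sum_I f_I\nu^I$ in a tube domain and exploit the $\Gamma$-transformation of the coefficients) is the same as the paper's first proof. But the core of your argument is a gap. The ``cascade'' does not run in the direction you claim: equating $\nu$-coefficients in $F|_{k,s}\gamma=F$ shows that the \emph{failure of modularity} of each $f_I$ is expressed through the strictly higher-depth coefficients (and that the top-depth coefficient is genuinely modular); it does \emph{not} express the lower-depth $f_I$ as polynomial-differential expressions in the higher ones --- already for $\SL_2$, the coefficient $G_2$ of $\widehat G_2=G_2+1/8\pi y$ is only determined by the constant top coefficient up to an honest weight-$2$ modular form. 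Consequently ``$f_0=0$ propagates upward'' is an assertion, not an argument. What the hypothesis $f_0=0$ actually yields is the single identity $\sum_{t\ge t_0}\langle F^{(t)},(\partial\log J(\gamma,z))^{\otimes t}\rangle=0$ for all $\gamma\in\Gamma$, where $t_0$ is the minimal depth of a nonzero coefficient, and the entire difficulty is to disentangle the terms of different degree in this sum. The paper does this by invoking the Borel density theorem (so that $\gamma$ may be taken Zariski-generically in $O(M_{\BC})$), specializing $\gamma$ to an explicit family of parabolic-type elements depending on free parameters $(v_f,d)$, and extracting a coefficient to force $F^{(t_0)}=0$. Some such linear-independence/density input is indispensable and is absent from your sketch. (The paper also gives a second, shorter proof you might prefer: restrict $F$ to the sub-period-domains of signature-$(2,1)$ sublattices $U\oplus(2m)\subset M$, where the statement reduces to the known $\SL_2$ case, and use density of rational positive rays.)

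Your proposed fallback is also not viable as stated: constructing a right inverse $\mathrm{Lift}$ with $\ct\circ\mathrm{Lift}=\mathrm{id}$ only reproves surjectivity; injectivity would require $\mathrm{Lift}\circ\ct=\mathrm{id}$ on all of $\AHMod_{k,s}(\Gamma)$, i.e.\ that every almost-holomorphic form lies in the image of your lift, which is exactly the statement you are trying to prove. Moreover the operator $\RO$ on quasimodular forms is \emph{defined} in the paper as $\ct\circ\RO\circ\ct^{-1}$, so an iterated-$\RO$ construction of the completion would be circular at this stage.
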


The next discusses the commutator of the lowering and raising operators:

\begin{thm}[Proposition~\ref{prop:commutation relation}]
The lowering and raising operators act on the spaces of almost-holomorphic modular forms as derivations by
\[ \LO_k : \AHMod_{k,s}(\Gamma) \to \AHMod_{k-1,s+1}(\Gamma),
\quad \RO_k : \AHMod_{k,s}(\Gamma) \to \AHMod_{k+1,s+1}(\Gamma) \]
and satisfy the identity
\[ {[} \LO, \RO ] F = \frac{k}{2} F \otimes g
+ \frac{1}{2} \sum_{r=1}^{s} \sigma_{r, s+1}( F \otimes g) - \sigma_{r, s+2}(F \otimes g), \] 
where $g$ is the metric and $\sigma_{ij}$ is the transposition interchanging the $i$-th and $j$-th tensor power of $\CE$.
\end{thm}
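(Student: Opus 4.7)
I would work in the tube-domain model $\CC$, where an almost-holomorphic modular form $F\in\AHMod_{k,s}(\Gamma)$ is a polynomial in the non-holomorphic variables $\nu_1,\ldots,\nu_n$ whose coefficients are holomorphic functions on $\CC$ valued in $L_\BC^{\otimes s}$. In these coordinates both operators admit closed-form expressions: $\LO$ is essentially $\partial/\partial\nu$ (tensored with a basis of $\CE$), so it strictly lowers the $\nu$-degree by one and inserts a factor of $\CE$, while $\RO$ is the holomorphic covariant derivative on $\CL^{\otimes k}\otimes\CE^{\otimes s}$ with respect to the $G$-invariant K\"ahler metric, involving $\partial/\partial z_j$, the metric $g$ and its Christoffel symbols, and a weight-dependent term from the $\CL^{\otimes k}$-factor. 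Because these operators are built from the $G$-invariant metric they are automatically $\Gamma$-equivariant, so it suffices to verify the claim at the level of smooth sections.

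\textbf{Mapping properties and cancellation of the principal symbol.} The statement $\LO:\AHMod_{k,s}\to\AHMod_{k-1,s+1}$ is immediate from the coordinate formula. For $\RO$, one checks that $\partial\nu_\ell/\partial z_j$ is again a polynomial in $\nu$, which follows directly from $\nu_j=\partial_{z_j}\log\Im(z)^2$; this guarantees that $\RO$ preserves the almost-holomorphic property with the stated shift in weight and rank. For the commutator, the essential observation is that $[\LO,\RO]$ is a priori a first-order differential operator, but its principal symbol vanishes because $\partial/\partial z_j$ and $\partial/\partial\bar z_\ell$ commute as vector fields on $\CC$. Hence $[\LO,\RO]$ is a $G$-equivariant, $C^\infty$-linear bundle map $\CL^{\otimes k}\otimes\CE^{\otimes s}\to\CL^{\otimes k}\otimes\CE^{\otimes(s+2)}$, and can therefore be evaluated pointwise.

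\textbf{Identifying the endomorphism and main obstacle.} I would then collect the surviving zeroth-order contributions to $\LO\RO F-\RO\LO F$. Two sources remain. First, the weight-$k$ connection term in $\RO$ coming from the $\CL^{\otimes k}$-factor; differentiating it under $\LO$ produces the curvature of $\CL^{\otimes k}$, which on a type IV domain is proportional to $k$ times the invariant K\"ahler form, and after contracting into the two new $\CE$-slots via $g$ this yields exactly $\frac{k}{2}F\otimes g$. Second, the Christoffel symbols in $\RO$ act nontrivially on each of the $s$ tensor factors of $\CE$, and commuting them past $\LO$ produces the curvature of $\CE$ at each slot. The main obstacle is verifying that this curvature, acting at the $r$-th tensor slot, equals precisely the antisymmetrizer $\frac{1}{2}(\sigma_{r,s+1}-\sigma_{r,s+2})$ applied to $F\otimes g$; the two transpositions reflect the two symmetric legs of $g$ at positions $s+1$ and $s+2$, while the relative minus sign comes from the skew symmetry of the curvature tensor. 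This combinatorial identification can either be checked by direct computation in tube coordinates, or more conceptually via the Harish-Chandra decomposition $\mathfrak{so}(2,n)_\BC=\mathfrak{p}^-\oplus\mathfrak{k}_\BC\oplus\mathfrak{p}^+$: identifying $\LO$ and $\RO$ with the actions of $\mathfrak{p}^\mp$ on $K$-equivariant sections, the commutator lies in $\mathfrak{k}=\mathfrak{so}(2)\oplus\mathfrak{so}(n)$, where $\mathfrak{so}(2)$ acts by the weight $k$ on $\CL$ and $\mathfrak{so}(n)$ acts skew-symmetrically on each $\CE$-slot.
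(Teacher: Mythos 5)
Your proposal is correct in outline and reaches the right answer, but it takes a different route from the paper. The paper defines $\LO$ and $\RO$ by the explicit coordinate formulas \eqref{formulaL} and \eqref{formula for R} on the affine cone $\A(\CD)$ and computes $[\LO,\RO]F$ there directly, using only $\LO(\nu)=\tfrac12 g$, $\LO(|Z|^2)=0$ (Lemma~\ref{lemma:basic evaluation}) and the Euler relation $\sum_\ell \partial F_I/\partial\overline{Z}_\ell\cdot\overline{Z}_\ell=0$ for bidegree $(-k,0)$ functions; the lengthy verification is then omitted as routine. You instead work in the tube domain and organize the computation curvature-theoretically: the bracket is a tensorial bundle map, and its value is the contraction of the Chern curvature of $\CL^{\otimes k}\otimes\CE^{\otimes s}$, which splits as $k$ times the curvature of $\CL$ (giving $\tfrac{k}{2}F\otimes g$, which in the paper's language is exactly the $\LO(\nu)=\tfrac12 g$ contribution from the term $kF\otimes\nu$ in $\RO$) plus the $\mathfrak{so}$-valued curvature of $\CE$ acting in each slot (giving $\tfrac12(\sigma_{r,s+1}-\sigma_{r,s+2})(F\otimes g)$). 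What your approach buys is a conceptual explanation of the shape of the answer and of why $\RO$ preserves almost-holomorphicity (coefficients stay polynomial in $\nu$, using $\langle\nu,\nu\rangle=-1/\Im(z)^2$), whereas the paper obtains the latter as a corollary of the commutator; what it costs is a choice of real isotropic splitting, which is harmless since the identity is pointwise and $O^+(M_\BR)$-equivariant.

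One step in your argument is stated too quickly. Commutativity of $\partial/\partial z_j$ and $\partial/\partial\overline{z}_\ell$ only kills the second-order part of the bracket (and even that uses the transposition $\sigma_{s+1,s+2}$ built into the definition of $[\LO,\RO]$, which you do not mention); it does not by itself show that the remaining first-order part vanishes. That vanishing is true, but it relies on the specific structure of the operators — $\RO$ is (up to the isomorphisms $\varphi,\psi$ and powers of $|Z|^2$) the $(1,0)$-part of the Chern connection and $\LO$ is $\bar\partial$ followed by the metric contraction, so the mixed anticommutator is the $(1,1)$-curvature and hence $C^\infty$-linear. As written, your symbol argument leaves a gap exactly where the first-order terms (e.g.\ $\partial$ of the coefficients $\Im(z)^2\,dz_\ell-2i\Im(z_\ell)\partial(\Im(z)^2)$ of $\LO$ against $\bar\partial$ of the connection coefficients of $\RO$) must be checked to cancel; you should either invoke the Chern-connection identity explicitly or verify this cancellation by hand, after which your identification of the two curvature contributions completes the proof at the same level of detail as the paper's.
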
 

An analogue of the second Eisenstein series $G_2(\tau)$ usually does not exist for $O(2,n)$. However, if we allow poles along a divisor $\CH$ which is the zero divisor $\{ f=0 \}$ of a modular form $f$ of positive weight, then the quasimodular form $\partial(f)/f$ which is the holomorphic part of the almost-holomorphic modular form $\RO(f)/f$ is a useful substitute.
In particular, in Section~\ref{subsec:log ahol modular forms} we define the notion of {\em logarithmic} almost-holomorphic modular forms with respect to the pair $(\CD, \CH)$ as weight $k$ modular forms $F$ for which $\LO^d F$ is allowed to have a pole of order $\leq k-d$ along $\CH$ for all $d \geq 0$ but must be almost-holomorphic elsewhere. We obtain the following decomposition:

\begin{thm} There is a natural isomorphism
\[ \AHMod^{\log}_{k,s}(\Gamma) \cong \bigoplus_{d=0}^{k} \Mod_{k-d,s+d}^{\log}(\Gamma)^{S_d}, \]
where $\Mod_{k-d,s+d}^{\log}(\Gamma)^{S_d} \subset \Mod_{k-d,s+d}^{\log}(\Gamma)$ is the subspace of logarithmic rank $s+d$ forms that are invariant under the action of $S_d$ permuting the last $d$ factors.
\end{thm}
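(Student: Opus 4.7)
The plan is to exploit the tube-domain description from Section~\ref{sec:classical case}: a logarithmic almost-holomorphic modular form of weight $k$ and rank $s$ can be written uniquely as a polynomial in the non-holomorphic variables $\nu_1,\ldots,\nu_n$ with coefficients that are holomorphic sections of $\CL^{\otimes k} \otimes \CE^{\otimes s}$ (possibly with poles along $\CH$). The strategy is to decompose $F = \sum_{d \geq 0} F_d$ by homogeneous degree in $\nu$, and to identify each $F_d$ canonically with a logarithmic modular form of weight $k-d$ and rank $s+d$ that is $S_d$-invariant on the last $d$ tensor factors of $\CE$.

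First I would observe that the vector $(\nu_1,\ldots,\nu_n)$ transforms, under the orthogonal cocycle associated to the tube-domain trivialization, as a section of $\CE$ (this is how the lowering operator $\LO$ acquires its geometric meaning in Section~2). Consequently the homogeneous piece $F_d$ is invariantly identified with a section of $\CL^{\otimes k} \otimes \CE^{\otimes s} \otimes \Sym^d \CE$; the symmetry of the $\nu$-monomials matches precisely the $S_d$-invariance on the last $d$ factors. Because $F$ is $\Gamma$-invariant and the $\nu$-variables transform $\BC$-linearly within $\CE$, each homogeneous component is separately $\Gamma$-invariant: the $\nu$-grading is equivariant. This produces a well-defined morphism from the left-hand side to the right-hand side.

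For the inverse, given a tuple $(G_d)_{d=0}^{k}$ with $G_d \in \Mod^{\log}_{k-d,s+d}(\Gamma)^{S_d}$, I reconstruct $F$ by contracting $G_d$ with the symmetric power $\nu^{\otimes d}/d!$ to obtain the degree-$d$ term. The two maps are inverse to each other because the $\nu_j$ are algebraically independent over the field of meromorphic functions on $\CD$, so the polynomial decomposition is unique and the reconstruction is faithful. Equivalently (and perhaps cleaner for proving bijectivity), one can verify by induction on $d$ that $\LO^d F\big|_{\nu=0}$ equals $G_d$ up to a nonzero combinatorial constant and terms already accounted for at lower depth, giving a triangular change of basis between $\{F_d\}$ and $\{\LO^d F|_{\nu=0}\}$.

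The main obstacle is matching the logarithmic pole conditions on both sides. By definition of $\AHMod^{\log}_{k,s}(\Gamma)$, the pole order of $\LO^d F$ along $\CH$ is bounded by $k-d$. Since $\LO$ lowers the $\nu$-degree by one while decreasing weight by one, the operator $\LO^d$ maps the depth-$d$ piece $F_d$ (up to a nonzero constant) to a section that restricts at $\nu=0$ to the modular form corresponding to $G_d$, so the pole order of $G_d$ equals the pole order of $\LO^d F|_{\nu=0}$ and is therefore at most $k-d$, matching the definition of $\Mod^{\log}_{k-d,s+d}(\Gamma)$. Conversely, if each $G_d$ satisfies the stated pole bound, then the reconstructed $F$ satisfies $\LO^d F = \sum_{e \geq d} c_{d,e} \, G_e \cdot \nu^{\otimes(e-d)}$ for combinatorial constants $c_{d,e}$, whose pole order along $\CH$ is bounded by $\max_{e \geq d}(k-e) \leq k-d$, as required. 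The truncation of the sum at $d = k$ is automatic, since for $d > k$ the bound $k-d < 0$ forces $G_d = 0$.
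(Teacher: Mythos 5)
Your decomposition by homogeneous degree in $\nu$ does not produce a well-defined map to $\bigoplus_d \Mod^{\log}_{k-d,s+d}(\Gamma)^{S_d}$, because the claim that ``the $\nu$-variables transform $\BC$-linearly within $\CE$'' and hence that ``each homogeneous component is separately $\Gamma$-invariant'' is false. By Lemma~\ref{lemma:nu transformation} one has $(\nu|_{1,1}\gamma)(z) = \nu(z) - \partial \log J(\gamma,z)$: the transformation is affine, not linear, with an inhomogeneous shift by $\partial\log J(\gamma,z)$. Consequently the homogeneous pieces $F^{(t)}$ of the $\nu$-expansion mix under $\Gamma$ --- this is exactly the content of Proposition~\ref{prop:transformation property}, which shows $F^{(t)}|_{k-t,s+t}\gamma = \sum_{t'\ge t}\binom{t'}{t}\langle F^{(t')},(\partial\log J(\gamma,z))^{\otimes (t'-t)}\rangle$. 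Only the top-degree component is an honest modular form; the lower ones are merely quasimodular-type objects and do not lie in $\Mod^{\log}_{k-t,s+t}(\Gamma)$. (If your argument worked as stated, it would prove the same decomposition for \emph{holomorphic} almost-holomorphic forms without any divisor $\CH$, which fails already for $\SL_2$: $G_2$ is not a sum of modular forms.)

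The fix --- and the route the paper takes --- is to replace $\nu^{\otimes d}$ by $G^{\otimes d}$, where $G = \tfrac{1}{\ell}\tfrac{\RO(f)}{f} = \tfrac{1}{\ell}\tfrac{\partial f}{f} + \nu$ is the logarithmic derivative of the defining modular form $f$ of $\CH$. Unlike $\nu$, the section $G$ is genuinely $\Gamma$-invariant (it lies in $\AHMod^{\log}_{1,1}(\Gamma)$) and satisfies $\LO(G) = \tfrac12 g$, so $\LO^d\langle F_d, G^{\otimes d}\rangle = \tfrac{d!}{2^d}F_d$ for $F_d$ holomorphic and $S_d$-symmetric. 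One then sets $F_d := \LO^d(F)$ at the top depth $d$ (this \emph{is} modular, holomorphic, symmetric by Lemma~\ref{lemma:L^2 and R^2}, and has the right pole order), subtracts $\tfrac{2^d}{d!}\langle F_d, G^{\otimes d}\rangle$, and inducts on the depth; injectivity follows by applying $\LO^{k_0}$ to kill all lower terms. This is where the logarithmic hypothesis enters essentially: the existence of $f$ is what supplies the invariant substitute for $\nu$. Your closing remark on the truncation at $d=k$ is also not quite right --- a ``pole of order $\le k-d<0$'' is a vanishing condition and does not by itself force $G_d=0$; the paper instead invokes the vanishing of holomorphic vector-valued modular forms of negative weight (\cite[Prop.~4.4]{Ma}).
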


In particular, this says that all logarithmic quasimodular forms (including holomorphic quasimodular forms) can be expressed in terms of usual vector-valued modular forms.
The theory of vector-valued orthogonal modular forms was recently developed by Ma \cite{Ma}, who proved important vanishing results. However, almost no results about explicit computations of spaces of vector-valued orthogonal modular forms are known. This makes the computation of spaces of orthogonal quasimodular forms challenging, even in situations where the ring of classical scalar-valued orthogonal modular forms is as simple as possible (the case of a free polynomial algebra, as discussed for example in \cite{WW}).

The most important construction of modular forms on $\mathrm{O}(2, n)$ is the theta lift, which was defined in the work of Oda and Rallis--Schiffmann and others. We follow here Borcherds \cite{Borcherds1998} who extended the theta lift to allow singularities of certain types.
The theta lift takes as input an almost-holomorphic modular form $f$ for the finite Weil representation attached to $M$ of weight $\kappa = k+1 - n/2$, where $k \geq 0$, with poles only at the cusp with at worst exponential growth, and outputs the regularized integral 
$$\mathrm{Lift}(F)(Z) = \frac{1}{2} \Big( \frac{i}{2} \langle Z, \overline{Z} \rangle \Big)^{-k} \int^{\mathrm{reg}} [ F, \Theta_k ] \, \frac{\mathrm{d}x \, \mathrm{d}y}{y^2},$$
where $\Theta_k(\tau, Z)$ is the weight $k$ theta function associated to $M$. We refer to Section~\ref{sec:theta lifts} for necessary background and a review.
Our first main result identifies sufficient and necessary conditions for the lift $\Lift(f)$ to be almost-holomorphic:

\begin{thm}[Theorem \ref{thm:list of almost-holomorphic modular form}]
Let $f$ be an almost-holomorphic modular form of weight $\kappa=k+1-\frac{n}{2}$ and depth $d>0$. If $k \geq 2d$, then $\Lift(f)$ is an almost-holomorphic modular form of depth $2d$.
If in addition $M = U \oplus L$ splits a hyperbolic plane, then $\Lift(F)$ is almost-holomorphic if and only if $k \geq 2d$.
\end{thm}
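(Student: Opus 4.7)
The plan is to exploit the equivariance of the theta lift with respect to the Maa{\ss} lowering operator $\LM$ on the input side and the orthogonal lowering operator $\LO$ on the output side, which is item (iii) of the introduction. By weight accounting, $\LM$ drops the elliptic weight by $2$ while $\LO$ drops the orthogonal weight by $1$, so one application of $\LM$ on the input must correspond, up to lower-order correction terms, to two applications of $\LO$ on the output. Concretely, I would aim to establish an identity of the schematic form
\[ \LO^2 \Lift(f) \;=\; c\cdot \Lift(\LM f) \;+\; \text{(correction involving }\Lift(f)\otimes g\text{)}, \]
where $g$ is the invariant metric and the correction terms come from the commutation relation between $\LO$ and $\RO$ together with the action of $\LO^2$ on the theta kernel $\Theta_k$ differentiated under the regularized integral.

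Given such an equivariance, the sufficiency half of the theorem follows by induction on $d$. Since $f$ has depth $d$, $\LM^{d+1} f = 0$, and iterating the equivariance yields $\LO^{2d+2}\Lift(f) = 0$ modulo terms that, by the inductive hypothesis applied to $\LM^{j} f$ for $j < d+1$, are already almost-holomorphic of depth at most $2d$. The hypothesis $k \geq 2d$ enters precisely because each intermediate lift $\Lift(\LM^j f)$ must exist as a regularized theta lift of orthogonal weight $k - 2j$, and Borcherds's construction produces a modular form only when this weight is non-negative; the worst case $j = d$ gives exactly $k \geq 2d$. The depth of $\Lift(f)$ is not merely bounded above by $2d$ but equal to $2d$: the equivariance transports the non-vanishing of the depth-$0$ form $\LM^d f$ into the non-vanishing of $\LO^{2d}\Lift(f)$ through the explicit constant $c^d$.

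For the converse under $M = U \oplus L$, I would exploit the explicit series expansion of $\Lift(f)$ at the $0$-dimensional cusp associated to the hyperbolic plane, which is item (v) of the introduction. In these tube-domain coordinates, an almost-holomorphic orthogonal modular form of weight $k$ is a polynomial in the non-holomorphic variables $\nu_j$ with holomorphic coefficients, and the commutation of $\LO$ and $\RO$ forces its total degree in the $\nu_j$ to be bounded by $k$. Reading off the leading non-holomorphic part of $\Lift(f)$ directly from the depth-$d$ non-holomorphic part of $f$ gives a contribution of total $\nu$-degree $2d$; when $k < 2d$ this exceeds the permitted degree and cannot be cancelled by any other term in the expansion, so $\Lift(f)$ cannot be almost-holomorphic.

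The main obstacle is the rigorous derivation of the equivariance identity: the theta lift is defined only via a regularized integral, and commuting the second-order differential operator $\LO^2$ past the regularization requires careful control of boundary contributions together with a sharp identity relating the action of $\LO^2$ on $\Theta_k$ to the action of $\LM$ on the elliptic variable. Once the equivariance is in place with the correct constants, both the upward induction and the obstruction computation at the $U$-cusp reduce to the weight--depth arithmetic $2d \leq k$.
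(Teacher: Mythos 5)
Your overall strategy for the sufficiency direction (induct on the depth, using equivariance of the lift under lowering operators) is the same as the paper's, but the key identity you propose to establish is not the one that actually holds, and this is not a cosmetic issue. The correct equivariance (Theorem~\ref{thm:L on theta lift}) is \emph{first order} and involves the raising operator on the right-hand side: $\LO_k[\Lift(F)] = -\tfrac{1}{2\pi}\RO_{k-2}[\Lift(\LM F)]$. Your schematic $\LO^2\Lift(f) = c\,\Lift(\LM f) + (\text{corrections involving } \Lift(f)\otimes g)$ cannot be right as written: $\LO^2\Lift(f)$ is a section of $\CE^{\otimes 2}\otimes\CL^{k-2}$ while $\Lift(\LM f)$ is scalar-valued, and the correction you allow carries weight $k$, not $k-2$. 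The closest true second-order statement is Zemel's scalar identity (Proposition~\ref{prop:liftzemel}(ii)), whose right-hand side is a lift of $(\Delta - (k-2)n)\LM F$, not of $\LM F$. Once you substitute the correct first-order identity the induction becomes exactly the paper's: $\Lift(\LM F)$ has depth $2d-2$ by induction, $\RO$ raises depth by one, hence $\LO\Lift(F)$ has depth $2d-1$ and $\Lift(F)$ has depth $2d$. You should also note that your base case is incomplete at the boundary $k=2d$: there $\LM^d f$ is holomorphic of orthogonal weight $0$, the lift is only the logarithm of a Borcherds product (not meromorphic), and one needs the separate fact that $\RO$ of such a lift is almost-holomorphic of depth $1$ (Proposition~\ref{prop:R on Borcherds lift}).

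The converse direction has a genuine logical gap: your argument is circular. You want to conclude that $\Lift(f)$ is not almost-holomorphic because ``its leading non-holomorphic part has $\nu$-degree $2d > k$, which exceeds the permitted degree.'' But the bound $\mathrm{depth}\le\mathrm{weight}$ is a theorem \emph{about} almost-holomorphic forms; to disprove almost-holomorphicity you must exhibit a feature of $\Lift(f)$ that is incompatible with being a polynomial in the $\nu_j$ with holomorphic coefficients, and a ``too-high $\nu$-degree contribution'' is not such a feature until you have ruled out cancellation and shown the function is of polynomial type in $\nu$ at all. The actual obstruction when $k<2d$ is different in kind: $\LM^d f$ is a holomorphic form of negative weight, hence has nonzero Fourier coefficients $c^{(d)}(n,\gamma)$ with $n<0$; when $M$ splits a hyperbolic plane these are realized by positive-norm vectors $\lambda\in M'$, and by the singularity description of the regularized lift (Proposition~\ref{prop:singularity}, the $t\ge k$ range) they produce logarithmic singularities $\ln\bigl(2\pi|\langle\lambda,Z\rangle|^2/|Z|^2\bigr)$ along $\lambda^{\perp}$ that no power of $\LO$ annihilates. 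Equivalently, in the cuspidal expansion the offending terms are the anti-holomorphic exponentials $e^{2\pi i\langle\delta\mu,\overline{z}\rangle}$ in the $\mu<0$ sum, which are not polynomial in $\nu$. Either mechanism works, but your sketch identifies neither.
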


The proof is based on equivariance formulas of the theta lift with respect to the raising and lowering operators on $\SL_2$ and $O(2,n)$.
We state here the most important of these formulas, which may be of independent interest. 
\begin{thm}[Theorem \ref{thm:L on theta lift}]
If $F$ is an almost-holomorphic modular form of weight $\kappa = k + 1 - n/2$ with $k \ge 2$, then the image of the theta lift $\mathrm{Lift}(F)$ under the orthogonal lowering operator $\LO_k$ is $$\LO_k[\mathrm{Lift}(F)] = -\frac{1}{2\pi} \cdot \RO_{k-2}[\mathrm{Lift}(\LM F)],$$
where $\LM$ is the Maa{\ss} lowering operator on $\SL_2$ and $\RO_{k}$ is the orthogonal raising operator.
\end{thm}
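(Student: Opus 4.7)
The plan is to differentiate under the integral sign and then transfer the resulting differential operator from the theta kernel to $F$ via integration by parts in $\tau$. This is the classical mechanism by which theta lifts intertwine differential operators on the two sides, and the sign $-\tfrac{1}{2\pi}$ in the statement should emerge naturally from one application of the Petersson-style adjoint relation $\langle\RM f,g\rangle=-\langle f,\LM g\rangle$.

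The crucial input is a pointwise ``heat equation'' for the orthogonal theta kernel of the form
\[
\LO_Z \Theta_k(\tau, Z) \;=\; \frac{1}{2\pi}\,\RM_\tau\,\Theta_{k-2}(\tau, Z),
\]
where $\LO_Z$ denotes the orthogonal lowering operator (acting on $Z$) and $\RM_\tau$ the Maass raising operator in $\tau$; the weight shift $k\mapsto k-2$ is forced by the fact that $\LO$ drops the $Z$-weight by $1$ but returns a $\CE$-valued (rank $1$) object, while $\RM$ raises the $\tau$-weight by $2$. I would establish this identity by a direct computation on the explicit Gaussian expression for $\Theta_k$: both sides measure the deviation of the underlying Gaussian from holomorphicity in their respective variables, and once the normalizations of $\LO$, $\RM$ and the prefactor $(\tfrac{i}{2}\langle Z,\overline{Z}\rangle)^{-k}$ implicit in $\Theta_k$ are tracked, the coefficient $\tfrac{1}{2\pi}$ drops out. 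This is the step where the main bookkeeping lies.

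With the heat equation in hand, I would apply $\LO_k$ to $\mathrm{Lift}(F)$. Since $\LO$ acts as a derivation (Proposition~\ref{prop:commutation relation}) and $F$ is independent of $Z$, the derivative hits only the prefactor $(\tfrac{i}{2}\langle Z,\overline{Z}\rangle)^{-k}$ and the theta kernel inside the integrand. Using the heat equation one obtains
\[
\LO_k\,\mathrm{Lift}(F)(Z) \;=\; \frac{1}{2\pi}\cdot\tfrac{1}{2}\Big(\tfrac{i}{2}\langle Z,\overline{Z}\rangle\Big)^{-(k-2)}\!\!\int^{\mathrm{reg}}\![F,\RM_\tau\Theta_{k-2}]\,\frac{\dd x\,\dd y}{y^2}\;+\;(\text{prefactor terms}),
\]
and the prefactor terms are precisely those needed to reconstruct the action of $\RO_{k-2}$ on the weight-$(k-2)$ lift, cf.\ the definition of $\RO$ in Section~\ref{sec:classical case}.

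Next I would integrate by parts in $\tau$, using that $\RM$ is (up to sign) adjoint to $\LM$ with respect to the invariant measure $\dd x\,\dd y/y^2$, and that the pairing $[\,\cdot\,,\,\cdot\,]$ is $\SL_2$-equivariant with compatible weights ($F$ has weight $\kappa=k+1-n/2$ and $\Theta_{k-2}$ has weight $\kappa-2$ in $\tau$). The boundary terms of Borcherds' regularization must be controlled: the hypothesis $k\geq 2$ ensures sufficient decay of the prefactor and of $\Theta_{k-2}$ at the cusp, and the standard truncation argument in the regularization (cutting $\CF_T$ off at height $T$ and taking the constant term in $T^{-s}$ at $s=0$) commutes with integration by parts because $\RM,\LM$ preserve the relevant growth. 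This transfers $\RM_\tau$ off $\Theta_{k-2}$ and onto $F$, turning it into $-\LM F$, and produces exactly
\[
\LO_k\,\mathrm{Lift}(F) \;=\; -\frac{1}{2\pi}\,\RO_{k-2}\,\mathrm{Lift}(\LM F),
\]
after identifying the remaining expression, together with the prefactor terms gathered above, with the action of $\RO_{k-2}$ on the weight-$(k-2)$ theta lift of $\LM F$.

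The main obstacle is the first step: deriving the heat equation $\LO_Z\Theta_k=\tfrac{1}{2\pi}\RM_\tau\Theta_{k-2}$ with the precise constant. Everything else is essentially bookkeeping (tracking the $(\tfrac{i}{2}\langle Z,\overline{Z}\rangle)^{-k}$ prefactors through the derivation, and verifying that the boundary terms of the regularization vanish for $k\geq 2$). Once the heat equation is established in the correct normalization, the theorem follows from a single integration by parts.
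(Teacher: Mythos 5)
Your overall strategy is the paper's: a pointwise identity on the theta kernel plus an integration by parts (Green's formula) under the regularized integral, with the boundary term controlled by the exponential decay of the surviving terms of the kernel. But the kernel identity you propose, $\LO_Z\Theta_k=\tfrac{1}{2\pi}\RM_\tau\Theta_{k-2}$, cannot hold as written, and this is where the actual content of the theorem lives. The left side is a rank-one ($\CE$-valued) section of $Z$-weight $-k-1$, while the right side is a scalar of $Z$-weight $-(k-2)$; no normalization fixes this mismatch. Your fallback — that the ``prefactor terms'' from $\LO$ hitting $\bigl(\tfrac{i}{2}\langle Z,\overline{Z}\rangle\bigr)^{-k}$ will reconstruct the missing $\RO_{k-2}$ — also fails: by Lemma~\ref{lemma:basic evaluation} one has $\LO(|Z|^2)=0$, so $\LO$ annihilates the prefactor entirely and contributes nothing. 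All of the $\RO$-structure must therefore already be present in the kernel identity itself.

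The identity that actually works is mixed and second order: Proposition~\ref{prop:lr_maass} states
\[
\LO_{-k}\bigl[\RM_{\kappa}\Theta_k\bigr]=-2\pi\cdot\RO_{-(k+2)}\bigl[\Theta_{k+2}\bigr],
\]
in which both sides are sections of $\CL^{-k-1}\otimes\CE$, and the Maa{\ss} raising on one side is traded for the shift $k\mapsto k+2$ of the kernel together with an \emph{orthogonal raising} on the other. It is proved by reducing to the seed Schwartz function $f_k$ and computing $\LO_{-k}[\omega(\mathfrak{R})f_k]=-2\pi\,\RO_{-(k+2)}[f_{k+2}]$ explicitly; you cannot split this into a first-order relation between $\LO_Z$ of one kernel and $\RM_\tau$ of another. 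Once you replace your heat equation by this identity (in its conjugated, prefactored form), the rest of your argument — swapping $\RO$ with the regularized integral and applying the adjunction $\int^{\mathrm{reg}}[\RM f,\Theta]=-\int^{\mathrm{reg}}[f,\LM\Theta]$ — goes through exactly as in the paper. One small further correction: the hypothesis $k\ge 2$ is needed so that the kernel $\Theta_{k-2}$ is defined (i.e.\ $k-2\ge 0$), not primarily for decay; the boundary term in Green's formula vanishes because any $\lambda$ with $\langle\lambda_Z,\lambda_Z\rangle=0$ satisfies $\langle\lambda,Z\rangle=0$ and hence drops out of the sum.
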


For applications it will be useful to consider Fourier expansions of quasimodular forms. 
We prove the following two results in this direction:

\begin{thm} If we have a decomposition $M \cong U \oplus
	U(r) \oplus N(-r)$, where $N$ is an even positive-definite lattice, then every rank $0$ quasimodular form $f \in \QMod_{k,0}(\Gamma)$ admits a Fourier-Jacobi expansion
	\[ f = \sum_{m} e^{2 \pi i m \tilde{\tau}} f_m(\Fz,\tau) \]
	where $f$ is a quasi-Jacobi form of weight $k$ and index $m$ for $N$.
\end{thm}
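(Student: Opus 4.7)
The plan is to apply the constant-term isomorphism (Theorem~\ref{thm:constant term}) to lift $f$ to an almost-holomorphic modular form, Fourier-expand it at the one-dimensional cusp associated to the first $U$-summand of $M$, verify that the coefficients are almost-holomorphic Jacobi forms, and then take the constant term again.

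Let $\tilde F \in \AHMod_{k,0}(\Gamma)$ be the unique almost-holomorphic modular form with $\ct(\tilde F) = f$. Using the decomposition $M = U \oplus U(r) \oplus N(-r)$, I introduce coordinates $(\tilde\tau, \Fz, \tau)$ on the tube domain, where $\tilde\tau$ is the variable dual to the isotropic vector in the first $U$-summand, $\tau$ is the variable on the tube of $U(r)$, and $\Fz \in N \otimes \BC$. A direct computation gives
\[
\Im(z)^2 = 2r\,\Im(\tilde\tau)\,\Im(\tau) - r(\Im(\Fz),\Im(\Fz))_N,
\]
which yields explicit formulas for the non-holomorphic variables $\nu_j = \frac{\partial}{\partial z_j}\log \Im(z)^2$ in terms of $\Im(\tilde\tau)$, $\Im(\tau)$ and $\Im(\Fz)$.

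Translations $\tilde\tau \mapsto \tilde\tau + \lambda$ coming from the first $U$-summand lie in $\Gamma$, so $\tilde F$ expands as
\[
\tilde F(\tilde\tau, \Fz, \tau, \nu) = \sum_m e^{2\pi i m \tilde\tau}\, \tilde F_m(\Fz, \tau, \nu), \qquad m \in \BZ.
\]
Modulo these translations, the stabilizer in $\Gamma$ of the isotropic line from the first $U$ contains, up to finite index, the Jacobi group $\SL_2(\BZ) \ltimes (N \oplus N)$, acting by $(\tau, \Fz) \mapsto \bigl(\tfrac{a\tau+b}{c\tau+d}, \tfrac{\Fz}{c\tau+d}\bigr)$ together with the elliptic shifts $\Fz \mapsto \Fz + \lambda\tau + \mu$. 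Invariance of $\tilde F$ under this subgroup, combined with the transformation of $\nu$ read off the explicit formulas above, forces each $\tilde F_m$ to satisfy the defining transformation laws of an almost-holomorphic Jacobi form of weight $k$ and index $m$ for $N$: the standard Jacobi automorphy factors appear, and polynomial dependence on the orthogonal non-holomorphic variables $\nu$ transports to polynomial dependence on the standard Jacobi non-holomorphic variables $\Im(\tau)^{-1}$ and $\Im(\Fz)/\Im(\tau)$.

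Setting $\nu = 0$ on both sides yields $f = \sum_m e^{2\pi i m \tilde\tau} f_m(\Fz, \tau)$ with $f_m = \ct(\tilde F_m)$ quasi-Jacobi of weight $k$ and index $m$, proving the claim. The principal technical obstacle is precisely this last identification: one must verify that the change of coordinates between the orthogonal variables $\nu_{\tilde\tau}, \nu_\tau, \nu_{\Fz,j}$ and the Jacobi non-holomorphic variables is triangular with respect to the relevant polynomial filtrations, so that the orthogonal constant-term operation coincides with the Jacobi constant-term operation and preserves polynomiality. A minor point is that although the $U(r)$-factor a priori permits indices in $\tfrac{1}{r}\BZ$, the integrality of the first $U$-summand forces $m \in \BZ$.
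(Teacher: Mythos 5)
Your overall strategy --- lift $f$ to $\tilde F=\ct^{-1}(f)$ via Theorem~\ref{thm:constant term}, Fourier-expand along the first $U$-summand, show the coefficients are almost-holomorphic Jacobi forms, and take constant terms --- is the same as the paper's (Theorem~\ref{thm:FJ of almost-holomorphic} and Corollary~\ref{thm:FJ expansion of quasimodular forms}). But the step you yourself flag as ``the principal technical obstacle'' is where your argument breaks, and the resolution you propose (a ``triangular change of coordinates'' between the orthogonal variables $\nu_{\tilde\tau},\nu_\tau,\nu_{\Fz_j}$ and the Jacobi variables $\Im(\tau)^{-1}$, $\Im(\Fz_j)/\Im(\tau)$) is not the right mechanism. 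All three families of orthogonal variables carry the common denominator $2\Im(\tilde\tau)\Im(\tau)-(\Im\Fz,\Im\Fz)_N$ and hence depend essentially on $\Im(\tilde\tau)$, whereas the Jacobi non-holomorphic variables do not; there is no change of coordinates between the two sets, triangular or otherwise, and the raw Fourier coefficient $\tilde F_m(\Fz,\tau,\nu)$ is not even a function on $\BC^l\times\BH$, so it cannot literally satisfy the Jacobi transformation laws. What is actually needed is a \emph{specialization}: set $\nu_{\tilde\tau}=\nu_{\Fz_j}=0$ and $\nu_\tau=1/(2i\Im\tau)$, which is the limit $\Im(\tilde\tau)\to\infty$ of the $\nu$'s (cf.\ \eqref{Flimit}). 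The nontrivial content --- carried out via the explicit transformation law of $\nu$ under $(A,[x,y;\kappa])$ in Lemma~\ref{lemma:nu transform for imega tau} --- is that this specialization is equivariant for the Jacobi subgroup: after setting $\nu_{\tilde\tau}=\nu_{\Fz}=0$ the transformation of $\nu_\tau$ collapses to $(c\tau+d)^2\nu_\tau-c(c\tau+d)$, which is exactly the cocycle \eqref{tr law for 1/y} of $1/(2i\Im\tau)$. Without this computation your assertion that invariance of $\tilde F$ ``forces'' each $\tilde F_m$ to be an almost-holomorphic Jacobi form does not follow.

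A secondary error: the modular part of the Jacobi group you obtain is $\Gamma_0(r)$, not $\SL_2(\BZ)$. The embedding $A\mapsto\{A\}_r$ acts on the $U(r)$-block through $\begin{pmatrix} a & -rb\\ -c/r & d\end{pmatrix}$, which preserves the integral lattice $M$ only when $r\mid c$; likewise the elliptic shifts generate the integral Heisenberg group $H(N)$ rather than $N\oplus N$. For $r=1$ this is immaterial, but for $r>1$ the Jacobi group you assert is too large.
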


This is an analogue of the Fourier-Jacobi expansion of orthogonal modular forms. The details of notation and the precise Jacobi group that appears are explained in Section~\ref{sec:FJ expansion and quasimodular forms}. There we also consider the case of vector-valued modular forms
and discuss a compatibility result between the orthogonal lowering operator $\LO$ and the two types of lowering operators on quasi-Jacobi forms.
Quasi-Jacobi forms are the holomorphic parts of almost-holomorphic Jacobi forms and have been considered before: for example in
\cite{Libgober} or \cite[Appendix A]{Enriques}.

Any quasimodular form $f$ can be expanded in the tube domain in the Fourier series
\[ f = \sum_{\alpha \in U(\Gamma)^{\vee}_{\geq 0}} c_{\alpha} e^{2 \pi i (\alpha, z)} \]
where $U(\Gamma) \subset L_{\BQ}$ is a certain full rank sublattice depending on $\Gamma$, see Section~\ref{subsec:Fourier expansion of quasimodular forms}.
Our last result concerns the Fourier expansions of theta lifts:

\begin{thm}[Cor.\ref{cor:expansion of lift of almostholomorphic mod forms}] \label{thm:expansion quasimod lift}
Assume that $M = U \oplus L$ and let $F(\tau)$ be an almost-holomorphic modular form of weight $\kappa = k + 1 - \mathrm{rank}(L)/2$ with $k \ge 0$ for the Weil representation of $M$ and
consider the associated quasimodular form
\[ \ct(F) = \sum_{\mu \in L'/L} \sum_{n \in \mathbb{Q}} c(\mu; n) q^n \mathfrak{e}_{\mu}.
\]
If we have the depth condition $k \geq 2d$ with $d>0$, so that $\mathrm{Lift}(F)$ is almost-holomorphic, then the associated orthogonal quasimodular form has Fourier expansion 
\[
		\ct(\mathrm{Lift}(F))
		= 
		\frac{1}{2} \zeta(1-k) c(0,0) + 
		\sum_{\substack{\mu \in L^{\vee} \\ \mu > 0}} c(\mu, \mu^2 / 2)  \sum_{\delta=1}^{\infty} \delta^{k-1} e^{2\pi i \langle \delta \mu, z \rangle}.
\]
\end{thm}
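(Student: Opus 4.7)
The approach is to extract the formula from the full Fourier expansion of $\Lift(F)$ in tube-domain coordinates on $\CC$, then set the non-holomorphic variables $\nu_j$ to zero. This is legitimate precisely because the depth hypothesis $k \ge 2d$ forces $\Lift(F)$ to be almost-holomorphic by Theorem~\ref{thm:list of almost-holomorphic modular form}, so its $\nu$-expansion is polynomial in the $\nu_j$ with holomorphic coefficients.

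\textbf{Step 1 (Unfolding).} Starting from the regularized integral
\[
\Lift(F)(Z) = \frac{1}{2}\Bigl(\tfrac{i}{2}\langle Z, \overline{Z}\rangle\Bigr)^{-k} \int^{\mathrm{reg}} [F, \Theta_k]\,\frac{dx\,dy}{y^2},
\]
and the splitting $M = U \oplus L$, I would apply Borcherds' unfolding trick: Poisson summation along the isotropic direction of $U$ decomposes the theta kernel $\Theta_k(\tau, Z)$ into terms indexed by $\lambda \in L^\vee$, and the integration over an $\SL_2(\BZ)$-fundamental domain unfolds into a strip integral for each such $\lambda$. Each resulting term becomes a Mellin-type transform of $F$ against a Gaussian in $y$ controlled by $\lambda^2$.

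\textbf{Step 2 (Explicit evaluation).} The Mellin evaluation yields the following. For $\lambda = 0$, regularization of the divergent $y$-integral produces the constant $\tfrac{1}{2}\zeta(1-k) c(0,0)$ plus non-holomorphic remainders in $\nu$. For each nonzero $\lambda \in L^\vee$ with $\lambda > 0$, the Mellin integral returns $c(\lambda, \lambda^2/2) \cdot e^{2\pi i \langle \lambda, z\rangle}$ multiplied by a polynomial in $\nu$ whose value at $\nu = 0$ equals $1$. Reparametrizing $\lambda = \delta\mu$ with $\mu \in L^\vee$, $\mu > 0$, and $\delta \ge 1$, the scaling $y \mapsto y/\delta^2$ in the Mellin integral produces the factor $\delta^{k-1}$. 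Decomposing $F = \sum_{d \ge 0} (-4\pi y)^{-d} F^{[d]}$ with $F^{[d]}$ holomorphic and $F^{[0]} = \ct(F)$, the components $F^{[d]}$ with $d > 0$ yield Mellin contributions carrying strictly positive powers of $\nu_j$ that vanish upon setting $\nu = 0$. Hence only the Fourier coefficients of $\ct(F)$ at the lattice points $(\mu, \mu^2/2)$ contribute, giving the stated formula after collecting terms.

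\textbf{Main obstacle.} The technical core is the last claim of Step 2: rigorously separating the holomorphic part of $\Lift(F)$ from its non-holomorphic remainders. A direct attack requires explicit computation of the Mellin integrals arising from the higher-depth pieces $F^{[d]}/y^d$, which is intricate but routine. A cleaner alternative is induction on the depth $d$: the equivariance
\[
\LO_k[\Lift(F)] = -\tfrac{1}{2\pi} \RO_{k-2}[\Lift(\LM F)]
\]
of Theorem~\ref{thm:L on theta lift}, combined with the constant-term isomorphism $\ct\colon \AHMod \xrightarrow{\sim} \QMod$ of Theorem~\ref{thm:constant term}, reduces the general statement to the holomorphic case $d = 0$, where the formula is the classical Borcherds/Harvey--Moore-type additive lift.
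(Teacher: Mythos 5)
Your primary route (unfold the theta integral over the splitting $M = U \oplus L$, evaluate the resulting Mellin integrals, and observe that only the depth-zero piece of $F$ survives at $\nu = 0$) is exactly the paper's: the authors first prove a general expansion theorem for $\mathrm{Lift}(F)$ of arbitrary depth (Theorem~\ref{thm:expansion of theta lift}, obtained by writing $\Theta_k$ as a Poincar\'e series via Borcherds' Theorem 5.2 applied twice) and then read off the constant term under the hypothesis $k \ge 2d$. However, the content you defer as ``intricate but routine'' is in fact the bulk of the argument. Two points in particular are not routine. First, showing that the $t=0$ contribution carries coefficient exactly $1$ and that the $t>0$ contributions carry no constant term requires collapsing a triple sum over Bessel-expansion indices via the binomial identities of Lemma~\ref{lemma:binomial identity}; it is also at this stage that one sees the $\mu<0$ terms disappear entirely under $k\ge 2d$. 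Second, the zero-vector contribution produces Bernoulli-polynomial terms containing $v_e^{1+2t-k}$, which for even $k$ involve a \emph{negative} power of $v_e$ and are not individually almost-holomorphic; the paper needs a separate cancellation identity for Fourier coefficients of depth-one forms (equation~\eqref{eq:vanishing}) to control these. Your Step~2 silently assumes all non-holomorphic remainders are manifestly polynomials in the $\nu_j$ without constant term, which is true but requires these computations to establish.

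The ``cleaner alternative'' you propose --- induction on depth via $\LO_k[\mathrm{Lift}(F)] = -\tfrac{1}{2\pi}\RO_{k-2}[\mathrm{Lift}(\LM F)]$ together with the constant-term isomorphism --- has a genuine gap. Knowing $\LO(\mathrm{Lift}(F))$ determines $\mathrm{Lift}(F)$ only up to an element of the kernel of $\LO$, i.e.\ up to an arbitrary holomorphic (or, in the logarithmic setting, meromorphic) modular form of weight $k$. The constant-term isomorphism of Theorem~\ref{thm:constant term} goes the wrong way for your purposes: it lets you recover $\mathrm{Lift}(F)$ from $\ct(\mathrm{Lift}(F))$, not the converse. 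So at each inductive step you are left with an undetermined element of $\Mod_{k,0}(\Gamma)$ (a space that is far from zero-dimensional), and the induction does not close without an independent computation of the holomorphic part --- which is precisely what the direct Mellin evaluation supplies. This is why the paper proves the full expansion first and only afterwards uses the equivariance formula, in the proof of Theorem~\ref{thm:list of almost-holomorphic modular form}, for the qualitative statement that the lift is almost-holomorphic.
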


This expansion was first obtained by Borcherds for modular forms. Remarkably, the formula for quasimodular forms is exactly the same.

Our proof of Theorem~\ref{thm:expansion quasimod lift} uses a more general result. We describe explicitly the series expansion of the theta lift of any almost-holomorphic modular form, whether the lift is almost-holomorphic or not. The obtained expansion can be found in Theorem~\ref{thm:expansion of theta lift} below, but is too long to concisely state it in the introduction.

\subsection{Related work on quasimodular forms}
Many of our results are closely related to earlier work in the field.
Shimura defined \emph{nearly-holomorphic modular form} for arbitrary Hermitian symmetric spaces in (\cite{Shimura2000}, III.13.11).
Our definition of almost-holomorphic modular forms agrees with his, and part of our work in Sections~\ref{sec:Orthogonal modular forms} and~\ref{sec:tube domain and quasimodular forms} can be viewed as fleshing out his construction in the tube domain and the affine cone (as Shimura only considers the symplectic and unitary case in detail).

Our work also relies heavily on the theory of vector-valued modular forms for the orthogonal group and the geometry of the bundle $\CE$ which was recently investigated in great detail by Ma in \cite{Ma}.

In \cite{Zemel}, Zemel introduced quadratic raising and lowering operators on scalar-valued modular forms for orthogonal groups,
and also proved an equivariance formula for the theta lift.
We show that his operators, which are denoted $\CR,\CL$ here, are related to our raising and lowering operators $\RO,\LO$ by
\[ \CR = \tr( \RO \circ \RO), \quad \CL = \tr( \LO \circ \LO). \]
Our equivariance formulas recover and generalize certain identities proved by Zemel (cf. Proposition~\ref{prop:liftzemel}). For the applications to almost-holomorphic and quasimodular forms that we pursued here, it is however necessary to work with the vector-valued operators $\LO,\RO$.

The special geometry of the period domain of Enriques surfaces was considered in the context of mirror symmetry by Grimm-Klemm-Marino-Weiss in \cite{GKMW}. In particular, they work out the covariant derivative of the K\"ahler metric (our raising operator $\RO$) in the tube domain and discuss certain constructions of almost-holomorphic modular forms. Making the discussion of \cite{GKMW} rigorous and understanding its connection with Gromov-Witten theory was a main motivation of this work.

Quasimodular forms were studied for other types of modular forms such as Siegel modular forms \cite{PSS,KPSW,RZZ} or Hilbert modular forms \cite{MR2377572}. The motivation is usually related to enumerative geometry.

The ``Gauss--Manin connection in disguise" program of Mosavati and collaborators gives a geometrization of quasimodular forms in many classical contexts. The idea is to view quasimodular forms as algebraic functions on natural enhanced period domains. We will not consider these ideas in this paper, but instead refer to the book \cite{MR4375351} for a survey and more details.

\subsection{Gromov-Witten theory}
Gromov-Witten invariants are defined as integrals over the moduli space of stable maps from nodal genus $g$ algebraic curves to a smooth projective target variety $X$ of degree $\beta \in H_2(X,\BZ)$,
\[
\left\langle \tau_{k_1}(\gamma_1) \cdots \tau_{k_n}(\gamma_n) \right\rangle^{X}_{g,\beta}
=
\int_{[ \Mbar_{g,n}(X,\beta) ]^{\vir}} \prod_{i=1}^{n} \ev_i^{\ast}(\gamma_i) \psi_i^{k_i}.
\]
The invariant may be viewed as counting curves in $X$ passing through cycles of class $\gamma_i \in H^{\ast}(X)$ with a "tangency multiplicity" $k_i \geq 0$.
We refer to Section~\ref{sec:basic definitions} for all details.

If $X$ is a Calabi-Yau manifold, there are deep predictions by
Bershadsky-Cecotti-Ooguri-Vafa (BCOV)
of physical origin \cite{BCOV} and rooted in mirror symmetry that link the genus $g$ Gromov-Witten potentials,
\[ F_g(\tau_{k_1}(\gamma_1) \dots \tau_{k_n}(\gamma_n)) = \sum_{\beta \in H_2(X,\BZ)} \langle \tau_{k_1}(\gamma_1) \dots \tau_{k_n}(\gamma_n) \rangle^X_{g,\beta} Q^{\beta}, \]\
where $Q^{\beta}$ is a formel variable, to functions satisfying modular properties.

For Calabi-Yau threefolds, where $\Mbar_{g,0}(X,\beta)$ is of (virtual) dimension zero and we may consider the series $F_g := F_g()$ defined without any incidence conditions, this prediction has the following shape: let $X^{\vee}$ be a Calabi-Yau threefold mirror to $X$, let $\CM$ be its moduli space of complex deformations, and let $\CL$ be the Hodge line bundle.
Then there should be a canonical non-holomorphic section $F_g^{\ast}$ of $\CL^{2g-2}$ such that its asymptotic expansions around a point of maximally-unipotent monodromy is equal to $F_g$ under a canonical transformation of coordinates (the mirror map). 
The non-holomorphic functions $F_g^{\ast}$ can thus be viewed as modular objects, where the group $\Gamma$ under which they transform is given by the monodromy of $X^{\vee}$. The series $F_g$ are their quasimodular limits.
Similar predictions exist for Calabi-Yau manifolds of arbitrary dimension, if one takes into account the presence of descendent insertions $\tau_k(\gamma)$. We refer to \cite{CosLi} for this more general case.

It is very attractive to consider these conjectures when the moduli space $\CM$ of complex deformations of the mirror is the quotient of a Hermitian symmetric domain. The expected modularity of $F_g^{\ast}$ should then be linked with the theory of classical (almost-holomorphic) modular forms of Hermitian symmetric domains.
The simplest examples are Calabi-Yau manifolds of dimension $1$ (elliptic curves) and dimension $2$ (lattice polarized K3 and abelian surfaces).
For Calabi-Yau threefolds however this case is rarely satisfied \cite{FL,LY,MR4375351}.
For elliptic curves, which are mirror to itself, the mirror symmetry prediction says that $F_g^{\ast}$ are almost-holomorphic modular forms for $\SL_2(\BZ)$ and $F_g$ are their quasimodular limits \cite{Dijkgraaf}.
This has been proven in full generality by Okounkov and Pandharipande \cite{OP}.
A BCOV interpretation of their results was given in \cite{LiEll}.
In dimension 2, lattice-polarized K3 surfaces are mirror to K3 surfaces with a mirror-dual lattice polarization \cite{Dolgachev}, and thus the associated moduli spaces $\CM$ are arithmetic quotients of period domains associated to an orthogonal group of a lattice of signature $(2,n)$ (a Hermitian symmetric of Cartan type IV). One thus naturally expects that $F_g^{\ast}$ are (almost-holomorphic) orthogonal modular forms and the $F_g$ are their quasimodular limits.
This is true but for trivial reasons: since K3 surfaces may by deformed to K3 surfaces of Picard rank $0$, all Gromov-Witten invariants of K3 surfaces vanish, so the claim is vacuous.\footnote{A reduced non-trivial Gromov-Witten theory for K3 surfaces has been defined and intensively studied in recent years, but the modular behaviour of the generating series is only partial and the interaction with mirror symmetry is not clear.}

A natural generalization of the above geometric setup is that of Calabi-Yau fibrations.
We are motivated here by the following question:

\begin{question} \label{question:CY fibration}
Let $\pi : X \to B$ be a Calabi-Yau fibration, that is a morphism of smooth projective varieties (or orbifolds) with equidimensional fibers and trivial relative canonical bundle $\omega_{\pi} \cong \CO_X$.
If $\max(3g-3+n,\alpha)>0$, what are the modular properties of the $\pi$-relative Gromov-Witten potentials
\begin{equation} \label{Fgrel} F_{g,\alpha}(\tau_{k_1}(\gamma_1) \cdots \tau_{k_n}(\gamma_n)) = \sum_{\substack{\beta \in H_2(X,\BZ) \\ \pi_{\ast} \beta = \alpha}} \langle \tau_{k_1}(\gamma_1) \cdots \tau_{k_n}(\gamma_n) \rangle^X_{g,\beta} Q^{\beta}, \end{equation}
where $\alpha \in H_2(B,\BZ)$ is the degree in the base?
\end{question}

The condition $\max(3g-3+n,\alpha)>0$, i.e. $3g-3+n>0$ or $\alpha$ effective, ensures that no unstable cases appear in the sum and thus all terms are well-defined.

For a Calabi-Yau fibration $\pi$, the generic fiber is a smooth Calabi-Yau manifold. If $\pi$ has relative dimension one, then $\pi$ is an elliptic fibrations and assuming that $\pi$ has a section, 
the $F_{g,\alpha}$ were conjectured to be quasimodular forms for $\SL_2(\BZ)$ in \cite{HAE,RES}.
In this paper we are interested in the case of relative dimension $2$, where $\pi$ is a K3 or abelian surface fibration,
and want to establish links to orthogonal quasimodular forms.
As a starting point we will restrict ourselves to the two simplest cases: the Enriques and bielliptic surfaces.

Recall that an Enriques surface is the quotient $Y = X/\langle \tau \rangle$ of a K3 surface $X$ by a fixed-point free involution.
We may view Enriques surfaces as K3 fibrations over the orbifold point,
$\pi : Y \to B \BZ_2$, and thus view it as a special case of Question~\ref{question:CY fibration}.
Since $B \BZ_2$ has vanishing $H_2$, the relative Gromov-Witten potentials are just the absolute potentials of the Enriques surface.

Our main conjecture is then the following:

\begin{conj}[Special form of Conjecture~\ref{conj:main} in Section~\ref{subsec:modularity conjecture}]
The Gromov-Witten potentials of the Enriques surfaces
$F_g( \tau_{k_1}(\gamma_1), \ldots, \tau_{k_n}(\gamma_n))$
are components of vector-valued logarithmic quasimodular form for the orthogonal group $O^{+}(M)$ with respect to the pair $(\CD,\CH_{-2})$, where $M$ is the Enriques lattice $U \oplus U(2) \oplus E_8(-2)$ and $\CH_{-2} \subset \CD$ is
the norm $-2$ Noether-Lefschetz divisor.
\end{conj}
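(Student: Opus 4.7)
The plan is to combine the Fourier–Jacobi expansion at a chosen cusp with the theta-lift machinery developed earlier in the paper. A zero-dimensional cusp of the modular variety attached to the Enriques lattice $M = U \oplus U(2) \oplus E_8(-2)$ corresponds, on the geometric side, to an elliptic fibration structure $\pi : Y \to \BP^1$ on the Enriques surface. By the Fourier–Jacobi expansion theorem stated in the excerpt, a logarithmic vector-valued orthogonal quasimodular form for the pair $(\CD, \CH_{-2})$ is equivalent to a sequence of vector-valued quasi-Jacobi forms of matching weight and index $m$ for the lattice $U(2) \oplus E_8(-2)$, subject to a global consistency condition. I would therefore split the conjecture into two sub-statements: (a) for each fiber degree $m$, the Fourier–Jacobi coefficient of $F_g$ at the chosen cusp is a quasi-Jacobi form, and (b) these coefficients assemble into a form invariant under the full group $\mathrm{O}^+(M)$.

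For (a), the natural route is to express the fiber-degree-$m$ Fourier–Jacobi coefficient in terms of $\pi$-relative Gromov–Witten invariants of the elliptic fibration, summed over section and multi-section classes. Modularity of the elliptic variable should then follow from a Gromov--Witten/Hilbert-scheme-type correspondence for $\pi$ in the spirit of the Maulik--Pandharipande--Thomas analysis of elliptically fibered surfaces, while the quasi-Jacobi depth should come from Hodge-type integrals on rubber moduli capturing degenerations along fibers. The presence of $\psi$-class insertions is precisely what enlarges the resulting ring from Jacobi forms to quasi-Jacobi forms, exactly as in the Okounkov--Pandharipande treatment of descendents on elliptic curves.

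For (b), the most attractive route is through the theta lift. BCOV / Klemm--Mari\~no-type physics predictions for Enriques surfaces suggest that the $F_g$ are, up to explicit classical terms, Borcherds theta lifts of almost-holomorphic vector-valued modular forms whose weight and depth satisfy the inequality $k \ge 2d$ of Theorem~\ref{thm:list of almost-holomorphic modular form}. Once such an input is identified, orthogonal quasimodularity of the lift is automatic by that theorem, and the logarithmic poles along $\CH_{-2}$ can be traced to contributions of isolated rational curves in norm $-2$ classes (for example, components of singular elliptic fibers), following the mechanism by which Borcherds products acquire singularities. In practice I would first fix the correct input modular form in low genus by matching against known low-degree Enriques GW calculations, and then propagate to all $g$ by using the raising-operator equivariance of Theorem~\ref{thm:L on theta lift} together with the holomorphic anomaly equations that form the companion half of the main conjecture.

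The main obstacle is twofold. On the geometric side, proving modularity of the Fourier–Jacobi coefficients at even one cusp from first principles amounts to a complete elliptic-fibration-based evaluation of the Enriques Gromov--Witten theory with descendents, which is open even for K3 surfaces outside special cases. On the modular side, promoting single-cusp modularity to full $\mathrm{O}^+(M)$-modularity is genuinely delicate because $\mathrm{O}^+(M)$ is generated by reflections that mix very different fibration structures, and no reason is visible from the definition of the $F_g$ that the GW invariants should organize themselves in such a way. Both difficulties suggest that the cleanest route is a bootstrap: guess the theta-lift input, verify in low genus and low curve class, and then use the lowering/raising equivariance identities together with the holomorphic anomaly equations as the inductive engine carrying the formula across all $g$.
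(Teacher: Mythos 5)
The statement you were given is a conjecture, and the paper does not prove it; it only assembles evidence. Your proposal is therefore best judged as a strategy sketch, and as such it tracks the paper's own evidence quite closely. Your part (a) corresponds to the paper's discussion of Fourier--Jacobi series, where the fiber-degree-$m$ coefficients $F_{g,k}$ are quoted (from earlier work on the Enriques surface) to be quasi-Jacobi forms for $\Gamma_0(2)\ltimes H(E_8(-1))$ of index $k$ with poles of order $\le k$ at the cusps, and this is matched against Corollary~\ref{thm:FJ expansion of quasimodular forms}, which says that Fourier--Jacobi coefficients of orthogonal quasimodular forms are quasi-Jacobi forms of exactly this type. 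Your part (b) corresponds to Proposition~\ref{prop:Hodge integral}: for the single family of linear Hodge integrals $F_g=F_g((-1)^{g-1}\lambda_{g-1})$ the paper exhibits an explicit depth-$(g-1)$, weight-$(2g-2)$ quasimodular form $f_g$ for $\Gamma_0(2)$ with $F_g=2(-1)^{g-1}\ct(\Lift(F_{f_g^{\ast}}))$, the depth inequality $k\ge 2d$ of Theorem~\ref{thm:list of almost-holomorphic modular form} holding with equality, and the anomaly equation of Conjecture~\ref{conj:HAE} then following from the equivariance formula of Theorem~\ref{thm:L on theta lift}. So your ``bootstrap'' is precisely the mechanism the paper uses where it can.

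The one place where your proposal overstates what is available is the claim that, once a theta-lift input is identified, full $\mathrm{O}^{+}(M)$-modularity is automatic. That is true for invariants that genuinely are theta lifts of $\mathrm{O}^{+}(M)$-invariant vector-valued forms, but the paper's own final section shows that for closely analogous surface-fibered geometries (the STU model, the banana threefold) the natural completions of the generating series fail the depth--weight criterion and are \emph{not} almost-holomorphic; there is no a priori reason the Enriques potentials with general descendent insertions admit a theta-lift presentation at all, and no candidate input form is identified beyond the Hodge-integral case. Your closing paragraph correctly names both obstacles --- the absence of a first-principles evaluation of the descendent theory at even one cusp, and the difficulty of propagating single-cusp modularity to the full orthogonal group --- and these are exactly the reasons the statement remains a conjecture in the paper rather than a theorem. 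In short: your route is consistent with the paper's, but neither your proposal nor the paper constitutes a proof, and you should present yours as a program with the Hodge-integral and Fourier--Jacobi checks as its only completed steps.
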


We refer to Section~\ref{subsec:modularity conjecture} and Part A for the precise explanation of all the terms.
The precise weights and vector-valued structure is given in Conjecture~\ref{conj:main},
where the modularity is more generally predicted on the level of Gromov-Witten classes.
In Conjecture~\ref{conj:HAE} we also present a holomorphic anomaly equation that 
determines the dependence of the completion $F_g^{\ast}$ of $F_g$ on the non-holomorphic parameters.
We give a wide range of evidence for the conjecture.
Most notably, we show that the linear Hodge integrals of the Enriques surfaces, which were computed in \cite{Enriques}, are the quasimodular forms associated to theta lifts of almost-holomorphic modular form for $\Gamma_0(2)$. Hence they satisfy the conjecture.
We will also explain that the conjecture holds on the level of Fourier-Jacobi expansion (based on \cite{Enriques}) and discuss the implied modularity of the Enriques Calabi-Yau threefold in Section~\ref{subsec:Enriques CY3}.

A parallel example to the Enriques surface is the bielliptic surfaces,
which are quotients of abelian surfaces by fixed point free actions. This case is similar to the Enriques surface, but also considerably simpler because their modularity is essentially implied by the modularity conjectures for elliptic fibrations given in \cite{HAE,RES}. We interpret these predictions as saying that the Gromov-Witten potentials of bielliptic surfaces are orthogonal quasimodular forms for suitable orthogonal groups determined by their group of autoequivalences (Conjecture~\ref{conj:bielliptic surface}). 

In the last section we consider the case of lattice-polarized K3 fibrations $\pi : X \to B$
where the base $B$ is a smooth curve and the fibers of $\pi$ are at most nodal.
For fiber curve classes (those $\beta$ with $\pi_{\ast} \beta = 0$) all Gromov-Witten invariants were then computed by Pandharipande-Thomas \cite{PT_KVV} (based on earlier work of Maulik-Pandharipande \cite{MPGWNL}).
Using their results we show that the relative Gromov-Witten potentials \eqref{Fgrel} for fiber classes have a natural completion which is {\em not} almost-holomorpic.
We also discuss the
fiber Gromov-Witten potentials of the abelian-surface fibered Banana Calabi-Yau threefold determined by Bryan-Pietromonaco \cite{Banana} with a parallel result.
The modular properties of general Calabu-Yau surface fibrations as in Question~\ref{question:CY fibration} appear to be more complicated then just quasimodularity in our sense. This will be taken up in future work.

\subsection{Acknowledgements}
We thank Thomas Blomme, Jan Bruinier and Aaron Pixton for useful discussions.
The first author was supported by the starting grant 'Correspondences in enumerative geometry: Hilbert schemes, K3 surfaces and modular forms', No 101041491 of the European Research Council.

\part{Quasimodular forms for the orthogonal group}
	
\section{The classical case}
\label{sec:classical case}
We recall the classical definition of quasimodular forms for $\mathrm{SL}_2(\mathbb{Z})$ introduced by \cite{KZ}.
Let $\BH = \{ \tau \in \BH | \Im(\tau)>0 \}$ be the upper half-plane.
The group $\SL_2(\BR)$ acts on $\BH$ by $\gamma \cdot \tau = \frac{a \tau + b}{c \tau + d}$ whenever $\gamma = \binom{a\ b}{c\ d}$. The weight $k$ slash operator on functions on $\BH$ is defined by
\[ (f|_{k} \gamma)(\tau) = (c \tau+d)^{-k} f\left( \frac{a \tau + b}{c \tau + d} \right). \]
We have $(f|_{k}\gamma_1) |_{k} \gamma_2 = f|_{k} (\gamma_1 \gamma_2)$ for any $\gamma_1, \gamma_2 \in \SL_2(\BR)$.

The Maa{\ss} lowering and raising operators for functions on  $\BH$ are defined by
\begin{equation} \label{def:Maas raisin and lowering operators}
\begin{gathered}
\RM_k f = 2i \frac{\partial}{\partial \tau} f + \frac{k}{y}f \\
 \LM_k f = -2iy^2 \frac{\partial}{\partial \overline{\tau}} f. 
 \end{gathered}
 \end{equation}
The operators satisfy $\LM_k(y^{k}) = k y^{k+1}$ where $y = \Im(\tau)$ and
\[ 
\RM_k(f)|_{k+2} \gamma = \RM_{k}( f|_{k} \gamma), \quad
\LM_k(f)|_{k-2} \gamma = \LM_{k}( f|_{k} \gamma).
\]

A smooth function $f : \BH \to \BC$ is called {\em almost-holomorphic} if there exists an $\ell \geq 1$ such that $(\LM)^{\ell}(f)=0$. Equivalently, $f$ is a polynomial in $1/y$ with holomorphic functions as coefficients. The slash operators $f \mapsto f|_{k} \gamma$ preserve the ring of almost-holomorphic functions. An almost-holomorphic modular form of weight $k$ for a congruence subgroup $\Gamma \subset \SL_2(\BZ)$ is an almost-holomorphic function $f$ satisfying:
\begin{itemize}
\item $f|_{k} \gamma = f$ for all $\gamma \in \Gamma$ and
\item (cusp condition) for all $\gamma \in \SL_2(\BZ)$, we have the expansion $f|_{k} \gamma = \sum_{i=0}^{\ell} f_i(\tau) y^{-i}$ for some $\ell$ where each $f_i(\tau)$ has a Fourier expansion $\sum_{n} a_n q^n$ with $a_n=0$ for $n<0$.
\end{itemize}
An almost-holomorphic modular form that is holomorphic is called a modular form.
A {\em quasimodular form} of weight $k$ for $\Gamma$ is a holomorphic function $f:\BH \to \BC$ for which there exists an almost-holomorphic modular form $\widehat{f}=\sum_{i} f_i(\tau) y^{-i}$ of weight $k$ for $\Gamma$ with $f_0 = f$. We say that $f$ is the constant term (or holomorphic part) of $\widehat{f}$, and we call $\widehat{f}$ a non-holomorphic completion of $f$. Let $\AHMod_k(\Gamma), \QMod_k(\Gamma), \Mod_k(\Gamma)$ be the spaces of almost-holomorphic modular, quasimodular, and modular forms of weight $k$ respectively. Let $$\ct : \AHMod_k(\Gamma) \to \QMod_k(\Gamma)$$ be the map that sends an almost-holomorphic modular form to its constant term (viewed as a polynomial in $y^{-1}$); this is called the constant-term map and it is an isomorphism, see for example \cite[Prop 3.4]{BO}. In particular, the non-holomorphic completion of a quasimodular form is unique. 
If we let $$\Mod(\Gamma) = \oplus_{k} \Mod_k(\Gamma), \quad \QMod(\Gamma) = \oplus_k \QMod_k(\Gamma)$$ be the algebras of modular and quasimodular forms, then we have the concrete description
\[ \QMod(\Gamma) = \Mod(\Gamma) \otimes_{\BC} \BC[G_2] \]
where $G_2 = -\frac{1}{24} + \sum_{n \geq 1} \sum_{d|n} d q^n$ is the normalized second Eisenstein series.
The non-holomorphic completion of $G_2$ is $$\widehat{G}_2(\tau) = G_2(\tau)+1/8 \pi y.$$

The Maa{\ss} raising and lowering operators define maps on almost-holomorphic modular forms
\begin{align*} \RM_k : \AHMod_k(\Gamma) &\to \AHMod_{k+2}(\Gamma, \\ \LM_k : \AHMod_k(\Gamma) &\to \AHMod_{k-2}(\Gamma). \end{align*}
We define $\RM$ on the graded ring $\AHMod(\Gamma)$ by $\RM|_{\AHMod_{k}(\Gamma)} = \RM_k$ and similarly for $\LM$.
They satisfy the commutation relation 
\[ [\LM,\RM]=-\wt, \]
where the weight operator $\wt$ acts on $\AHMod_k(\Gamma)$ as multiplication by $k$.
We also let $\LM,\RM$ act on quasimodular forms by $$\ct \circ \LM \circ \ct^{-1}, \quad \ct \circ \RM \circ \ct^{-1}.$$ Then we have $\LM(G_2)=\LM(\widehat{G_2}) = -1/8 \pi$. We may therefore formally write
$$\LM = -\frac{1}{8 \pi} \frac{d}{dG_2}.$$
	
	\section{Preliminaries}
\subsection{Notation}
We write $e(x) := e^{2 \pi i x}$ for any complex number $x \in \BC$.

	\subsection{Eichler transvections} \label{subsec:Eichler transvection}
Given an inner product space $M=(V, (-,-))$ consisting of a real vector space $V$ and an inner product $(-,-)$, and two vectors $v,w \in V$ we let $v \wedge_M w \in \mathfrak{so}(V)$ denote the endomorphism,
usually called Eichler transvection, given by
\[ (v \wedge_M w) x = (w,x) v - (v,x) w \text{ for all } x \in V. \]
If $M$ is clear from the notation, we omit the subscript $M$ in $v \wedge_M w$.
    
	\subsection{Connections}
	Let $X$ be a complex manifold
	and let $E$ be a holomorphic vector bundle on $X$.
	We let $\CA(X,E)$ and $\Gamma(X,E)$ denote the spaces of $C^{\infty}$ and
	holomorphic sections of $E$ respectively.
	Since the transition functions are holomorphic,
	there is a well-defined anti-holomorphic derivative
	\[ \overline{\partial} : \CA(X,E) \to \CA(X, E \otimes \overline{\Omega}_X). \]
	If $U \subset X$ is an open set with coordinates $z_1, \ldots, z_n$,
	and we identify sections of $E|_{U}$ with a $\BC^r$-valued function $s = \sum_{i} s_i e_i$ using a trivialization $\CE|_{U} \cong U \times \BC^r$, then we have
	\[ \overline{\partial}(s) = \sum_{i} \frac{\partial s_i}{\partial \overline{z}_j} e_i \otimes d \overline{z}_j. \]
	
	On the other hand, the holomorphic derivative in this chart defined by
	\[ \partial(s) = \sum_{i} \frac{\partial s_i}{\partial z_j} e_i \otimes d z_j \]
	depends on the choice of trivialization.
	To obtain a global derivative (which gives a section of $E \otimes \Omega_X$) one uses a connection,
	which is defined to be a $\BC$-linear morphism $\nabla : \CA(X,E) \to \CA(X,E \otimes \Omega_X)$
	satisfying the Leibniz rule $\nabla(f \cdot s) = \partial(f) s + f \cdot \nabla(s)$
	for any $C^{\infty}$-functions $f$ and any section $s$.\footnote{Strictly speaking, we only consider the $(1,0)$-part of a connection here.}
	The simplest way to construct a connection is as follows.
	Assume that we have an isomorphism of vector bundles $\psi : E \to \overline{F}$, where $F$ is a  holomorphic vector bundle.
	The holomorphic derivative on the anti-holomorphic bundle $\overline{F}$ is well-defined
	and yields a morphism $\partial : \CA(X,\overline{F}) \to \CA(X, \overline{F} \otimes \Omega_X)$. Then we have the associated connection
	\[ \nabla_{\psi} := (\psi^{-1} \otimes \id_{\Omega_X}) \circ \partial \circ \psi : \CA(X,E) \to \CA(X, E \otimes \Omega_X). \]
	
	\begin{rmk}
		A hermitian metric on $E$ is an hermitian scalar product $h_x$ on each fiber $E_x$ which depends smoothly on $x \in X$. The pair $(E,h)$ is called a Hermitian vector bundle.
		As explained for example in \cite[Prop.4.2.14]{HuybrechtsComplexGeometry},
		for any Hermitian vector bundle there is a unique connection
		$$\nabla :  \CA(X,E) \to \CA(X, E \otimes \Omega_X)$$ 
		which is compatible with the Hermitian metric in the sense that
		$\partial h(s_1,s_2) = h(\nabla s_1, s_2)$.
		Assuming that $U \subset X$ is an open set on which $E$ is trivialized, $E|_{U} \cong U \times \BC^r$,
		let $e_i$ denote the basis in this trivialization and $h_{ij} = h(e_i, e_j)$. Then 
		for any section $s = \sum_i s_i e_i$ of $E|_{U}$ we have
		\[ \nabla s = \sum_i \partial(s_i) e_i + \sum_{i,j} s_i A_{ij} e_j \]
		where
		$A_{ij} = \sum_{\ell} \overline{h}^{i \ell} \partial( \overline{h}_{\ell j} )$
		and $h^{ij} = (h^{-1})_{ij}$ are the coefficients of the inverse matrix.
		This connection can be also obtained from the earlier construction
		by considering the isomorphism $E \to \overline{E^{\ast}}$ given by $v \mapsto \overline{h( \overline{-}, v})$.
	\end{rmk}

	\section{Orthogonal modular forms}
    \label{sec:Orthogonal modular forms}

In this section we construct the orthogonal lowering and raising operators $\LO,\RO$ and study their properties. We then define almost-holomorphic modular forms and their logarithmic versions (with poles of a certain kind).
We work both in the period domain $\CD$ as well as $\BG_m$-equivariantly in the affine cone $\A(\CD)$ over it.

	\subsection{Hermitian symmetric domain}
	Let $M$ be an even integral lattice of signature $(2,n)$. Given a ring $R$ we write $M_{R} = M \otimes_{\BZ} R$. The Hermitian symmetric domain of type IV is defined as
	\[ \CD = \CD_M = \{ [Z] \in \p(M_{\BC}) | Z \cdot Z = 0, Z \cdot \overline{Z} > 0 \}^{+} \]
	where $+$ stands for one of the two connected components.
    Let $\Orthnoplus(M)$ be the orthogonal group of $M$ and let $\Orth(M) \le \Orthnoplus(M)$ be the subgroup that preserves the connected components.
	
	There are two Hodge bundles on $\CM$.
	Consider first the line bundle
	\[ \CL = \CO_{\p(M_{\BC})}(-1)|_{\CD} \]
	which is a subbundle of $M_{\BC} \otimes \CO_{\CD}$ by restriction of the tautological sequence.
	The action of $\Orth(M)$ on $M_{\BC} \otimes \CO_{\CD}$ preserves this subbundle,
	hence $\Orth(M)$ acts on $\CL$.
	
	The pairing on $M$ induces a symmetric pairing on $M_{\BC} \otimes \CO_{\CD}$.
	By definition of $\CD$, the subbundle $\CL$ is isotropic.
	The second Hodge bundle is the rank $n$ vector bundle defined by
	\[ \CE = \CL^{\perp}/\CL \]
	where $\CL^{\perp}$ can be defined as the kernel of the surjection $\beta : M_{\BC} \otimes \CO_{\CD} \to \CL^{\vee}$ obtained by dualizing the inclusion $\CL \subset M_{\BC} \otimes \CO_{\CD}$ and then using the isomorphism $M_{\BC}^{\vee} \cong M$ induced by the intersection pairing.
	The fiber of $\CE$ over the point $[Z] \in \CD$ is $$\CE_{[Z]} = Z^{\perp} / \BC Z \subset M_{\BC}/ \BC Z.$$
	
	The intersection pairing on $M_{\BC}$ then yields a canonical non-degenerate holomorphic pairing
	\[ \tr:\CE \otimes \CE \to \CO, \]
	which we call the trace morphism.
	In particular, $\CE$ is an orthogonal bundle, i.e. $\CE \cong \CE^{\vee}$. 

	The relationship between $\CE$ and the tangent bundle of $\CD$ is as follows:
	\begin{lemma}[{\cite[Example 2.3]{Ma}}] \label{lemma:tangent bundle}
		The tangent bundle of $\CD$ admits the canonical isomorphism
		\[ T_{\CD} \xrightarrow{\cong} \CE \otimes \CL^{\vee}. \]
	\end{lemma}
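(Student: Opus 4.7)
The plan is to identify $\CD$ as an open subset of the quadric hypersurface $Q \subset \BP(M_{\BC})$ cut out by the equation $Z \cdot Z = 0$ and compute $T_Q$ directly via the Euler sequence. Since $\CD$ is open in $Q$, we have $T_{\CD} = T_Q|_{\CD}$, so it suffices to construct the desired isomorphism for $T_Q$ and then show it is $\mathrm{O}(M)$-equivariant (hence descends to any arithmetic quotient later if needed).

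First, I would apply the Euler sequence on $\BP(M_{\BC})$. The tautological inclusion $\CO_{\BP(M_{\BC})}(-1) \hookrightarrow M_{\BC} \otimes \CO$ yields the short exact sequence
\[
0 \to \CO \to (M_{\BC} \otimes \CO) \otimes \CO_{\BP(M_{\BC})}(1) \to T_{\BP(M_{\BC})} \to 0,
\]
which, after restriction to $\CD$ and rewriting in terms of $\CL = \CO(-1)|_{\CD}$, gives a canonical isomorphism
\[
T_{\BP(M_{\BC})}|_{\CD} \cong \big( M_{\BC} \otimes \CO_{\CD} / \CL \big) \otimes \CL^{\vee}.
\]
Fiberwise at $[Z] \in \CD$, this is just the usual description $T_{\BP(M_{\BC}),[Z]} = \Hom(\BC Z, M_{\BC}/\BC Z)$.

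Next I would cut out $Q$. The defining equation $Z \mapsto Z \cdot Z$ is a global section of $\CO_{\BP(M_{\BC})}(2)$, and its differential at $[Z]$ sends a tangent direction $\dot Z \in M_{\BC}/\BC Z$ to $2(Z, \dot Z)$; this expression is well-defined precisely because $(Z,Z) = 0$. The kernel is exactly $Z^{\perp}/\BC Z = \CE_{[Z]}$. Globalizing: the dual of the inclusion $\CL \hookrightarrow M_{\BC} \otimes \CO$ composed with the intersection pairing identifies the map $(M_{\BC} \otimes \CO)/\CL \to \CL^{\vee}$ whose kernel is $\CL^{\perp}/\CL = \CE$. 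Tensoring with $\CL^{\vee}$, we obtain the canonical identification
\[
T_Q|_{\CD} \cong \CE \otimes \CL^{\vee}.
\]

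The main (and only) subtlety will be to make the differential of $Z \cdot Z$ well-defined as a morphism of vector bundles, since one works with the projective space rather than the affine cone; this is handled by noting that on the quadric $(Z,Z)=0$, pairing with $Z$ descends from $M_{\BC}$ to $M_{\BC}/\BC Z$ and is $\CL$-linear when viewed as a map into $\CL^{\vee}$. Finally, since every step is constructed from the $\mathrm{O}(M)$-equivariant short exact sequence $0 \to \CL \to M_{\BC} \otimes \CO_{\CD} \to M_{\BC} \otimes \CO_{\CD}/\CL \to 0$ and the canonical pairing on $M_{\BC}$, the resulting isomorphism $T_{\CD} \xrightarrow{\cong} \CE \otimes \CL^{\vee}$ is automatically $\mathrm{O}(M)$-equivariant, as claimed.
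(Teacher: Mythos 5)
Your proposal is correct and follows essentially the same route as the paper: the Euler-sequence description $T_{\p(M_{\BC})} \cong \hom(\CO(-1), M_{\BC}\otimes\CO/\CO(-1))$ combined with the normal bundle sequence of the quadric, whose conormal map is exactly the pairing-with-$Z$ morphism $\beta$ with kernel $\CE \otimes \CL^{\vee}$. The extra care you take in checking that the differential of $Z\cdot Z$ descends to $M_{\BC}/\BC Z$ on the quadric is a correct elaboration of what the paper leaves implicit.
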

	\begin{proof}
		This is because
		\[ T_{\p(M_{\BC})} \cong \hom\Big( \CO(-1), M_{\BC} \otimes \CO / \CO(-1) \Big) \]
		and the normal bundle sequence of $\CD \subset \p(M_{\BC})$ is precisely
		\[ 0 \to T_{\CD} \to T_{\p(M_{\BC})}|_{\CD} \xrightarrow{\beta} \CL^{\vee}|_{\CD} \to 0. \qedhere \] 
	\end{proof}

\begin{rmk} \label{rmk:identification of sections of E with twisted differentials on D}
By Lemma~\ref{lemma:tangent bundle} and the self-duality $\CE \cong \CE^{\vee}$ we have a canonical isomorphism $$\CE \cong \Omega_{\CD} \otimes \CL^{\vee}.$$ We will therefore identify sections of $\CE$ with the corresponding sections of $\Omega_{\CD} \otimes \CL^{\vee}$, and more generally, sections of $\CE^{\otimes s} \otimes \CL^k$ with the corresponding sections of $\Omega_{\CD}^{\otimes s} \otimes \CL^{k-s}$.
\end{rmk}
    
	\begin{defn} \label{defn:modular forms}
Assume $n \geq 3$.
Let $\Gamma$ be a finite index subgroup of $\Orth(M)$.
A modular form of weight $k$ and rank $s$ for $\Gamma$
is a holomorphic section of $\CL^{\otimes k} \otimes \CE^{\otimes s}$ which is invariant under $\Gamma$.
We write $\Mod_{k,s}(\Gamma)$ for the vector space of modular forms of weight $k$ and rank $s$.
\end{defn}

\begin{rmk}
If $n \leq 2$ then the definition of a modular form is identical but requires additionally a condition at the cusps. We refer to \cite[Section 3.4]{Ma} for details.
\end{rmk}

\begin{rmk}
Ma more generally defines vector-valued modular forms as $\Gamma$-invariant sections of $\CE_{\lambda} \otimes \CL^{\otimes k}$,
where $\CE_{\lambda}$ is the image of $\CE$ under the Schur functor associated to a partition $\lambda = (\lambda_1 \geq \lambda_2 \geq \ldots \geq \lambda_n \geq 0$ such that $\lambda^t_1 + \lambda^t_2 \leq n$, where $\lambda^t$ is the transpose of $t$,
see \cite[Sec.3.2]{Ma}.
Since all the $\CE_{\lambda}$ are subbundles of $\CE^{\otimes s}$ for some $s$, the modular forms of Ma are subspaces of our spaces of modular forms.
In fact, we have a natural action of the symmetric group of $s$ elements on $\Mod_{k,s}(\Gamma)$,
and we can recover the sections of $\CE_{\lambda}$ by applying Schur funtors.
Hence no generalization is lost (at least for our purposes) by considering sections of tensor powers of $\CE$.
\end{rmk}
	
\subsection{Affine geometry}
It will be useful to relate the geometry of $\CD$ to the $\BC^{\ast}$-equivariant geometry of the affine cone
\[ \A(\CD) = \{ [Z] \in M_{\BC} | Z \cdot Z = 0, Z \cdot \overline{Z} > 0 \}^{+}. \]
We say a function $f : U \to \BC$ defined on a $\BC^{\ast}$-equivariant open $U \subset \CA(\CD)$ is of 
bidegree $(k,k')$ if $f(\lambda Z) = \lambda^k \overline{\lambda}^{k'} f(Z)$ for all $Z \in U$ and $\lambda \in \BC^{\ast}$.
We say it is of degree $k$ if it is of bidegree $(k,0)$.

Let $q:\A(\CD) \to \CD$ be the projection. Under descent, 
a smooth (or holomorphic) section $s$ of $\CL^k \otimes \overline{\CL}^{k'}$ over some open $U \subset \CD$ corresponds to a smooth (or holomorphic) function $f : q^{-1}(U) \to \BC$ which is of bigdegree $(-k,-k')$.\footnote{Indeed, $f$ is just given by the image of $q^{\ast}(s)$ under the natural trivialization $q^{\ast}(\CL^k \otimes \overline{\CL}^{k'}) \cong \CO \otimes \Ft^{-k} \otimes \overline{\Ft}^{-k'}$, where $\Ft$ is the standard representation of $\BC^{\ast}$.}
	In particular,
	a modular form of weight $k$ and rank $0$ corresponds to a degree $-k$ holomorphic function $f:\A(\CD) \to \BC$ such that
	\[ f(\gamma Z) = f(Z) \text{ for all } \gamma \in \Gamma. \]
In what follows we identify sections of $\CL^k \otimes \overline{\CL}^{k'}$
with the corresponding functions on the affine cone.
	
\begin{example}(A non-holomorphic modular form)
	For $Z \in M_{\BC}$ we write $|Z|^2 = \frac{1}{2} Z \cdot \overline{Z}$. Consider the function
	on the affine cone $\A(\CD)$ given by
	\[ f(Z) = |Z|^2. \]
		We have $f \in \CA(\CD, \CL^{\vee} \otimes \overline{\CL}^{\vee})$ and $f(\gamma Z) = f(Z)$ for all $\gamma \in \Orth(V)$.
\end{example}

Next we give a description of sections of $\Omega_{\CD}$.
For that we need to understand $q^{\ast} \Omega_{\CD}$.
	Consider the inclusion $\CD \to \p(M_{\BC})$. We have the associated short exact sequence
	\[ 0 \to I/I^2 \xrightarrow{d} \Omega_{\p(M_{\BC})}|_{\CD} \to \Omega_{\CD} \to 0 \]
	where $I$ is the ideal generated by $Z^2$ and the first map is the differential.
	The Euler sequence is
	\[
	0 \to \Omega_{\p(M_{\BC})} \to \CO(-1) \otimes M_{\BC}^{\vee} \xrightarrow{\rho} \CO_{\p(M_{\BC})} \to 0
	\]
	where $\rho$ is the dual of the tautological inclusion $\CO(-1) \subset M_{\BC} \otimes \CO_{\p(M_{\BC})}$. 
	Suppose we choose a basis $e_i$ of $M$.
	Under pullback by $q$, the sections of $\CO(-1) \otimes M_{\BC}^{\vee}$ correspond to differentials $\sum_i a_i(Z) dZ_i$ where $a_i(Z)$ is of degree $-1$. Moreover, the morphism $\rho$ is defined by $\rho(dZ_i) = Z_i$,
	or equivalently, it is given by contraction with the radial vector field 
    \[ v=\sum_i Z_i \frac{\partial}{\partial Z_i}. \]
	Hence sections of $\Omega_{\p(M_{\BC})}$ over an open set $U$ correspond to 
	differentials 
	$\omega = \sum_i a_i(Z) dZ_i$, defined on $q^{-1}(U)$ with $a_i(Z)$ of degree $-1$, for which the contraction with $v$ vanishes.
	Then sections of $\Omega_{\CD}$ corresponds to such forms
	where $Z \in \A(\CD)$ and the differential is taken modulo the 
	ideal generated by $$d(Z^2) = 2 \sum_{a,b} Z_a g_{ab} dZ_b,$$
	where $g_{ab} = \langle e_a, e_b \rangle$ is the pairing of the basis vectors.

Hence viewing sections of $\CE$ as sections of $\Omega_{\CD} \otimes \CL^{\vee}$ as in 
Remark~\ref{rmk:identification of sections of E with twisted differentials on D},
we find that
a section of $\CE$ corresponds to a 
differential form
$\omega = \sum_i a_i(Z) dZ_i$ on $A(\CD)$ with $a_i(Z)$ of degree $0$,
whose contraction with $v=\sum_i Z_i \partial/\partial Z_i$ vanishes
and which is taken modulo the ideal $dZ^2$.

We arrive at the following description for the sections of $\CE^{\otimes s} \otimes \CL^{k}$:
\begin{lemma} \label{lemma:identification E}
There is a canonical isomorphism between the space of smooth (resp. holomorphic) sections $s$ of $\CE^{\otimes s} \otimes \CL^k$ over some open $U \subset \CD$ and the space of multi-differentials
\[ F = \sum_{I = (i_1, \ldots, i_s)} F_I dZ_I, \quad dZ_I = dZ_{i_1} \otimes \cdots \otimes dZ_{i_s}, \]
defined on some open $q^{-1}(U) \subset \A(\CD)$ modulo the subspace $W$, where
\begin{itemize}
\item $F_I(Z)$ are smooth (resp. holomorphic) functions of degree $-k$,
\item the contraction $F(\ldots,v,\ldots)$ of $F$ with $v=\sum_k Z_k \partial/\partial Z_k$ in any of its entries vanishes:
\[ \forall r \in \{ 1,\ldots, s\}:\quad  \sum_{i_r} F_{(i_1,\ldots,i_s)} Z_{i_r} = 0, \]
\item $W$ is the subspace spanned by multi-differentials of the form $F_1 \otimes dZ^2 \otimes F_2$ for any $F_1, F_2$.
\end{itemize}
\end{lemma}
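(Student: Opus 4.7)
The plan is to extend the single-differential description of $\CE$ derived in the paragraph preceding the lemma to the tensor power $\CE^{\otimes s} \otimes \CL^k$, via the canonical isomorphism
\[ \CE^{\otimes s} \otimes \CL^k \cong \Omega_{\CD}^{\otimes s} \otimes \CL^{k-s} \]
coming from Remark~\ref{rmk:identification of sections of E with twisted differentials on D}. Under this identification, the three conditions in the statement should arise respectively as overall degree bookkeeping, horizontality inherited slot-by-slot from each factor of $\Omega_{\CD}$, and the quotient by $d(Z^2)$ inherited slot-by-slot from the same description of $\Omega_{\CD}$.

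I would first record the case $s=1$ by combining the identification of $\CE$ with twisted $1$-forms (already in the text) with the description of $\CL^m$ via degree-$(-m)$ functions on the cone: a section of $\CE \otimes \CL^k$ pulls back to $\sum_i a_i(Z)\, dZ_i$ with $a_i$ of degree $-k$ (degree $0$ from the $\CE$-factor combined with degree $-k$ from a trivialization of $\CL^k$), horizontal, and taken modulo $d(Z^2)$. Next I would bootstrap to general $s$. Since $\Omega_{\CD}$ is locally free, its $s$-th tensor power is locally generated over $\CO$ by pure tensors $dZ_{i_1} \otimes \cdots \otimes dZ_{i_s}$, with the defining relations of $\Omega_{\CD}$ (horizontality and the quotient by $d(Z^2)$) applied independently in each tensor slot. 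Twisting by $\CL^{k-s}$ then shifts the degree of each coefficient $F_I$ by $s-k$, turning the base degree $-s$ (from $s$ copies of the degree-$(-1)$ factor in $\Omega_{\CD}$) into the stated degree $-k$. Both the smooth and holomorphic versions follow by the same reasoning.

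The main obstacle is confirming that the slot-wise quotients in $\Omega_{\CD}^{\otimes s}$ really match the subspace $W$ spanned by multi-differentials with a factor $d(Z^2) = 2\sum_{a,b} Z_a g_{ab}\, dZ_b$ in one of their slots. This reduces to the standard fact that for a locally free sheaf $\CG$ and a subsheaf $\CI \subset \CG$, the $s$-th tensor power of $\CG/\CI$ is canonically $\CG^{\otimes s}$ modulo the sum $\sum_{r=1}^{s} \CG^{\otimes(r-1)} \otimes \CI \otimes \CG^{\otimes(s-r)}$. Applied to $\Omega_{\CD} = \Omega_{\p(M_{\BC})}|_{\CD}/\langle d(Z^2)\rangle$ together with the horizontality condition, this produces precisely the subspace $W$ of the lemma. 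All of the identifications used are $\BC^{\ast}$-equivariant and natural in $U$, so the local descriptions glue to the global multi-differential description in the statement.
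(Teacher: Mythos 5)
Your proposal is correct and takes essentially the same route as the paper: identify $\CE^{\otimes s}\otimes\CL^k$ with $\Omega_{\CD}^{\otimes s}\otimes\CL^{k-s}$, pull back along $q$ using the canonical trivialization of $q^{\ast}\CL$, and descend, with all three conditions coming from the cone description of $\Omega_{\CD}$ established just before the lemma. The paper's proof leaves the slot-wise tensor-power bookkeeping implicit, which you spell out explicitly, but there is no difference in substance.
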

\begin{proof}
This follows from the discussion before: to a section $t$ of $\CE^{\otimes s} \otimes \CL^k \cong \Omega_{\CD}^{\otimes s} \otimes \CL^{k-s}$ we associate $F=q^{\ast}(t)$ where we use the canonical trivialization of $q^{\ast} \CL$. To $F$ we associate $t$ by descent.
\end{proof}
        
We can also describe the trace morphism
$\tr : \CE \otimes \CE \to \CO$ in the affine geometry:
	
	\begin{lemma} \label{lemms3sdfd}
		If we identify the sections $\omega_1, \omega_2$ of $\CE$ with the differentials
		$\omega_1 = \sum_i a_i(Z) dZ_i$ and $\omega_2 = \sum_j b_j(Z) dZ_j$
		as in Lemma~\ref{lemma:identification E}, then
		\begin{equation} \label{3423}
			\tr(\omega_1, \omega_2) = \sum_{i,j} g^{ij} a_i(Z) b_j(Z).
		\end{equation}
	\end{lemma}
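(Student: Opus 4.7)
The idea is to unravel the correspondence of Lemma~\ref{lemma:identification E} in the rank one case and observe that the pairing $g_{ij}$ on $M_{\BC}$ becomes its inverse $g^{ij}$ when sections of $\CE$ are rewritten as $1$-forms. The conceptual reason is that the isomorphism $\CE \cong \Omega_{\CD} \otimes \CL^{\vee}$ factors as
\[
\CE \xrightarrow{\ \tr\ } \CE^{\vee} \xrightarrow{\ \sim\ } \Omega_{\CD} \otimes \CL^{\vee},
\]
in which the first arrow is the self-duality induced by the trace pairing itself. So the pairing ``disappears'' into the identification and re-emerges, inverted, in the description as $1$-forms.

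Concretely, I would first make the correspondence of Lemma~\ref{lemma:identification E} explicit for a section of $\CE$ represented on $\A(\CD)$ by a degree zero function $w : q^{-1}(U) \to M_{\BC}$ with $w(Z) \in Z^{\perp}$, well-defined modulo $\BC Z$. Applying the pairing, this section corresponds to the $1$-form $\omega_w = \sum_i a_i\, dZ_i$ with $a_i = \sum_j g_{ij}\, w^j$. One then checks that the three conditions of Lemma~\ref{lemma:identification E} are precisely the translations of the conditions on $w$: the contraction $\sum_i a_i Z_i$ equals $\langle Z, w\rangle$ and so vanishes; the ambiguity $w \sim w + cZ$ corresponds to adding $\tfrac{c}{2}\, d(Z^{2})$ to $\omega_w$; and $a_i$ is of degree $0$ because $w^j$ is.

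With this at hand, the trace in \eqref{3423} is obtained by a direct index manipulation: if $\omega_1, \omega_2$ correspond to $w_1, w_2$, then by construction of the self-duality we have $\tr(\omega_1, \omega_2) = \langle w_1, w_2\rangle = \sum_{k,l} g_{kl} w_1^k w_2^l$, and inverting the index-lowering via $w^k = \sum_i g^{ki} a_i$ together with $\sum_{k,l} g_{kl}\, g^{ki}\, g^{lj} = g^{ij}$ yields exactly $\sum_{i,j} g^{ij} a_i b_j$. The only sanity check worth isolating is that the formula is independent of the representative $1$-form, i.e.\ that it is unchanged under $\omega_1 \mapsto \omega_1 + \tfrac{c}{2}\, d(Z^{2})$; this follows from the vanishing contraction condition on $\omega_2$. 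There is no real obstacle here; the content of the lemma is the slogan that lowering and then raising indices is the identity.
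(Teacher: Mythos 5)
Your proof is correct and follows essentially the same route as the paper's: you verify that the formula annihilates the ideal generated by $d(Z^2)$ using the contraction condition on $\omega_2$, and you identify the trace pairing as the index-lowered version of the intersection pairing on $Z^{\perp}/\BC Z$, so that the inverse Gram matrix $g^{ij}$ appears on the $1$-form side. Your write-up is merely more explicit about the index-lowering $a_i = \sum_j g_{ij}w^j$ and the translation of the three conditions of Lemma~\ref{lemma:identification E}, which the paper leaves implicit.
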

	\begin{proof}
 We first check that the expression \eqref{3423} is well-defined:
		Indeed, if $$\omega_1 = dZ^2 = 2\sum_i Z_a g_{ab} dZ_b,$$
		then $\tr(\omega_1, \omega_2) = \omega_2(v) = 0$ since $\omega_2$ is a section of $\CE$.
		The claim now follows by construction of the pairing on $\CE$.
		Namely, under the canonical isomorphism $\CE \cong T_{\CD} \otimes \CL$,
		the pairing of the tangent vectors $\partial/\partial Z_i$ and $\partial/\partial Z_j$ is
		$g_{ij}$; however, we use the inverse of $g$ to identify $\CE \to \CE^{\vee}$,
		so by duality the pairing between $dZ_i$ and $dZ_j$ is $g^{ij}$.
	\end{proof}

\begin{rmk}(Warning) \label{rmk:Warning}
The trace morphism $\tr: \CE \to \CE \to \CO$ can be viewed as a section $$s_{\tr} : \CO \to \CE^{\vee} \otimes \CE^{\vee} \cong \CE \otimes \CE.$$
 Then $\tr(s_{\tr}) =n \in \Gamma(\CO_{\CD})$ as one can  check fiberwise (on $\CD$).

However, we can also consider the form on the affine cone corresponding to $s_{\tr}$, which is given by
\[ \overline{s}_{\tr} = \sum_{i,j} g_{ij} dZ_i \otimes dZ_j. \]
If we naively apply the formula of Lemma~\ref{lemms3sdfd}, i.e. substitute $dZ_i \otimes dZ_j \mapsto g^{ij}$, we get $$\sum_{i,j} g_{ij} g^{ij} = \dim(M_{\BC}) = n+2,$$ which is false. The problem is that $\overline{s}_{\tr}$ is not written in the form $\omega_1 \otimes \omega_2$ (or a linear combination of such forms) where $\omega_1, \omega_2$ are sections of $\CE$ satisfying $\omega_i(v)=0$. So Lemma~\ref{lemms3sdfd} does not apply.
\end{rmk}

Contraction of the $(i,j)$-th indices of tensors defines a morphism
	\[
	\tr_{ij} : \CE^{\otimes s} \to \CE^{\otimes (s-2)},
	\quad \quad \tr_{ij}(v_1 \otimes \cdots \otimes v_s) := 
	\tr(v_i, v_j) v_1 \otimes \cdots \hat{v}_i \otimes \cdots \otimes \hat{v}_j \otimes v_s,
	\]
where $\hat{\ }$ means the corresponding entry is omitted.
More generally, contracting indices $a_i,b_i$ pairwise for $i=1,\ldots, r$, with $a_1, b_1, \ldots, a_r, b_r$ all distinct,  yields the morphism 
\begin{gather*} \tr_{a_1b_1, \ldots, a_r b_r} : \CE^{\otimes s} \to \CE^{\otimes (s-2r)} \\
\tr_{a_1b_1, \ldots, a_r b_r}( v_1 \otimes \cdots \otimes v_s )
= \prod_{j=1}^{r} \tr( v_j \otimes w_j) \cdot \bigotimes_{i \notin \{ a_1, b_1, \ldots, a_r, b_r \}} v_i.\end{gather*}

We will also use the pairing
\begin{equation}
\label{eqn:defn of pairing}
\langle - , - \rangle : \CA(U,\CE^{\otimes (s+t)}) \times \CA(U, \CE^{\otimes t}) \to \CA(U, \CE^{\otimes s}), \quad (F,G) \mapsto \langle F,G \rangle
\end{equation}
which is given by contracting the final $s$ factors pairwise, i.e. 
$$\left\langle F, G \right\rangle =
\tr_{(s+1,s+t+1), \ldots, (s+t,s+2t)}(F \otimes G).$$

	The relationship between sections on $\CD$ and equivariant sections on the cone is summarized in Figure~\ref{figure:1}.

	\begin{figure}[h!]
		\begin{tabular}{lcl}
			\hline
			Geometry of $\CD$ & $\quad \longleftrightarrow \quad$& Equivariant geometry of $A(\CD)$ \\
			\hline 
			\vspace{5pt}
			Section $f$ of $\CL^k$ & $\quad \longleftrightarrow \quad$ & Function $f : A(\CD) \to \BC$ of degree $-k$, that is $f(\lambda Z) = \lambda^{-k} f(Z)$ \\[5pt]
			Section $\omega$ of $\Omega_{\p(M_{\BC})}$  & $\quad \longleftrightarrow \quad$ & Differential $\omega = \sum_i a_i(Z) dZ_i$ with $a_i: M_{\BC} \to \BC$ of degree $-1$\\
   && and $\iota_{v} \omega = 0$ where $v=\sum_i Z_i \frac{\partial}{\partial Z_i}$ \\[10pt]
			Section $\omega$ of $\Omega_{\CD}$ & $\quad \longleftrightarrow \quad$ & 
			\begin{minipage}{0.65\textwidth}
				Differential $\omega = \sum_i a_i(Z) dZ_i$ with $a_i: A(\CD) \to \BC$ of degree $-1$, $\iota_{v} \omega = 0$ and modulo the ideal generated by $d(Z^2)$
			\end{minipage} \\[10pt]
			Section $\omega$ of $\CE$ & $\quad \longleftrightarrow \quad$ & 
			\begin{minipage}{0.65\textwidth}
				Differential $\omega = \sum_i a_i(Z) dZ_i$ with $a_i: A(\CD) \to \BC$ of degree $0$, $\iota_{v} \omega = 0$ and modulo the ideal generated by $d(Z^2)$
			\end{minipage} \\[10pt]
			$\tr : \CE \otimes \CE \to \CO$ & $\quad \longleftrightarrow \quad$ & 
			$\tr(\sum_i a_i(Z) dZ_i, \sum_j b_j(Z) dZ_j) = \sum_{i,j} g^{ij} a_i(Z) b_j(Z)$
		\end{tabular}
  \caption{}
	\label{figure:1}
\end{figure}

\subsection{An isomorphism from $\CE$ to $\overline{\CE}$}
Our next goal will be to define a natural $O^{+}(M)$-invariant non-degenerate section
$H \in \CA( \CD, \Omega_{\CD} \otimes \overline{\Omega}_{\CD} )$.
We will first define this section on the affine cone $\A(\CD)$ and then descend it to $\CD$.
 Concretely, define
	\[
	H := \partial \overline{\partial} \log( |Z|^2 ), \quad Z \in M_{\BC}.
	\]
	If $e_i$ is a basis of $M$, and $Z_i$ are the corresponding coordinates and
	$g_{ab} = \langle e_a, e_b \rangle$, then
	\[
	H = \sum_{i,j} H_{ij} dZ_i \otimes d \overline{Z}_j
	\]
	with
	\begin{align*}
		H_{ij} 
		& = \frac{\partial}{\partial Z_i} \frac{\partial}{\partial \overline{Z}_j} \log(|Z|)^2 \\
		& = \frac{g_{ij}}{2 |Z|^2} - \frac{1}{4 |Z|^4} \sum_{a} g_{ia} \overline{Z}_a \sum_b g_{jb} Z_b.
	\end{align*}
	Observe that $\overline{H_{ij}} = H_{ji}$, so that
	$\overline{H} = H^{\mathrm{swap}}$, where $(-)^\mathrm{swap}$ stands for swapping the two factors.
	Moreover, $H(v, -) = 0$, hence $H(-, \overline{v})$ also. Also $H(\lambda Z) = |\lambda|^{-2} H(Z)$.
	\begin{prop}
		$H$ is a well-defined element of $\CA(\CD, \Omega_{\CD} \otimes \overline{\Omega}_{\CD})$ satisfying
		\begin{enumerate}
			\item[(a)] $H$ is $O^{+}(M_{\BR})$-invariant;
			\item[(b)] $H$ is non-degenerate; that is, it induces an isomorphism $T_{\CD} \to \overline{\Omega}_{\CD}$.
		\end{enumerate}
	\end{prop}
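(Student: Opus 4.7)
The plan is to verify that $H$ descends from the affine cone to a section of $\Omega_\CD \otimes \overline{\Omega}_\CD$ on $\CD$ using the bar analogue of Lemma~\ref{lemma:identification E}, read off invariance (a) from the $\Orth(M_\BR)$-invariance of $|Z|^2$, and prove non-degeneracy (b) by a pointwise linear-algebra computation in an adapted basis.

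For the descent, the bar version of Lemma~\ref{lemma:identification E} (applied in bidegree $(1,1)$) requires three things about $H = \sum H_{ij} dZ_i \otimes d\bar Z_j$: that the coefficients $H_{ij}$ have bidegree $(-1,-1)$ with respect to the $\BC^\ast$-action, that the contractions $\iota_v H = 0$ and $\iota_{\bar v} H = 0$ hold, and that $H$ is well-defined modulo the ideals generated by $d(Z^2)$ and $d(\bar Z^2)$. The first is equivalent to the already-noted scaling $H(\lambda Z) = |\lambda|^{-2} H(Z)$ once one accounts for the weights of $dZ_i$ and $d\bar Z_j$. The second follows directly from the explicit formula for $H_{ij}$ and the identity $\sum_{a,b} g_{ab}Z_a\bar Z_b = 2|Z|^2$:
\[
\sum_i Z_i H_{ij} = \frac{\sum_i g_{ij}Z_i}{2|Z|^2} - \frac{\bigl(\sum_{i,a}g_{ia}Z_i\bar Z_a\bigr)\bigl(\sum_b g_{jb}Z_b\bigr)}{4|Z|^4} = \frac{\sum_i g_{ij}Z_i - \sum_b g_{jb}Z_b}{2|Z|^2} = 0,
\]
with the companion $\iota_{\bar v} H = 0$ following either by the same calculation or from $\overline H = H^{\mathrm{swap}}$. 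The third condition is automatic because $H$ is a globally defined smooth form on $\A(\CD)$. Claim (a) is then immediate: $\Orth(M_\BR) \subset \Orthnoplus(M_\BR)$ preserves the bilinear pairing on $M_\BR$ and hence the scalar function $|Z|^2 = \tfrac{1}{2}\langle Z, \bar Z\rangle$, and since $\partial\bar\partial$ commutes with biholomorphisms, $H = \partial\bar\partial \log|Z|^2$ is $\Orth(M_\BR)$-invariant.

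For (b), by (a) together with transitivity of the $\Orth(M_\BR)$-action on $\CD$, it suffices to prove non-degeneracy at a single point $[Z_0]$. Fix a lift $Z_0 \in \A(\CD)$ and extend $\{Z_0, \bar Z_0\}$ to a $\BC$-basis $e_1 = Z_0$, $e_2 = \bar Z_0$, $e_3, \ldots, e_{n+2}$ by choosing the $e_j$ for $j \ge 3$ in the real subspace $W = (\BR Z_0 + \BR\bar Z_0)^{\perp} \subset M_\BR$, which is negative-definite of dimension $n$ since $\BR Z_0 + \BR\bar Z_0$ is a positive-definite $2$-plane in the signature-$(2,n)$ space $M_\BR$. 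In this basis the only nonzero entries of $g$ among indices $\le 2$ are $g_{12} = g_{21} = 2|Z_0|^2$, and $(g_{ij})_{i,j\ge 3}$ is non-degenerate. Since $\sum_a g_{ia}\bar Z_a = g_{i2}$ and $\sum_b g_{jb}Z_b = g_{j1}$ at $Z_0$, the cross term in the formula for $H_{ij}(Z_0)$ vanishes whenever $i, j \ge 3$, yielding
\[
H_{ij}(Z_0) = \frac{g_{ij}}{2|Z_0|^2}, \qquad i, j \ge 3,
\]
which is non-degenerate. The tangent space $T_{[Z_0]}\CD = Z_0^\perp/\BC Z_0$ is identified in our basis with the span of $e_3, \ldots, e_{n+2}$ (the condition $Z_0 \cdot W = 0$ reads $W_2 = 0$, and modding by $\BC Z_0$ removes the $W_1$-direction), so non-degeneracy on this span gives the claimed isomorphism $T_\CD \xrightarrow{\sim} \overline{\Omega}_\CD$.

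The only real obstacle is the affine-cone/projective-domain bookkeeping: verifying the contraction identities and properly interpreting the descent under Lemma~\ref{lemma:identification E}. Once this is in place, (a) is immediate from the scalar construction of $H$, and (b) reduces to the basis calculation above.
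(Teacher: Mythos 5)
Your proposal is correct and follows essentially the same route as the paper: invariance is immediate from the definition, and non-degeneracy is reduced by transitivity of the $\Orth(M_{\BR})$-action to a pointwise computation in an adapted basis (the paper's footnote uses the real basis with $Z=e_1+ie_2$ where you use the complex basis $\{Z_0,\overline{Z}_0\}$, which is the same calculation up to a change of coordinates). One small caution: the coordinate formula $H_{ij}=\tfrac{g_{ij}}{2|Z|^2}-\cdots$ is derived for a \emph{real} basis, so in your non-real basis the entries should be expressed via the basis-independent quantities $\langle e_i,\overline{Z}_0\rangle$ and $\langle e_j,Z_0\rangle$ (which is what your computation actually uses, so the conclusion stands).
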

	\begin{proof}
		The first claim is clear from the definition.
		Since $\Orth(M_{\BR})$ acts transitively,
		it suffices to prove the statement for the restriction to one point.
		This can be done by a direct computation.\footnote{Concretely, consider a basis $e_a$ (over $\BR$) with $e_1^2 = 1$, $e_2^2=1$ and $e_a \cdot e_1 = e_a \cdot e_2$ for $a>2$. Over the point $Z=e_1 + i e_2$ we have $H = (dZ_1 - i dZ_2)\otimes \alpha + (d Z_1 + i dZ_2) \otimes \beta + \sum_{i,j \geq 3} g_{ij}/2 |Z|^2$, for some $(0,1)$ forms $\alpha, \beta$.
			Since $H(v,-) = 0$ we see that $\alpha = 0$ and since we work modulo $dZ^2$, we can assume $\beta=0$. The remaining terms are non-degenerate on the subspace spanned by $\partial/\partial Z_i$, $i=3,\ldots, n+2$.}
	\end{proof}

We use $|Z|^2 H \in \Gamma(\CD, \CE \otimes \overline{\CE})$ to define the morphism
\[ \psi := \tr_{12} \circ ( - \otimes 2|Z|^2 H) : \CE \to \overline{\CE}. \]
We also have its complex conjugate $\varphi := \overline{\psi} : \overline{\CE} \to \CE$.
\begin{lemma}
	$\varphi \circ \psi = \id_{\CE}$ and $\psi \circ \varphi = \id_{\overline{\CE}}$.
\end{lemma}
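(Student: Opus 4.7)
The plan is to verify $\varphi\circ\psi=\id_{\CE}$ by a direct coordinate computation on the affine cone $\A(\CD)$, using the dictionary of Lemma~\ref{lemma:identification E} and the trace formula of Lemma~\ref{lemms3sdfd}, and then deduce $\psi\circ\varphi=\id_{\overline{\CE}}$ by complex conjugation, since $\varphi=\overline{\psi}$ gives $\overline{\varphi\circ\psi}=\psi\circ\varphi$.

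First I would represent a section of $\CE$ by a differential $\omega=\sum_i a_i(Z)\,dZ_i$ of degree $0$ satisfying $\sum_i a_i Z_i=0$, considered modulo the subspace $W$ generated by $d(Z^2)=2\sum_b \alpha_b\,dZ_b$, where $\alpha_b:=\sum_a g_{ab}Z_a$. Using the explicit formula $H_{ij}=g_{ij}/(2|Z|^2)-\overline{\alpha}_i\alpha_j/(4|Z|^4)$ together with the auxiliary identity $\sum_j g^{ij}\overline{\alpha}_j=\overline{Z}_i$ (and its conjugate $\sum_j g^{ij}\alpha_j = Z_i$), a short calculation gives
\[
\psi(\omega)=\sum_k b_k\,d\overline{Z}_k,\qquad b_k=a_k-\frac{\alpha_k}{2|Z|^2}\sum_i a_i\overline{Z}_i.
\]
Next I would apply $\varphi$, whose formula is the complex conjugate and so leads to $\varphi(\psi(\omega))=\sum_m c_m\,dZ_m$ with $c_m=b_m-\overline{\alpha}_m/(2|Z|^2)\sum_k b_k Z_k$. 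The crucial observation is that the inner sum $\sum_k b_k Z_k$ \emph{vanishes}: substituting the formula for $b_k$, it reduces to a linear combination of $\sum_k a_k Z_k=0$ (since $\omega\in\CE$) and $\sum_k \alpha_k Z_k=Z\cdot Z=0$ (since $Z\in\A(\CD)$). Hence $c_m=b_m$, and the residual discrepancy $c_m-a_m=-\alpha_m/(2|Z|^2)\sum_i a_i\overline{Z}_i$ is a scalar multiple of $\alpha_m$, hence represents a multiple of $d(Z^2)$ and lies in $W$. Therefore $\varphi\circ\psi(\omega)=\omega$ in $\CE$, and $\psi\circ\varphi=\id_{\overline{\CE}}$ follows by conjugation as indicated.

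The main obstacle is the bookkeeping modulo $W$: the computation does not produce the identity literally, and one must verify that precisely the off-diagonal terms produced by the rank-one correction in $H_{ij}$ assemble into a multiple of $d(Z^2)$, using both isotropy conditions $\omega(v)=0$ and $Z\cdot Z=0$. A more conceptual alternative would be to exploit the $\Orth(M_\BR)$-equivariance of $H$, $\psi$, $\varphi$ together with the transitivity of $\Orth(M_\BR)$ on $\CD$ to reduce the identity to a single basepoint (as in the footnote of the preceding proposition), where one can choose a basis diagonalizing the metric and make both operators manifestly inverse to each other; this replaces the cone-level calculation by a finite-dimensional linear-algebra verification but is essentially equivalent.
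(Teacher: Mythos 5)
Your proof is correct and takes essentially the same route as the paper's: a direct computation on the affine cone with the explicit formula for $H$, using $Z\cdot Z=0$ and $\iota_v\omega=0$ to show that the correction terms either vanish or are multiples of $d(Z^2)$, followed by complex conjugation for the second identity. The only cosmetic difference is that the paper packages the computation via the matrix $R_{ij}=2|Z|^2\sum_\ell g^{i\ell}H_{\ell j}$ and the product $\sum_j R_{ij}\overline{R}_{j\ell}$, whereas you apply $\psi$ and $\varphi$ sequentially.
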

\begin{proof}
	Since $\overline{H} = H^{\mathrm{swap}}$, we have
	\[ \varphi = \tr_{13} \circ ( - \otimes 2|Z|^2 H ) : \overline{\CE} \to \CE. \]
	Define the element
	\[ R_{ij} := 2 |Z|^2 \sum_{\ell} g^{i \ell} H_{\ell j}. \]
	A short computation shows that
	\[ R_{ij} = \delta_{ij} - \frac{1}{2 |Z|^2} \overline{Z}_i \sum_{b} g_{jb} Z_b. \]
	We have
	\begin{align}
		\label{psi}
		\psi\left( \sum_k a_k(Z) dZ_k \right) & = 2 |Z|^2 \sum_{k,i,j} a_k g^{ki} H_{ij} d\overline{Z}_j =  \sum_{k,j} a_k(Z) R_{kj} d\overline{Z}_j \\
		\label{phi}
		\varphi\left( \sum_k b_k(Z) d\overline{Z}_k \right) & = 2 |Z|^2 \sum_{k,i,j} b_k g^{kj} H_{ij} dZ_i = \sum_{i,j} b_k(Z) \overline{R}_{kj} dZ_j.
	\end{align}
 Intuitively, $\psi$ and $\varphi$ can be interpreted as the replacement rules
 \[ dZ_{k} \mapsto \sum_j R_{kj} d \overline{Z}_j, \quad d \overline{Z}_k \mapsto \sum_j \overline{R}_{kj} dZ_j \]
	Since $Z^2 = 0$ we have
	\begin{equation} \label{RR} R_{ij} \overline{R}_{j \ell} = \delta_{i \ell}
		- \frac{1}{2|Z|^2} \left( \overline{Z}_i \sum_{a} g_{\ell a} Z_a 
		+ Z_i \sum_{b} g_{\ell b} \overline{Z}_b \right). \end{equation}
	If $\omega = \sum_i a_i(Z) dZ_i$ is a section of $\CE$, such that $\iota_{v} \omega = 0$, then
	\begin{align*}
		\varphi \psi(\omega) & = \sum_{i,j,\ell} a_i(Z) R_{ij} \overline{R}_{j \ell} dZ_{\ell} \\
		& = \omega - \frac{1}{2|Z|^2} \sum_{i} a_i(Z) \overline{Z}_i \underbrace{\sum_{a,\ell} g_{\ell a} Z_a dZ_{\ell}}_{= dZ^2 = 0}
		- \frac{1}{2|Z|^2} \underbrace{\sum_{i} a_i(Z) Z_i}_{\iota_{v} \omega = 0} \sum_{a} g_{\ell a} \overline{Z}_a \\ & = \omega.
	\end{align*}
	The second claim follows from the first.
\end{proof}

\subsection{Raising and lowering operators}
The raising and lowering operators are modular substitutes for holomorphic and anti-holomorphic derivatives of modular forms.

\begin{defn} The lowering and raising operators are defined by
	\begin{gather*}
		\LO_k := |Z|^2 (\id_{\CE^{\otimes s}} \otimes \varphi) \circ \overline{\partial} :
		\CA(U, \CL^k \otimes \CE^s) \to \CA(U, \CL^{k-1} \otimes \CE^{s+1}); \\
		\RO_k := |Z|^{-2k} (\varphi^{\otimes s} \otimes \id_{\CE}) \circ \partial \circ
		|Z|^{2k} \psi^{\otimes s} : 
		\CA(U, \CL^k \otimes \CE^s) \to \CA(U, \CL^{k+1} \otimes \CE^{s+1})
	\end{gather*}
	on any open $U \subset \CD$.
\end{defn}

If clear from the context, we usually drop the subscript $k$ in $\LO_k,\RO_k$.

We will give a more explicit formula for both $\LO$ and $\RO$. Define
\[ \nu = \partial \log( |Z|^2 ) = \frac{1}{2 |Z|^2} \sum_{a,b} \overline{Z}_b g_{ab} dZ_a \in \CA( \A(\CD), \Omega_{\A(\CD)} ). \]
Note that $\nu$ is not a section of $q^{\ast} \Omega_{\CD}$ because $\iota_{v} \nu = 1$ for $v=\sum_i Z_i \partial/\partial Z_i$.

Also, define
\begin{equation} g = \sum_{a,b} g_{ab} dZ_a \otimes dZ_b \in \Gamma(\CE \otimes \CE). \label{g on period domain} \end{equation}

\begin{prop} \label{prop:Properties of L and R}~ 
	\begin{enumerate}
		\item[(i)] The operators $\LO$ and $\RO$ are well-defined and derivations.
		\item[(ii)] For any $F \in \CA(U, \CL^k \otimes \CE^{\otimes s})$,
		\[ \RO(F) = |Z|^{-2k-2} \varphi^{\otimes (s+1)} \overline{\LO \Big( |Z|^{2k} \varphi^{\otimes s} \overline{F} \Big)} \]
		\item[(iii)] For any $F \in \CA(\CD, \CL^k \otimes \CE^{\otimes s})$ and $\gamma \in O^{+}(M_{\BR})$ we have
		\[ \gamma^{\ast} \RO(F) = \RO(\gamma^{\ast} F), \quad \gamma^{\ast} \LO(F) = \LO(\gamma^{\ast}F). \]
		\item[(iv)]
		If we write $F = \sum_{I = (i_1, \ldots, i_s)} F_I dZ_I$, where $dZ_I = dZ_{i_1} \otimes \cdots \otimes dZ_{i_s}$, then
\begin{gather}
\LO(F) = \sum_I \frac{\partial F_I}{\partial \overline{Z}_k} dZ_I \otimes \left( |Z|^2 dZ_k - Z_k \partial(|Z|^2) \right) \label{formulaL} \\
		\RO(F) = \partial(F) + k F \otimes \nu
		+ \sum_{r=1}^{s} \sigma_{r, s+1}(F \otimes \nu)
		- \sum_{r=1}^{s} \tr_{s+2,s+3} \sigma_{r, s+2}( F \otimes g \otimes \nu ) \label{formulaR}
\end{gather}
		where $\sigma_{i,j}$ stands for the transposition of a set $\{ 1, \ldots, \ell \}$ interchanging $i,j$.
	\end{enumerate}
\end{prop}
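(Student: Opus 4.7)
The plan is to first establish the explicit formulas (iv) by direct coordinate computation on the affine cone using Lemma~\ref{lemma:identification E} and the explicit expressions \eqref{psi}, \eqref{phi} for $\psi$ and $\varphi$. Once (iv) is in hand, the claims (i), (ii), (iii) will follow without significant additional effort.

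For $\LO$, I would start from a representative $F = \sum_I F_I\, dZ_I$ of a section of $\CL^k \otimes \CE^{\otimes s}$. Since $\overline{\partial}$ acts only on the coefficients $F_I$, it produces $\sum_{I,k} (\partial F_I/\partial \overline{Z}_k)\, dZ_I \otimes d\overline{Z}_k$. Using \eqref{phi} to rewrite $d\overline{Z}_k \mapsto \sum_j \overline{R}_{kj}\, dZ_j$, substituting the explicit expression $R_{ij} = \delta_{ij} - \frac{1}{2|Z|^2}\overline{Z}_i \sum_b g_{jb} Z_b$, and applying $|Z|^2$ together with the identity $\partial(|Z|^2) = |Z|^2 \nu$, a short rearrangement collapses the result to precisely \eqref{formulaL}. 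For $\RO$, I would expand $\partial\bigl(|Z|^{2k}\, \psi^{\otimes s}(F)\bigr)$ by the Leibniz rule and then apply $|Z|^{-2k} (\varphi^{\otimes s} \otimes \id)$. There are three sources of terms: when $\partial$ hits $|Z|^{2k}$ one gets $k F \otimes \nu$ after using $\varphi^{\otimes s} \circ \psi^{\otimes s} = \id$; when $\partial$ hits the coefficients $F_I$ one obtains $\partial(F)$ by the same cancellation; finally, when $\partial$ hits the $R_{i_r j}$-tensors inside each factor of $\psi^{\otimes s}(F)$, the resulting expressions simplify via the identity \eqref{RR} for $R\overline{R}$ and reassemble into the two symmetrized correction sums $\sum_r \sigma_{r,s+1}(F \otimes \nu)$ and $-\sum_r \tr_{s+2,s+3}\, \sigma_{r,s+2}(F \otimes g \otimes \nu)$.

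With (iv) in hand, (i) reduces to checking that the right-hand sides of \eqref{formulaL} and \eqref{formulaR} satisfy the constraints of Lemma~\ref{lemma:identification E}: correct homogeneity, vanishing of contraction with the Euler vector field $v = \sum_i Z_i\, \partial/\partial Z_i$ in every new $\CE$-entry, and invariance modulo the $dZ^2$-ideal. For $\LO$, contracting $|Z|^2\, dZ_k - Z_k\, \partial(|Z|^2)$ against $v$ gives $|Z|^2 Z_k - Z_k |Z|^2 = 0$, and the $dZ^2$-relation is obviously preserved. For $\RO$, the correction terms in \eqref{formulaR} are precisely those needed to cancel the failure of $\partial F$ and $kF\otimes \nu$ alone to satisfy $\iota_v = 0$ and the $dZ^2$-relation, which one verifies by applying $\iota_v$ and the ideal relation to both sides. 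The derivation property is immediate from the Leibniz rule for $\partial, \overline{\partial}$. For (ii), I would take complex conjugate of the defining expression for $\RO$ and use $\overline{\psi} = \varphi$, $\overline{\varphi} = \psi$ together with the relation $\overline{\partial} G = \overline{\partial \overline{G}}$ to rewrite it in the stated form. For (iii), each of $\partial$, $\overline{\partial}$, $|Z|^2$, $\psi$, $\varphi$ is built from the $\Orth(M_{\BR})$-invariant pairing on $M_{\BC}$ and is therefore equivariant, so the compositions $\LO$ and $\RO$ inherit the equivariance.

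The main obstacle will be the $\RO$-computation in (iv): isolating the three Leibniz contributions, applying the identity \eqref{RR} to simplify the $R\overline{R}$ products, and verifying that the non-trivial correction term in \eqref{RR} produces precisely the trace $\tr_{s+2,s+3}$ against the metric $g$ in the final formula requires careful index bookkeeping.
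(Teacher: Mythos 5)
Your proposal is correct and takes essentially the same route as the paper's proof: formula (iv) via the affine-cone computation with the matrices $R_{ij}$ and the identity \eqref{RR}, part (ii) by conjugation using $\overline{\psi}=\varphi$, and part (iii) from the $\Orth(M_{\BR})$-invariance of the ingredients (the paper gets well-definedness in (i) slightly more directly, by writing $\LO,\RO$ as compositions of maps between section spaces rather than by checking the conditions of Lemma~\ref{lemma:identification E} on the explicit formulas, but both work). The only bookkeeping point to watch in the $\RO$-computation is that the terms $\sigma_{r,s+1}(F\otimes\nu)$ arise from the contraction $R\overline{R}$ applied to the coefficient derivatives $\partial(F_I)$ together with the Euler-type relation $\sum_{i_r}\partial(F_I)Z_{i_r}=-F$, not from $\partial$ hitting the $R$-tensors (which produces only the trace correction against $g$).
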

\vspace{5pt}

An explicit formula for $\RO$ in terms of coordinates is given in the proof.

\begin{example}
	For example, if $\omega$ is a section of $\CE \otimes \CL^k$, then
	\[ \RO(\omega) = \partial(\omega) + k \omega \otimes \nu
	+ \nu \otimes \omega - \tr(\omega \otimes \nu) g. \]
	If $\omega_1, \omega_2$ are sections of $\CE$ then
	\begin{align*}
		\RO(\omega_1 \otimes \omega_2)
		& =
		\partial(\omega_1 \otimes \omega_2)
		+ \nu \otimes \omega_2 \otimes \omega_1 + \omega_1 \otimes \nu \otimes \omega_2 \\
  & - \tr(\omega_1 \otimes \nu) \sigma_{23}( g \otimes \omega_2 ) - \tr(\omega_2 \otimes \nu) \omega_1 \otimes g
	\end{align*}
	More generally, if $F = \omega_1 \otimes \ldots \otimes \omega_s$ is a section of $\CL^k \otimes \CE^s$, then
	\begin{align*}
		\RO(\omega_1 \otimes \cdots \otimes \omega_s) &= \partial(\omega_1 \otimes \cdots \otimes \omega_s) + k \omega_1 \otimes \cdots \otimes \omega_s \otimes \nu \\
		& + \sum_{r=1}^{s}  \omega_1 \otimes \ldots \otimes \omega_{r-1} \otimes \nu \otimes \omega_{r+1} \otimes \cdots \otimes \omega_{s} \otimes \omega_r \\
  		& - \sum_{r=1}^{s} \sum_{a,b} g_{ab} \tr(\omega_r \otimes \nu) \omega_1 \otimes \cdots \otimes \omega_{r-1} \otimes dZ_a \otimes \omega_{r+1} \otimes \cdots \otimes \omega_{s} \otimes dZ_{b}.
	\end{align*}
\end{example}

\begin{example}
	A local section $f$ of $\CL^k$ corresponds to a degree $-k$ function $f:A(\CD) \to \BC$.
	The derivative of $f$ should be a section of $\CL^{k} \otimes \Omega_{\CD}$, which corresponds to a
	differential $\omega = \sum_i a_i(Z)$ with $a_i$ of degree $-(k+1)$
	such that $\iota_{v} \omega = 0$, where $v=\sum_i Z_i \frac{\partial}{\partial Z_i}$.
	The naive derivative
	$\partial(f) = \sum_{i} \frac{\partial f}{\partial Z_i} dZ_i$ satisfies the first condition,
	but by Euler's equation we have
	\[ \iota_{v} \partial(f) = \sum_{i} Z_i \frac{\partial f}{\partial Z_i} = -k f(Z). \]
	The raising operator corrects for the difference:
	\[ \RO(f) = \partial(f) + k \partial( \log(|Z|^2) ) f \]
	and
	\[ \iota_{v} \partial( \log  |Z|^2 ) = 1. \]
\end{example}
\begin{example}
	Consider a local section of $\CE$ corresponding to a differential
	$\omega = \sum_{k} a_k(Z) dZ_k$. One could consider the naive derivative
	\[ 
	\partial(\omega) = \sum_{k,\ell} \frac{\partial a_k}{\partial Z_{\ell}} dZ_k \otimes dZ_{\ell}.
	\]
	However, $\partial(\omega)$ does not respect the ideal $(dZ^2)$. Indeed, we have
	\[
	\partial(dZ^2) = \sum_{a,b} g_{ab} dZ_a \otimes dZ_{b} \]
	which is non-zero modulo $dZ^2$. Moreover,
	\[ \partial(\omega)(v, - ) = \sum_i \partial(a_i) Z_i = \partial( \sum_i a_i Z_i ) - 
	\sum_i a_i dZ_i = - \omega \]
	is also non-zero. The raising operator corrects for both of these defects:
	it satisfies $\RO(dZ^2) = 0$ modulo $dZ^2$ and $\RO(\omega)(v, - )=0$,
	so that we obtain a section of $\CE^{\otimes 2} \otimes \CL$. \qed
\end{example}

\begin{proof}
	(i) The lowering and raising operators are the compositions
	\begin{multline*}
		\CA(U, \CL^k \otimes \CE^s) \xrightarrow{\overline{\partial}}
		\CA(U, \CL^k \otimes \CE^s \otimes \overline{\Omega})
		\cong \CA(U, \CL^k \otimes \CE^s \otimes \overline{\CE} \otimes \overline{\CL} ) \\
		\xrightarrow{\id_{\CE^{\otimes s}} \otimes \varphi}
		\CA(U, \CL^k \otimes \CE^{s+1} \otimes \overline{\CL} )
		\xrightarrow{|Z|^2} 
		\CA(U, \CL^{k-1} \otimes \CE^{s+1} ),
	\end{multline*}
	and
	\[
	\CA(U, \CL^k \otimes \CE^s) \xrightarrow{|Z|^{2k} \psi^{\otimes s}}
	\CA(U, \overline{\CL}^{\vee k} \otimes \overline{\CE}^s)
	\xrightarrow{\partial} 
	\CA(U, \overline{\CL}^{\vee k} \otimes \overline{\CE}^s \otimes \CE \otimes \CL)
	\xrightarrow{ |Z|^{-2k} \phi^{s} \otimes \id_{\CE}}
	\CA(U, \CL^{k+1} \otimes \CE^{s+1}).
	\]
	Hence they are well-defined. Since $\partial$ and $\overline{\partial}$ are derivations, so are $\LO$ and $\RO$. \\
	(ii) This is a straightforward computation using  $\overline{\psi} = \varphi$. \\
	(iii) Since $H$ is $\Orth(M_{\BR})$-invariant, the maps $\varphi, \psi$ both commute with pullback by $g$. This shows that $\LO$ and $\RO$ are $\Orth(M_{\BR})$-invariant. \\
	(iv) By \eqref{phi} the lowering operator is
\[ 				\LO(F) 
				= |Z|^2 \sum_{I, k, i}
				\frac{\partial F_{I}}{\partial \overline{Z}_k} \overline{R}_{k i} \, dZ_I \otimes dZ_i  \]
    which implies the claim if we use the definition of $R_{ki}$.    
	For the raising operator, if $f$ is a section of $\CL^k$ then
	\[
	\RO(f) = |Z|^{-2k} \partial( |Z|^{2k} f ) = \partial(f) + k f \partial(\log |Z|^2 ) = \partial(f) + k f \nu.
	\]
	If $\omega = \sum_i a_i(Z) dZ_i$ is a section of $\CE$, then
	\[ \RO(\omega) = \varphi \partial \psi(\omega)
	= \sum_{i,j,m} R_{ij} \overline{R}_{jm} dZ_m \otimes \partial(a_i)
	+ \sum_{i,j,m} a_i dZ_m \otimes \partial(R_{ij}) \overline{R}_{jm}. \]
	In the first summand we insert \eqref{RR} and use 
\[ \sum_i \partial(a_i) Z_i = \partial( \sum_i a_i(Z) Z_i ) - \sum_i a_i dZ_i = - \sum_i a_i dZ_i = -\omega \quad (\text{since } \iota_{v}(\omega)=0) \]
which gives
\[ \sum_{i,j,m} R_{ij} \overline{R}_{jm} dZ_m \otimes \partial(a_i) = \sum_{i} dZ_i \otimes \partial(a_i) + \nu \otimes \sum_k a_k dZ_k = \partial(\omega) + \nu \otimes \omega. \]
 For the second term we use the identity (valid for $Z^2=0$ and modulo $dZ^2$)
	\[
	\sum_{j} \partial(R_{ij}) \overline{R}_{jm} = 
 -\frac{1}{2 |Z|^2} \overline{Z}_i \sum_{b} g_{mb} dZ_b
 + \frac{1}{2 |Z|^2} \overline{Z}_i \sum_{b} g_{mb} Z_b \, \nu
	\]
This gives
\[
\sum_{i,j,m} a_i dZ_m \otimes \partial(R_{ij}) \overline{R}_{jm}
=
-\frac{1}{2|Z|^2} \sum_{i} a_i \overline{Z}_i \sum_{m,b} g_{mb} dZ_b \otimes dZ_m = - \tr(\nu \otimes \omega) g. \]
Thus we get $\RO(\omega) = \delta(\omega) + \nu \otimes \omega - \tr(\nu \otimes \omega) g$ as desired.

The general case is essentially a combination of these two cases and straightforward to check. For $F=\sum_{I} F_I dZ_I$ an explicit formula in coefficients for $\RO(F)$ is given by
\begin{multline} \label{formula for R}
	\RO(F)  = \partial(F)
		\ +\  k F \otimes \nu 
  + \sum_{r=1}^{s} \sum_{I} F_I dZ_{i_1} \otimes \cdots \otimes dZ_{i_{r-1}} \otimes \nu \otimes dZ_{i_{r+1}} \otimes \cdots \otimes dZ_{i_s} \otimes dZ_{i_r} \\
  - \sum_{r=1}^{s} \sum_{I} \sum_{a,b} \frac{F_I \overline{Z}_{i_{r}}}{2 |Z|^2} g_{ab} 
  dZ_{i_1} \otimes \cdots \otimes dZ_{i_{r-1}} \otimes dZ_a \otimes dZ_{i_{r+1}} \otimes \cdots \otimes dZ_{i_s} \otimes dZ_b
  \qedhere
\end{multline}
\end{proof}

The commutator of $\LO$ and $\RO$ is defined as
\[ [\LO, \RO] := \LO \circ \RO - \sigma_{s+1, s+2} \circ \RO \circ \LO : \CA(U, \CE^{\otimes s} \otimes \CL^k)
\to \CA(U, \CE^{\otimes s+2} \otimes \CL^k), \]
where $\sigma_{ij}$ again stands for the transposition.
\begin{prop} \label{prop:commutation relation}
For $F \in \CA(U, \CE^{\otimes s} \otimes \CL^k)$ we have the commutator formula
\[ {[} \LO, \RO ] F = \frac{k}{2} F \otimes g
+ \frac{1}{2} \sum_{r=1}^{s} \sigma_{r, s+1}( F \otimes g) - \sigma_{r, s+2}(F \otimes g). \]
\end{prop}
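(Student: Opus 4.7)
I would prove the commutator formula by a direct computation in the affine cone, using the explicit formulas of Proposition~\ref{prop:Properties of L and R}(iv), and reducing the rank via the derivation property and $O^{+}(M_{\BR})$-equivariance.

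First I observe that $[\LO,\RO]$ is a \emph{tensorial} (zeroth-order) operator. The top-order parts of $\LO$ and $\RO$ are $\overline{\partial}$ and $\partial$ composed with algebraic maps, and $\partial$ and $\overline{\partial}$ anticommute on smooth functions, so the pure second-derivative contributions in $\LO\RO$ and $\sigma_{s+1,s+2}\RO\LO$ cancel. Hence $[\LO,\RO]$ is $\CA(U)$-linear in $F$. By Proposition~\ref{prop:Properties of L and R}(iii) both sides of the claimed formula are also $O^{+}(M_{\BR})$-equivariant, so by transitivity of the action it suffices to verify the identity pointwise.

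Next I reduce the rank. Since $\LO$ and $\RO$ are derivations (inserting the new $\CE$-factor at the end), both sides of the commutator formula satisfy matching Leibniz rules under $F \otimes G$. This reduces the proof to two basic cases:
\begin{enumerate}
\item[(a)] $F \in \CA(U,\CL^k)$, where one must show $[\LO,\RO]F = \tfrac{k}{2}F \otimes g$.
\item[(b)] $F \in \CA(U,\CE \otimes \CL^k)$, where the additional permutation terms on the right-hand side appear.
\end{enumerate}
For case~(a) one computes $\RO F = \partial F + k F \nu$ and applies formula~\eqref{formulaL} to obtain $\LO(\RO F)$. The decisive input is the identity
\[ \overline{\partial}\nu \;=\; \partial\overline{\partial}\log|Z|^2 \;=\; H, \]
which, when passed through the isomorphism $\varphi\colon \overline{\CE}\to\CE$ built into $\LO$, produces a multiple of $g$. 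Concretely, since $\psi(\omega) = \tr_{12}(\omega \otimes 2|Z|^2 H)$ and $\varphi\circ\psi = \id$, one verifies that $|Z|^2(\id\otimes \varphi)H \equiv \tfrac{1}{2}g$ modulo the defining relations of $\CE$. A symmetric computation of $\sigma_{1,2}\RO(\LO F)$ and subtraction makes all mixed second-derivative terms and all $\nu$-dependent algebraic corrections cancel, leaving exactly $\tfrac{k}{2}F \otimes g$. Case~(b) proceeds identically, with the two extra summands $\sigma_{r,s+1}(F\otimes\nu)$ and $\tr_{s+2,s+3}\sigma_{r,s+2}(F\otimes g\otimes \nu)$ in formula~\eqref{formulaR} producing the transposition terms $\tfrac{1}{2}\sum_r\sigma_{r,s+1}(F\otimes g) - \sigma_{r,s+2}(F\otimes g)$ after commuting with $\LO$ and again invoking $\overline{\partial}\nu = H$.

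\textbf{Main obstacle.} The principal difficulty is combinatorial bookkeeping. Formula~\eqref{formulaR} for $\RO$ has three structurally different summands, each interacting nontrivially with each summand in~\eqref{formulaL} for $\LO$; every resulting cross-term must be placed on the correct tensor factor, which is the origin of the various transpositions $\sigma_{r,s+1}$ and $\sigma_{r,s+2}$ on the right-hand side. Simultaneously one must respect both the ideal $(dZ^2)$ and the vanishing constraint $\iota_v\omega = 0$ that cut out $\CE$ from differentials on $\A(\CD)$, without which the identity $|Z|^2(\id\otimes\varphi)H \equiv \tfrac{1}{2}g$ would fail (compare Remark~\ref{rmk:Warning}). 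The key simplifying feature, however, is that all non-holomorphic $\nu$-contributions cancel in pairs between $\LO\RO F$ and $\sigma_{s+1,s+2}\RO\LO F$, so only the purely algebraic $g$-terms survive.
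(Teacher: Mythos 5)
Your plan follows essentially the same route as the paper's (omitted) proof: a direct computation on the affine cone $\A(\CD)$ using the coordinate formulas \eqref{formulaL} and \eqref{formula for R}, whose decisive input is exactly the identity $\LO(\nu)=\tfrac12 g$ (your $\overline{\partial}\nu=H$ pushed through $\varphi$; this is Lemma~\ref{lemma:basic evaluation}). The Leibniz reduction to the rank $0$ and rank $1$ cases and the equivariance reduction are reasonable organizational refinements, but two points need repair. First, the inference ``the second-derivative contributions cancel, hence $[\LO,\RO]$ is $\CA(U)$-linear'' is a non sequitur: cancelling the top-order parts only shows the commutator is at most a first-order operator, and killing the residual first-order terms is where most of the work lies; you cannot invoke pointwise verification via transitivity before that is done. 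Second, and relatedly, the cancellation of those first-order terms is not purely formal: it uses that the components $F_I$ on the affine cone are homogeneous of bidegree $(-k,0)$ and hence satisfy the anti-holomorphic Euler relation $\sum_{\ell} \overline{Z}_{\ell}\, \partial F_I/\partial \overline{Z}_{\ell}=0$, which is needed, for instance, to dispose of the terms produced by tracing the last slot of $\LO F$ against $\nu$ in \eqref{formula for R}. The paper cites both ingredients explicitly; once you add the Euler relation and replace the tensoriality shortcut by an honest check of the first-order cancellation, your computation closes.
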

\begin{proof}
We define ``raising" and ``lowering" operators on differentials on $\A(\CD)$ by the formulas 
\eqref{formula for R} and \eqref{formulaL},
and compute the commutator $[\LO,\RO]F$ directly on $\A(\CD)$.
This is lengthy but straightforward and omitted here.
In the computation one uses Lemma~\ref{lemma:basic evaluation}(ii) and the fact that for any $F = \sum_I F_I dZ_I \in \CA(U, \CE^{\otimes s} \otimes \CL^k)$ the function $F_I$ on the affine cone is homogeneous of bidegree $(-k,0)$, and therefore satisfies Euler's relation
$\sum_{\ell} \frac{\partial F_I}{\partial \overline{Z}_{\ell}} \overline{Z}_{\ell} = 0$.
\end{proof}
\begin{lemma} \label{lemma:basic evaluation} In $\CA(\Omega_{\A(\CD)}^{\otimes 2})$ we have
$\LO( |Z|^2 ) = 0$ and $\LO(\nu) = \frac{1}{2} g$.
\end{lemma}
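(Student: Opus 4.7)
The plan is to verify both identities by direct computation via the explicit coordinate formula \eqref{formulaL} for the lowering operator. I will read that formula as a definition of $\LO$ on arbitrary differentials $F = \sum_I F_I\, dZ_I$ on the affine cone:
\[
\LO(F) = \sum_{I,k} \frac{\partial F_I}{\partial \overline{Z}_k}\, dZ_I \otimes \bigl( |Z|^2 dZ_k - Z_k\, \partial(|Z|^2) \bigr).
\]
The organizing principle throughout is that on $\A(\CD)$ not only does $Z^2$ vanish identically, but so does its differential $d(Z^2) = 2 \sum_{a,b} Z_a g_{ab} dZ_b$ as an element of $\Omega_{\A(\CD)}$: since $Z^2$ pulls back to the zero function on $\A(\CD)$, so does its exterior derivative. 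Every term I will want to throw away will be visibly proportional to one of these two expressions.

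For $\LO(|Z|^2)$, I would plug in $F = |Z|^2$ with empty tensor index, use the identities $\partial |Z|^2 / \partial \overline{Z}_k = \tfrac{1}{2} \sum_a g_{ak} Z_a$ and $\partial(|Z|^2) = |Z|^2\, \nu$, and observe that the two resulting summands are respectively proportional to $\sum_{a,k} g_{ak} Z_a\, dZ_k = \tfrac{1}{2} d(Z^2)$ and to $\sum_{a,k} g_{ak} Z_a Z_k = Z^2$. Both vanish on $\A(\CD)$, giving $\LO(|Z|^2) = 0$.

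For $\LO(\nu)$, I would write $\nu = \sum_a \nu_a\, dZ_a$ with $\nu_a = \tfrac{1}{2|Z|^2} \sum_b g_{ab} \overline{Z}_b$ and compute
\[
\frac{\partial \nu_a}{\partial \overline{Z}_k} = \frac{g_{ak}}{2|Z|^2} - \frac{\nu_a}{|Z|^2}\, \frac{\partial |Z|^2}{\partial \overline{Z}_k}.
\]
Substituting this into the formula for $\LO$ produces four summands. The pairing of the $g_{ak}/(2|Z|^2)$ piece with the $|Z|^2 dZ_k$ piece contributes exactly $\tfrac{1}{2} \sum_{a,k} g_{ak}\, dZ_a \otimes dZ_k = \tfrac{1}{2} g$. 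Each of the remaining three summands carries a factor proportional to $d(Z^2)$ or to $Z^2$, and hence vanishes on $\A(\CD)$.

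I do not anticipate any real obstacle — it is routine bookkeeping once the coordinate formula for $\LO$ is in hand. The only point that requires attention is to work consistently on the cone: the identities would be false over $M_\BC$ and hold in $\CA(\Omega_{\A(\CD)}^{\otimes 2})$ precisely by virtue of the relations $Z^2 = 0$ and $d(Z^2) = 0$.
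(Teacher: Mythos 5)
Your proof is correct and follows essentially the same route as the paper: a direct computation from the coordinate formula \eqref{formulaL}, discarding every term proportional to $Z^2$ or $d(Z^2)$ on the affine cone. The only cosmetic difference is that the paper shortens the second computation by writing $\nu = \partial(|Z|^2)/|Z|^2$ and using $\LO(|Z|^2)=0$ to reduce to $\LO(\partial(|Z|^2))$, whereas you expand the quotient rule by hand; both yield the same four summands.
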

\begin{proof}
For the first note that
\[ \LO( |Z|^2 ) = \frac{1}{4} |Z|^2 d(Z^2) - \frac{1}{2} (Z \cdot Z) \partial(|Z|^2) \]
which is zero since $Z \cdot Z=0$ and $dZ^2=0$ in $\Omega_{\A(\CD)}$.
Hence we get
\[ \LO(\nu) = \LO\left( \frac{\partial(|Z|^2)}{|Z|^2} \right) = \frac{|Z|^2} \LO(\partial(|Z|^2)) = \frac{g}{2} - \frac{1}{4 |Z|^2} d(Z^2) \otimes \partial(|Z|^2) = \frac{g}{2}. \qedhere \]
\end{proof}

\begin{example}
If $f$ is a section of $\CL^k$, then
\[ [\LO,\RO] f = \frac{k}{2} f g. \]
If $\omega$ is a section of $\CE$ then we have
\[ [\LO,\RO](\omega) = \frac{1}{2} \sigma_{23}(g \otimes \omega) - \frac{1}{2} g \otimes \omega. \]
\end{example}

\begin{lemma} \label{lemma:L^2 and R^2}
Let $F = \sum_{I = (i_1, \ldots, i_s)} F_I dZ_I$, where $dZ_I = dZ_{i_1} \otimes \cdots \otimes dZ_{i_s}$. Then we have
\begin{align*}
\LO^2(F) = & \sum_{I, k,\ell} \frac{\partial^2 F_I}{\partial \overline{Z}_k \partial \overline{Z}_{\ell}}
dZ_I \otimes \left( |Z|^2 dZ_k - Z_k \partial(|Z|^2) \right)
\otimes \left( |Z|^2 dZ_{\ell} - Z_{\ell} \partial(|Z|^2) \right) \\
& - \frac{1}{2} |Z|^2 \sum_{I,k} \frac{\partial F_I}{\partial \overline{Z}_k} Z_k dZ_I \otimes g.
\end{align*}
In particular, $\LO^d(F)$ and $\RO^d(F)$ are symmetric in the last $d$ factors.
\end{lemma}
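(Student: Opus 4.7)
The plan is to establish the explicit formula for $\LO^2(F)$ by a direct calculation on the affine cone $\A(\CD)$, and then to deduce the symmetry statements. Writing $G_k := |Z|^2 dZ_k - Z_k \partial(|Z|^2)$ so that formula (iv) of Proposition~\ref{prop:Properties of L and R} reads $\LO F = \sum_{I,k} \frac{\partial F_I}{\partial \overline{Z}_k}\, dZ_I \otimes G_k$, and observing that this formula is independent of the weight, I would apply it again to $\LO F$. Expanding by the Leibniz rule, the resulting sum splits into a main term $\sum_{k,\ell} \frac{\partial^2 F_I}{\partial \overline{Z}_k \partial \overline{Z}_\ell}\, dZ_I \otimes G_k \otimes G_\ell$, where both derivatives act on the coefficient $F_I$, plus a cross-term in which the outer $\overline{\partial}$ acts on the coefficients of $G_k$.

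For the cross-term I would compute $\frac{\partial G_k}{\partial \overline{Z}_\ell} = \tfrac{1}{2}\bigl(\sum_c g_{c\ell} Z_c\bigr) dZ_k - \tfrac{1}{2} Z_k \sum_n g_{n\ell}\, dZ_n$ and tensor with $G_\ell$. The crucial step is to invoke the two identities $Z \cdot Z = 0$ (valid on $\A(\CD)$) and $\sum_{a,b} g_{ab} Z_a\, dZ_b = \tfrac{1}{2} d(Z^2)$ (which vanishes modulo the $d(Z^2)$-ideal): the first piece of the cross-term sum lies in the ideal, while the second piece collapses exactly to $-\tfrac{1}{2}|Z|^2 \sum_k Z_k \frac{\partial F_I}{\partial \overline{Z}_k}\, dZ_I \otimes g$, yielding the claimed formula. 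Symmetry of $\LO^2(F)$ in the last two factors is then immediate: the main term is symmetric because mixed partials commute and the double sum is invariant under relabeling $k \leftrightarrow \ell$, while the correction term is symmetric because $g = \sum_{a,b} g_{ab}\, dZ_a \otimes dZ_b$ is a symmetric tensor.

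For general $d$ I would proceed by induction. Applying the same computation with $\LO^{d-1}(F)$ in place of $F$ produces a leading term $\sum_{k_1,\ldots,k_d} \frac{\partial^d F_I}{\partial \overline{Z}_{k_1} \cdots \partial \overline{Z}_{k_d}}\, dZ_I \otimes G_{k_1} \otimes \cdots \otimes G_{k_d}$, symmetric in the last $d$ factors by commutativity of mixed partials, plus correction terms in which certain pairs of $G$'s are replaced by $g$. These corrections are symmetric in the last $d$ factors by combining the induction hypothesis with the same $d(Z^2)$-cancellations as in the $d=2$ case. The analogous claim for $\RO^d(F)$ then follows by iterating the conjugation identity of Proposition~\ref{prop:Properties of L and R}(ii), which yields $\RO^d F = |Z|^{-2k-2d}\, \varphi^{\otimes(s+d)}\, \overline{\LO^d\bigl(|Z|^{2k} \varphi^{\otimes s} \overline{F}\bigr)}$ (verified by induction using $\varphi \circ \psi = \id$), and so transfers the symmetry of $\LO^d$ to $\RO^d$. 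The main obstacle is the careful bookkeeping of the cross-terms modulo the $d(Z^2)$-ideal; once this is handled, all symmetry statements follow essentially formally.
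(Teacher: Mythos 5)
Your proof is correct and follows essentially the same route as the paper: a direct computation from \eqref{formulaL}, killing the cross-terms with the relations $Z^2=0$ and $d(Z^2)=0$, and transferring the symmetry to $\RO^d$ via the conjugation identity of Proposition~\ref{prop:Properties of L and R}(ii). One small simplification for general $d$: rather than tracking the (somewhat delicate) structure of the correction terms inductively, note that $\LO$ only differentiates coefficients and appends a new tensor factor, so it commutes with any permutation of the pre-existing slots; the symmetry of $\LO^2$ applied at each intermediate stage then immediately gives invariance of $\LO^d F$ under every adjacent transposition of the last $d$ factors, hence under all of $S_d$.
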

\begin{proof}
The first claim follows by a straightforward calculation from \eqref{formulaL} using the relations $dZ^2=0$ and $Z^2=0$.
By inspection $\LO^2(F)$ is hence symmetric, so $\LO^d(F)$ is symmetric. Moreover, by Proposition~\ref{prop:Properties of L and R}(ii) and an induction argument we have
\[ \RO^d(F) = |Z|^{-2k-2d} \varphi^{\otimes (s+d)} \overline{\LO^d\Big( |Z|^{2k} \varphi^{\otimes s} \overline{F}\Big)} \]
which shows that $\RO^d(F)$ is also symmetric.
\end{proof}

Consider the scalar-valued raising and lowering operators
\begin{gather}
\CR := \tr_{12} \circ \RO \circ \RO : \CA( \CD, \CL^k ) \to \CA( \CD, \CL^{k+2}), \notag \\
\CL := \tr_{12} \circ \LO \circ \LO : \CA( \CD, \CL^k ) \to \CA( \CD, \CL^{k-2}). \label{Zemel L}
\end{gather}

The following formulas show that $\CR, \CL$ are the raising and lowering operators that were
defined by Zemel \cite{Zemel}.
\begin{cor}
For $f \in \CA(\CD, \CL^k)$ we have
\[ \CR(f) = \sum_{i,j} \frac{\partial^2 f}{\partial Z_i \partial Z_j} g^{ij}
+ (2k+2-n) \frac{1}{2|Z|^2} \sum_{a} \overline{Z}_k \frac{\partial f}{\partial Z_k}. \]
\[ \CL(f)  = |Z|^4 \sum_{i,j} g^{ij} \frac{\partial^2 f}{\partial \overline{Z}_i \partial \overline{Z}_j} 
+ (2-n) \frac{1}{2} |Z|^2 \sum_{k} \frac{\partial f}{\partial \overline{Z}_k} Z_k.
\]
\end{cor}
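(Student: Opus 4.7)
The plan is to specialize Lemma~\ref{lemma:L^2 and R^2} to the scalar case $s=0$ and then contract, using Lemma~\ref{lemms3sdfd} together with Euler's relation and the identity $Z \cdot Z = 0$ to simplify.

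For $\CL(f)$, setting $s=0$ in Lemma~\ref{lemma:L^2 and R^2} yields
$$\LO^2(f) = \sum_{k,\ell} \frac{\partial^2 f}{\partial \overline{Z}_k \partial \overline{Z}_\ell} \, \omega_k \otimes \omega_\ell \ - \ \tfrac{1}{2} |Z|^2 \sum_k \frac{\partial f}{\partial \overline{Z}_k} Z_k \cdot g,$$
where $\omega_k := |Z|^2 dZ_k - Z_k \partial(|Z|^2)$ satisfies $\iota_v \omega_k = 0$ by Euler's identity applied to $|Z|^2$. Lemma~\ref{lemms3sdfd} then applies and, after expansion, gives
$$\tr(\omega_k \otimes \omega_\ell) = |Z|^4 g^{k\ell} - \tfrac{1}{2} |Z|^2 (Z_k \overline{Z}_\ell + Z_\ell \overline{Z}_k),$$
where the expected term $\tfrac{1}{4} Z_k Z_\ell (\overline{Z} \cdot \overline{Z})$ vanishes because $Z \cdot Z = 0$ forces $\overline{Z} \cdot \overline{Z} = 0$. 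Since $f$ has bidegree $(-k, 0)$ on $\A(\CD)$, the antiholomorphic Euler relation $\sum_\ell \overline{Z}_\ell \, \partial f/\partial \overline{Z}_\ell = 0$ differentiates to $\sum_\ell \overline{Z}_\ell \, \partial^2 f/(\partial \overline{Z}_k \partial \overline{Z}_\ell) = -\partial f/\partial \overline{Z}_k$; combined with $\tr(g) = n$ (cf.\ Remark~\ref{rmk:Warning}), these identities collapse the mixed and trace contributions into the single coefficient $(2-n)/2$, producing the stated formula for $\CL(f)$.

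For $\CR(f)$, iterating Proposition~\ref{prop:Properties of L and R}(ii) gives $\RO^2(f) = |Z|^{-2k-4} \, \varphi^{\otimes 2} \, \overline{\LO^2( |Z|^{2k} \overline{f} )}$, so one may apply the $\LO^2$-computation above to the bidegree $(k, 0)$ function $h := |Z|^{2k} \overline{f}$, commute $\tr_{12}$ past $\varphi^{\otimes 2}$ (which preserves the trace pairing up to complex conjugation), and invoke the holomorphic Euler relation $\sum_i Z_i \, \partial f/\partial Z_i = -k f$ to obtain the coefficient $2k + 2 - n$ in place of $2 - n$. Alternatively one may apply formula~\eqref{formulaR} twice to $\RO(f) = \partial f + k f \nu$ and trace term-by-term using Lemma~\ref{lemms3sdfd}; this route is more laborious but avoids conjugation and is entirely self-contained.

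The main technical point is the correct value $\tr(g) = n$, as opposed to the naive $\sum_{a,b} g_{ab} g^{ab} = n + 2$. This is precisely the pitfall of Remark~\ref{rmk:Warning}: the differential $g = \sum_{a,b} g_{ab} dZ_a \otimes dZ_b$ on $\A(\CD)$ is not a sum of tensor products of sections of $\CE$, so Lemma~\ref{lemms3sdfd} cannot be applied to it directly. Instead $g$ represents, fiberwise on $\CD$, the identity endomorphism of $\CE$ under the self-duality $\CE \cong \CE^{\vee}$, whose trace equals the rank $n$; all factors of $g$ arising in the computation must be contracted via this fiberwise identification.
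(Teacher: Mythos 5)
Your proposal is correct. For $\CL(f)$ it is essentially the paper's own argument: the paper likewise takes the trace of the expression in Lemma~\ref{lemma:L^2 and R^2}, uses $\tr(g)=n$ from Remark~\ref{rmk:Warning}, and collapses the mixed term via the differentiated antiholomorphic Euler relation to get the coefficient $(2-n)/2$; your explicit evaluation $\tr(\omega_k\otimes\omega_\ell)=|Z|^4g^{k\ell}-\tfrac12|Z|^2(Z_k\overline{Z}_\ell+Z_\ell\overline{Z}_k)$ is a correct (and slightly more detailed) way of organizing the same contraction, and you rightly flag that Lemma~\ref{lemms3sdfd} applies because each $\omega_k$ genuinely satisfies $\iota_v\omega_k=0$, whereas $g$ itself must be traced fiberwise to avoid the $n+2$ trap. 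For $\CR(f)$ the paper takes your ``alternative'' route: it expands $\RO^2(f)$ directly from $\RO(f)=\partial f+k\nu f$ using \eqref{formulaR} (equation \eqref{eqn:abcdef}) and traces term by term via $\tr(\nu\otimes\nu)=0$, $\tr(\partial\nu)=0$ and $\tr(g)=n$. Your preferred route through Proposition~\ref{prop:Properties of L and R}(ii) is also valid --- one can check from the explicit form of $R_{ij}$ that $\psi$, hence $\varphi$, is an isometry for the trace pairing on genuine sections, so $\tr_{12}$ does commute past $\varphi^{\otimes 2}$ up to conjugation --- but it is computationally heavier than you suggest: differentiating $h=|Z|^{2k}\overline{f}$ twice produces product-rule terms in $f$, $\partial f$ and $\partial^2 f$ that must all be tracked before the coefficient $2k+2-n$ emerges, so the phrase ``invoke the holomorphic Euler relation'' compresses a real computation. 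Since you supply the direct expansion as a fallback, nothing is missing.
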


\begin{proof}
We have $\RO(f) = \partial(f) + k \nu f \in \CA(\CL^{k+1} \otimes \CE)$ and therefore
\begin{equation} \label{eqn:abcdef}
	\begin{aligned}
		\RO^2(f) & = \partial^2(f) + k \partial(\nu) f + k \nu \otimes \partial(f) \\
		& + (k+1) \partial(f) \otimes \nu + k (k+1) f \nu \otimes \nu \\
		& - \tr(\partial(f) \otimes \nu) g - k f \tr(\nu, \nu) g \\
		& + \nu \otimes \partial(f) + k f \nu \otimes \nu.
	\end{aligned}
\end{equation}
Observe that
\begin{align*}
	\tr(\nu \otimes \nu) & = 0 \quad (\text{since } \overline{Z}\cdot \overline{Z} = 0) \\
	\tr( \partial(\nu)) & = 0 \quad (\text{since } \partial(\nu) = - \nu \otimes \nu) \\
	\tr( \nu \otimes \partial(f)) & = \frac{1}{2 |Z|^2} \sum_{a} \overline{Z}_a \frac{\partial f}{\partial Z_a} \quad (\text{direct computation})\\
	\tr(g) & = n \quad (\text{Remark~\ref{rmk:Warning}}).
\end{align*}
We conclude that
\[ \tr(\RO^2(f)) = \sum_{i,j} \frac{\partial^2 f}{\partial Z_i \partial Z_j} g^{ij}
+ (2k+2-n) \tr( \partial(f), \nu ). \]

For the lowering operator, taking the trace in Lemma~\ref{lemma:L^2 and R^2} yields
\begin{align*}
\CL(f)  & = |Z|^4 \sum_{i,j} g^{ij} \frac{\partial^2 f}{\partial \overline{Z}_i \partial \overline{Z}_j} 
- \frac{n}{2} |Z|^2 \sum_{k} \frac{\partial f}{\partial \overline{Z}_k} Z_k
- |Z|^2 \frac{1}{2} \sum_{i,j} \frac{\partial^2 f}{\partial \overline{Z}_i \partial \overline{Z}_j} (\overline{Z}_i Z_j + Z_i \overline{Z}_j).
\end{align*}
To simplify the last term, note that the function $f$ viewed on the affine cone $\A(\CD)$ is of bi-degree $(k,0)$, so $f_k:=\partial f/\partial \overline{Z}_k$ is of bidegree $(k,-1)$, hence satisfies the Euler equation
\[ \sum_{\ell} \frac{\partial f_k}{\partial \overline{Z}_{\ell}} \overline{Z}_{\ell} = - f_k. \]
So $$\sum_{i,j} \frac{\partial^2 f}{\partial \overline{Z}_i \partial \overline{Z}_j} (\overline{Z}_i Z_j + Z_i \overline{Z}_j) = -2 \sum_{j} Z_J \frac{\partial f}{\partial \overline{Z}_j}.$$
%
\end{proof}

\subsection{Almost-holomorphic modular forms}
Let $U \subset \CD$ be an open.
A section 
\[ F \in \CA(U, \CE^{\otimes s} \otimes \CL^k) \]
is called {\em almost-holomorphic of depth $d$} if
$\LO^{d}(F) \neq 0$ and $\LO^{d+1}(F) = 0$.
We say that $F$ is almost-holomorphic if it is so of some depth $d$.

\begin{defn}
Let $\Gamma$ be a finite index subgroup of $\Orth(M)$
and assume that $n \geq 3$.
An almost-holomorphic modular form of weight $k$ and rank $s$ for $\Gamma$ is an almost-holomorphic $\Gamma$-invariant section of $\CL^k \otimes \CE^{\otimes s}$.
We write $\AHMod_{k,s}(\Gamma)$ for the space of almost-holomorphic modular forms of weight $k$ and rank $s$.
\end{defn}

Since $\LO$ is a derivation, $\AHMod_{*,*}(\Gamma)$ is a bigraded ring: if $F$ and $G$ are almost-holomorphic of depths $d_F$ and $d_G$, then $F \otimes G$ is almost-holomorphic of depth $d_F+d_G$.

We refer to Section~\ref{subsec:general cusp} for the additional cusp condition that needs to be required in case $n \leq 2$.

\begin{prop}
The raising and lowering operators act on the space of almost-holomorphic modular forms for $\Gamma$:
\begin{gather*}
	\RO : \AHMod_{k,s}(\Gamma) \to \AHMod_{k+1,s+1}(\Gamma) \\
	\LO : \AHMod_{k,s}(\Gamma) \to \AHMod_{k-1,s+1}(\Gamma).
\end{gather*}
\end{prop}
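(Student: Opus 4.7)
The plan is to verify three claims: that the images land in the correct weight/rank spaces, that $\Gamma$-invariance is preserved, and that almost-holomorphicity is preserved. The first two are essentially immediate from earlier results, so the substantive content lies in the third, and in particular in controlling the depth of $\RO F$.

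The weight and rank shifts are built into the definitions: in Proposition~\ref{prop:Properties of L and R}(i) we already identified $\LO$ and $\RO$ as morphisms $\CA(U,\CL^k\otimes\CE^{\otimes s})\to\CA(U,\CL^{k\mp 1}\otimes\CE^{\otimes s+1})$. For $\Gamma$-invariance, Proposition~\ref{prop:Properties of L and R}(iii) gives $\gamma^\ast\RO(F)=\RO(\gamma^\ast F)$ and $\gamma^\ast\LO(F)=\LO(\gamma^\ast F)$ for all $\gamma\in O^+(M_\BR)$, so applying this to $\gamma\in\Gamma$ and $\gamma$-invariant $F$ shows $\RO F$ and $\LO F$ are again $\Gamma$-invariant. (In the case $n\le 2$ one must additionally verify the cusp condition; we defer this to the discussion in Section~\ref{subsec:general cusp}.)

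It remains to show that the image is almost-holomorphic. For $\LO$ this is direct: if $F$ has depth $d$, so that $\LO^{d+1}F=0$, then $\LO^d(\LO F)=\LO^{d+1}F=0$, so $\LO F$ is almost-holomorphic of depth at most $d-1$ (interpreted as the zero section when $d=0$). For $\RO$ we argue by induction on the depth $d$ of $F$ that $\RO F$ is almost-holomorphic of depth at most $d+1$, i.e.\ $\LO^{d+2}(\RO F)=0$. The base case $d=0$ (holomorphic $F$) follows from the commutator identity of Proposition~\ref{prop:commutation relation}: since $\LO F=0$, we have
\[
\LO(\RO F)=\sigma_{s+1,s+2}\RO(\LO F)+[\LO,\RO]F=[\LO,\RO]F,
\]
and the right-hand side is a $\BC$-linear combination of $F$ with tensorial factors of $g$, hence still holomorphic, so $\LO^2(\RO F)=0$.

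For the inductive step, assume the claim for all depths strictly less than $d$. Using Proposition~\ref{prop:commutation relation} once,
\[
\LO^{d+2}(\RO F)=\LO^{d+1}\bigl(\sigma_{s+1,s+2}\RO(\LO F)\bigr)+\LO^{d+1}\bigl([\LO,\RO]F\bigr).
\]
Since $\LO$ commutes with transpositions of the cotensor slots (they act on different factors than those $\LO$ produces, and the final slots can be symmetrized appropriately; this is also visible from Lemma~\ref{lemma:L^2 and R^2}), the first term equals a transposition applied to $\LO^{d+1}\RO(\LO F)$. By hypothesis $\LO F$ has depth at most $d-1$, so the inductive hypothesis gives $\RO(\LO F)$ of depth at most $d$, whence the first term vanishes. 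For the second term, the explicit formula in Proposition~\ref{prop:commutation relation} shows $[\LO,\RO]F$ is a sum of permutations of $F\otimes g$; since $g$ is holomorphic and constant with respect to $\LO$ (it is a holomorphic section of $\CE\otimes\CE$), the bracket $[\LO,\RO]F$ has the same depth as $F$, namely $\le d$, so it is annihilated by $\LO^{d+1}$. Hence $\LO^{d+2}(\RO F)=0$, completing the induction.

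The main obstacle is simply bookkeeping in the inductive step: one must check that the symmetric-group permutations appearing in $\sigma_{s+1,s+2}\RO\LO$ and in the commutator formula do not interfere with iterating $\LO$, which is ensured by the symmetry statement in Lemma~\ref{lemma:L^2 and R^2}. Given this, the argument is a straightforward induction driven by Proposition~\ref{prop:commutation relation}.
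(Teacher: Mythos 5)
Your proof is correct and follows essentially the same route as the paper: the weight/rank shifts and $\Gamma$-equivariance come from Proposition~\ref{prop:Properties of L and R}, the case of $\LO$ is immediate from the definition of depth, and preservation of almost-holomorphicity under $\RO$ is deduced from the commutator formula of Proposition~\ref{prop:commutation relation}. The paper states this last deduction in one line; your induction on the depth (using that $[\LO,\RO]F$ is built from permutations of $F\otimes g$ with $g$ holomorphic, and that $\LO$ commutes with permutations of the existing tensor slots) is exactly the argument it leaves implicit.
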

\begin{proof}
By Proposition~\ref{prop:commutation relation}, if a section $F$ of $\CE^{\otimes s} \otimes \CL^k$ is almost-holomorphic, then $\RO(F)$ is again almost-holomorphic.
Moreover, by Proposition~\ref{prop:Properties of L and R}(ii) we have that if 
$F$ is $\Gamma$-invariant, then so is $\RO(F)$ and $\LO(F)$.
\end{proof}

\subsection{Logarithmic almost-holomorphic modular forms}
\label{subsec:log ahol modular forms}
Consider a (reduced)
divisor $\CH \subset \CD$ which is the vanishing locus of a holomorphic modular form $f \in \Mod_{\ell,0}(\Gamma)$ for some group $\Gamma$ of some positive weight $\ell>0$:
\[ \CH = \{ Z \in \CD \, |\, f(Z) = 0 \}. \]
We do not require $f$ to vanish along $\CH$ to first order.

\begin{defn}
A {\em logarithmic modular form} on $(\CD,\CH)$ of weight $k$ and rank $s$ for $\Gamma$ is a meromorphic $\Gamma$-invariant section of $\CL^{\otimes k} \otimes \CE^{\otimes s}$ which has a pole of order $\leq k$ along $\CH$ and no other poles.
We let $\Mod_{k,s}^{\log}(\Gamma)$ be the vector space of these forms.
\end{defn}

In other words, logarithmic modular forms behave roughly like derivatives of the logarithm of a function on $\mathcal{D}$ that vanishes on $\mathcal{H}$. (This was the motivation for the name.)

\begin{example}(\cite[Sec.5.2]{GKMW})
A basic construction of logarithmic modular forms on $(\CD,\CH)$ is
\[
\epsilon(f) := \frac{\RO^2(f)}{f} - \frac{\ell+1}{\ell} \frac{\RO(f) \otimes \RO(f)}{f^2} + \frac{1}{2 \ell} \frac{\tr(\RO(f) \otimes \RO(f))}{f^2} g.
\]
where $f$ is a (scalar-valued) modular form of weight $\ell$.
This defines a section of $\CL^{2} \otimes \CE^{\otimes 2}$ invariant under $\Gamma$. By expanding the raising operator terms using \eqref{eqn:abcdef}, we see this is given by
\[ \epsilon(f) = \frac{\partial^2(f)}{f} - \frac{\ell+1}{\ell} \frac{\partial(f) \otimes \partial(f)}{f^2} + \frac{1}{2 \ell} \frac{\tr( \partial(f) \otimes \partial(f))}{f^2} g \]
so this is holomorphic except for poles of order $\leq 2$ along $\CH$. Hence $\epsilon \in \Mod^{\log}_{2,2}(\Gamma)$.
\end{example}

We want to extend this notion to almost-holomorphic modular forms.
For this we first define the pole order of an almost-holomorphic function along $\CH$:

\begin{defn} An almost-holomorphic function $F$ defined on $\CD\setminus \CH$
has {\em pole order $\leq d$ along $\CH$} if for any $x \in \CH$, neighborhood $U_x$ of $x$, and holomorphic function $h_x : U_x \to \BC$ which vanishes on $\CD \cap U_x$ to first order and nowhere else (i.e. a defining equation for $\CD \cap U_x$), 
the function $|F \cdot h_x^d|$ defined on $U_x \setminus (U_x \cap \CH)$ extends to a smooth function on $U_x$.
\end{defn}

Logarithmic almost-holomorphic modular forms are then defined as follows:

\begin{defn}\label{defn:log}
A logarithmic almost-holomorphic modular form on $(\CD, \CH)$ of weight $k$ and rank $s$ for $\Gamma$ is a section $F \in \CA(\CD \setminus \CH, \CL^k \otimes \CE^{\otimes s})$ such that:
\begin{enumerate}
	\item[(i)] $F$ is almost-holomorphic;
	\item[(ii)] $F$ is $\Gamma$-invariant: $\gamma^{\ast}(F) = F$ for all $\gamma \in \Gamma$;
	\item[(iii)] $\LO^d(F)$ has a pole of order $\leq k-d$ along $\CH$ for all $d \geq 0$.
\end{enumerate}
\end{defn}

Let $\AHMod_{k,s}^{\log}(\Gamma)$ be the vector space
of weight $k$ rank $s$ logarithmic almost-holomorphic modular forms for $\Gamma$.
Its immediate to check that the raising and lowering operators act on the spaces of almost-holomorphic logarithmic modular forms.

\begin{example}
The simplest example of a logarithmic almost-holomorphic modular form
is the ``logarithmic derivative" of $f$,
\[ G := \frac{1}{\ell} \frac{\RO(f)}{f} \in \AHMod_{1,1}^{\log}(\Gamma), \]
where $\ell$ is the weight of $f$. The lowering operator gives
$\LO(G) = \frac{1}{2} g$.
\end{example}

We prove the following simple structure theorem,
relating logarithmic almost-holomorphic modular forms
to vector-valued modular forms of larger rank:

\begin{prop} \label{structure prop log mod forms} We have a natural isomorphism
\[ \AHMod^{\log}_{k,s}(\Gamma) \cong \bigoplus_{d=0}^{k} \Mod_{k-d,s+d}^{\log}(\Gamma)^{S_d} \]
where $\Mod_{k-d,s+d}^{\log}(\Gamma)^{S_d} \subset \Mod_{k-d,s+d}^{\log}(\Gamma)$ is the subspace of rank $s+d$ forms invariant under the $S_d$ action permuting the last $d$ factors.
\end{prop}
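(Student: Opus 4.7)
My plan is to construct the isomorphism by iteratively extracting the ``top-depth'' piece of an almost-holomorphic modular form under the lowering operator $\LO$. The key ingredients I would use are the symmetry of $\LO^d F$ in the last $d$ factors (Lemma~\ref{lemma:L^2 and R^2}), the identity $\LO(\nu) = g/2$ on the non-holomorphic affine differential $\nu = \partial\log|Z|^2$ (Lemma~\ref{lemma:basic evaluation}), and the commutation relation of Proposition~\ref{prop:commutation relation}.

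\emph{Forward direction.} First I would observe that for $F \in \AHMod^{\log}_{k,s}(\Gamma)$, Definition~\ref{defn:log}(iii) forces $\LO^{k+1} F = 0$ (since the allowed pole order would be negative), so the depth $D$ of $F$ satisfies $D \leq k$. At depth $D$, the form $\LO^D F$ is annihilated by $\LO$; I would then use the explicit formula \eqref{formulaL} together with the non-degeneracy of the tensor $|Z|^2 dZ_k - Z_k \partial(|Z|^2)$ (which encodes the isomorphism $\varphi$) to conclude that $\LO^D F$ is in fact holomorphic. Combined with its $S_D$-symmetry, its $\Gamma$-invariance (Proposition~\ref{prop:Properties of L and R}(iii)), and the induced pole bound, this gives the top component $G_D := \tfrac{1}{D!}\LO^D F \in \Mod^{\log}_{k-D, s+D}(\Gamma)^{S_D}$. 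Lower components would then be extracted inductively by subtracting a suitable lift of $G_D$, leaving a form of strictly smaller depth.

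\emph{Inverse via lifts.} The inverse would send a tuple $(H_d)_{d=0}^k$ to $\sum_d L(H_d)$, where for $H \in \Mod^{\log}_{k-d, s+d}(\Gamma)^{S_d}$ the lift $L(H) \in \AHMod^{\log}_{k,s}(\Gamma)$ is defined, working on the affine cone, as a $\Gamma$-invariant correction of the naive contraction of the last $d$ factors of $H$ against $d$ copies of $\nu$. Since $\LO(\nu) = g/2$ and $\LO$ is a derivation, iterating $\LO^d$ on this contraction should strip off the $\nu$'s and produce $d! \cdot H$ up to symmetrized insertions of $g$, recovering $H$ after the appropriate projection. The pole bounds are preserved because $\nu$ is holomorphic on $\CD \setminus \CH$ and $\LO$ respects the bound on pole orders appearing in Definition~\ref{defn:log}(iii).

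\emph{Main obstacle.} The hard part will be constructing the lift $L(H)$ as a genuinely $\Gamma$-invariant object. The differential $\nu$ is a section of $\Omega_{\A(\CD)}$ satisfying $\iota_v \nu = 1$, so it does not descend to a section of any $\Gamma$-equivariant bundle on $\CD$; consequently, the naive contraction $\langle H, \nu^{\otimes d}\rangle$ transforms under $\Gamma$ by a non-trivial cocycle, and a canonical $\Gamma$-invariant correction must be built. I expect this correction to be produced by a recursive procedure in depth, whose well-definedness is governed by the commutator $[\LO, \RO] = \tfrac{k}{2}F\otimes g + \cdots$ of Proposition~\ref{prop:commutation relation}, and whose existence relies crucially on the $S_d$-invariance of $H$ in order that the ambiguities in contracting symmetric tensors match the combinatorial coefficients arising from repeated commutation. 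Once the lift is constructed, verifying that the forward and inverse maps are mutually inverse and respect the $S_d$-invariance is a direct consequence of the formulas above.
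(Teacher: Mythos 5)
Your forward direction matches the paper's, but the proposal has a genuine gap exactly where you flag the ``main obstacle'': you never actually construct the lift $L(H)$, and without it neither the inductive step of the forward direction (subtracting ``a suitable lift of $G_D$'') nor the inverse map exists. The resolution is not a recursive correction procedure governed by the commutator; it is the observation that the logarithmic setting hands you a canonical $\Gamma$-invariant replacement for $\nu$, namely the logarithmic derivative $G := \frac{1}{\ell}\frac{\RO(f)}{f} = \frac{1}{\ell}\frac{\partial(f)}{f} + \nu \in \AHMod^{\log}_{1,1}(\Gamma)$ of the defining equation $f$ of $\CH$. This $G$ is genuinely $\Gamma$-invariant (no cocycle to correct), is almost-holomorphic of depth $1$ with $\LO(G) = \tfrac{1}{2}g$, and its only pole is a simple one along $\CH$ --- which is precisely why the proposition is stated for \emph{logarithmic} forms. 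The lift is then simply $\alpha_d(H) = \langle H, G^{\otimes d}\rangle$, and the $S_d$-invariance of $H$ together with $\LO(G)=\tfrac{1}{2}g$ gives $\LO^d\bigl(\langle H, G^{\otimes d}\rangle\bigr) = \frac{d!}{2^d}H$ directly; injectivity and surjectivity of $\alpha = \sum_d \alpha_d$ then follow by the depth induction you describe. Your proposed $\langle H,\nu^{\otimes d}\rangle$ plus an unspecified invariant correction is not a proof, and I do not see how the commutator relation alone would produce the correction.

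A secondary issue: your claim that Definition~\ref{defn:log}(iii) ``forces $\LO^{k+1}F = 0$ since the allowed pole order would be negative'' is not correct as stated. A pole of order $\leq k-d < 0$ only means $\LO^d F$ vanishes along $\CH$; it does not make the form zero. The correct argument is that $\LO^d F$ is then a holomorphic vector-valued modular form of negative weight $k-d$, and such forms vanish by \cite[Prop.~4.4]{Ma}. This is the vanishing the paper invokes to bound the depth by $k$.
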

\begin{proof}
There is a natural map
\[ \alpha_d : \Mod_{k-d,s+d}^{\log}(\Gamma)^{\mathrm{sym}} \to \AHMod^{\log}_{k,s}(\Gamma) \]
given by
$F_d \mapsto \left\langle F_d, G^{\otimes d} \right\rangle$,
where we use the notation introduced in \eqref{eqn:defn of pairing}.

Since $F_d \in \Mod_{k-d,s+d}^{\log}(\Gamma)^{\mathrm{sym}}$ is holomorphic and $S_d$-invariant in the last $d$ factors we have
\[
\LO^{d}\left( \left\langle F_d, G^{\otimes d} \right\rangle \right) = 
\frac{d!}{2^d} \tr_{(s+1, s+d+1), (s+2, s+d+3), \ldots ,(s+d,s+d+2d-1)}( F_d \otimes g^{\otimes d})
= \frac{d!}{2^d} F_d
\]

Consider the sum of these maps over $d$,
\[ \alpha=\sum_{d} \alpha_d : \bigoplus_{d=0}^{k} \Mod_{k-d,s+d}^{\log}(\Gamma)^{\mathrm{sym}} \to \AHMod^{\log}_{k,s}(\Gamma). \]

We show that $\alpha$ is injective: If $\alpha(F_0,\ldots, F_k) = 0$, then if it exists, let $k_0$ be the largest number such that $F_{k_0} \neq 0$. Then $\LO^{k_0}( \alpha(F_0,\ldots, F_k) )$ is a non-zero multiple of $F_{k_0}$, so $F_{k_0} = 0$, which is a contradiction. Hence $\alpha(F_0,\ldots, F_k)=0$.
To show that $\alpha$ is surjective, let $F \in \AHMod^{\log}_{k,s}(\Gamma)$, let $d$ be its depth and let $F_d = \LO^d(F)$. Then $F_d$ is meromorphic (with poles of order at most $\leq k-d$ since $F$ is logarithmic), and by Lemma~\ref{lemma:L^2 and R^2} invariant under the permutation of the last $S_d$-factors, so lies in $\Mod_{k-d,s+d}^{\log}(\Gamma)^{\mathrm{sym}}$.
Since there are no holomorphic vector-valued modular forms of negative weight by \cite[Prop.4.4]{Ma},
we must have moreover $d \leq k$.
Then by the above
\[ F' := F - \frac{2^d}{d!} \left\langle F_d, G^d \right\rangle \]
has depth $d-1$, so the claim follows by induction on the depth.
\end{proof}

\begin{rmk}
The proof shows that a logarithmic almost-holomorphic modular form of weight $k$ has depth at most $k$.
\end{rmk}

\begin{example}(Serre derivative, see also \cite[Sec.5.2]{KMPS})
The derivative of a classical modular forms is no longer modular, but transforms with a correction term.
By adding a multiple of the second Eisenstein series, one can construct a derivative operator on modular forms, called the Serre derivative \cite{123}.
We can introduce here a Serre derivative on logarithmic orthogonal modular forms.

As before denote the (almost-holomorphic) ``logarithmic derivative" of $f$ with
\[ G := \frac{1}{\ell} \frac{\RO(f)}{f} = \frac{1}{\ell} \frac{\partial(f)}{f} + \nu, \]
where $\ell$ is the weight of $f$, and the holomorphic logarithmic derivative with
\[
G_{\mathrm{hol}} = \frac{1}{\ell} \frac{\partial(f)}{f}.
\]
We define the Serre derivative
\[
\partial_S: \Mod^{\log}_{k,s}(\Gamma) \to \Mod^{\log}_{k+1,s+1}
\]
by setting for $F \in \Mod^{\log}_{k,s}(\Gamma)$:
\begin{align*}
	\partial_S(F) & = \RO(F) - k F \otimes G + \sum_{r=1}^{s} \tr_{s+2,s+3} \sigma_{r, s+2}( F \otimes g \otimes G ) - \sum_{r=1}^{s} \sigma_{r, s+1}(F \otimes G) \\
	& = \partial(F) - k F \otimes G_{\mathrm{hol}}
	+ \sum_{r=1}^{s} \tr_{s+2,s+3} \sigma_{r, s+2}( F \otimes g \otimes G_{\mathrm{hol}} ) - \sum_{r=1}^{s} \sigma_{r, s+1}(F \otimes G_{\mathrm{hol}}).
\end{align*}
The first line above shows that $\partial_S$ sends section of $\CL^k \otimes \CE^s$ to sections of $\CL^k \otimes \CE^{s+1}$ and preserves $\Gamma$-invariance. The second shows that it preserves holomorphicity (away from the divisor $\CH$) and increases the pole order at most by $1$.
Hence $\partial_S$ is well-defined.

This gives another way to construct new logarithmic modular forms out of old ones. For example, for $k \ge 2$ we have the forms
$$\epsilon_k(f) = \partial_S^{k-2}(\epsilon(f)) \in \Mod^{\log}_{k,k}(\Gamma).$$ (Note that $\partial_S(f) = 0$ by construction.)
\end{example}

\section{Theta lifts and almost-holomorphic modular forms}
\label{sec:theta lifts}

In this section we first review the construction of the regularized theta lift as introduced by Borcherds \cite{Borcherds1998}.
We then show that 
the theta kernel behaves well with respect to the raising and lowering operators on both $\SL_2(\BR)$ and $\mathrm{O}(2, n)$, which implies that theta lifts are also well-behaved (in a sense to be made precise later).
This leads to the characterizations of those almost-holomorphic modular forms whose lift is again almost-holomorphic (Theorem~\ref{thm:list of almost-holomorphic modular form}).

\subsection{Metaplectic group}
The metaplectic group $\mathrm{Mp}_2(\mathbb{R})$ is the unique connected double cover of $\SL_2(\BR)$. It is usually constructed as the group of pairs $(M, \phi(\tau))$, where $M = \begin{pmatrix} a & b \\c & d \end{pmatrix} \in \mathrm{SL}_2(\RR)$ and $\phi(\tau)$ is a holomorphic square root of $\tau \mapsto c \tau + d$ on the upper half-plane.
The group structure on $\mathrm{Mp}_2(\mathbb{R})$ is defined by
$$(M_1,\phi_1(\tau)) \cdot (M_2, \phi_2(\tau)) = (M_1 M_2, \phi_1(A_2 \tau) \phi_2(\tau).$$

The integral metaplectic group $\mathrm{Mp}_2(\mathbb{Z})$ is 
the preimage of $\SL_2(\BZ)$ under the projection $\mathrm{Mp}_2(\mathbb{R}) \to \SL_2(\BR)$. It is generated by the elements
\begin{align*}
S &= \left( \begin{pmatrix} 0 & -1 \\ 1 & 0 \end{pmatrix}, \sqrt{\tau} \right), \\
T & = \left( \begin{pmatrix} 1 & 1 \\ 0 & 1 \end{pmatrix}, +1 \right),
\end{align*}
where the square root $\sqrt{\tau}$ has positive imaginary part, modulo the relations 
\[ S^8 = (ST)^6 = \id. \]

\subsection{The Weil representation and theta functions}

Let $M$ be an even integral lattice of rank $n$ and signature $\sigma$.
We consider two representations assoiated to the lattice $M$, the finite Weil representation and the unitary Weil representation.

\subsubsection{Finite Weil representation}
The finite Weil representation $\rho$ associated to $M$ is the representation of $\mathrm{Mp}_2(\mathbb{Z})$ on the group ring $\mathbb{C}[M'/M] = \mathbb{C}[e_x: \, x \in M'/M]$, defined 
by 
\begin{align*} 
\rho(S) e_x & = \frac{e^{-\pi i \sigma / 4}}{\sqrt{|M'/M|}} \sum_{y \in M'/M} e^{-2\pi i \langle x, y \rangle} e_y \\ \rho(T) e_x & = e^{\pi i \langle x, x \rangle} e_x.
\end{align*}

If we define the discrete Fourier transform 
\[ \CF(e_{\gamma}) = \frac{1}{\sqrt{|M'/M|}} \sum_{\beta} e^{- 2 \pi i \langle \gamma, \beta\rangle} e_{\beta}, \] which satisfies $\CF^2 e_{\gamma} = e_{-\gamma}$, then
$\rho(S) = e^{- \pi i \sigma/4} \CF$. Hence $\rho(S^2) = \rho( -I, i) e_{\gamma} = e^{- \pi\sigma/2} e_{\gamma}$
and thus $\rho(S^4) = \rho(I,-1)$ acts by multiplication by $e^{-\pi i \sigma} = (-1)^{\rk M}$. In particular, the finite Weil representation factors through $\SL_2(\BZ)$ for $M$ of even rank.

\subsubsection{Unitary Weil representation}
The unitary Weil representation is the unitary representation $\omega$ of $\mathrm{Mp}_2(\mathbb{R})$ on $L^2(M_{\RR})$ 
defined on the usual generators of $\mathrm{Mp}_2(\mathbb{R})$ by \begin{align*} \omega \Big( \begin{pmatrix} t & 0 \\ 0 & t^{-1} \end{pmatrix}, \pm \sqrt{t} \Big) f(x) &= (\pm \sqrt{t})^n f(tx); \\ \omega \Big( \begin{pmatrix} 1 & b \\ 0 & 1 \end{pmatrix}, 1 \Big) f(x) &= e^{\pi i b \langle x, x \rangle} f(x);\end{align*} 
and $\omega(\begin{pmatrix} 0 & -1 \\ 1 & 0 \end{pmatrix}, \sqrt{\tau})$ is essentially the Fourier transform: if $f \in L^1 \cap L^2$ then 
$$\omega \Big( \begin{pmatrix} 0 & -1 \\ 1 & 0 \end{pmatrix}, \sqrt{\tau} \Big) f(x) = e^{\pi i \sigma / 4} \hat f(-x) = \frac{e^{\pi i \sigma / 4}}{\sqrt{|M'/M|}} \int_{M_{\RR}} f(y) e^{-2\pi i \langle x, y \rangle} \, \mathrm{d}y.$$

The representation $\omega$ descends to a representation of $\mathrm{SL}_2(\mathbb{R})$ if and only if $n$ is even.

In any coordinates $(x_i)$ on $M \otimes \mathbb{R}$, if $g_{ij}$ are the entries of the Gram matrix and $g^{ij}$ its inverse, the induced representation of the Lie algebra $\mathfrak{sl}_2(\RR)$ is determined by \begin{align*} \omega(H) f(x) &= \Big(\frac{n}{2} + \sum_i x_i \partial_i\Big) f(x); \\ \omega(X) f(x) &= \pi i \langle x, x \rangle \cdot f(x); \\ \omega(Y) f(x) &= -\frac{1}{4\pi i} \sum_{i,j} g^{ij} \frac{\partial^2 f}{\partial x_i \partial x_j},\end{align*} where $X = \begin{pmatrix} 0 & 1 \\ 0 & 0 \end{pmatrix}$, $Y = \begin{pmatrix} 0 & 0 \\ 1 & 0 \end{pmatrix}$, $H = \begin{pmatrix} 1 & 0 \\ 0 & -1 \end{pmatrix}$. These expressions are basis-independent.

\subsubsection{Theta functions}
Let $f : M_{\RR} \rightarrow \mathbb{C}$ be a Schwartz function. Define 
the vector-valued theta function attached to $M$ by $$\Theta(f; \tau) = y^{n/4 - k/2} \sum_{\lambda \in M'} f(\lambda \sqrt{y}) e^{\pi i x \langle \lambda, \lambda \rangle} \mathfrak{e}_{\lambda + M}.$$
The following result of Vign\'eras \cite{Vigneras1977} describes its behavior under the finite Weil representation:

\begin{prop}[Vign\'eras]
If $f$ satisfies the differential equation $\omega(Y - X) f = -ik \cdot f$ for some $k \in \frac{1}{2}\mathbb{Z}$, then 
for every $\gamma \in \mathrm{Mp}_2(\mathbb{Z})$ we have $$\Theta(f; \gamma \cdot \tau) = \rho(\gamma) j(\gamma; \tau)^k \Theta(f; \tau).$$
\end{prop}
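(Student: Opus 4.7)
\medskip

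The plan is to verify the transformation law on the generators $T$ and $S$ of $\mathrm{Mp}_2(\BZ)$, since cocycle consistency then gives the result for all of $\mathrm{Mp}_2(\BZ)$. The $T$-case is a direct computation: decompose $\Theta(f;\tau) = y^{n/4-k/2}\sum_{\mu \in M'/M}\theta_\mu(f;\tau)\mathfrak{e}_\mu$ where $\theta_\mu$ runs over $\lambda \in \mu+M$. Since $M$ is even, for $\lambda = \mu+m$ with $m \in M$ we have $\langle\lambda,\lambda\rangle = \langle\mu,\mu\rangle + 2\langle\mu,m\rangle + \langle m,m\rangle \equiv \langle\mu,\mu\rangle \pmod{2}$ (noting $2\langle \mu, m\rangle \in 2\BZ$). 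Hence $\tau \mapsto \tau+1$ multiplies each coset sum by $e^{\pi i\langle\mu,\mu\rangle}$, which matches $\rho(T)\mathfrak{e}_\mu$.

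For the $S$-case, I would rephrase $\Theta$ using the unitary Weil representation. With $g_\tau = \left(\bigl(\begin{smallmatrix} 1 & x \\ 0 & 1\end{smallmatrix}\bigr)\bigl(\begin{smallmatrix} \sqrt y & 0 \\ 0 & 1/\sqrt y\end{smallmatrix}\bigr), y^{1/4}\right) \in \mathrm{Mp}_2(\RR)$, the formulas for $\omega$ give $(\omega(g_\tau)f)(\lambda) = y^{n/4}f(\sqrt y\,\lambda) e^{\pi i x\langle\lambda,\lambda\rangle}$, so
\[
\Theta(f;\tau) = y^{-k/2}\sum_{\lambda \in M'}(\omega(g_\tau)f)(\lambda)\,\mathfrak{e}_{\lambda+M}.
\]
For $\gamma = S$, use the Iwasawa decomposition $\gamma\cdot g_\tau = g_{\gamma\tau}\cdot k_\theta$ in $\mathrm{Mp}_2(\RR)$, where $k_\theta \in \widetilde{\mathrm{SO}}(2)$ is the element whose image rotates by the argument of $j(\gamma;\tau) = \tau$. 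Applied to $f$, this yields $\omega(g_{\gamma\tau})f = \omega(\gamma)\omega(g_\tau)\omega(k_\theta^{-1})f$.

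The weight hypothesis $\omega(Y-X)f = -ikf$ enters exactly here: $Y-X$ generates the maximal compact $\widetilde{\mathrm{SO}}(2)$, so integration of the Lie algebra relation gives $\omega(k_\theta)f = e^{-ik\theta}f$, which produces the factor $e^{ik\theta}$ needed to assemble $j(\gamma;\tau)^k = \tau^{k}$ together with the $y^{-k/2}$ prefactors (using $\mathrm{Im}(-1/\tau)/y = 1/|\tau|^2$). The remaining step is to intertwine $\omega(S)$ acting on $f$ with $\rho(S)$ acting on the coefficient vector: for any Schwartz function $h$ on $M_\RR$,
\[
\sum_{\lambda \in M'}(\omega(S)h)(\lambda)\,\mathfrak{e}_{\lambda+M} = \rho(S)\sum_{\lambda \in M'}h(\lambda)\,\mathfrak{e}_{\lambda+M}.
\]
This identity is Poisson summation: unfolding $\omega(S)h = e^{\pi i\sigma/4}\widehat{h}(-\cdot)$ on the left and $\rho(S)\mathfrak{e}_{\mu+M}$ on the right, and applying the coset version of Poisson summation $\sum_{\lambda \in \mu+M}h(\lambda) = \mathrm{covol}(M)^{-1}\sum_{\beta \in M'}\widehat{h}(\beta)e^{2\pi i\langle\mu,\beta\rangle}$, both sides agree after collecting the normalization $\mathrm{covol}(M)^{-1}\sqrt{|M'/M|} = 1$.

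The main obstacle will be bookkeeping the phases and branches: the eighth root of unity $e^{\pi i\sigma/4}$ in the Weil representation, the branch of $\sqrt\tau$ lifting $S$ to the metaplectic cover, and the sign of $\theta = \arg(c\tau+d)$ in the Iwasawa decomposition all have to line up simultaneously. This is exactly what forces the passage to $\mathrm{Mp}_2(\BZ)$ rather than $\mathrm{SL}_2(\BZ)$ (and is why the representation $\rho$ only factors through $\mathrm{SL}_2(\BZ)$ when $\mathrm{rk}(M)$ is even). Once these are tracked through the composition of the two identities above, the transformation law for $\gamma = S$ follows, completing the proof.
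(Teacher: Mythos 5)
Your proposal is correct and follows essentially the same route as the paper's proof: rewrite $\Theta(f;\tau)$ as $y^{-k/2}\sum_{\lambda}\omega(g_\tau)f(\lambda)\mathfrak{e}_{\lambda+M}$, integrate the Lie-algebra condition $\omega(Y-X)f=-ikf$ to the $\mathrm{SO}(2)$-equivariance $\omega(k_\theta)f=e^{-ik\theta}f$, apply the Iwasawa decomposition of $S g_\tau$, and conclude by Poisson summation to intertwine $\omega(S)$ with $\rho(S)$. You additionally spell out the (routine) $T$-case and the coset Poisson summation with its normalization, which the paper leaves implicit, but the argument is the same.
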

\begin{rmk}
Explicitly, the equation $\omega(Y - X)f = -ik \cdot f$ is equivalent to
$$\Delta f(x) = -2\pi k f(x) + 2\pi^2 \langle x, x \rangle f(x).$$ Here $\Delta = \frac{1}{2} \sum_{i,j} g^{ij} \partial_i \partial_j$ is the holomorphic Laplace operator.
\end{rmk}

\begin{proof}
We give a proof for the reader's convenience.
Taking exponentials shows that the differential equation is equivalent to $\omega(K) f(x) = e^{-ik \theta} f(x)$ for any $K = \begin{pmatrix} \cos(\theta) & -\sin(\theta) \\ \sin(\theta) & \cos(\theta) \end{pmatrix}$ in $\mathrm{SO}_2(\RR)$. (In the odd-rank case, one has to work in the metaplectic group and $k$ is necessarily half-integral, but we ignore that here by abuse of notation.)
We can then write $$\Theta(f; \tau) = y^{-k/2} \sum_{\lambda \in M'} \omega(\tau) f(\lambda) \mathfrak{e}_{\lambda + M}$$ where $\tau =x+iy \in \mathbb{H}$ also denotes the associated matrix $\begin{pmatrix} 1 & x \\ 0 & 1 \end{pmatrix} \cdot \begin{pmatrix} y^{1/2} & 0 \\ 0 & y^{-1/2} \end{pmatrix}$ in $\mathrm{SL}_2(\RR)$. Note that if $\tau = |\tau| e^{i \theta}$ then $-\tau S \tau = \begin{pmatrix} \cos(\theta) & -\sin(\theta) \\ \sin(\theta) & \cos(\theta) \end{pmatrix} \in \mathrm{SO}_2(\mathbb{R})$.

The Fourier transform of $\omega(\tau) f$ is therefore $$e^{-\pi i \sigma / 4} \omega(S) \omega(\tau) f(-x) = e^{-ik \theta - \pi i \sigma / 4} \omega(-1/\tau) f(-x).$$ By Poisson summation, we then obtain
\[\Theta(f; -1/\tau) = y^{-k/2} |\tau|^k e^{ik \theta} \rho(S)\Theta(f; \tau) = \tau^k \rho(S) \Theta(f; \tau). \qedhere \]
\end{proof}

\subsubsection{{Maa\ss\, operators}}
The Maa\ss\, raising and lowering operators, defined for vector-valued functions and half-integral weight $k$ by the usual formulas \eqref{def:Maas raisin and lowering operators},
are compatible with the (finite) Weil representation in the sense that 
\begin{align*}
\Big( \RM_k f \Big) \Big|_{k+2, \rho} \gamma & = \RM_k \Big( f \Big|_{k, \rho} \gamma \Big) \\
\Big(\LM_k f \Big) \Big|_{k - 2, \rho} \gamma & = \LM_k \Big( f \Big|_{k, \rho} \gamma \Big)
\end{align*}
for any $\gamma \in \mathrm{Mp}_2(\mathbb{Z})$, where $f |_{k, \rho} \gamma = (c \tau + d)^{-k} \rho(\gamma)^{-1} f(\tau).$

On theta functions, the action of the Maa{\ss} operators is 
\begin{equation}\LM_k \Theta(f; \tau) = \Theta(\omega(\mathfrak{L}) f; \tau), \quad \RM_k \Theta(f; \tau) = \Theta(\omega(\mathfrak{R}) f; \tau), \label{LR on Theta} \end{equation}
where 
\[ \mathfrak{L} = \begin{pmatrix} 1/2 & -i/2 \\ -i/2 & -1/2 \end{pmatrix}, \quad \quad \mathfrak{R} = \begin{pmatrix} 1/2 & i/2 \\ i/2 & -1/2 \end{pmatrix} \in \mathfrak{sl}_2(\CC). \]
These matrices satisfy $[Y-X, \mathfrak{L}] = 2i \mathfrak{L}$ and $[Y-X, \mathfrak{R}] = -2i \mathfrak{R}$, so if $f$ satisfies the Vign\'eras equation with weight $k$ then $\omega(\mathfrak{L})f$ and $\omega(\mathfrak{R})f$ satisfy the equation with weights $k-2$ and $k+2$, respectively.

\subsection{The theta kernel}
Suppose now that $M$ has signature $(2, n)$ and rank $n+2$ and let $\A(\CD)$ be the affine cone over the period domain $\CD$. For any $Z \in \A(\CD)$ and $x \in M_{\RR}$, we write $x_Z$ for the orthogonal projection of $x$ to the positive-definite space in $M_{\RR}$ spanned by $\mathrm{re}(Z)$ and $\mathrm{im}(Z)$, and $x_Z^{\perp} = x - x_Z$ for the projection of $x$ to its orthogonal complement. Hence $$x_Z = \frac{\langle x, Z \rangle \overline{Z} + \langle x, \overline{Z} \rangle Z}{\langle Z, \overline{Z} \rangle}$$ and $$\langle x_Z, x_Z \rangle = 2 \frac{\langle x, Z \rangle \langle x, \overline{Z} \rangle}{\langle Z, \overline{Z} \rangle} = \frac{|\langle x, Z \rangle|^2}{|Z|^2}.$$

For any $Z \in \A(\CD)$, define $\Theta_k(\tau, Z)$ to be the theta function constructed from the Schwartz function $$f_k = f_{k, Z} : M_{\RR} \rightarrow \mathbb{C} \quad f_k(x) = \langle x, Z \rangle^k e^{\pi \langle x_Z^{\perp}, x_Z^{\perp} \rangle - \pi \langle x_Z, x_Z \rangle}.$$ So \begin{equation} \label{thetea function}\Theta_k(\tau, Z) = y^{n/2} \sum_{\lambda \in M'} \langle \lambda , Z \rangle^k e^{\pi i \tau \langle \lambda_Z, \lambda_Z \rangle + \pi i \overline{\tau} \langle \lambda_Z^{\perp}, \lambda_Z^{\perp} \rangle} e_{\lambda + M}.\end{equation}

The action of the infinitesimal Weil representation on $f_k$ is by 
\begin{align*} \omega(H) f_k &= \Big( \frac{n}{2} + 1 + k + 2\pi \langle x, x \rangle - 8\pi \frac{\langle x, Z \rangle \langle x, \overline{Z} \rangle}{\langle Z, \overline{Z} \rangle} \Big) \cdot f_k; \\ \omega(X) f_k &= \pi i \langle x, x \rangle \cdot f_k; \\ \omega(Y) f_k &= i \Big(\pi \langle x, x \rangle - \kappa \Big) \cdot f_k,\end{align*} where $\kappa = k + 1 - n/2$. Therefore, $f_k$ satisfies Vign\'eras' equation $$\omega(Y - X) f_k = -i \kappa f_k$$ and
 $\Theta_k$ transforms as a modular form of weight $\kappa$ in the variable $\tau$ with respect to the finite Weil representation $\rho_M$. With respect to the orthogonal group of $M$, it satisfies $$\Theta_k(\tau, \sigma Z) = y^{n/2} \sum_{\lambda \in M'} \langle \lambda, Z \rangle^k e^{\pi i \tau \langle \lambda_Z, \lambda_Z \rangle + \pi i \overline{\tau} \langle \lambda_Z^{\perp}, \lambda_Z^{\perp} \rangle} e_{\sigma \lambda + M} = \sigma \Theta_k(\tau, Z),$$ where $\mathrm{O}(M)$ acts on $\CC[M'/M]$ by $\sigma e_x = e_{ \sigma x}$. Moreover, $\Theta_k$ is of degree $k$ in the variable $Z$. Therefore, $\Theta_k$ transforms like a modular form of weight $(-k)$ in the variable $Z$ under the discriminant kernel.

The infinitesimal lowering and raising operators $\omega(\mathfrak{L})$, $\omega(\mathfrak{R})$ act on the function $f_k$ in $\Theta_k$ by $$\omega(\mathfrak{L}) f_k = \Big( \frac{n}{2} + 2\pi \langle x,x \rangle - 4\pi \frac{\langle x, Z \rangle \langle x, \overline{Z} \rangle}{\langle Z, \overline{Z} \rangle} \Big) \cdot f_k;$$ $$\omega(\mathfrak{R}) f_k = \Big( k + 1 - 4\pi \frac{\langle x, Z \rangle \langle x, \overline{Z} \rangle}{\langle Z, \overline{Z} \rangle} \Big) \cdot f_k.$$

\begin{prop}\label{prop:lr_maass} The orthogonal and Maa{\ss} raising and lowering operators on $\Theta_k$ satisfy $$\LO_{-k} \Big[ \RM_{\kappa} \Theta_k \Big] = -2\pi \cdot \RO_{-(k+2)}[\Theta_{k+2}]$$
\end{prop}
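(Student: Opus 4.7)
The plan is to reduce the identity to an equality of per-$\lambda$ Schwartz kernels and then verify it by direct computation in the $Z$-variables. First, using $\RM_\kappa\Theta_k = \Theta(\omega(\mathfrak{R})f_k;\tau)$ from \eqref{LR on Theta} together with the explicit formula $\omega(\mathfrak{R})f_k = (k+1 - 2\pi\langle x_Z,x_Z\rangle)f_k$ recorded just above, I can write $\RM_\kappa\Theta_k$ as an explicit theta series, and since $f_k$ is Schwartz in $x$, both orthogonal operators $\LO_{-k}$ and $\RO_{-(k+2)}$ commute with the sum over $\lambda \in M'$. It therefore suffices to verify the identity on each per-$\lambda$ summand. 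Setting $\alpha := \langle \lambda,Z\rangle$, $\overline\alpha := \langle \lambda,\overline Z\rangle$, $r := |Z|^2$, and $\mu := \alpha\overline\alpha/r = \langle\lambda_Z,\lambda_Z\rangle$, and using the identity $\langle\lambda_Z,\lambda_Z\rangle + \langle\lambda_Z^\perp,\lambda_Z^\perp\rangle = \langle\lambda,\lambda\rangle$ together with $\tau - \overline\tau = 2iy$, the $\lambda$-summand of $\Theta_k$ takes the compact form $y^{n/2}K_k$ with
$$K_k := \alpha^k\, e^{\pi i \overline\tau \langle\lambda,\lambda\rangle}\, e^{-2\pi y\mu}.$$
A direct application of $\RM_\kappa = 2i\partial_\tau + \kappa/y$ then gives $\RM_\kappa\Theta_k = y^{n/2-1}\sum_\lambda (k+1 - 2\pi y\mu)K_k\, e_{\lambda+M}$, using $n/2 + \kappa = k+1$.

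For the left-hand side I would apply the coordinate formula \eqref{formulaL} for $\LO$ on scalar sections. The required partial derivatives are $\partial_{\overline Z_j}\mu = (\alpha/r)(g\lambda)_j - (\mu/(2r))(gZ)_j$ and $\partial_{\overline Z_j}K_k = -2\pi y K_k \partial_{\overline Z_j}\mu$. Using $\sum_j(g\lambda)_jdZ_j = \partial\alpha$, $\sum_j(gZ)_j dZ_j = \tfrac12 d(Z^2) \equiv 0 \pmod{dZ^2}$, and $\langle Z,Z\rangle = 0$ on $\A(\CD)$, the combination $\sum_j \partial_{\overline Z_j}\mu\cdot(rdZ_j - Z_j\partial r)$ collapses to $\alpha(\partial\alpha - \alpha\nu)$, where $\nu := \partial r/r$. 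Collecting factors then yields
$$\LO_{-k}[\RM_\kappa\Theta_k] = -2\pi y^{n/2}\sum_{\lambda\in M'} (k+2 - 2\pi y\mu)\,\alpha\, K_k\,(\partial\alpha - \alpha\nu)\, e_{\lambda+M}.$$
For the right-hand side, applying $\RO_{-(k+2)}(F) = \partial F - (k+2)F\nu$ to the $\lambda$-summand of $\Theta_{k+2}$ and using $\partial\mu = (\overline\alpha/r)\partial\alpha - \mu\nu$, the expression factors via the algebraic identity $(k+2)(\partial\alpha/\alpha - \nu) - 2\pi y(\overline\alpha/r)(\partial\alpha - \alpha\nu) = \big((k+2)/\alpha - 2\pi y\overline\alpha/r\big)(\partial\alpha - \alpha\nu)$ as
$\RO_{-(k+2)}K_{k+2} = \alpha K_k\,[(k+2) - 2\pi y\mu]\,(\partial\alpha - \alpha\nu).$
Multiplying by $-2\pi y^{n/2}$ and summing in $\lambda$ matches the LHS exactly.

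The main obstacle is the bookkeeping of the various $Z$- and $\overline Z$-derivatives of the rational and exponential factors; the key simplification that makes the match transparent is recognizing the common factor $\alpha(\partial\alpha - \alpha\nu)$ appearing on both sides. A useful sanity check is that both sides are sections of $\CL^{-(k+1)}\otimes\CE$: indeed $K_k$ has $Z$-degree $k$, so $\alpha K_k$ has degree $k+1$, and $\iota_v(\partial\alpha - \alpha\nu) = \alpha - \alpha\cdot 1 = 0$ because $\iota_v \partial\alpha = \sum_j Z_j(g\lambda)_j = \alpha$ by Euler and $\iota_v\nu = 1$, confirming that the answer descends to a well-defined section of $\CE$ on $\CD$.
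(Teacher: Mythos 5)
Your proof is correct and follows essentially the same route as the paper: reduce via \eqref{LR on Theta} and the explicit formula for $\omega(\mathfrak{R})f_k$ to a termwise identity, then verify it by direct computation with the coordinate formulas \eqref{formulaL} and \eqref{formulaR}. Your common factor $\alpha(\partial\alpha-\alpha\nu)$ is exactly the paper's $\frac{\langle x,Z\rangle}{\langle Z,\overline Z\rangle}\big(\langle Z,\overline Z\rangle\langle x,\mathrm{d}Z\rangle-\langle x,Z\rangle\langle\overline Z,\mathrm{d}Z\rangle\big)$, just written per lattice summand instead of per Schwartz function.
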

\begin{proof}
Using \eqref{LR on Theta} it suffices to prove
\[ \LO_{-k}[ \Theta( \omega(\mathfrak{R}) f_k ; \tau) ]
= -2 \pi \RO_{-(k+2)}[ \Theta( f_k ; \tau)] \]
Since the theta function is defined by a lattice sum,
this differential equation follows from the following differential equation for the input function $f_k$:
\[ \LO_{-k}[\omega(\mathfrak{R}) f_k] = -2\pi \RO_{-(k+2)}[f_{k+2}]. \]

To prove the latter equation, we apply the formulas \eqref{formulaL} and \eqref{formulaR} in coordinates (which are valid for arbitrary functions on the affine cone) to compute 
\begin{gather*}
\LO( \langle x, \overline{Z} \rangle) = \frac{1}{2}\Big( \langle Z, \overline{Z} \rangle \langle x, \mathrm{d}Z \rangle - \langle x, Z \rangle \langle \overline{Z}, \mathrm{d}Z \rangle \Big) \\
\partial \Big( \langle x, Z \rangle \Big) =  \langle x, \mathrm{d}Z \rangle, \quad 
\LO(\langle Z, \overline{Z} \rangle) = 0, 
\quad 
\partial \langle Z, \overline{Z} \rangle  = \langle \overline{Z}, \mathrm{d}Z \rangle. \end{gather*}
If we rewrite the function $f_{k}(x) = f_{k,Z}(x)$ as
$$f_k(x) = \langle x, Z \rangle^k \exp \Big( \pi \langle x, x \rangle -4\pi \frac{\langle x, Z \rangle \langle x, \overline{Z} \rangle}{\langle Z, \overline{Z} \rangle} \Big)$$ 
it follows that 
\begin{gather*} \LO_{-k}[f_k] = -\frac{2\pi \langle x, Z \rangle}{\langle Z, \overline{Z} \rangle} \Big( \langle Z, \overline{Z} \rangle \langle x, \mathrm{d}Z \rangle - \langle x, Z \rangle \langle \overline{Z}, \mathrm{d}Z \rangle \Big) \cdot f_k \\
\RO_{-k}[f_k] = \Big( \frac{k \langle Z, \overline{Z} \rangle - 4\pi \langle x, Z \rangle \langle x, \overline{Z} \rangle}{\langle Z, \overline{Z} \rangle^2 \langle x, Z \rangle} \Big) \Big( \langle Z, \overline{Z} \rangle \langle x, \mathrm{d}Z \rangle - \langle x, Z \rangle \langle \overline{Z}, \mathrm{d}Z \rangle \Big) \cdot f_k.
\end{gather*}
Hence we conclude that \begin{align*} \LO_{-k}[\omega(\mathfrak{R}) f_k] &= (k+1)  \LO_{-k}[f_k] - 4\pi \frac{\langle x, Z \rangle}{\langle Z, \overline{Z} \rangle} \LO_{-k}[\langle x, \overline{Z} \rangle] f_k - 4\pi \frac{\langle x, Z \rangle \langle x, \overline{Z} \rangle}{\langle Z, \overline{Z} \rangle} \LO_{-k}[f_k] \\ &= \Big( k + 2 - 4\pi \frac{\langle x, Z \rangle \langle x, \overline{Z} \rangle}{\langle Z, \overline{Z} \rangle} \Big) \Big(-2\pi \frac{\langle x, Z \rangle}{\langle Z, \overline{Z} \rangle} \Big) \Big( \langle Z, \overline{Z} \rangle \langle x, \mathrm{d}Z \rangle - \langle x, Z \rangle \langle \overline{Z}, \mathrm{d}Z \rangle \Big) \cdot f_k \\ &= -2\pi \RO_{-(k+2)}[f_{k+2}]. \qedhere \end{align*}
\end{proof}

This implies the following identity which was first proved by Zemel:
\begin{lem}[{Zemel \cite{Zemel}}]\label{lem:zemel_kernel}
    $\mathcal{L}_{-k}[\Theta_k] = 2\pi \cdot \LM \Theta_{k+2}.$
\end{lem}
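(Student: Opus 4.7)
The plan is to reduce Zemel's identity to a pointwise identity on the Schwartz function $f_{k,Z}$ defining $\Theta_k$, analogous to how Proposition~\ref{prop:lr_maass} was proved. The key observation is that the orthogonal operators $\LO,\RO$ act only on the $Z$-variable, so they commute with the lattice sum and the basis vectors $\mathfrak{e}_{\lambda+M}$ of the Weil representation. Hence
\[
\mathcal{L}_{-k}[\Theta_k] \;=\; \Theta\bigl(\mathcal{L} f_{k,Z};\tau\bigr),
\]
where on the right we apply $\mathcal{L}=\tr_{12}\circ\LO\circ\LO$ to $f_{k,Z}$ viewed as a function of $Z$ (with $x$ fixed) on the affine cone. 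Combined with~\eqref{LR on Theta}, which gives $\LM\Theta_{k+2}=\Theta(\omega(\mathfrak{L})f_{k+2,Z};\tau)$, it suffices to prove the pointwise identity
\[
\mathcal{L}\,f_{k,Z} \;=\; 2\pi\,\omega(\mathfrak{L})f_{k+2,Z}.
\]

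For the pointwise computation, I would start from the formula established in the proof of Proposition~\ref{prop:lr_maass}, namely
\[
\LO f_{k,Z} \;=\; -\tfrac{2\pi\beta}{\alpha}\,h\,f_{k,Z},
\qquad
h\;=\;\alpha\langle x,\mathrm{d}Z\rangle-\beta\langle\overline{Z},\mathrm{d}Z\rangle,
\]
with $\alpha=\langle Z,\overline{Z}\rangle$, $\beta=\langle x,Z\rangle$, $\gamma=\langle x,\overline{Z}\rangle$. Applying~\eqref{formulaL} directly shows $\LO\alpha=\LO\beta=0$ on the affine cone (using $Z^2=0$ and $\mathrm{d}(Z^2)\equiv 0$), so the prefactor $-2\pi\beta/\alpha$ is killed by $\LO$. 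The Leibniz rule then gives
\[
\LO^2 f_{k,Z} \;=\; \tfrac{4\pi^2\beta^2}{\alpha^2}\,(h\otimes h)\,f_{k,Z}\;-\;\tfrac{2\pi\beta}{\alpha}\,\LO(h)\,f_{k,Z}.
\]
A short calculation with~\eqref{formulaL} and the relations $Z^2=0$, $\mathrm{d}(Z^2)\equiv 0$ yields $\LO(h)=-\tfrac{\alpha\beta}{2}\,g$.

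Taking traces, we use $\tr(g)=n$ (Remark~\ref{rmk:Warning}) and Lemma~\ref{lemms3sdfd} to obtain $\tr(\LO h)=-\tfrac{n\alpha\beta}{2}$ and $\tr(h\otimes h)=\alpha^2\langle x,x\rangle-2\alpha\beta\gamma$ (where $\langle\overline{Z},\overline{Z}\rangle=0$ eliminates the remaining cross-term). Combining,
\[
\mathcal{L}\,f_{k,Z} \;=\; \Bigl(\pi n\beta^2+4\pi^2\beta^2\langle x,x\rangle-8\pi^2\beta^3\gamma/\alpha\Bigr) f_{k,Z} \;=\; 2\pi\beta^2\!\left(\tfrac{n}{2}+2\pi\langle x,x\rangle-\tfrac{4\pi\beta\gamma}{\alpha}\right)f_{k,Z}.
\]
Since $f_{k+2,Z}=\beta^2 f_{k,Z}$ and the scalar factor in parentheses is exactly the multiplier for $\omega(\mathfrak{L})$ given in the discussion preceding Proposition~\ref{prop:lr_maass}, the right side equals $2\pi\,\omega(\mathfrak{L})f_{k+2,Z}$, completing the proof.

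The main obstacle is verifying the identity $\LO h = -\tfrac{\alpha\beta}{2}g$ and then applying the trace formula of Lemma~\ref{lemms3sdfd} to $h\otimes h$; both require careful bookkeeping because $h$ involves $\mathrm{d}Z$-terms whose coefficients are not of the right homogeneity to be sections of $\CE$ individually, yet the combination $h$ does satisfy $\iota_v h=0$, so Lemma~\ref{lemms3sdfd} applies. Once the four vanishing identities $Z^2=0$, $\overline{Z}^2=0$, $\mathrm{d}(Z^2)\equiv 0$, and $\iota_v h=0$ are used consistently, the computation is routine.
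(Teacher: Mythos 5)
Your proposal is correct and follows essentially the same route as the paper: it starts from the expression for $\LO_{-k}[f_k]$ obtained in the proof of Proposition~\ref{prop:lr_maass}, applies $\LO$ once more using $\LO(\langle \overline{Z}, \mathrm{d}Z\rangle) = \tfrac{1}{2}\langle Z,\overline{Z}\rangle g$, takes traces with $\tr(g)=n$, and matches the result against the multiplier of $\omega(\mathfrak{L})$ on $f_{k+2}=\langle x,Z\rangle^2 f_k$ via \eqref{LR on Theta}. Your explicit checks that $\LO$ kills the prefactor $\beta/\alpha$ and that $\iota_v h = 0$ (so that Lemma~\ref{lemms3sdfd} applies to $h\otimes h$) are exactly the bookkeeping the paper leaves implicit.
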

\begin{proof}
Since $\LO(\langle \overline{Z}, \mathrm{d}Z \rangle) = \frac{1}{2} \langle Z, \overline{Z} \rangle \cdot g,$ it follows that $$\LO_{-k-1} \LO_{-k}[f_k] = \Big[ \pi \langle x, Z \rangle^2 g + \frac{4\pi^2 \langle x, Z \rangle^2}{\langle Z, \overline{Z} \rangle^2} \Big( \langle Z, \overline{Z} \rangle \langle x, \mathrm{d}Z \rangle - \langle x, Z \rangle \langle \overline{Z}, \mathrm{d}Z \rangle \Big)^{\otimes 2} \Big] \cdot f_k.$$ Taking the trace, it follows that the image of $f_k$ under Zemel's lowering operator is $$\mathcal{L}[f_k] = \Big[ n \pi \langle x, Z \rangle^2 + \frac{4\pi^2 \langle x, Z \rangle^2}{\langle Z, \overline{Z} \rangle} \Big( \langle Z, \overline{Z} \rangle \langle x, x \rangle - 2 \langle x, Z \rangle \langle x, \overline{Z} \rangle \Big) \Big] \cdot f_k.$$
Comparing this with the action of $\omega(\mathfrak{L})$ using \eqref{LR on Theta}, we obtain the claim.
\end{proof}

\subsection{The regularized theta lift}

$M$ continues to be an even integral lattice of signature $(2, n)$.

For a real-analytic $\mathrm{SL}_2(\ZZ)$-invariant function $f : \mathbb{H} \rightarrow \CC$ with at most exponential growth at $\infty$, the regularized integral $\int^{\mathrm{reg}} f(\tau) \, \frac{\mathrm{d}x \, \mathrm{d}y}{y^2}$ is defined as the residue at $s=0$ of the limit $$\lim_{h \rightarrow \infty} \int_{\mathcal{F}_h} f(\tau)\mathbf{E}(\tau; s) \, \frac{\mathrm{d}x \, \mathrm{d}y}{y^2},$$ where $\mathcal{F}_h$ is $\{\tau = x+iy: \, |\tau| > 1,\; |x| < 1/2, \; y < h\}$, and where $$\mathbf{E}(\tau; s) = -\pi^{-s} \Gamma(s) \zeta(2s) \sum_{\substack{c, d \in \mathbb{Z} \\ \mathrm{gcd}(c, d) = 1}} \frac{y^s}{|c \tau + d|^{2s}}$$  is the completed Eisenstein series of weight zero, normalized to have residue $1$ at $s=0$.
Roughly speaking, the integral over the fundamental domain is understood as an iterated integral, in which the integral in the horizontal direction is applied first. For background see \cite[Sec.6]{Borcherds1998}.

For vector-valued modular forms $f$ and $g$ of the same weight $\kappa$, it will be convenient to use the notation $$[f, g] := y^{\kappa} \sum_{\lambda \in M'/M} f_{\lambda}(\tau) \overline{g_{\lambda}(\tau)}.$$ This is then an $\mathrm{SL}_2(\mathbb{Z})$-invariant function.

\begin{defn} 
Let $k \geq 0$ be a non-negative integer and let
\[ F = \sum_{\lambda \in M'/M} F_{\lambda} \mathfrak{e}_{\lambda} \]
be an almost-holomorphic modular form for the Weil representation attached to $M$ of weight $\kappa = k + 1 - n/2$
with no poles in the interior but with possibly a pole at $\infty$ with 
at worst exponential growth.
The theta lift of $F$ is the following normalization of the regularized integral $$\mathrm{Lift}(F)(Z) = \frac{1}{2} \Big( \frac{i}{2} \langle Z, \overline{Z} \rangle \Big)^{-k} \int^{\mathrm{reg}} [ F, \Theta_k ] \, \frac{\mathrm{d}x \, \mathrm{d}y}{y^2},$$
where $\Theta_k(\tau, Z) = \sum_{\lambda \in M'/M} \Theta_{k, \lambda}(\tau, Z) \mathfrak{e}_{\lambda}$ is the theta function defined in \eqref{thetea function}. \qed
\end{defn}

Since $\Theta_k$ transforms like an orthogonal modular form of weight $(-k)$, it follows that $\langle Z, \overline{Z} \rangle^{-k} \overline{\Theta_k}$ and therefore $\Lift(F)$ transforms like a modular form of weight $k$ under the discriminant kernel
\[ \widetilde{O}^{+}(M) = \mathrm{Ker}( O^{+}(M) \to O( M^{\vee}/M ) ). \]
More generally, for all $\gamma \in \mathrm{O}^+(M)$ we have 
\begin{equation} \label{tr property}\Lift(F) | \gamma = \Lift(F | \gamma) \end{equation} where $\gamma$ acts on vector-valued modular forms through the formula $$\Big( \sum_{\beta \in M'/M} f_{\beta} e_{\beta} \Big) \Big| \gamma = \sum_{\beta \in M'/M} f_{\beta} e_{\gamma \beta}.$$

The singularities of the theta lift are given as follows.
Consider the expansion
$$F(\tau) = \sum_{t=0}^d F^{(t)} (2\pi y)^{-t} = \sum_{t=0}^d \sum_{\lambda \in M'/M} \sum_{n \in \mathbb{Q}} c^{(t)}(n, \lambda) q^n (2\pi y)^{-t} \mathfrak{e}_{\lambda}.$$

\begin{prop}\label{prop:singularity}
The theta lift of $F$ defines a real-analytic function with singularities on hyperplanes $\lambda^{\perp}$ for vectors $\lambda \in M'$ of positive norm. Its singularity there is of type
\begin{align*} (2\pi i)^{-k} \langle \lambda, Z \rangle^{-k} \times \Big[ &\sum_{t=0}^{k-1} (k-t-1)! c^{(t)}\Big(-\frac{\langle \lambda, \lambda \rangle}{2}, \lambda + M \Big) \Big( \frac{|\langle \lambda, Z \rangle|^2}{|Z|^2} \Big)^t \\ & \quad + \sum_{t=k}^d \frac{(-1)^{1+k-t}}{(t - k)!} c^{(t)}\Big(-\frac{\langle \lambda, \lambda \rangle}{2}, \lambda + M \Big) \Big( \frac{|\langle \lambda, Z \rangle|^2}{|Z|^2} \Big)^t \ln\Big( 2\pi \frac{|\langle \lambda, Z \rangle|^2}{|Z|^2} \Big)\Big].
\end{align*}
\end{prop}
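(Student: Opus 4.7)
The plan is to follow Borcherds' unfolding technique for the regularized theta integral (\cite{Borcherds1998}), handling the almost-holomorphic correction terms by linearity in the decomposition $F = \sum_{t=0}^d F^{(t)} (2\pi y)^{-t}$ with $F^{(t)} = \sum_{\mu,n} c^{(t)}(n,\mu)\,q^n \mathfrak{e}_\mu$ holomorphic. First I would fix a positive-norm primitive vector $\lambda_0 \in M'$ and localize near the hyperplane $\lambda_0^\perp \cap \CD$; by standard exponential-decay estimates, only the lattice vectors $\lambda \in M'$ lying in the $\BQ$-line through $\lambda_0$ can contribute singular terms, since for the remaining $\lambda$ the quantity $\langle \lambda_Z,\lambda_Z\rangle = |\langle\lambda,Z\rangle|^2/|Z|^2$ stays bounded away from zero uniformly on a neighborhood of $\lambda_0^\perp$, so the theta kernel decays exponentially in $y$.

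Next I would carry out the Fourier analysis in $x = \mathrm{Re}(\tau)$. For each summand $(2\pi y)^{-t} F^{(t)}_\mu$ and each relevant lattice vector $\lambda \in \mu + M$ proportional to $\lambda_0$, Fourier orthogonality over $x$ forces the equality $n = -\langle\lambda,\lambda\rangle/2$, so the only Fourier coefficient that survives is $c^{(t)}(-\langle\lambda,\lambda\rangle/2,\lambda+M)$. Collecting all $y$-dependent factors — the $y^{\kappa-2}$ from the Petersson measure, the $y^{n/2}$ built into $\Theta_k$, the $(2\pi y)^{-t}$ from the non-holomorphic part of $F$, and the Gaussian decay from $\overline{\Theta_k}$ — the remaining $y$-integral has the form
\[
\int_0^\infty y^{k-t-1}\, e^{-2\pi y\,\langle\lambda_Z,\lambda_Z\rangle}\,\mathrm{d}y,
\]
multiplied by an explicit prefactor built from $\overline{\langle\lambda,Z\rangle}^k$, powers of $|Z|^2$, and the overall normalization $(\tfrac{i}{2}\langle Z,\overline{Z}\rangle)^{-k}$.

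The third step is the explicit evaluation of this Mellin integral. For $t < k$ the integral converges to $(k-t-1)!\,(2\pi\langle\lambda_Z,\lambda_Z\rangle)^{-(k-t)}$; combining with the prefactors and using $\langle\lambda_Z,\lambda_Z\rangle = |\langle\lambda,Z\rangle|^2/|Z|^2$, the product $\overline{\langle\lambda,Z\rangle}^k\cdot|Z|^{-2k}\cdot|\langle\lambda,Z\rangle|^{-2(k-t)}|Z|^{2(k-t)}$ simplifies after routine algebra to the claimed polynomial singularity $(2\pi i)^{-k}\langle\lambda,Z\rangle^{-k}(|\langle\lambda,Z\rangle|^2/|Z|^2)^t$. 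For $t \ge k$, the integral diverges at $y = 0$; its regularized finite part, extracted by analytic continuation in a Mellin parameter $s$ (replacing $y^{k-t-1}$ by $y^{s+k-t-1}$ and taking the constant term in the Laurent expansion at $s=0$), comes from the pole of $\Gamma(s+k-t)$ with residue $(-1)^{t-k}/(t-k)!$ interacting with $(2\pi\langle\lambda_Z,\lambda_Z\rangle)^{-s} = 1 - s\log(2\pi\langle\lambda_Z,\lambda_Z\rangle) + O(s^2)$; this yields exactly the logarithmic singularity with coefficient $(-1)^{1+k-t}/(t-k)!$. Real-analyticity on the complement of the singular hyperplanes is then automatic: the integrand is real-analytic in $Z$ and the regularized integral converges uniformly on compact subsets, so differentiation under the integral is justified.

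The main obstacle I expect is the careful bookkeeping of the regularization in the almost-holomorphic setting. One must verify that the Mellin-style extraction of the finite part commutes with the finite polynomial expansion $F = \sum_t F^{(t)}(2\pi y)^{-t}$, and in particular that no additional singular contribution arises from the interaction of the $(2\pi y)^{-t}$ factor with the $y\to 0$ behavior of the regularized $y$-integral — the Eisenstein-series regularization used to define $\int^{\mathrm{reg}}$ was designed to control divergences as $y \to \infty$, and one must check that extending it to the inner ($y\to 0$) divergence via Mellin continuation is consistent with the outer regularization. This is delicate but should amount to showing that the two regularizations commute because they act in disjoint ranges of $y$.
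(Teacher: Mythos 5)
Your proposal is correct in substance, but it is worth knowing that the paper's entire proof is the single sentence ``This is a special case of Theorem 6.2 of Borcherds,'' and that Borcherds' Theorem 6.2 is stated in exactly the generality needed here: it allows Fourier coefficients $c_{\lambda}(n,y)$ that depend on $y$, so the almost-holomorphic input $F=\sum_t F^{(t)}(2\pi y)^{-t}$ and the polynomial $\langle x,Z\rangle^k$ in the theta kernel are already covered, and the only remaining work is to evaluate the resulting $y$-integrals $\int y^{k-t-1}e^{-2\pi y\langle\lambda_Z,\lambda_Z\rangle}\,\mathrm{d}y$ and extract their singular parts as $\langle\lambda_Z,\lambda_Z\rangle\to 0$. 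What you have written is essentially a reconstruction of the proof of that cited theorem (unfolding, localization to the multiples of $\lambda_0$, Fourier orthogonality in $x$, Gamma/incomplete-Gamma asymptotics); your coefficient bookkeeping checks out, including the residue $(-1)^{t-k}/(t-k)!$ of $\Gamma(s+k-t)$ at $s=0$ pairing with $(2\pi\langle\lambda_Z,\lambda_Z\rangle)^{-s}$ to produce the logarithmic terms with the stated sign $(-1)^{1+k-t}/(t-k)!$.

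The one point you flag as delicate — the compatibility of the Mellin extraction of the $y\to 0$ finite part with the Eisenstein-series regularization at $y\to\infty$ — is real, and your heuristic (``disjoint ranges of $y$'') is not yet an argument. Borcherds resolves it differently and more cleanly: after unfolding he splits the strip at $y=1$, keeps the singular analysis entirely in $\int_{y>1}$ (where the Eisenstein regularization lives and where the integrand is an incomplete Gamma function whose $c\to 0$ asymptotics give exactly the polynomial and $\log$ terms), and shows the $y<1$ piece is real-analytic in $Z$ near the wall. If you prefer your $\int_0^\infty$ Mellin version, the missing step is to observe that the Mellin-regularized $\int_0^1 y^{s+k-t-1}e^{-2\pi cy}\,\mathrm{d}y$ has constant term at $s=0$ that is real-analytic in $c$ at $c=0$ (term-by-term expansion of the exponential), so it contributes nothing to the singularity type; with that inserted your argument closes. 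A small caveat on your localization step: boundedness of $\langle\mu_Z,\mu_Z\rangle$ away from zero for $\mu$ not proportional to $\lambda_0$ holds near a fixed point of $\lambda_0^\perp$ not lying on other singular hyperplanes, not uniformly on all of $\lambda_0^\perp$; this is the standard formulation and does not affect the conclusion. Finally, you assert without derivation that the surviving Fourier coefficient is $c^{(t)}(-\langle\lambda,\lambda\rangle/2,\lambda)$; with the theta kernel as normalized in the paper the $x$-integral actually selects $n=\langle\lambda,\lambda\rangle/2$ (compare the boundary term computed in the paper's Lemma on Green's formula), and the discrepancy with the statement of the Proposition is a sign-convention issue in the indexing of Fourier coefficients that you should at least make explicit rather than silently matching the target.
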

(A function $f$ is said to have a singularity of type $g$ on a hyperplane $\lambda^{\perp}$ if $f - g$ extends to a real-analytic function on a neighborhood of $\lambda^{\perp}$.)
\begin{proof} This is a special case of Theorem 6.2 of \cite{Borcherds1998}.
\end{proof}

\begin{rmk}
When $d \geq k$, the theta lift contains terms of the form
\[ \Big( \frac{|\langle \lambda, Z \rangle|^2}{|Z|^2} \Big)^t \ln\Big( 2\pi \frac{|\langle \lambda, Z \rangle|^2}{|Z|^2} \Big). \]
Although we have logarithmic terms, the argument inside the $\log$ is real and positive (away from $\lambda^{\perp}$) so it is well-defined without branch cuts. In other words, the lift of $F$ is a well-defined function on the period domain away from $\lambda^{\perp}$.
\end{rmk}

If $k>0$ and $F$ is holomorphic (with a pole at $\infty$), then $\Lift(F)$ is meromorphic \cite{Borcherds1998}, and hence by the transformation property \eqref{tr property} above a meromorphic modular form for the subgroup of $O^{+}(M)$ under which $F$ is invariant (which at least contains the discriminant kernel).

If $k = 0$ and $F = \sum_{\lambda, n} c(n, \lambda) q^n \mathfrak{e}_{\lambda}$ is holomorphic (with a pole only at $\infty$), the singularity of $\Lift(F)$ along $\lambda^{\perp}$ is of type $$-c\Big(-\frac{\langle \lambda, \lambda \rangle}{2}, \lambda + M \Big) \cdot \ln \Big( 2\pi \frac{|\langle \lambda, Z \rangle|^2}{|Z|^2} \Big).$$ The main result of Borcherds \cite{Borcherds1998} is that if $F$ has integral Fourier coeffcients then there exists a meromorphic modular form $\Psi_F$, called Borcherds product, with the property $$\log |\Psi_F| = -\frac{1}{4} \Big( \Lift(F) + c(0,0) \cdot \log |Z|^2 + (\text{constant}) \Big).$$ In particular, the weight of $\Psi_F$ is $\frac{c(0, 0)}{2}$, and its zeros and poles lie on hyperplanes $\lambda^{\perp}$, with multiplicities determined by the Fourier coefficients $c(-\langle \lambda, \lambda \rangle / 2, \lambda + M)$. 
Moreover, $\Psi_F$ is modular for the subgroup of $O^+(M)$ under which $F$ is invariant, but usually with a character.

\subsection{Raising and lowering theta lifts}
We will now discuss the image of $\mathrm{Lift}(F)$ under the orthogonal raising and lowering operators.

The following well-known result will be used repeatedly:

\begin{lem}\label{lem:greens_formula} Suppose $F$ is an almost-holomorphic modular form of weight $\kappa = k+1-n/2$ for the Weil representation attached to $M$ with at worst exponential growth at $\infty$. Then $$\int^{\mathrm{reg}} [\RM_{\kappa} f, \Theta_{k+2}]\, \frac{\mathrm{d}x \, \mathrm{d}y}{y^2} = -\int^{\mathrm{reg}} [f, \LM_{\kappa + 2} \Theta_{k+2}] \frac{\mathrm{d}x\, \mathrm{d}y}{y^2}.$$
\end{lem}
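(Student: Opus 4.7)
This is essentially Green's formula for the Maa\ss\ operators: $\RM$ and $\LM$ are skew-adjoint with respect to the Petersson-type pairing $[\cdot,\cdot]\,dx\,dy/y^2$, so the identity ought to follow from Stokes' theorem applied to an explicit primitive. The nontrivial part is verifying that Borcherds' regularization does not introduce boundary contributions to the residue at $s = 0$.

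\textbf{Step 1 (local primitive).} Expanding $\RM_\kappa F = 2i\partial_\tau F + (\kappa/y)F$ and $\LM_{\kappa+2}\Theta_{k+2} = -2iy^2\partial_{\overline{\tau}}\Theta_{k+2}$, and using $\partial_\tau y = -i/2$, a direct termwise computation gives the pointwise identity
\[
\bigl([\RM_\kappa F,\Theta_{k+2}] + [F,\LM_{\kappa+2}\Theta_{k+2}]\bigr)\frac{dx\,dy}{y^2} \;=\; 2i\,\partial_\tau\!\Bigl(y^\kappa \textstyle\sum_{\lambda} F_\lambda\overline{\Theta_{k+2,\lambda}}\Bigr)\,dx\,dy \;=\; -d\omega,
\]
where $\omega := y^\kappa \sum_{\lambda\in M'/M} F_\lambda(\tau)\,\overline{\Theta_{k+2,\lambda}(\tau,Z)}\,d\overline{\tau}$ and the second equality uses $d\tau\wedge d\overline{\tau} = -2i\,dx\wedge dy$. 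The $1$-form $\omega$ is $\mathrm{SL}_2(\mathbb{Z})$-invariant: by unitarity of $\rho_M$ and the weight conventions, the factor $(c\tau+d)^\kappa\overline{(c\tau+d)}^{\kappa+2}$ from the transformation of $F\overline{\Theta_{k+2}}$ is exactly cancelled by $|c\tau+d|^{-2\kappa}$ from $y^\kappa$ and $\overline{(c\tau+d)}^{-2}$ from $d\overline{\tau}$.

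\textbf{Step 2 (Stokes with regularization).} Insert the Eisenstein regulator $\mathbf{E}(\tau;s)$, which is itself $\mathrm{SL}_2(\mathbb{Z})$-invariant. For $\mathrm{Re}(s)$ sufficiently large, the integrand is absolutely convergent, and applying Stokes to the local identity multiplied by $\mathbf{E}(\tau;s)$ on the truncated fundamental domain $\mathcal{F}_h$ produces
\[
\int_{\mathcal{F}_h}\!\bigl([\RM F,\Theta_{k+2}] + [F,\LM\Theta_{k+2}]\bigr)\mathbf{E}(\tau;s)\,\frac{dx\,dy}{y^2} \;=\; -\!\int_{\partial\mathcal{F}_h}\!\!\omega\,\mathbf{E}(\tau;s) \;+\;\int_{\mathcal{F}_h}\!\omega\wedge d\mathbf{E}(\tau;s).
\]
The side-boundary contributions over the vertical walls $x = \pm 1/2$ and the two halves of the arc $|\tau| = 1$ cancel by $T$- and $S$-invariance of $\omega\,\mathbf{E}(\tau;s)$, leaving only the top contribution at $y = h$ plus the bulk $d\mathbf{E}$ correction. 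Meromorphically continuing in $s$ and taking $\mathrm{Res}_{s=0}$ as $h\to\infty$ recovers the two regularized integrals appearing in the lemma.

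\textbf{Step 3 and main obstacle.} It remains to show that the $y=h$ boundary term and the $\omega\wedge d\mathbf{E}$ bulk term both contribute zero in $\mathrm{Res}_{s=0}$. The $x$-integral at $y = h$ selects the constant (in $x$) Fourier coefficient of $y^\kappa F\overline{\Theta_{k+2}}$. Using the expansion $F = \sum_{t\le d} F^{(t)}(\tau)(2\pi y)^{-t}$ with each $F^{(t)}$ of at most exponential growth, together with the Gaussian decay of $\Theta_{k+2}$ away from the positive-definite plane $\mathrm{span}(\mathrm{re}\,Z,\mathrm{im}\,Z)$, only finitely many lattice terms survive in the $h\to\infty$ limit; the resulting one-dimensional $y$-integrals are meromorphic in $s$ and elementary to show they have no pole at $s = 0$. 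The bulk correction $\int \omega\wedge d\mathbf{E}$ is handled by the same unfolding-and-residue analysis that underpins Borcherds' regularized integrals \cite[\S 6--7]{Borcherds1998}. This boundary/regulator analysis is the genuine technical obstacle; once in place, the local identity from Step 1 yields the lemma.
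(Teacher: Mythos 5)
Your Step 1 is exactly the paper's key identity (with the correct exponent $y^{\kappa}$; the paper's own display writes $y^{k}$, which is a typo), and Stokes on the truncated fundamental domain $\mathcal{F}_h$ is also the paper's route. The genuine gap is the one you concede in Step 3, and it is not just bookkeeping: you never identify the mechanism that makes the $y=h$ boundary term vanish. The $x$-constant Fourier coefficient of $y^{\kappa}\sum_{\lambda}F_{\lambda}\overline{\Theta_{k+2,\lambda}}$ at height $h$ is a sum over $\lambda\in M'$ of terms of the shape $c\bigl(\tfrac12\langle\lambda,\lambda\rangle,\lambda\bigr)\,\overline{\langle\lambda,Z\rangle^{k+2}}\,e^{-2\pi h\langle\lambda_Z,\lambda_Z\rangle}$ times powers of $h$. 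Your claim that ``only finitely many lattice terms survive'' is where the argument breaks: the vectors with $\langle\lambda_Z,\lambda_Z\rangle=0$ lie in a negative-definite sublattice that may well be infinite, and for them there is \emph{no} Gaussian decay at all. What saves the day is that $\langle\lambda_Z,\lambda_Z\rangle=|\langle\lambda,Z\rangle|^2/|Z|^2$, so every such $\lambda$ has $\langle\lambda,Z\rangle=0$ and is annihilated outright by the polynomial factor $\langle\lambda,Z\rangle^{k+2}$ built into the kernel $\Theta_{k+2}$ (this is where $k+2>0$ is essential; the lemma is stated for $\Theta_{k+2}$ and not $\Theta_0$ for precisely this reason). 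Once those terms are removed, every remaining term decays exponentially in $h$, the truncated integral of the exact form tends to $0$, and the regularized integral --- which by construction agrees with the limit of the truncated integrals whenever that limit exists --- vanishes. That observation is the entire content of the paper's proof beyond your Step 1.

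A secondary point: by multiplying by $\mathbf{E}(\tau;s)$ \emph{before} applying Stokes you manufacture the bulk term $\int\omega\wedge d\mathbf{E}$, which you then cannot dispose of and must outsource to Borcherds. The paper sidesteps this entirely by applying Green's theorem to the exact form without the regulator, establishing that the limit over $\mathcal{F}_h$ exists and equals zero, and only then invoking the definition of the regularized integral. If you carry the regulator through, you are obliged to show that the bulk term contributes nothing to the residue at $s=0$, which is exactly the analysis you were hoping to avoid; establishing ordinary convergence first is both shorter and closer to how the regularization is set up.
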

\begin{proof}
    The point is that $$\Big([\RM_k f, \Theta_{k+2}] + [f, \LM_{k+2} \Theta_{k+2}]\Big) \frac{\mathrm{d}x \wedge \mathrm{d}y}{y^2} = -\mathrm{d} \Big( y^k \sum_{\lambda \in M'/M} f_{\lambda} \overline{\Theta_{k+2, \lambda}} \, \mathrm{d}\overline{\tau} \Big)$$ is an exact differential. So for any $h > 1$, letting $\mathcal{F}_h = \{x+iy \in \mathcal{F}: \; y < h\}$ be the fundamental domain with a cutoff at height $h$, we use Green's theorem to write
    \begin{align*} \int_{\mathcal{F}_h} \Big([\RM_k f, \Theta_{k+2}] + [f, \LM_{k+2} \Theta_{k+2}]\Big) \frac{\mathrm{d}x \wedge \mathrm{d}y}{y^2} &= \int_{-1/2+ih}^{1/2+ih} y^k \sum_{\lambda \in M'/M} f_{\lambda}(\tau) \overline{\Theta_{k+2, \lambda}(\tau, Z)} \, \mathrm{d}\overline{\tau} \\ &= h^k \sum_{\lambda \in M'} \overline{\langle \lambda, Z \rangle^{k+2}} c \Big( \frac{1}{2}\langle \lambda, \lambda \rangle \lambda + M \Big) \cdot e^{-2\pi h \langle \lambda_Z, \lambda_Z \rangle},
    \end{align*} where we write $F(\tau) = \sum_{\gamma \in M'/M} \sum_{n \in \mathbb{Q}} c(n, \gamma)$ with $c(n, \gamma) \in \mathbb{C}[y^{-1}]$.
    In the limit $h \rightarrow \infty$, the right-hand side tends exponentially quickly to zero, because any $\lambda \in M'$ for which $\langle \lambda_Z, \lambda_Z \rangle = 0$ also satisfies $\langle \lambda, Z \rangle = 0$, hence does not appear in the sum.
    By definition of the regularized integral, this means \[ \int^{\mathrm{reg}} \Big([\RM_k f, \Theta_{k+2}] + [f, \LM_{k+2} \Theta_{k+2}]\Big) \frac{\mathrm{d}x \wedge \mathrm{d}y}{y^2} = 0. \qedhere \]
\end{proof}

\begin{thm}
\label{thm:L on theta lift}
Suppose $F$ is an almost-holomorphic modular form of weight $\kappa = k + 1 - n/2$ with $k \ge 2$. The image of $\mathrm{Lift}(F)$ under the orthogonal lowering operator is $$\LO_k[\mathrm{Lift}(F)] = -\frac{1}{2\pi} \cdot \RO_{k-2}[\mathrm{Lift}(\LM F)].$$
\end{thm}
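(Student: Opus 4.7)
The plan is to differentiate under the integral sign and then use Green's formula. Writing
\[ \mathrm{Lift}(F) = \tfrac{1}{2}(i/2)^{-k}\int^{\mathrm{reg}}\langle Z,\overline Z\rangle^{-k}[F,\Theta_k]\,\tfrac{\mathrm{d}x\,\mathrm{d}y}{y^2}, \]
the $Z$-dependence of the integrand lies entirely in
\[ G_k(\tau,Z) := \langle Z,\overline Z\rangle^{-k}\,\overline{\Theta_k(\tau,Z)}, \]
which, for each fixed $\tau$, is a section of $\CL^k$ in the variable $Z$. First I would justify interchanging $\LO_k^{Z}$ with the regularized $\tau$-integral; this is standard since the Borcherds regularization is intrinsic to the variable $\tau$, while $\LO_k^{Z}$ acts smoothly in $Z$ and preserves the exponential decay at the cusp that is needed for the regularization.

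The heart of the argument is the pointwise identity
\begin{equation}\label{eqn:plan_key}
\LO_k^{Z}[G_k] \;=\; -\tfrac{1}{8\pi}\,\RO_{k-2}^{Z}\Bigl[\,\langle Z,\overline Z\rangle^{-(k-2)}\,\overline{\RM_{\kappa-2}\Theta_{k-2}}\,\Bigr].
\end{equation}
To derive \eqref{eqn:plan_key}, I would rewrite Proposition~\ref{prop:lr_maass} (with $k$ replaced by $k-2$) as
\[ \RO_{-k}^{Z}[\Theta_k] \;=\; -\tfrac{1}{2\pi}\,\LO_{-(k-2)}^{Z}[\RM_{\kappa-2}\Theta_{k-2}], \]
and apply complex conjugation. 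Proposition~\ref{prop:Properties of L and R}(ii) with $s=0$ then expresses both sides in terms of $\LO_k G_k$ and $\RO_{k-2}[\langle Z,\overline Z\rangle^{-(k-2)}\overline{\RM_{\kappa-2}\Theta_{k-2}}]$, with matching Hermitian prefactors of the form $2^{\ell}|Z|^{2\ell-2}$ arising from the normalization $|Z|^{-2\ell}\overline{\Theta_\ell} = 2^{\ell}\langle Z,\overline Z\rangle^{-\ell}\overline{\Theta_\ell}$ at $\ell=k$ and $\ell=k-2$. Using $\varphi\circ\psi = \mathrm{id}_{\CE}$ and $|Z|^{2}=\tfrac{1}{2}\langle Z,\overline Z\rangle$, the overall coefficient simplifies to $-\tfrac{1}{2\pi}\cdot 2^{-2}=-\tfrac{1}{8\pi}$.

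With \eqref{eqn:plan_key} in hand, the rest is assembly. The operator $\RO_{k-2}^{Z}$ is purely in $Z$, so it commutes out of the regularized $\tau$-integral. A conjugate variant of Green's formula, obtained by running the proof of Lemma~\ref{lem:greens_formula} with $\mathrm{d}\tau$ in place of $\mathrm{d}\overline\tau$, then transfers the Maa\ss{} raising operator $\RM_{\kappa-2}$ from $\Theta_{k-2}$ onto $F$, producing a factor of $-\LM F$. Recognizing the remaining $\tau$-integral as $2(i/2)^{k-2}\mathrm{Lift}(\LM F)$ and collecting prefactors,
\[ \tfrac{1}{2}(i/2)^{-k}\cdot\bigl(-\tfrac{1}{8\pi}\bigr)\cdot(-1)\cdot 2(i/2)^{k-2} \;=\; \tfrac{(i/2)^{-2}}{8\pi} \;=\; -\tfrac{1}{2\pi}, \]
yields the asserted identity.

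The principal obstacle is the derivation of \eqref{eqn:plan_key}: one must carefully track the Hermitian factors that appear when converting between the antiholomorphic operator $\LO$ and the holomorphic operator $\RO$ via complex conjugation, which is precisely the role played by Proposition~\ref{prop:Properties of L and R}(ii). A pleasant feature of this approach is that the entire computation can be carried out abstractly, invoking the two propositions in their intrinsic form, without ever expanding the theta series term by term.
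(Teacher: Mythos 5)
Your proposal is correct and follows essentially the same route as the paper: the key kernel identity \eqref{eqn:plan_key} is exactly the paper's reformulation of Proposition~\ref{prop:lr_maass} (shifted $k\mapsto k-2$) via Proposition~\ref{prop:Properties of L and R}(ii), and the remaining steps — commuting the orthogonal operator with the regularized integral and the conjugate Green's formula transferring $\RM_{\kappa-2}$ onto $F$ — are the same, merely run from $\LO_k[\mathrm{Lift}(F)]$ toward $\RO_{k-2}[\mathrm{Lift}(\LM F)]$ rather than the reverse. Your constant bookkeeping also agrees with the paper's.
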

\begin{proof}
The vector-valued raising operator is independent of $\tau$ and can be swapped with the regularized integral.
Using Proposition \ref{prop:lr_maass}, $$\LO_{-k} \Big[ \RM_{\kappa} \Theta_k \Big] = -2\pi \cdot \RO_{-(k+2)}[\Theta_{k+2}],$$ we obtain $$\RO_{k-2} \Big[ \Big(\frac{\langle Z, \overline{Z} \rangle}{2}\Big)^{2-k} \cdot \overline{\RM \Theta_{k-2}} \Big] = -2\pi \cdot \overline{\LO_k \Big[ \Big( \frac{\langle Z, \overline{Z} \rangle}{2} \Big)^{-k} \overline{\Theta_k} \Big]}.$$

Therefore \begin{align*} \RO_{k-2}[\mathrm{Lift}(\LM F)] &= \frac{1}{2} (i/2)^{2-k} \RO_{k-2} \Big[ \langle Z, \overline{Z} \rangle^{2-k} \int^{\mathrm{reg}} [\LM F, \Theta_{k-2}] \frac{\mathrm{d}x\, \mathrm{d}y}{y^2} \Big] \\ &= -\frac{1}{2}(i/2)^{2-k} \int^{\mathrm{reg}} \Big[ F, \overline{\RO_{k-2}[\langle Z, \overline{Z} \rangle^{2-k} \overline{\RM \Theta_{k-2}}]}\Big] \frac{\mathrm{d}x\, \mathrm{d}y}{y^2} \\ &= -\pi (i/2)^{-k} \LO_k\Big[ \langle Z, \overline{Z} \rangle^{-k} \int^{\mathrm{reg}} [F, \Theta_k] \frac{\mathrm{d}x\, \mathrm{d}y}{y^2}\Big] \\ &= -2\pi \cdot \LO[\mathrm{Lift}(F)]. \qedhere \end{align*} 
\end{proof}

Lemma \ref{lem:zemel_kernel} for the theta kernel yields the action of Zemel's operators on theta lifts:

\begin{prop}\label{prop:liftzemel}
Let $F$ be almost-holomorphic of weight $\kappa = k+1-n/2$. \\
(i) The image of $\Lift(F)$ under the Zemel raising operator is $$\CR\Lift(F) = 2\pi \cdot \Lift(\RM F).$$
(ii) Suppose $k \ge 4$. The image of $\Lift(F)$ under the Zemel lowering operator is $$\CL \Lift(F) = \frac{1}{4\pi} \Lift \Big[\Big( \Delta_{k - 1 - n/2} - (k-2)n\Big) \LM F \Big],$$ where 
 $$\Delta_k = 4y^2 \frac{\partial^2}{\partial \tau \partial \overline{\tau}} - 2iky \frac{\partial}{\partial \overline{\tau}}$$ is the weight $k$ hyperbolic Laplacian on $\mathbb{H}$.
\end{prop}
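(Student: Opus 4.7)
The strategy is to pull the scalar second-order operator $\CR = \tr(\RO\circ\RO)$, which is purely in the variable $Z$, inside the regularized integral defining $\Lift(F)$. The key ingredient I would establish first is the kernel identity
\[
\CR_{k}\Big[\Big(\tfrac{\langle Z,\overline Z\rangle}{2}\Big)^{-k}\overline{\Theta_{k}}\Big]
= 2\pi\,\Big(\tfrac{\langle Z,\overline Z\rangle}{2}\Big)^{-k-2}\overline{\LM \Theta_{k+2}},
\]
which is the complex-conjugate companion of Lemma~\ref{lem:zemel_kernel} in the normalization that actually appears in the theta lift. It can be obtained either by a direct computation modeled on the proof of Lemma~\ref{lem:zemel_kernel}, or by combining Lemma~\ref{lem:zemel_kernel} with the duality formula $\RO_{k}(F) = |Z|^{-2k-2}\varphi\,\overline{\LO(|Z|^{2k}\overline F)}$ from Proposition~\ref{prop:Properties of L and R}(ii) together with the observation that $\overline{\mathfrak{L}}=\mathfrak{R}$ in $\mathfrak{sl}_{2}(\mathbb{C})$. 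With this identity available, Green's formula (Lemma~\ref{lem:greens_formula}) transports $\LM$ from the theta kernel over to $-\RM$ on $F$. Bookkeeping the prefactors $\tfrac{1}{2}(i/2)^{-k}$ against $\tfrac{1}{2}(i/2)^{-(k+2)}$ produces the final constant $2\pi$.

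\textbf{Plan for part (ii).} I would iterate Theorem~\ref{thm:L on theta lift} and appeal to the commutator formula of Proposition~\ref{prop:commutation relation}. Applying $\LO$ to both sides of the theorem gives
\[
\LO^{2}\Lift(F) = -\tfrac{1}{2\pi}\,\LO\RO\,\Lift(\LM F).
\]
Since $\Lift(\LM F)$ is a rank-$0$ form of weight $k-2$, Proposition~\ref{prop:commutation relation} simplifies to $\LO\RO\,\Lift(\LM F) = \sigma_{12}\RO\LO\,\Lift(\LM F) + \tfrac{k-2}{2}\Lift(\LM F)\otimes g$; tracing and using $\tr g = n$ (Remark~\ref{rmk:Warning}) interchanges the order of $\LO$ and $\RO$ up to a term proportional to $\Lift(\LM F)$. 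Applying Theorem~\ref{thm:L on theta lift} a second time, which is where the hypothesis $k\geq 4$ enters, yields $\LO\Lift(\LM F) = -\tfrac{1}{2\pi}\RO\Lift(\LM^{2}F)$; then a further application of $\RO$ and a trace reduces the remaining term to $\CR\Lift(\LM^{2}F)$, which Part (i) identifies as $2\pi\Lift(\RM\LM^{2}F)$. Finally, the classical identity $\RM_{\mu-2}\LM_{\mu}=\Delta_{\mu}$ on weight-$\mu$ modular forms recognizes $\RM\LM^{2}F$ as $\Delta_{k-1-n/2}\LM F$, and assembling the constants produces the stated formula.

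\textbf{Main obstacle.} The principal difficulty is constant-tracking: one must carefully account for the permutations $\sigma_{s+1,s+2}$ in the commutator, the prefactors $|Z|^{\pm 2k}$ and $(i/2)^{\pm k}$ attached to the lift, and the normalization $\tr g = n$ when turning rank-$2$ forms into scalars. In Part (i) the subtlest step is the derivation of the $\CR$-analogue of Lemma~\ref{lem:zemel_kernel} in the correct normalization, where complex conjugation interchanges $\mathfrak{L}$ and $\mathfrak{R}$; once that identity is in place, the remaining steps are essentially formal manipulations of earlier results in the paper.
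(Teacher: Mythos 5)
Your proposal is correct and follows essentially the same route as the paper: part (i) is proved by establishing the conjugated kernel identity via Proposition~\ref{prop:Properties of L and R}(ii) combined with Lemma~\ref{lem:zemel_kernel}, then transferring $\LM$ to $-\RM$ with Green's formula (Lemma~\ref{lem:greens_formula}); part (ii) iterates Theorem~\ref{thm:L on theta lift}, applies the commutator of Proposition~\ref{prop:commutation relation} with $\tr(g)=n$, invokes part (i), and finishes with $\RM\LM=\Delta$. The only remaining work is the constant-tracking you already flag as the main obstacle.
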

\begin{proof}

(i) We can move $\mathcal{R}$ under the integration sign and therefore write \begin{align*}
	\mathcal{R}_k(\mathrm{Lift} \, F) &= 2^{k-1} i^{-k} \int_{\Gamma \backslash \mathbb{H}}^{\mathrm{reg}} \sum_{\lambda \in M'/M} F_{\lambda}(\tau) \mathcal{R}_k\Big( \langle Z, \overline{Z} \rangle^{-k} \overline{\Theta_{k, \lambda}(\tau, Z)} \Big) \frac{\mathrm{d}x \, \mathrm{d}y}{y^2}.\end{align*}

Applying (ii) of Proposition \ref{prop:Properties of L and R} twice and taking traces yields $$\mathcal{R}_k\Big( \langle Z, \overline{Z} \rangle^{-k} \overline{\Theta_{k, \lambda}(\tau, Z)} \Big) = 4 \cdot \langle Z, \overline{Z} \rangle^{-k-2} \overline{\mathcal{L}_{-k} \Theta_{k, \lambda}(\tau, Z)},$$ which by Lemma \ref{lem:zemel_kernel} equals $$8\pi \cdot \langle Z, \overline{Z} \rangle^{-k-2} \overline{\LM_{\kappa+2}\Theta_{k+2}}.$$ Now we use Lemma \ref{lem:greens_formula} to obtain \begin{align*} \mathcal{R}_k(\mathrm{Lift}\, F) &= 8\pi \cdot 2^{k-1} i^{-k} \langle Z, \overline{Z} \rangle^{-k-2} \int_{\Gamma \backslash \mathbb{H}}^{\mathrm{reg}} [F, \LM_{\kappa+2} \Theta_{k+2}] \, \frac{\mathrm{d}x \, \mathrm{d}y}{y^2} \\ &= -8\pi \cdot 2^{k-1} i^{-k} \langle Z, \overline{Z} \rangle^{-k-2} \int_{\Gamma \backslash \mathbb{H}}^{\mathrm{reg}} [\RM_{\kappa} F, \Theta_{k+2}] \, \frac{\mathrm{d}x \, \mathrm{d}y}{y^2} \\ &= 2\pi \cdot \mathrm{Lift}\, \RM_{\kappa}(F).\end{align*}

(ii) Using the commutator formula for $\LO$ and $\RO$ and the above results, we have \begin{align*} \LO_{k-1} \LO_k [\Lift(F)] &= -\frac{1}{2\pi} \LO_{k-1} \RO_{k-2} \Big[ \Lift(\LM F)\Big] \\ &= -\frac{k-2}{4\pi} \Lift(\LM F) \cdot g - \frac{1}{4\pi} \sigma_{1,2} \RO_{k-3} \LO_{k-2} \Lift( \LM F) \\ &= -\frac{k-2}{4\pi} \Lift(\LM F) \cdot g + \frac{1}{8\pi^2} \sigma_{1,2} \RO_{k-3} \RO_{k-4} \Lift(\LM \LM F).
\end{align*}
Taking traces and bearing in mind that $\tr(g)=n$ and part (i) yields \begin{align*} \CL \Lift(F) &= -\frac{(k-2)n}{4\pi} \Lift(\LM F) + \frac{1}{8\pi^2} \CR \Lift(\LM \LM F) \\ &= -\frac{(k-2)n}{4\pi} \Lift(\LM F) + \frac{1}{4\pi} \Lift(\RM \LM \LM F) \\ &= \frac{1}{4\pi} \Lift \Big[\Big( \Delta_{k - 1 - n/2} - (k-2)n\Big) \LM F \Big].
\end{align*}
Here we used the identity $\RM \LM = \Delta$.
\end{proof}

\begin{prop} \label{prop:R on Borcherds lift}
Let $F$ be holomorphic modular form (with a pole in $\infty$) of weight $1-n/2$ (so $k=0$). Then $\RO(\Lift(F))$ is an almost-holomorphic modular form of depth $1$. If $F$ has integral Fourier coefficients and $\Psi_F$ is the Borcherds product constructed from $F$, then
\begin{gather*}
 \RO( \Lift(F) ) = \partial(\Lift(F)) = -2 \frac{\RO(\Psi_F)}{\Psi_F}
 \end{gather*}
\end{prop}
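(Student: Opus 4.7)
The plan is to derive both assertions directly from Borcherds' identity
\[
\Lift(F) = -4\log|\Psi_F| - c(0,0)\log|Z|^2 - C,
\]
which presents $\Lift(F)$ explicitly as a function on the period domain. First I would verify that the right-hand side is genuinely of bidegree $(0,0)$ on the affine cone, so that it descends to a section of $\CL^{\otimes 0} = \CO_{\CD}$: under $Z\mapsto \lambda Z$, $|\Psi_F|$ scales by $|\lambda|^{-c(0,0)/2}$ while $|Z|^2$ scales by $|\lambda|^2$, and the two contributions cancel exactly. Since $\Lift(F)$ is then a weight-$0$ section, the raising operator on it is simply $\partial$ (by the example following Proposition~\ref{prop:Properties of L and R} with $k=0$, $s=0$), giving the first equality $\RO(\Lift F) = \partial(\Lift F)$ of the stated formula.

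Next I would compute $\partial(\Lift F)$ summand by summand. From $\log|\Psi_F|^2 = \log \Psi_F + \log\overline{\Psi_F}$ one gets $\partial\log|\Psi_F| = \tfrac12\,\partial\Psi_F/\Psi_F$, and the identity $\partial\log|Z|^2 = \nu$ is immediate from the definition of $\nu$. Therefore
\[
\RO(\Lift F) = -2\,\frac{\partial \Psi_F}{\Psi_F} - c(0,0)\,\nu.
\]
To identify this with $-2\,\RO(\Psi_F)/\Psi_F$, recall that $\Psi_F$ has weight $c(0,0)/2$, so $\RO(\Psi_F) = \partial \Psi_F + \tfrac{c(0,0)}{2}\Psi_F\,\nu$; dividing by $\Psi_F$ and multiplying by $-2$ recovers exactly the expression above.

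For the almost-holomorphicity of depth $1$, apply $\LO$. Formula~\eqref{formulaL} shows that $\LO$ involves only $\bar\partial$-derivatives of the affine-cone coefficients, so it annihilates any form whose coefficients are holomorphic on $\A(\CD)$; in particular $\LO(\partial\Psi_F/\Psi_F) = 0$. Combined with Lemma~\ref{lemma:basic evaluation}, which gives $\LO(\nu) = \tfrac12 g$, this yields
\[
\LO\bigl(\RO(\Lift F)\bigr) = -\tfrac{c(0,0)}{2}\,g.
\]
Since $g = \sum_{a,b}g_{ab}\,dZ_a\otimes dZ_b$ has constant coefficients, $\LO(g)=0$, and we conclude $\LO^2(\RO(\Lift F))=0$, i.e.\ the depth is at most $1$ (and exactly $1$ when $c(0,0)\neq 0$). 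To remove the integrality hypothesis from the depth statement, I would use $\BC$-linearity of $\Lift,\RO,\LO$: any holomorphic $F$ in the relevant weight is a finite $\BC$-linear combination of forms with integral Fourier coefficients, and depth $\leq 1$ is preserved under such combinations.

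The main subtlety is not any single hard step but the careful bookkeeping of bundle identifications: one must consistently translate between sections of $\CL^{\otimes k}$ and $\CE$ on $\CD$ and the corresponding bihomogeneous objects on $\A(\CD)$ via Lemma~\ref{lemma:identification E}, and verify that the individual pieces $\partial\Psi_F/\Psi_F$ and $\nu$, which separately are not sections of $\CE$ on $\CD$, combine into an expression that is. Once these identifications are in place, every step reduces to a direct affine-cone calculation.
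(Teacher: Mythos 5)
Your proof is correct and follows essentially the same route as the paper's: reduce to integral Fourier coefficients via the spanning result (McGraw's theorem, which you assert without citation but which is exactly what the paper invokes), differentiate Borcherds' identity $\log|\Psi_F| = -\tfrac14\bigl(\Lift(F)+c(0,0)\log|Z|^2+C\bigr)$ to obtain $\RO(\Lift F)=\partial(\Lift F)=-2\,\RO(\Psi_F)/\Psi_F$, and then use $\LO(\partial\Psi_F/\Psi_F)=0$, $\LO(\nu)=\tfrac12 g$ and $\LO(g)=0$ to conclude $\LO^2(\RO(\Lift F))=0$. The only differences are cosmetic — you apply $\partial$ to the identity directly rather than computing $\partial\log\Psi_F$ and solving for $\partial\Lift(F)$, and your explicit checks of the bidegree and of the fact that $\partial\Psi_F/\Psi_F + \tfrac{c(0,0)}{2}\nu$ is a genuine section of $\CE$ are sound (and slightly more careful than the paper's).
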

\begin{proof}
To show that $\RO(\Phi_F)$ is almost-holomorphic, we can assume without loss of generality that $F$ has integral Fourier coefficients: by a theorem of McGraw \cite{McGraw}, such forms span the space of modular forms of weight $1-n/2$ with a pole at $\infty$. For simplicity, we write $\Phi=\Lift(F)$ and $\Psi=\Psi_F$.
Since $$\frac{1}{\Psi} \partial \Psi = \partial \log \Psi = \partial \Big( 2 \log |\Psi| - \log \overline{\Psi} \Big) = -\frac{1}{2} \partial \Phi_F - \frac{c(0, 0)}{2} \partial (\log |Z|^2),$$ and $\Psi$ has weight $\frac{c(0,0)}{2},$ it follows that \[\RO(\Psi) = \partial \Psi + \frac{c(0,0)}{2} \partial(\log |Z|^2) \Psi = -\frac{1}{2}\Psi \partial \Phi_F. \]
To see that $\RO(\Phi_F)$ is almost-holomorphic (of depth 1),  note that
\[
\LO( \RO(\Phi)) = \LO\left( \frac{\RO(\Psi)}{\Psi} \right) = \LO\left( \partial(\Psi)/\Psi + \frac{c(0,0)}{2} \nu \right)
= \frac{c(0,0)}{4} g
\]
where we use $\LO(\nu)=g/2$. Hence $\LO^2(\RO(\Phi))=0$. 
\end{proof}

\subsection{Lift of almost-holomorphic modular forms}
We are finally able to say when the lift of an almost-holomorphic modular form is almost-holomorphic:
\begin{thm} \label{thm:list of almost-holomorphic modular form}
Let $F$ be an almost-holomorphic modular form of weight $\kappa=k+1-\frac{n}{2}$ and depth $d>0$.
If $k \geq 2d$, then $\Lift(F)$ is a logarithmic almost-holomorphic modular form of weight $k$ and depth $2d$.
If moreover $M = U \oplus L$, then $\Lift(F)$ is almost-holomorphic if and only if $k \geq 2d$.
\end{thm}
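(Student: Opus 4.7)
The plan is to iterate the equivariance identity $\LO[\Lift(G)] = -\tfrac{1}{2\pi}\RO[\Lift(\LM G)]$ from Theorem~\ref{thm:L on theta lift} together with the commutator formula $\LO\RO = \sigma_{s+1,s+2}\RO\LO + [\LO,\RO]$ of Proposition~\ref{prop:commutation relation} and the identity $\LO(g) = 0$ from Lemma~\ref{lemma:basic evaluation}. Under successive applications of $\LO$, each new $\LO$ either passes through the existing string of $\RO$'s (producing permutation factors) and is eventually absorbed by converting $\Lift(\LM^\ell F) \mapsto -\tfrac{1}{2\pi}\RO[\Lift(\LM^{\ell+1}F)]$, or it meets one of the $\RO$'s in a commutator bracket and is consumed, together with that $\RO$, to produce a factor of the metric $g$ (which is itself annihilated by $\LO$).

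Combinatorially, this leads by induction on $m$ to a closed formula of the shape
\[
\LO^m[\Lift(F)] \;=\; \sum_{c=0}^{\lfloor m/2 \rfloor} a_{m,c}\;\Sym\!\bigl[\,\RO^{m-2c}[\Lift(\LM^{m-c}F)] \otimes g^{\otimes c}\,\bigr],
\]
for explicit scalars $a_{m,c} \in \BC$ (with $a_{m,0} = (-1/2\pi)^m$), valid whenever all output weights $k - 2(m-c) \geq 0$ so that the lifts on the right are defined; the symmetrization $\Sym$ over the $m$ new $\CE$-factors is forced by Lemma~\ref{lemma:L^2 and R^2}. Since $\LM^{d+1}F = 0$, any summand with $m - c \geq d+1$ vanishes. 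For $m = 2d+1$ the constraint $c \leq \lfloor m/2 \rfloor = d$ forces $m - c \geq d+1$, and every summand is zero, so $\LO^{2d+1}[\Lift(F)] = 0$ and the depth is at most $2d$. For $m = 2d$ only $c = d$ contributes, yielding a nonzero multiple of $\Sym[\Lift(\LM^d F) \otimes g^{\otimes d}]$; the non-vanishing of $\Lift(\LM^d F)$, given $\LM^d F \neq 0$, follows from the explicit Fourier expansion of Theorem~\ref{thm:expansion of theta lift} (or, in the borderline case $k = 2d$, from the fact that this weight-$0$ lift equals $-2\log|\Psi| + \mathrm{const}$ for the Borcherds product $\Psi$ of $\LM^d F$). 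Finally, the logarithmic pole condition along Heegner divisors demanded by Definition~\ref{defn:log} is read off from the singularity formula in Proposition~\ref{prop:singularity}: the coefficient of $\bigl(|\langle \lambda, Z \rangle|^2/|Z|^2\bigr)^t$ has leading factor $\langle \lambda, Z \rangle^{-k}$, and each application of $\LO$ shaves off one order, consistent with the bound $k - d$ on $\LO^d[\Lift(F)]$.

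For the converse when $M = U \oplus L$, I would use the explicit Fourier expansion of $\Lift(F)$ at the $0$-dimensional cusp provided by the splitting, namely Theorem~\ref{thm:expansion of theta lift}. In the regime $k \geq 2d$ this expansion is polynomial in the non-holomorphic variables $\nu_j$, consistent with almost-holomorphy. When $k < 2d$, the same expansion acquires terms involving $\log(\Im(z)^2)$ with nonzero coefficient, arising from the Fourier components of $F$ of depth greater than $k/2$ that the theta kernel cannot absorb into polynomial combinations of the $\nu_j$. Since almost-holomorphic modular forms are exactly polynomials in the $\nu_j$ with holomorphic coefficients (by definition, as refined in Proposition~\ref{structure prop log mod forms}), the presence of these logarithmic non-holomorphic pieces rules out $\Lift(F)$ being almost-holomorphic.

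The main obstacle will be the careful bookkeeping in the inductive formula: the commutator identity of Proposition~\ref{prop:commutation relation} produces several permutation terms at each step, and one must verify that they assemble into the fully symmetric expression $\Sym[\,\cdot\,]$ with the stated coefficients $a_{m,c}$. A secondary difficulty lies in the converse direction, where one must isolate the logarithmic contributions in the Fourier expansion of Theorem~\ref{thm:expansion of theta lift} in the regime $k < 2d$ and verify that they do not cancel against any almost-holomorphic background.
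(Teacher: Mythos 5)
Your forward direction runs on the same engine as the paper's, namely iterating the equivariance identity of Theorem~\ref{thm:L on theta lift}, but the paper organizes this as a short induction on the depth $d$: one writes $\LO[\Lift(F)] = -\tfrac{1}{2\pi}\RO[\Lift(\LM F)]$, notes that $\Lift(\LM F)$ is almost-holomorphic of depth $2d-2$ by the inductive hypothesis (base case $d=1$, handled by Proposition~\ref{prop:R on Borcherds lift} when the weight drops to $1-n/2$), and that $\RO$ raises the depth by one. Your explicit closed formula for $\LO^m[\Lift(F)]$ is not needed and, as stated, breaks precisely in the borderline case $k=2d$: the $c=d$ summand at $m=2d$ isolates $\Lift(\LM^d F)$ as a detached factor, but this is then a weight-$0$ lift of the form $-4\log|\Psi| - c(0,0)\log|Z|^2 + \mathrm{const}$, which is \emph{not} almost-holomorphic (no power of $\LO$ annihilates $\log\overline{\Psi}$), so your formula would give $\LO^{2d+1}[\Lift(F)] = a_{2d,d}\,\LO[\Lift(\LM^d F)]\otimes g^{\otimes d} \neq 0$. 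The conversion $\LO[\Lift(G)]\mapsto -\tfrac{1}{2\pi}\RO[\Lift(\LM G)]$ requires $G$ to have weight at least $2$ and cannot be applied at the bottom of the chain; what actually saves the argument is that the final $\RO$ stays attached, since $\RO[\Lift(\LM^d F)]$ \emph{is} almost-holomorphic of depth one with $\LO\RO[\Lift(\LM^d F)] = \tfrac{c(0,0)}{4}\,g$. Your appeal to Proposition~\ref{prop:R on Borcherds lift} only for "non-vanishing of $\Lift(\LM^d F)$" addresses the wrong issue.

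For the converse your route is genuinely different from the paper's and does not work as described. The paper argues via \emph{interior} singularities: if $k<2d$ then $\LO^d F$ is a nonzero holomorphic modular form of negative weight, hence has nonzero coefficients $c^{(d)}(n,\gamma)$ with $n<0$; the splitting $M=U\oplus L$ guarantees vectors $\lambda$ of positive norm in the relevant cosets, and Proposition~\ref{prop:singularity} then produces logarithmic singularities of $\Lift(F)$ along $\lambda^{\perp}$ which no power of $\LO$ kills. Your proposed obstruction, "terms involving $\log(\Im(z)^2)$" in the cusp expansion of Theorem~\ref{thm:expansion of theta lift}, is inaccurate: the only $\ln\langle v,v\rangle$ term there occurs when $k=0$. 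For general $k<2d$ the non-almost-holomorphic pieces of the cusp expansion are instead the anti-holomorphic exponentials in the $\mu<0$ sum (present only when $d\geq k$) and the $v_e^{-1}$ and fractional-part contributions of the Bernoulli terms with $t>k/2$; showing these do not cancel is exactly the step you defer, and Remark~\ref{rmk:FJ expansion is almost-hol} shows that even the cancellation in the favourable regime $k\geq 2d$ already rests on a nontrivial identity among Fourier coefficients. You should replace this part of your argument with the singularity argument along Heegner divisors.
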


\begin{proof}
Assume first that $k \geq 2d$ and $d \geq 1$.
We argue by induction on the depth $d$. Since $k \geq 2$
by Theorem~\ref{thm:L on theta lift} we have
\[
\LO(\Lift(F)) = -\frac{1}{2 \pi} \cdot \RO(\Lift( \LM F )).
\]

If $d=1$, then $\LM F$ is holomorphic of depth $0$ and weight $\geq 1-n/2$, so $\RO \Lift (\LM F)$ is almost-holomorphic of depth $1$ (if the weight is $=1-n/2$ use Proposition~\ref{prop:R on Borcherds lift}). Hence $\Lift(F)$ is almost-holomorphic of depth $2$.

If $d>1$, then $\Lift(\LM F)$ is almost-holomorphic of depth $2d-2$ by induction, so $\RO(\Lift(\LM F))$ is almost-holomorphic of depth $2d-1$.

Proposition \ref{prop:singularity} shows that if $k \ge 2d$ then $F$ has only poles of order $k$, and applying the lowering operator shows that $\LO^r F$ has poles of order $k - r$ for any $r \le 2d$. Hence $F$ is logarithmic in the sense of Definition \ref{defn:log}.

If $k<2d$, then $\LO^{d}(F)$ is of negative weight. Therefore, $F$ has nonzero Fourier coefficients $c^{(d)}(n, \gamma)$ with $n < 0$. If $M$ splits a hyperbolic plane then we can find vectors $\lambda \in \gamma + L$ of norm $\frac{\langle \lambda, \lambda \rangle}{2} = n$. By Proposition \ref{prop:singularity} these contribute logarithmic singularities to $\Lift(F)$ which are not annihilated under any power of the lowering operator, so $\Lift(F)$ is not almost-holomorphic.  
\end{proof}

\begin{rmk} The converse ``only-if" direction is not true in general if $M$ does not split a hyperbolic plane because the theta lift is not necessarily injective, even on holomorphic modular forms.
\end{rmk}

\section{The tube domain and quasimodular forms}
\label{sec:tube domain and quasimodular forms}

We consider the almost-holomorphic modular forms in the tube domain and define quasimodular forms as their constant term (or holomorphic part). We then give two proofs to show that the constant-term morphism, is an isomorphism. We also discuss the Fourier expansion of quasimodular forms and end with a series of examples by looking at the lattices $U\oplus (2m)$, $U \oplus U$ and $U \oplus U(m)$.

\subsection{Tube domain}
Let $M$ be a lattice of signature $(2,n)$ as before. In this section we will assume that $M$ splits a hyperbolic plane, i.e. $$M = L \overset{\perp}{\oplus} U$$ where $U = \mathbb{Z}e \oplus \mathbb{Z}f$ such that $e^2 = f^2 = 0$ and $e \cdot f = 1$.
Hence $L$ is a lattice of signature $(1,n-1)$.
We write vectors $Z=z_0 e + z + z_{n+1} f\in M_{\BC}$ with $z \in L_{\BC}$ as $Z=(z_0, z, z_{n+1})$.

The tube domain is
\[
\CC = \{ z \in L_{\BC} | \Im(z)^2 > 0 \}^{+}
\]
where `$+$' stands for one of the two connected components.
We have the biholomorphism
\begin{equation} \label{tube domain iso} \varphi : \CC \xrightarrow{\cong} \CD, \quad \varphi(z) = \left( -\frac{1}{2} z^2, z , 1 \right). \end{equation}

The map $\varphi$ induces an action 
of $O^{+}(M_{\BR})$ on $\CC$, where $\gamma \in O^{+}(M_{\BR})$ acts by
\[ \gamma \cdot z = \varphi^{-1}( \gamma \cdot \varphi(z))\quad \Leftrightarrow\quad \varphi(\gamma \cdot z) = \gamma \cdot \varphi(z). \]

Consider also the morphism $\widetilde{\varphi}: \CD \to \A(\CD)$ defined by
$\widetilde{\varphi}(z)=(-\frac{1}{2}z^2, z, 1)$.
Then $q \circ \widetilde{\varphi} = \varphi$, where $q : \A(\CD) \to \CD$ is the projection.
Hence for any $z \in \CC$, there exists $J(\gamma,z) \in \BC^{\ast}$ such that
\[ \gamma \cdot \widetilde{\varphi}(z) = J(\gamma,z) \widetilde{\varphi}(\gamma z). \]
The factor $J(\gamma,z)$ is holomorphic in $z$ and has the cocycle property
\[ J( \tilde{\gamma} \gamma, z ) = J(\tilde{\gamma}, \gamma z) J(\gamma, z). \]

The following is straightforward.
\begin{lemma} \label{lemma:norm Z}
Let $Z=\widetilde{\varphi}(z) = (-\frac{1}{2} z^2, z , 1)$. Then 
\[ \Im(z)^2 = |Z|^2 = \frac{1}{2} \langle Z,  \overline{Z} \rangle. \]
\end{lemma}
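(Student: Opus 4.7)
The second equality is immediate from the notational convention $|Z|^2 = \tfrac{1}{2} Z\cdot \overline Z$ introduced earlier in the text, so only the first equality $\Im(z)^2 = \tfrac{1}{2}\langle Z,\overline Z\rangle$ requires work, and this is a direct computation in the basis adapted to the decomposition $M = L \oplus U$.

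My plan is as follows. Write $Z = -\tfrac{1}{2} z^2 e + z + f$ and $\overline Z = -\tfrac{1}{2} \overline z^2 e + \overline z + f$, and expand the pairing $\langle Z, \overline Z\rangle$ using bilinearity together with the three facts $e\cdot e = f\cdot f = 0$, $e\cdot f = 1$, and $L \perp U$. Only three terms survive: the pairing $\langle -\tfrac{1}{2} z^2 e,\, f\rangle = -\tfrac{1}{2} z^2$, the symmetric pairing $\langle f,\, -\tfrac{1}{2} \overline z^2 e\rangle = -\tfrac{1}{2} \overline z^2$, and the $L$-pairing $\langle z, \overline z\rangle$. Hence
\[
\langle Z, \overline Z\rangle = -\tfrac{1}{2}(z^2 + \overline z^2) + \langle z, \overline z\rangle.
\]

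Now decompose $z = x + i y$ with $x,y \in L_{\BR}$. Because the pairing on $L_{\BC}$ is $\BC$-bilinear (not Hermitian), a short expansion gives $z^2 + \overline z^2 = 2(x^2 - y^2)$ and $\langle z,\overline z\rangle = x^2 + y^2$. Substituting into the previous display yields $\langle Z,\overline Z\rangle = 2 y^2 = 2\,\Im(z)^2$, which is the desired identity. I do not anticipate any obstacle; the only thing to be attentive to is keeping the pairing bilinear rather than inadvertently treating $\langle z,\overline z\rangle$ as a Hermitian inner product.
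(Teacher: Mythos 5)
Your computation is correct, and since the paper omits the proof entirely (declaring the lemma ``straightforward''), your direct expansion of $\langle Z,\overline Z\rangle$ using $e^2=f^2=0$, $e\cdot f=1$, $U\perp L$, followed by the decomposition $z=x+iy$ with the bilinear (non-Hermitian) pairing, is precisely the intended argument. The care you note about keeping $\langle z,\overline z\rangle$ bilinear rather than Hermitian is indeed the only point where one could slip.
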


\subsection{Multidifferentials}
Recall that on $\CD$ we had a canonical isomorphism $\CE^{\otimes s} \otimes \CL^k \cong \Omega_{\CD}^{\otimes s} \otimes \CL^{k-s}$ which allowed us to view sections $\omega \in \CA(\CD, \CE^{\otimes s} \otimes \CL^k)$ as multidifferentials on $\CD$ (up to a twist by $\CL$).
Pulling back via $\varphi : \CC \to \CD$ we obtain a multidifferential form $\omega' = \varphi^{\ast}(\omega) \in \CA(\CC, \Omega_{\CC}^{\otimes s})$.


\begin{lemma}
Let $\Gamma \subset O^{+}(M)$ be a finite-index subgroup.
Then $\omega \mapsto \varphi^{\ast}(\omega)$ defines an isomorphism between
the space $\CA(\CD, \CE^{\otimes s} \otimes \CL^k)^{\Gamma}$ of $\Gamma$-invariant smooth sections of $\CE^{\otimes s} \otimes \CL^k$ and the space of smooth multidifferential forms $\omega'\in \CA(\CC, \Omega_{\CC}^{\otimes s})$ satisfying
\[ \forall \gamma \in \Gamma: \quad \gamma^{\ast}(\omega') = J(\gamma,z)^{k-s} \omega'. \]
\end{lemma}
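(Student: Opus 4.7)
The plan is to pass to the affine cone $\A(\CD)$ where the bundles trivialize cleanly. By Lemma~\ref{lemma:identification E} a smooth section $\omega \in \CA(\CD, \CE^{\otimes s}\otimes\CL^k)$ corresponds to a multidifferential $F=\sum_I F_I(Z)\,dZ_I$ on $\A(\CD)$ with $F_I$ of degree $-k$, with vanishing contraction $\iota_v F = 0$ in every entry with the radial vector field $v = \sum_k Z_k\,\partial/\partial Z_k$, taken modulo the ideal $(dZ^2)$. Define $\omega' := \widetilde{\varphi}^{\ast}(F) \in \CA(\CC, \Omega_{\CC}^{\otimes s})$; this is well-defined since $\widetilde{\varphi}^{\ast}(dZ^2) = d(\widetilde{\varphi}(z)^2) = 0$, and it agrees with $\varphi^{\ast}\omega$ after trivializing $\varphi^{\ast}\CL$ by the section $\widetilde{\varphi}(z) \in \CL_{\varphi(z)}$. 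The map $\omega \mapsto \omega'$ is manifestly $\BC$-linear and injective since $\varphi$ is a biholomorphism.

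To verify the transformation property fix $\gamma \in \Gamma$ and denote by $\widetilde{\gamma} : Z \mapsto \gamma Z$ the action on $\A(\CD)$ and by $[\gamma] : z \mapsto \gamma z$ the induced action on $\CC$. The defining relation for $J$ reads $\widetilde{\gamma}\circ\widetilde{\varphi}(z) = J(\gamma, z)\cdot\widetilde{\varphi}([\gamma]z)$. Since $\widetilde{\gamma}^{\ast} F = F$ by $\Gamma$-invariance, $\omega' = (\widetilde{\gamma}\circ\widetilde{\varphi})^{\ast} F$. Writing $J = J(\gamma, z)$ and expanding each factor by the product rule
$d(J\cdot\widetilde{\varphi}(\gamma z)_{i_r}) = \widetilde{\varphi}(\gamma z)_{i_r}\,dJ + J\,d\widetilde{\varphi}(\gamma z)_{i_r}$,
together with the homogeneity $F_I(JW) = J^{-k} F_I(W)$, gives
\[
(\widetilde{\gamma}\circ\widetilde{\varphi})^{\ast} F = J^{-k}\sum_{T \subset \{1,\ldots,s\}} \sum_I F_I(\widetilde{\varphi}(\gamma z)) \bigotimes_{r \in T} \widetilde{\varphi}(\gamma z)_{i_r}\,dJ \bigotimes_{r \notin T} J\,d\widetilde{\varphi}(\gamma z)_{i_r}.
\]
For any nonempty $T$, picking $r_0 \in T$ and summing over $i_{r_0}$ produces the factor $\sum_{i_{r_0}} F_I(\widetilde{\varphi}(\gamma z))\cdot\widetilde{\varphi}(\gamma z)_{i_{r_0}} = 0$ by the contraction condition $\iota_v F = 0$. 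Hence only the $T = \emptyset$ term survives, giving $J^{s-k}\cdot(\widetilde{\varphi}\circ[\gamma])^{\ast} F = J^{s-k}\,[\gamma]^{\ast}\omega'$. Comparing with $\omega' = (\widetilde{\gamma}\circ\widetilde{\varphi})^{\ast} F$ yields $[\gamma]^{\ast}\omega' = J(\gamma, z)^{k-s}\omega'$.

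For the inverse direction, given $\omega' \in \CA(\CC, \Omega_\CC^{\otimes s})$ satisfying the transformation rule, define $\omega$ at $\varphi(z) \in \CD$ by $\omega(\varphi(z)) = \omega'(z) \otimes \widetilde{\varphi}(z)^{\otimes(k-s)}$, using $\widetilde{\varphi}(z) \in \CL_{\varphi(z)}$ as a basis vector of $\CL$. Since $\varphi$ is bijective this produces a well-defined smooth section of $\Omega_\CD^{\otimes s}\otimes\CL^{k-s} \cong \CE^{\otimes s}\otimes\CL^k$. Its $\Gamma$-invariance is the same computation run in reverse: the factor $J(\gamma, z)^{k-s}$ coming from the transformation rule for $\omega'$ cancels against the factor $J(\gamma, z)^{-(k-s)}$ appearing in $\widetilde{\varphi}(\gamma z)^{\otimes(k-s)} = J(\gamma, z)^{-(k-s)}(\gamma \widetilde{\varphi}(z))^{\otimes(k-s)}$, and the remaining action of $\gamma$ on the differentials and on $(\gamma\widetilde{\varphi}(z))^{\otimes(k-s)}$ reassembles $\gamma^{\ast}\omega(\varphi(z)) = \omega(\varphi(z))$.

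The main technical point is the vanishing of the cross terms involving $dJ$ in the expansion above. Once one has set up the affine-cone picture this is an immediate consequence of $\iota_v F = 0$, which is precisely the condition distinguishing sections of $\CE^{\otimes s}$ from arbitrary $s$-fold holomorphic differentials on $\A(\CD)$.
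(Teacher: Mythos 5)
Your proof is correct and follows essentially the same route as the paper: pass to the affine cone via Lemma~\ref{lemma:identification E}, use the cocycle relation $\gamma\circ\widetilde{\varphi}=J(\gamma,z)\cdot\widetilde{\varphi}\circ\gamma$ together with the homogeneity of the coefficients, and read off the automorphy factor $J^{k-s}$. Your explicit expansion of $d(J\cdot\widetilde{\varphi}(\gamma z)_{i_r})$ and the use of $\iota_v F=0$ to kill the $dJ$ cross-terms makes rigorous a step the paper's one-line computation $(J\cdot\widetilde{\varphi}\circ\gamma)^{\ast}\tilde{\omega}=J^{-k+s}\gamma^{\ast}\omega'$ leaves implicit (constant-scaling homogeneity alone does not suffice when $J$ varies with $z$), and your construction of the inverse likewise fleshes out the converse direction that the paper only asserts.
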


In particular, we will identify sections of $\CE^{\otimes s} \otimes \CL^k$ with the corresponding multidifferentials on $\CC$.

\begin{proof}
By Lemma~\ref{lemma:identification E}, the section $\omega$ corresponds to the multidifferential $\widetilde{\omega} = q^{\ast}(\omega)$ on $\A(\CD)$ (satisfying some conditions), and we have $$\omega' = \varphi^{\ast} \omega = \widetilde{\varphi}^{\ast} q^{\ast} \omega = 
\widetilde{\varphi}^{\ast} \tilde{\omega}.$$ The coefficients of $\widetilde{\omega}$ are of degree $-k$, so $\widetilde{\omega}$ is of total degree $-k+s$, by which we mean $\lambda^{\ast}(\widetilde{\omega}) = \lambda^{-k+s} \cdot \widetilde{\omega}$ for all $\lambda \in \BC^{\ast}$. Moreover, $\widetilde{\omega}$ is invariant under $\Gamma$.
For $\gamma \in \Gamma$ we have $\gamma \circ \widetilde{\varphi} = J \cdot \widetilde{\varphi} \circ \gamma$, such that
$\gamma^{\ast} \tilde{\omega} = \tilde{\omega}$ translates to
\[
\omega' = \widetilde{\varphi}^{\ast} \tilde{\omega} = 
\widetilde{\varphi}^{\ast} \gamma^{\ast} \tilde{\omega} = 
(\gamma \circ \widetilde{\varphi})^{\ast} \tilde{\omega} 
= (J \cdot \widetilde{\varphi} \circ \gamma)^{\ast} \tilde{\omega} =
\gamma^{\ast} \widetilde{\varphi}^{\ast} J^{\ast}\tilde{\omega}
= J^{-k+s} \gamma^{\ast} \omega',
\]
and we get
$\gamma^{\ast}( \omega' ) = J^{k-s} \omega'$.
Conversely, any multidifferential $\omega$' on $\CC$ with this property can be shown to extend to such a multidifferential on $\A(\CD)$ invariant under $\Gamma$.
\end{proof}

Let $e_1, \ldots, e_n$ be a basis of $L$. Extend this to a basis of $M$ by $e_0:=e$ and $e_{n+1}:=f$.
Let $g_{ij} = \langle e_i, e_j \rangle$ for $i,j=0,\ldots, n+1$ be the Gram matrix and let $g^{ij}$ be its inverse.
Since $e,f$ are orthogonal to $L$, the submatrix $(g^{ij})_{i,j=1}^{n}$ is then the inverse of the Gram matrix $(g_{ij})_{i,j=1}^{n}$.
Let $z=\sum_i z_i e_i=(z_1,\ldots, z_n)$ be the coordinates of $z \in L_{\BC}$ in the basis $e_1,\ldots, e_n$. 
A multidifferential $F$ on $\CC$ can be expanded uniquely as
\begin{equation} F=\sum_{\substack{I=(i_1,\ldots,i_s) \\ 1 \leq i_k \leq n}} F_I dz_{I}, \quad dz_I = dz_{i_1} \otimes \cdots \otimes dz_{i_s}. \label{repn of F as multidifferential} \end{equation}
We define the contraction of the $(a,b)$-th indices of $F$ by
\[
\tr_{ab}(F) = \sum_{I=(i_1,\ldots,i_s)} F_I g^{i_a i_b} \cdot dz_{i_1} \otimes \cdots \otimes \widehat{dz_{i_a}} \otimes \cdots \otimes \widehat{dz_{i_b}} \otimes \cdots \otimes dz_{i_s}.
\]

\begin{lemma} \label{lemma:trace under pullback by phitilde}
For any section $\omega$ of $\CE^{\otimes s}$,
we have $\varphi^{\ast}(\tr_{ab}(\omega)) = \tr_{ab}( \varphi^{\ast} \omega )$.
\end{lemma}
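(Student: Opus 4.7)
The plan is to reduce the statement to the base case $s=2$, $(a,b)=(1,2)$, i.e.\ to the assertion that the trace morphism $\tr\colon \CE\otimes \CE \to \CO$ commutes with $\varphi^{\ast}$. This reduction is immediate: the contraction $\tr_{ab}$ only affects the $a$-th and $b$-th tensor slots (leaving the rest as the identity), and $\varphi^{\ast}$ is a tensor functor on multidifferentials, so commuting $\varphi^{\ast}$ past $\tr_{ab}$ on a general tensor factors through the case of a rank-two tensor.

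For the base case I work in coordinates. Use the basis $e_0=e$, $e_1,\ldots,e_n$, $e_{n+1}=f$ of $M$. The Gram matrix of $M$ then decomposes as the direct sum of the hyperbolic block $\bigl(\begin{smallmatrix}0&1\\1&0\end{smallmatrix}\bigr)$ on $\Span(e_0,e_{n+1})$ and the Gram matrix $(g_{ij})_{i,j=1}^{n}$ of $L$; the inverse $(g^{ij})$ of $M$ has the same block structure, and in particular its restriction to indices $\{1,\ldots,n\}$ is the inverse Gram matrix of $L$ used to define $\tr$ on $\CC$ in \eqref{repn of F as multidifferential}. A section of $\CE$ on $\CD$ corresponds to a differential $\omega=\sum_{i=0}^{n+1} a_i(Z)\, dZ_i$ on $\A(\CD)$ with $\iota_v\omega=0$, modulo $dZ^2$. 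Since $\widetilde{\varphi}(z)=(-z^2/2,z,1)$, we have $\widetilde{\varphi}^{\ast}(dZ_0)= -\sum_{i=1}^n \tilde z_i\, dz_i$ with $\tilde z_i:=\sum_{j=1}^n g_{ij} z_j$, $\widetilde{\varphi}^{\ast}(dZ_i)=dz_i$ for $1\le i\le n$, and $\widetilde{\varphi}^{\ast}(dZ_{n+1})=0$. Therefore
\[
\varphi^{\ast}\omega \;=\; \sum_{i=1}^{n} \bigl(a_i - a_0\tilde z_i\bigr)\, dz_i.
\]
Moreover, evaluating $\iota_v\omega=0$ at $Z=\widetilde{\varphi}(z)$ yields the constraint $a_{n+1}=\tfrac{1}{2}a_0 z^2-\sum_{i=1}^{n} a_i z_i$.

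It remains to verify, for two such sections $\omega_1,\omega_2$ with coefficients $a_i, b_j$, that both sides agree. On the one hand,
\[
\tr(\omega_1,\omega_2)\,\bigl|_{Z=\widetilde\varphi(z)} \;=\; \sum_{i,j=1}^n g^{ij} a_i b_j \;+\; a_0 b_{n+1} + a_{n+1} b_0,
\]
into which one substitutes the formulas for $a_{n+1}, b_{n+1}$. On the other hand, on $\CC$,
\[
\tr\bigl(\varphi^{\ast}\omega_1,\varphi^{\ast}\omega_2\bigr)
\;=\; \sum_{i,j=1}^{n} g^{ij}\bigl(a_i-a_0\tilde z_i\bigr)\bigl(b_j-b_0\tilde z_j\bigr).
\]
Using the routine identities $\sum_{i,j} g^{ij}\tilde z_i b_j=\sum_j z_j b_j$, $\sum_{i,j} g^{ij} a_i\tilde z_j=\sum_i a_i z_i$, and $\sum_{i,j} g^{ij}\tilde z_i\tilde z_j=z^2$, both expressions collapse to the same polynomial $\sum_{i,j=1}^{n} g^{ij} a_i b_j + a_0 b_0 z^2 - a_0\sum_j z_j b_j - b_0\sum_i a_i z_i$, proving the claim. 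The only real obstacle is the bookkeeping for the contribution of the hyperbolic block on the $\CD$-side, which is precisely what the Lagrangian condition $\iota_v\omega=0$ is designed to manage.
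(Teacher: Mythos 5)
Your proof is correct and follows essentially the same route as the paper: reduce to $s=2$ by the tensor structure, then verify the identity in affine-cone coordinates using the pullback formulas for $dZ_0, dZ_i, dZ_{n+1}$ and the constraint $\iota_v\omega=0$ to eliminate $a_{n+1}$. The paper sketches exactly this direct check (and additionally offers a shortcut of normalizing $\widetilde\varphi^{\ast}a_0=0$ by adding a multiple of $dZ^2$); you have simply carried out the direct computation in full, and your algebra checks out.
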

\begin{proof}
Both trace maps are defined pointwise, so its enough to check this at a point. Moreover, we can assume without loss of generality that $s=2$ and $\omega = \omega_1 \otimes \omega_2$.
Then $\tr_{12}(\omega_1 \otimes \omega_2)$ was computed in Lemma~\ref{lemms3sdfd} and the claim follows by a direct computation.

More precisely, let $\widetilde{\omega}_1 = \sum_i a_i dZ_i$ and $\widetilde{\omega}_2 = \sum_i b_i dZ_i$ be the differential forms on the affine cone corresponding to $\omega_1, \omega_2$
written in the variables $Z=(Z_0,\ldots, Z_{n+1})$ given by the basis $e_0,\ldots, e_{n+1}$.
Then $\tr(\omega_1 \otimes \omega_2)$ is given by $\tr(\widetilde{\omega}_1 \otimes \widetilde{\omega}_2) = \sum_{i,j=0}^{n+1} g^{ij} a_i(Z) b_j(Z)$ by Lemma~\ref{lemms3sdfd},
and therefore
$\varphi^{\ast} \tr(\omega_1 \otimes \omega_2) = \widetilde{\varphi}^{\ast} \tr( \widetilde{\omega}_1 \otimes \widetilde{\omega}_2 )$.
We hence need to check that
\begin{equation} \label{checkable equation} \tr(\widetilde{\varphi}^{\ast}(\widetilde{\omega}_1) \otimes \widetilde{\varphi}^{\ast}(\widetilde{\omega}_2)) = \widetilde{\varphi}^{\ast}(\tr(\widetilde{\omega}_1 \otimes \widetilde{\omega}_2)). \end{equation}
Note that
\begin{equation} \label{3fdsfd}
\begin{aligned}
\widetilde{\varphi}^{\ast}(dZ_0) & = -\sum_{a,b=1}^{n} z_a g_{ab} dz_{b} \\
\widetilde{\varphi}^{\ast}(dZ_i) & = dz_i \quad \text{for } i=1,\ldots,n \\
\widetilde{\varphi}^{\ast}(dZ_{n+1}) & = 0.
\end{aligned}
\end{equation}
This yields $\varphi^{\ast}(\widetilde{\omega}_1) = \sum_{i=1}^{n} (a_i - \sum_{a=1}^{n} z_a g_{ai} a_0) dz_i$ and similarly for $\varphi^{\ast}(\widetilde{\omega}_2)$.
Moreover, for $v=\sum_{i} Z_i \frac{\partial}{\partial Z_i}$ we have $\omega_i(v)=0$ by Lemma~\ref{lemma:identification E}, so $\sum_{i} a_i Z_i=0$, which gives us
$\widetilde{\varphi}^{\ast}(a_{n+1}) = \frac{1}{2} z^2 \widetilde{\varphi}^{\ast}a_0 - \sum_i \widetilde{\varphi}^{\ast}(a_i) z_i$. Putting this together allows us to check \eqref{checkable equation} directly. 

However, one can also the following shortcut. Both sides of \eqref{checkable equation} only depend on $\widetilde{\omega}_i$ up to adding a multiple of $d(Z^2) = 2 Z_0 dZ_{n+1} + 2Z_{n+1} dZ_0 + 2 \sum_{i,j} Z_i g_{ij} dZ_{j}$. So after adding the appropriate multiple, we may assume
$\widetilde{\varphi}^{\ast}(a_0(Z)) = 0$. Hence
\[
\widetilde{\varphi}^{\ast}(\tr(\widetilde{\omega}_1 \otimes \widetilde{\omega}_2))
=
\sum_{i,j=0}^{n+1} \widetilde{\varphi}^{\ast}(a_i) \widetilde{\varphi}^{\ast}(b_j) g^{ij}
=
\sum_{i,j=1}^{n} \widetilde{\varphi}^{\ast}(a_i) \widetilde{\varphi}^{\ast}(b_j) g^{ij}
=
\tr(\widetilde{\varphi}^{\ast}(\widetilde{\omega}_1) \otimes \widetilde{\varphi}^{\ast}(\widetilde{\omega}_2)). \qedhere
\]
\end{proof}

Recall that we defined
\[ \nu = \partial \log(|Z|^2)  \in \CA( \A(\CD), \Omega_{\A(\CD)}). \]
Pulling back by $\widetilde{\varphi}$ and using Lemma~\ref{lemma:norm Z} gives
\[ \widetilde{\varphi}^{\ast}(\nu)=\partial( \widetilde{\varphi}^{\ast} \log |Z|^2 ) = \partial( \log( \Im(z)^2 )). \]
By a slight abuse of notation, we also write $\nu$ for the pullback $\widetilde{\varphi}^{\ast}(\nu)$; that is,
\[ \nu = \partial( \log ( \Im(z)^2 )) = \frac{1}{i \Im(z)^2} \sum_{a,b} \Im(z_a) g_{ab} dz_b \in \CA( \CC, \Omega_{\CC}). \]

\begin{lemma} \label{lemma:trace under pullback with nu}
For any section $\omega$ of $\CE^{\otimes s}$ on $\CD$, let $\widetilde{\omega}$ be the corresponding multidifferential on the affine cone (as in Lemma~\ref{lemma:identification E}). Then 
$\widetilde{\varphi}^{\ast}(\tr_{s,s+1}(\widetilde{\omega} \otimes \nu)) = \tr_{s,s+1}( \widetilde{\varphi}^{\ast}(\widetilde{\omega}) \otimes \nu )$.
\end{lemma}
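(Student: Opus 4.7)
The plan is to proceed in close analogy with the proof of Lemma~\ref{lemma:trace under pullback by phitilde}. By multilinearity we may assume $\widetilde\omega = \widetilde\omega'\otimes\eta$, where $\eta=\sum_{i=0}^{n+1}a_i(Z)\,dZ_i$ is a $1$-form on $\A(\CD)$ representing the $s$-th factor of $\widetilde\omega$ (so the $a_i$ are of degree $0$ and $\sum_i a_iZ_i=0$ because $\iota_v\eta=0$). The first step is to normalise the lift: since the coefficient of $dZ_0$ in $dZ^2=2Z_0\,dZ_{n+1}+2Z_{n+1}\,dZ_0+2\sum_{i,j=1}^n g_{ij}Z_i\,dZ_j$ is $2Z_{n+1}$ and $\widetilde\varphi^*(Z_{n+1})=1$, exactly as in the proof of Lemma~\ref{lemma:trace under pullback by phitilde} we may add an appropriate multiple of $dZ^2$ to $\eta$ (which keeps it in the same equivalence class modulo the subspace $W$ of Lemma~\ref{lemma:identification E}) so as to arrange $\widetilde\varphi^*(a_0)=0$.

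With this normalisation, using the explicit formula $\nu=\tfrac{1}{2|Z|^2}\sum_{i,j}g_{ij}\overline{Z}_j\,dZ_i$ and applying Lemma~\ref{lemms3sdfd} formally to $\eta\otimes\nu$ gives
\[
\tr_{s,s+1}(\widetilde\omega\otimes\nu)=\widetilde\omega'\cdot\frac{1}{2|Z|^2}\sum_{i=0}^{n+1}a_i\overline{Z}_i.
\]
Pulling back along $\widetilde\varphi$, using Lemma~\ref{lemma:norm Z} to identify $\widetilde\varphi^*|Z|^2=y^2$ with $y=\Im(z)$, and eliminating $\widetilde\varphi^*(a_{n+1})=-\sum_{k=1}^n z_k\widetilde\varphi^*(a_k)$ via the Euler relation $\sum_i a_iZ_i=0$ together with $\widetilde\varphi^*(a_0)=0$, the left-hand side reduces to
\[
\widetilde\varphi^*(\widetilde\omega')\cdot\frac{-i}{y^2}\sum_{k=1}^n y_k\widetilde\varphi^*(a_k),\qquad y_k:=\Im(z_k).
\]
For the right-hand side, \eqref{3fdsfd} together with $\widetilde\varphi^*(a_0)=0$ yields $\widetilde\varphi^*(\eta)=\sum_{k=1}^n\widetilde\varphi^*(a_k)\,dz_k$ on $\CC$; combined with the identity $\nu=\tfrac{1}{iy^2}\sum_{a,b=1}^n g_{ab}y_b\,dz_a$ on $\CC$ and the observation that the inverse of the $L$-Gram matrix $(g_{ij})_{i,j=1}^n$ equals the corresponding block of $(g^{ij})_{i,j=0}^{n+1}$, the contraction $\tr_{s,s+1}(\widetilde\varphi^*\widetilde\omega\otimes\nu)$ collapses to precisely the same expression.

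The only genuine subtlety (and hence the main obstacle) is that, unlike in Lemma~\ref{lemma:trace under pullback by phitilde}, the formal trace $\tr_{s,s+1}(\widetilde\omega\otimes\nu)$ on the affine cone is \emph{not} independent of the choice of lift in slot $s$: a short calculation using $\sum_{i}g^{ij}g_{ai}=\delta^j_a$ gives $\tr(dZ^2\otimes\nu)=(Z\cdot\overline{Z})/|Z|^2=2$, so replacing $\eta$ by $\eta+\alpha\,dZ^2$ shifts the trace by $2\alpha$. Fixing the lift by the condition $\widetilde\varphi^*(a_0)=0$ is precisely what makes the residual freedom (parametrised by $\alpha$ with $\widetilde\varphi^*(\alpha)=0$) invisible after pullback, so that both sides of the asserted identity are unambiguously defined and agree.
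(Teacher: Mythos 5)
Your proof is correct and is exactly the argument the paper intends: the paper's proof of this lemma is a one-line reference to the parallel computation in Lemma~\ref{lemma:trace under pullback by phitilde}, which you have carried out in full (same normalization $\widetilde{\varphi}^{\ast}(a_0)=0$ by adding a multiple of $dZ^2$, same reduction to the coordinate identity on both sides). Your additional observation that $\tr(dZ^2\otimes\nu)=2$ --- so that, unlike in Lemma~\ref{lemms3sdfd} where $\iota_{v}\omega_2=0$ forces the trace to kill $dZ^2$, here $\iota_{v}\nu=1$ makes the trace on the cone genuinely depend on the choice of lift and the normalization is what renders the statement unambiguous --- is a real subtlety that the paper does not make explicit.
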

\begin{proof}
This is proven parallel to Lemma~\ref{lemma:trace under pullback by phitilde}.
\end{proof}

Recall that we defined the element
\[ g = \sum_{a,b=0}^{n+1} g_{ab} dZ_a \otimes dZ_b \in \Gamma(\CD,\CE \otimes \CE). \]
We will also write $g$ for the pullback $\varphi^{\ast}(g)$, so
\[ g = \sum_{a,b=1}^{n} g_{ab} dz_a \otimes dz_b. \]

We are finally ready to describe the raising and lowering operators on the tube domain.



\begin{prop} \label{prop:R and L in the tube domain I}
Let $F= \sum_{I} F_I dz_I$ be a section of $\CE^{\otimes s} \otimes \CL^k$ (which we identify with a multidifferential on the tube domain). Then we have
\[
\RO(F) = \partial(F) + k F \otimes \nu 
		+ \sum_{r=1}^{s} \sigma_{r, s+1}(F \otimes \nu) \notag
		- \sum_{r=1}^{s} \tr_{s+2,s+3} \sigma_{r, s+2}( F \otimes g \otimes \nu )
\]
where $\sigma_{i,j}$ stands for interchanging the $i$-th and $j$-th factors in the tensor, and
\[
\LO(F) = \sum_{I,\ell} \frac{\partial F_I}{\partial \overline{z}_\ell} dz_I \otimes \Big( \Im(z)^2 dz_\ell - 2 i \cdot \Im(z_\ell) \partial( \Im(z)^2 ) \Big)
\]
\end{prop}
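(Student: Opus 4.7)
The plan is to deduce both formulas by pulling back the corresponding affine-cone formulas from Proposition~\ref{prop:Properties of L and R}(iv) along the map $\widetilde{\varphi}: \CC \to \A(\CD)$, $z \mapsto (-\frac{1}{2} z^2, z, 1)$. Three key ingredients are needed: $\partial$ commutes with holomorphic pullback; $\widetilde{\varphi}^{\ast}(\nu) = \nu$ in the tube-domain sense, which is immediate from Lemma~\ref{lemma:norm Z}; and $\widetilde{\varphi}^{\ast}(g) = g$, which follows from a short computation, since $e_0 = e$ and $e_{n+1} = f$ pair only with each other (so $g_{00} = g_{n+1,n+1} = 0$ and $g_{0,i} = g_{n+1,i} = 0$ for $1 \le i \le n$) and $\widetilde{\varphi}^{\ast}(dZ_{n+1}) = 0$, so that only the indices $1 \le a,b \le n$ survive in $\widetilde{\varphi}^{\ast}\bigl(\sum_{a,b=0}^{n+1} g_{ab}\, dZ_a \otimes dZ_b\bigr)$. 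Traces commute with pullback by Lemmas~\ref{lemma:trace under pullback by phitilde} and \ref{lemma:trace under pullback with nu}.

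For the raising operator, applying $\widetilde{\varphi}^{\ast}$ term-by-term to the affine-cone formula of Proposition~\ref{prop:Properties of L and R}(iv) directly produces the stated tube-domain formula: each of the four pieces $\partial \widetilde{F}$, $k\, \widetilde{F} \otimes \nu$, $\sigma_{r,s+1}(\widetilde{F} \otimes \nu)$, and $\tr_{s+2,s+3}\sigma_{r,s+2}(\widetilde{F} \otimes g \otimes \nu)$ pulls back to its tube-domain analogue using the ingredients above and the identification $\widetilde{\varphi}^{\ast}(\widetilde{F}) = F$ as multidifferentials.

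For the lowering operator, the plan is to first choose the extension of $\widetilde{F}$ off the cone so that the coefficients $\widetilde{F}_I$ are independent of $Z_0$. This is legitimate because on the cone $Z_0 = -\langle z, z \rangle/(2 Z_{n+1})$ is determined by the remaining coordinates, so the intrinsic antiholomorphic derivative $\overline{\partial}$ is insensitive to the extension in the $Z_0$ direction. With this choice, $\partial_{\overline{Z}_0} \widetilde{F}_I = 0$ annihilates the $\ell = 0$ term of the affine sum. For the $\ell = n+1$ term, although $\widetilde{\varphi}^{\ast}(dZ_{n+1}) = 0$, the factor $-Z_{n+1} \partial(|Z|^2)$ pulls back to $-\partial(\Im(z)^2)$ since $Z_{n+1}|_{\widetilde{\varphi}} = 1$, and its coefficient is nonzero. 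The bidegree $(-k,0)$ homogeneity of $\widetilde{F}_I$ yields the Euler-type identity $\sum_{\ell=0}^{n+1} \overline{Z}_\ell \partial_{\overline{Z}_\ell} \widetilde{F}_I = 0$, which combined with $\partial_{\overline{Z}_0} \widetilde{F}_I = 0$ and the chain-rule identity $\partial_{\overline{Z}_\ell} \widetilde{F}_I|_{\widetilde{\varphi}} = \partial_{\overline{z}_\ell} F_I$ (for $1 \le \ell \le n$) gives $\partial_{\overline{Z}_{n+1}} \widetilde{F}_I|_{\widetilde{\varphi}} = -\sum_{\ell=1}^n \overline{z}_\ell \partial_{\overline{z}_\ell} F_I$. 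Substituting this and adding to the naive pullback of the $1 \le \ell \le n$ terms produces the extra $\overline{z}_\ell \partial(\Im(z)^2)$ that converts $z_\ell$ into $z_\ell - \overline{z}_\ell = 2i \Im(z_\ell)$, matching the stated tube-domain formula.

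The main obstacle is the $\ell = n+1$ correction in the lowering operator: choosing the right extension so that $\partial_{\overline{Z}_0} \widetilde{F}_I = 0$ and tracking the chain-rule corrections via the Euler identity must be done with care. Once this bookkeeping is in place, every remaining step reduces to a routine application of pullback commutation.
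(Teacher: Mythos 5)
Your proposal is correct and follows essentially the same route as the paper: both formulas are obtained by pulling back the affine-cone expressions of Proposition~\ref{prop:Properties of L and R}(iv) along $\widetilde{\varphi}$, using $\widetilde{\varphi}^{\ast}\nu=\nu$, $\widetilde{\varphi}^{\ast}g=g$, the compatibility of traces with pullback, and the Euler/chain-rule bookkeeping for the $\ell=0$ and $\ell=n+1$ terms in the lowering operator. The only cosmetic difference is that you normalize the extension so that $\partial_{\overline{Z}_0}\widetilde{F}_I=0$ from the outset, whereas the paper keeps those terms and verifies that the dependence on $\partial F_I/\partial\overline{Z}_0$ cancels at the end; the two computations are equivalent.
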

\begin{proof}
The formula for $\RO$ follows immediately from \eqref{formulaR} in Proposition~\ref{prop:Properties of L and R} by pulling back via $\widetilde{\varphi}$ and using Lemma~\ref{lemma:trace under pullback with nu}.
Assume first that $F$ is a section of $\CL^k$. We identify $F=F(Z)$ with a function on the affine cone of degree $-k$. By \eqref{formulaL} we have
\[ \LO(F) = \sum_{\ell} \frac{\partial F}{\partial \overline{Z}_\ell} (|Z|^2 dZ_\ell -  Z_\ell \partial(|Z|^2)). \]
We want to pull this back via $\widetilde{\varphi}$.
We use \eqref{3fdsfd} for the pullback of the $dZ_i$.
For the derivatives note first that since $F$ is of bidegree $(-k,0)$, Euler's equation gives
$\sum_{\ell} \frac{\partial F}{\partial \overline{Z}_\ell} = 0$, so after pulling back to $\CC$, we get
\[
\frac{\partial F}{\partial \overline{Z}_{n+1}}(\widetilde{\varphi}(z))
= \frac{\partial F}{\partial \overline{Z}_0}(\widetilde{\varphi}(z)) \frac{1}{2} \overline{z}^2 -
\sum_{\ell=1}^{n} \frac{\partial F}{\partial \overline{Z}_\ell}(\widetilde{\varphi}(z)) \overline{z}_\ell.
\]
Moreover, by the chain rule for all $\ell=1,\ldots,n$ we have
\[
\frac{\partial F}{\partial \overline{Z}_\ell}(\widetilde{\varphi}(z))
=
\frac{\partial F \circ \widetilde{\varphi}}{\partial \overline{z}_\ell}(z) 
+ \frac{\partial F}{\partial \overline{Z}_0}(\widetilde{\varphi}(z)) \sum_{a} \overline{Z}_a g_{a\ell}.
\]
Inserting everything we can write $\widetilde{\varphi}^{\ast}\LO(F)$ in terms of 
$\frac{\partial F}{\partial \overline{Z}_0}(\widetilde{\varphi}(z))$
and $\frac{\partial F \circ \widetilde{\varphi}}{\partial \overline{z}_\ell}(z)$.
One finds that the dependence on the former drops out, and the expression in terms of the latter is precisely the claim.

In general, if $F$ is given by a multidifferential $\sum_{I=(i_0,\ldots,i_{n+1})} F'_I dZ_I$ on the affine cone, then by adding appropriate multiples of $dZ^2$ in the factors (as in the proof of Lemma~\ref{lemma:trace under pullback by phitilde}, this does not change $F$ as a section of $\CE^{\otimes s} \otimes \CL^k$; see Lemma~\ref{lemma:identification E}),
we may assume that $\widetilde{\varphi}^{\ast} F'_{(i_0,\ldots, i_{n+1})} = 0$ whenever $i_a=0$ for some $a$.
Then the computation of $\widetilde{\varphi}^{\ast}\LO(F)$ immediately reduces to the scalar-valued case.
\end{proof}

\subsection{Multidifferentials as multilinear maps}
\label{subsec:multidiff as multilinear maps}
The commutator identity for $[\LO,\RO]$ can be neatly formulated as follows.
Since $\CC$ is an open subspace of $L_{\BC}$, we can identify tangent vectors in $\CC$ with elements on $L_{\BC}$. Hence a multidifferential $F \in \CA(\CC, \Omega_{\CC}^{\otimes s}$ defines a linear map
$F : L_{\BC}^{\otimes s} \to \BC$ which may be viewed as a multi-linear map
\[ F : L_{\BC}^{\times s} \to \BC. \]
Concretely, if $F$ is represented as \eqref{repn of F as multidifferential}, then $F(e_{i_1} \otimes \cdots \otimes e_{i_s}) = F(e_{i_1}, \ldots, e_{i_s}) = F_{i_1, \ldots, i_s}$.

Given $\alpha \in L_{\BC}$, denote by $\LO_{\alpha}$ the operator on multi-differentials that applies $\LO$ and then inserts $\alpha$ in the last entry, and similar for $\RO$.
In other words, given $\gamma_1, \ldots, \gamma_s, \alpha, \beta \in L_{\BC}$ we write
\[
\LO_{\alpha} F(\gamma_1, \ldots, \gamma_s) := \LO(F)( \gamma_1 \otimes \cdots \otimes \gamma_s \otimes \alpha)
\]
and for the compositions we write
\begin{align*} 
\LO_{\alpha} \RO_{\beta} F(\gamma_1, \ldots, \gamma_s) 
&:= (\LO \RO(F))( \gamma_1 \otimes \cdots \otimes \gamma_s \otimes \beta \otimes \alpha) \\
\RO_{\beta} \LO_{\alpha} F(\gamma_1 , \ldots,  \gamma_s) 
&:= (\RO \LO(F))( \gamma_1 \otimes \cdots \otimes \gamma_s \otimes \alpha \otimes \beta).
\end{align*}

\begin{lemma} \label{lemma:commutation relation}
Let $F$ be a section of $\CE^{\otimes s} \otimes \CL^k$, identified with a multidifferential on the tube domain. Let $\gamma_1, \ldots, \gamma_s, \alpha, \beta \in L_{\BC}$.
Then
\[
[\LO_{\alpha}, \RO_{\beta}] F(\gamma_1, \ldots, \gamma_s)
=
\frac{k}{2} \langle \alpha, \beta \rangle F(\gamma_1, \ldots, \gamma_s)
+ \frac{1}{2} \sum_{m=1}^{s} F( \ldots, (\beta \wedge \alpha)(\gamma_i), \ldots )
\]
where $(\alpha \wedge \beta)(\gamma) = \langle \beta, \gamma \rangle \alpha - \langle \alpha, \gamma \rangle \beta$ is the Eichler transvection (Section~\ref{subsec:Eichler transvection})
\end{lemma}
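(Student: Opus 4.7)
The plan is to derive this lemma directly from Proposition~\ref{prop:commutation relation} by unpacking the definitions, so essentially no new analytic input is needed beyond Proposition~\ref{prop:commutation relation}; the content of the lemma is a pure rewriting in multilinear-map language (applied in the tube domain, where multidifferentials are identified with multilinear maps on $L_{\BC}$).

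First I would set up the translation between the two sides. From the definition of the commutator $[\LO,\RO]=\LO\RO-\sigma_{s+1,s+2}\RO\LO$ and the evaluation conventions of Section~\ref{subsec:multidiff as multilinear maps}, a short check shows
\[
[\LO_{\alpha},\RO_{\beta}]F(\gamma_1,\ldots,\gamma_s)=[\LO,\RO]F(\gamma_1,\ldots,\gamma_s,\beta,\alpha),
\]
because the swap $\sigma_{s+1,s+2}$ precisely accounts for the fact that $\LO\RO$ and $\RO\LO$ are evaluated at $(\ldots,\beta,\alpha)$ and $(\ldots,\alpha,\beta)$ respectively.

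Next I would apply Proposition~\ref{prop:commutation relation} to compute $[\LO,\RO]F(\gamma_1,\ldots,\gamma_s,\beta,\alpha)$ term by term. The first piece $\frac{k}{2}F\otimes g$ evaluates to $\frac{k}{2}\langle\beta,\alpha\rangle F(\gamma_1,\ldots,\gamma_s)=\frac{k}{2}\langle\alpha,\beta\rangle F(\gamma_1,\ldots,\gamma_s)$ using symmetry of $g$. For the permutation terms, a direct computation using the evaluation rule $\sigma_{i,j}(F\otimes g)(v_1,\ldots,v_{s+2})=(F\otimes g)(\ldots)$ with entries at positions $i$ and $j$ swapped gives
\[
\sigma_{r,s+1}(F\otimes g)(\gamma_1,\ldots,\gamma_s,\beta,\alpha)=\langle\gamma_r,\alpha\rangle F(\gamma_1,\ldots,\beta,\ldots,\gamma_s),
\]
with $\beta$ in the $r$-th slot, and similarly
\[
\sigma_{r,s+2}(F\otimes g)(\gamma_1,\ldots,\gamma_s,\beta,\alpha)=\langle\gamma_r,\beta\rangle F(\gamma_1,\ldots,\alpha,\ldots,\gamma_s),
\]
with $\alpha$ in the $r$-th slot.

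Finally I would recognize the Eichler transvection: by the definition recalled in the statement, $(\beta\wedge\alpha)(\gamma_r)=\langle\alpha,\gamma_r\rangle\beta-\langle\beta,\gamma_r\rangle\alpha$, so by multilinearity of $F$ in the $r$-th slot,
\[
F\bigl(\ldots,(\beta\wedge\alpha)(\gamma_r),\ldots\bigr)=\langle\alpha,\gamma_r\rangle F(\ldots,\beta,\ldots)-\langle\beta,\gamma_r\rangle F(\ldots,\alpha,\ldots),
\]
which is exactly $\sigma_{r,s+1}(F\otimes g)-\sigma_{r,s+2}(F\otimes g)$ evaluated at $(\gamma_1,\ldots,\gamma_s,\beta,\alpha)$. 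Summing over $r$ and adding the $\frac{k}{2}\langle\alpha,\beta\rangle F$ term yields the claimed identity. There is no genuine obstacle; the only subtle point is keeping track of the conventions for $\sigma_{i,j}$ and of the swap built into the definition of $[\LO,\RO]$, which I have already verified above.
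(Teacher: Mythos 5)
Your proposal is correct and follows essentially the same route as the paper: apply Proposition~\ref{prop:commutation relation} (pulled back to the tube domain), evaluate on $\gamma_1\otimes\cdots\otimes\gamma_s\otimes\beta\otimes\alpha$, and recognize the transposition terms as the Eichler transvection. Your extra care in checking that the built-in swap $\sigma_{s+1,s+2}$ in the definition of $[\LO,\RO]$ matches the evaluation convention for $[\LO_\alpha,\RO_\beta]$ is a point the paper leaves implicit, but the argument is the same.
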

\begin{proof}
By Proposition~\ref{prop:commutation relation} (and pulling it back via $\varphi$) we have the commutation relation
\[ {[} \LO, \RO ] F = \frac{k}{2} F \otimes g
+ \frac{1}{2} \sum_{r=1}^{s} \sigma_{r, s+1}( F \otimes g) - \sigma_{r, s+2}(F \otimes g). \]
Applying this to $\gamma_1 \otimes \gamma_s \otimes \beta \otimes \alpha$ yields the right hand side.
For example,
\begin{align*}
\sigma_{r, s+1}( F \otimes g)(\gamma_1 \otimes \gamma_s \otimes \beta \otimes \alpha)
& = (F \otimes g)(\gamma_1 \otimes \cdots \otimes \beta \otimes \cdots \otimes \gamma_s \otimes \gamma_r \otimes \alpha)  \\
& = F(\gamma_1, \ldots, \gamma_{r-1}, \beta, \gamma_{r+1}, \ldots, \gamma_s) \langle \alpha, \gamma_r \rangle. \qedhere
\end{align*}
\end{proof}

\subsection{Tensor representation}
The tube domain $\CC$ is an open subset in the vector space $L_{\BC}$, which allows us to identify $\Omega_{L_{\BC}} \cong L_{\BC}^{\vee} \otimes \CO$. After choosing a basis of $L$, we may therefore view a multidifferential $F \in \CA(\CC,\Omega_{\CC}^{\otimes s})$ as a tensor-valued function $F : \CC \to (L_{\BC}^{\vee})^{\otimes s} \cong 
(\BC^{n})^{\otimes s}$. Concretely, given a basis $e_1, \dots, e_n$ with associated coordinates $z=(z_1,\ldots,z_n)$,
we identify a multidifferential
\[ F=\sum_{\substack{I=(i_1,\ldots,i_s) \\ 1 \leq i_k \leq n}} F_I dz_{I}, \quad dz_I = dz_{i_1} \otimes \cdots \otimes dz_{i_s}. \]
with the tensor-valued function
\[ F = ( F_{i_1,\ldots, i_s} )_{i_1,\ldots,i_s=1}^{n} : \CC \to (\BC^{n})^{\otimes s}. \]
We often omit the brackets, simply writing $F = F_{i_1,\ldots, F_{i_s}}$ and call $F$ a tensor. \\

We will express the transformation law and the raising and lowering operators in this language.

For $\gamma \in O^{+}(M_{\BR})$, define $\lambda(\gamma,z)$ to be the $n \times n$-matrix
with entries
\[ \lambda(\gamma,z)_{ij} = J(\gamma,z) \cdot \frac{ \partial (\gamma \cdot z)_i }{\partial z_j}. \]
The chain rule implies the cocycle property $\lambda( \tilde{\gamma} \gamma, z) = \lambda( \tilde{\gamma}, \gamma z ) \lambda( \gamma, z)$
for all $\gamma, \tilde{\gamma} \in O^{+}(M_{\BR})$.

\begin{example} \label{example:Automorphy factors}
We have the following automorphy factors
for the generators of the orthogonal group $O^+(M_{\BR})$
(compare \cite[Sec.3]{Zemel}):
\begin{itemize}
	\item[(a)] The elements $\exp( \xi \wedge e)$ for $\xi \in L_{\BR}$ (defined in Section~\ref{subsec:Eichler transvection}) which act by
	\[ \exp( \xi \wedge e) \cdot z = z + \xi\] have automorphy factors \[ \quad J(\exp( \xi \wedge e) , z ) = 1,
	\quad \lambda(\exp( \xi \wedge e),z) = \id. \]
	\item[(b)] For $A \in O(L_{\BR})$ and $a \in \BR$ where
	\[ \begin{cases} a > 0 & \text{ if } A \in O^+(L_{\BR}) \\
		a < 0 & \text{ if } A \notin O^+(L_{\BR})
	\end{cases} \]
	we have the element
	\[
	k_{a,A} = \begin{pmatrix} a & & \\ & A & \\ & & 1/a \end{pmatrix} \in O^+(M_{\BR}) \]
	where the matrix is with respect to the basis $e, e_1, \ldots, e_n, f$.
	It acts by
	\[ k_{a,A} \cdot z = a A z\] and has automorphy factors \[J(k_{a,A},z) = 1/a, \quad \lambda(k_{a,A},z) = A. \]
	\item[(c)] The element $S \in O^+(M_{\BR})$ defined by
	$e \to f, f \mapsto e, S|_{L} = \id$.
	It acts by
	\[ S \cdot z = -2 \frac{z}{\langle z, z \rangle}\] and has automorphy factors \[J(S, z) = -\frac{1}{2} \langle z, z \rangle,
	\quad \lambda(S,z)_{ij} = \delta_{ij} \langle z, z \rangle - 2 z_i \langle e_j, z \rangle.
	\]
\end{itemize}
\end{example}

\begin{defn}
For $k \in \BZ$, $s \geq 0$ and a tensor $F : \CC \to (\BC^{\otimes n})^{s}$, the slash action is defined by
\[
(F|_{k,s} \gamma)(z) :=
J(\gamma,z)^{-k} F(\gamma z) \cdot \lambda(\gamma,z)^{\otimes s}, \quad \gamma \in O^+(M_{\BR}).
\]
\end{defn}

Written out in coefficients, this is
\[ (F|_{k,s} \gamma)(z)_{j_1,\ldots,j_s} =
J(\gamma,z)^{-k} \sum_{i_1,\ldots,i_s=1}^{n} F_{i_1, \ldots, i_s}(\gamma z)
\lambda_{i_1 j_1}(\gamma, z) \cdots \lambda_{i_s j_s}(\gamma, z) \]

If a multi-differential $F \in \CA(\CC, \Omega_{\CD}^{\otimes s})$ corresponds to a tensor $F = (F_{i_1,\ldots, i_s})$, then
$J(\gamma,z)^{-k+s} \gamma^{\ast}(F)$ corresponds to the tensor $(F_{i_1,\ldots, i_s})|_{k,s} \gamma$. Hence we get:
\begin{lemma}
Let $\Gamma \subset O^{+}(M)$ be a subgroup.
There are natural (depending on a basis of $L$) isomorphisms between:
\begin{enumerate}
\item[(i)] the space $\CA(\CD, \CE^{\otimes s} \otimes \CL^k)^{\Gamma}$ of $\Gamma$-invariant smooth sections of $\CE^{\otimes s} \otimes \CL^k$,
\item[(ii)] the space of smooth multidifferential forms $\omega'\in \CA(\CC, \Omega_{\CC}^{\otimes s})$ satisfying
\[ \forall \gamma \in \Gamma: \quad \gamma^{\ast}(\omega') = J(\gamma,z)^{k-s} \omega', \]
\item[(iii)] the space of smooth functions $F:\CC \to (\BC^{\otimes n})^{\otimes s}$ satisfying
\[ \forall \gamma \in \Gamma: F|_{k,s} \gamma = F. \]
\end{enumerate}
\end{lemma}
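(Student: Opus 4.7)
The equivalence of (i) and (ii) was already established in the earlier lemma in the excerpt, so the plan is to prove the equivalence of (ii) and (iii), which is essentially a bookkeeping argument using a choice of basis.

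Fix the basis $e_1,\ldots,e_n$ of $L$ with associated coordinates $z_1,\ldots,z_n$ on $\CC \subset L_{\BC}$. A smooth multidifferential $\omega' \in \CA(\CC, \Omega_{\CC}^{\otimes s})$ can be uniquely written as $\omega' = \sum_{I} F_I \, dz_I$ with $F_I \in C^{\infty}(\CC)$, so the assignment $\omega' \mapsto (F_I)_I$ gives a $\BC$-linear bijection between $\CA(\CC, \Omega_{\CC}^{\otimes s})$ and $C^{\infty}(\CC, (\BC^n)^{\otimes s})$ as vector spaces. The only nontrivial point is to check that the two equivariance conditions correspond under this bijection.

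For that I would compute $\gamma^{\ast} \omega'$ directly via the chain rule:
\[
\gamma^{\ast} \omega' = \sum_I F_I(\gamma z) \bigotimes_{r=1}^s d(\gamma z)_{i_r} = \sum_{I, J} F_I(\gamma z) \prod_{r=1}^s \frac{\partial (\gamma z)_{i_r}}{\partial z_{j_r}} \, dz_J.
\]
Using the definition $\lambda(\gamma,z)_{ij} = J(\gamma,z) \cdot \partial (\gamma z)_i / \partial z_j$, the product of partial derivatives equals $J(\gamma,z)^{-s}\prod_r \lambda_{i_r j_r}(\gamma,z)$, so the coefficient of $dz_J$ becomes
\[
J(\gamma,z)^{-s} \sum_I F_I(\gamma z) \prod_r \lambda_{i_r j_r}(\gamma,z) = J(\gamma,z)^{k-s} \cdot (F|_{k,s}\gamma)(z)_J,
\]
where in the last equality I insert the definition of the slash action. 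Hence $\gamma^{\ast}\omega' = J(\gamma,z)^{k-s} \sum_J (F|_{k,s}\gamma)_J \, dz_J$. Comparing with $\gamma^{\ast}\omega' = J(\gamma,z)^{k-s}\omega'$ shows that this condition is equivalent to $F|_{k,s}\gamma = F$ for all $\gamma \in \Gamma$.

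This immediately yields the isomorphism (ii) $\cong$ (iii), and composing with the earlier isomorphism (i) $\cong$ (ii) completes the proof. The cocycle identity $\lambda(\tilde\gamma\gamma,z)=\lambda(\tilde\gamma,\gamma z)\lambda(\gamma,z)$ stated earlier guarantees that the slash action is an actual group action, so this is consistent. There is no real obstacle here; the only thing to be careful about is the bookkeeping of how many factors of $J(\gamma,z)$ come from the coordinate change $d(\gamma z)_i \to dz_j$ versus from the definition of $\lambda$, but the computation above verifies that everything assembles into the expected weight $k-s$ transformation of the multidifferential.
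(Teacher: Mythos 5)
Your proof is correct and follows essentially the same route as the paper: the paper reduces (i)$\cong$(ii) to the earlier lemma on multidifferentials and establishes (ii)$\cong$(iii) via the single observation that $J(\gamma,z)^{-k+s}\gamma^{\ast}(F)$ corresponds to the tensor $F|_{k,s}\gamma$, which is exactly the chain-rule computation you carry out in detail. The bookkeeping of the powers of $J(\gamma,z)$ in your display matches the paper's convention, so nothing is missing.
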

\begin{cor}
Let $\Gamma \subset O^{+}(M)$ be a finite index subgroup.
There is a natural isomorphism between $\Mod_{k,s}(\Gamma)$ and the space of holomorphic tensors
$F : \CC \to (\BC^{\otimes n})^{\otimes s}$ 
satisfying $F|_{k,s} \gamma = F$ for all $\gamma \in \Gamma$.
\end{cor}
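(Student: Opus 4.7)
The plan is to deduce the corollary directly from the preceding lemma by specializing its statement to the holomorphic setting. Recall that by Definition~\ref{defn:modular forms}, the space $\Mod_{k,s}(\Gamma)$ consists of the $\Gamma$-invariant \emph{holomorphic} sections of $\CL^k \otimes \CE^{\otimes s}$ on $\CD$. The lemma already provides a natural bijection between $\Gamma$-invariant smooth sections of $\CL^k \otimes \CE^{\otimes s}$ and smooth tensors $F : \CC \to (\BC^n)^{\otimes s}$ satisfying $F|_{k,s}\gamma = F$. It therefore suffices to verify that, under this bijection, holomorphic sections on $\CD$ correspond precisely to holomorphic tensors on $\CC$.

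To do this, I would trace through the three successive identifications used in the construction of the lemma. First, by Lemma~\ref{lemma:identification E} the passage between sections of $\CE^{\otimes s} \otimes \CL^k$ on $\CD$ and multidifferentials on the affine cone $\A(\CD)$ (modulo the ideal generated by $dZ^2$, with appropriate vanishing of contractions with the radial vector field) is defined using only the holomorphic structure of $\CE$ and $\CL$, and so preserves holomorphicity in both directions. Second, pullback by the biholomorphism $\widetilde{\varphi} : \CC \to \A(\CD)$, combined with the holomorphic trivialization of $\widetilde{\varphi}^*\CL$ given by the tautological section $\widetilde{\varphi}$, preserves holomorphicity. Finally, the identification of a multidifferential on the open subset $\CC \subset L_{\BC}$ with a tensor-valued function uses the canonical holomorphic trivialization $\Omega_{L_{\BC}} \cong L_{\BC}^{\vee} \otimes \CO$ determined by the chosen basis $e_1, \ldots, e_n$ of $L$, which is again holomorphic. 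Hence holomorphicity is preserved at every step.

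There is no significant obstacle; the corollary amounts to no more than reading the lemma in the holomorphic category, together with the observation that the biholomorphism $\varphi$ and all the bundle identifications involved are themselves holomorphic. Note that no cusp condition appears in the statement because Definition~\ref{defn:modular forms} is formulated under the assumption $n \geq 3$, where no such condition is required; in the cases $n \leq 2$ one would additionally have to translate the Koecher-type conditions at cusps into the tensor picture, but this lies outside the scope of the corollary as stated.
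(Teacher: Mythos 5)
Your proof is correct and follows exactly the route the paper intends: the corollary is stated without proof as an immediate specialization of the preceding lemma to the holomorphic category, and your verification that each of the three identifications (Lemma~\ref{lemma:identification E}, pullback by $\widetilde{\varphi}$, and the trivialization $\Omega_{L_{\BC}} \cong L_{\BC}^{\vee}\otimes\CO$) preserves holomorphicity is precisely the content that is being left implicit. Your closing remark about the absence of a cusp condition under the standing assumption $n \geq 3$ is also accurate.
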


Now we describe the raising and lowering operators on tensors. Expand
\[ \nu = \partial \log(\Im(z)^2) = \sum_j \nu_j(z) dz_j, \quad \nu_j(z) = \frac{\partial}{\partial z_j} \log( \Im(z)^2 ) 
=
\sum_{a=1}^{n}
\frac{1}{i} \frac{\Im(z_a) g_{aj}}{\Im(z)^2}. \]

\begin{prop} \label{prop:R L in tube domain}
(i) 
If $F = (F_{i_1,\ldots, i_s})$ is a section of $\CE^{\otimes s} \otimes \CL^k$ written as a tensor, then:
\begin{align*}
	\RO(F)_{i_1, \ldots, i_s,a} 
	& = \frac{ \partial F_{i_1, \ldots, i_s} }{\partial z_{a}} + k \nu_{a} F_{i_1, \ldots, i_s}
	+ 
	\sum_{m=1}^{s} \nu_{i_m} F_{i_1, \ldots, i_{m-1}, a, i_{m+1}, \ldots, i_s} \\
	& \quad - \sum_{m=1}^{s} \sum_{b,c} g_{a i_m} \nu_{b} g^{b c} F_{i_1, \ldots, i_{m-1}, c, i_{m+1}, \ldots, i_s}. \\
	\LO( F )_{i_1, \ldots, i_s,a} & 
    = \frac{\partial F_I}{\partial \overline{Z}_a} \Im(z)^2 - 2 \sum_{b=1}^{n} \frac{\partial F_I}{\partial \overline{Z}_{b}} \Im(z_b) \sum_{c} g_{ac} \Im(z_c)
\end{align*}
(ii) We have the commutation relation
\begin{multline*}
	\LO \RO(F)_{i_1,\ldots,i_s,a,b} - \RO \LO(F)_{i_1,\ldots,i_s,b,a}
	=
	\frac{k}{2} g_{ab} F_{i_1, \ldots, i_s} \\
	- \frac{1}{2} \sum_{m=1}^{s} g_{i_m b} F_{i_1, \ldots, i_{m-1}, a, i_{m+1}, \ldots, i_s}
	+ \frac{1}{2} \sum_{m=1}^{s} g_{i_m a} F_{i_1, \ldots, i_{m-1}, b, i_{m+1}, \ldots, i_s}
\end{multline*}
\end{prop}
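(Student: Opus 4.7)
The plan is to deduce both parts by mechanically translating coordinate-free results already established in the paper into tensor components with respect to the chosen basis $e_1,\ldots,e_n$ of $L$; no genuinely new computation is needed, only careful bookkeeping.

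For part (i), I would start from the formulas in Proposition~\ref{prop:R and L in the tube domain I} and expand them in coordinates. For $\RO$, each of the four summands in
\[ \RO(F) = \partial(F) + k F \otimes \nu + \sum_{r=1}^{s} \sigma_{r,s+1}(F \otimes \nu) - \sum_{r=1}^{s} \tr_{s+2,s+3}\sigma_{r,s+2}(F \otimes g \otimes \nu) \]
converts directly: $\partial(F)$ produces $\partial F_{i_1,\ldots,i_s}/\partial z_a$, the second gives $k\nu_a F_{i_1,\ldots,i_s}$, the transposition $\sigma_{r,s+1}$ relocates the new index $a$ into slot $r$ and $\nu_{i_r}$ into slot $s+1$ (producing $\nu_{i_r} F_{i_1,\ldots,i_{r-1},a,i_{r+1},\ldots,i_s}$), and the last term expands to $g_{a i_m} g^{bc} \nu_b F_{i_1,\ldots,i_{m-1},c,i_{m+1},\ldots,i_s}$ after carrying out the trace using Lemma~\ref{lemms3sdfd}. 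For $\LO$, one first computes $\partial(\Im(z)^2) = -i \sum_{a,b} \Im(z_a) g_{ab}\,dz_b$, so that $-2i\,\Im(z_\ell)\,\partial(\Im(z)^2) = -2\,\Im(z_\ell) \sum_{a,b}\Im(z_a) g_{ab}\, dz_b$; substituting into the formula of Proposition~\ref{prop:R and L in the tube domain I} and reading off the coefficient of $dz_a$ gives the stated expression, using symmetry of $g$.

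For part (ii), I would pull the abstract commutation relation of Proposition~\ref{prop:commutation relation} back via $\widetilde{\varphi}^{*}$ and expand it in components. The critical bookkeeping is that both $\RO$ and $\LO$ append their new index at the last slot, so in $\LO\RO F$ the $\RO$-index sits at position $s+1$ and the $\LO$-index at position $s+2$, whereas in $\RO \LO F$ these positions are swapped. The factor $\sigma_{s+1,s+2}$ in the definition $[\LO,\RO] = \LO \circ \RO - \sigma_{s+1,s+2}\circ \RO \circ \LO$ reconciles these conventions, so that the left-hand side of (ii), namely $\LO\RO F_{(i_1,\ldots,i_s,a,b)} - \RO\LO F_{(i_1,\ldots,i_s,b,a)}$, is exactly $[\LO,\RO]F$ evaluated at $(i_1,\ldots,i_s,a,b)$. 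On the right-hand side of the abstract formula, $\tfrac{k}{2}F\otimes g$ contributes the term $\tfrac{k}{2} g_{ab} F_{i_1,\ldots,i_s}$, while direct evaluation at index $(i_1,\ldots,i_s,a,b)$ shows that $\sigma_{r,s+1}(F \otimes g)$ and $\sigma_{r,s+2}(F \otimes g)$ produce respectively $F_{i_1,\ldots,a,\ldots,i_s}\, g_{i_r b}$ and $F_{i_1,\ldots,b,\ldots,i_s}\, g_{a i_r}$; summing over $r$ and using symmetry of $g$ yields the remaining two summands of (ii).

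The only step genuinely requiring attention is the index-ordering convention just described; once the assignment ``first new slot $\leftrightarrow \RO$, second new slot $\leftrightarrow \LO$'' is enforced consistently on both sides, the whole argument reduces to algebraic substitution in the formulas already derived. I do not anticipate any essential obstacle beyond this bookkeeping.
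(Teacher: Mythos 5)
Your approach is exactly the paper's: part (i) is a coordinate reformulation of Proposition~\ref{prop:R and L in the tube domain I}, and part (ii) is the pullback of Proposition~\ref{prop:commutation relation} expanded in components. Part (i) of your argument is correct in all details, including the computation of $\partial(\Im(z)^2)$ and the component extraction of the transposition and trace terms.

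In part (ii), however, your final sentence does not follow from your own intermediate steps. You correctly find that $\sigma_{r,s+1}(F\otimes g)$ and $\sigma_{r,s+2}(F\otimes g)$ have components $F_{i_1,\ldots,a,\ldots,i_s}\,g_{i_r b}$ and $F_{i_1,\ldots,b,\ldots,i_s}\,g_{a i_r}$ at the index $(i_1,\ldots,i_s,a,b)$. Feeding these into Proposition~\ref{prop:commutation relation}, whose right-hand side is $\tfrac{k}{2}F\otimes g+\tfrac12\sum_r\bigl(\sigma_{r,s+1}(F\otimes g)-\sigma_{r,s+2}(F\otimes g)\bigr)$, yields
\[
\frac{k}{2}g_{ab}F_{i_1,\ldots,i_s}
+\frac12\sum_{m} g_{i_m b}\,F_{i_1,\ldots,a,\ldots,i_s}
-\frac12\sum_{m} g_{i_m a}\,F_{i_1,\ldots,b,\ldots,i_s},
\]
i.e.\ the last two summands carry the \emph{opposite} signs to those printed in the Proposition; symmetry of $g$ cannot repair this. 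One can check independently (take $F$ of depth $0$, so that $[\LO,\RO]F=\LO\RO F$, and use $\LO(G)_{I,a}=\tfrac12\sum_j g_{aj}\partial G_I/\partial\nu_j$ together with the formula for $\RO$ from part (i)) that the signs above — the ones your derivation actually produces, and the ones in Lemma~\ref{lemma:commutation relation} — are the correct ones, so the discrepancy is a sign typo in the statement of (ii) rather than an error in your method. Still, you should not have asserted that your computation ``yields the remaining two summands of (ii)'' without verifying the signs; as written, that step fails against the statement you set out to prove.
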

\begin{proof}
Part (i) is a reformulation of Proposition~\ref{prop:R and L in the tube domain I}.
Part (ii) follows from Proposition~\ref{prop:commutation relation} by pullback via $\varphi$
or by a direct computation from (i).
\end{proof}

Recall that a section $F$ of $\CE^{\otimes s} \otimes \CL^k$ is almost-holomorphic of depth $d$ if $\LO^d(F) \neq 0$ but $\LO^{e}(F) = 0$ for all $e>d$.
By Proposition~\ref{prop:R L in tube domain} it is easy to describe almost-holomorphic functions:
\begin{lemma} The function $F=(F_{i_1,\ldots,i_s}) : \CC \to (\BC^{\otimes n})^{s}$ is almost-holomorphic of depth $d$ if and only if every $F_{i_1,\ldots,i_s}$ is a polynomial in the $\nu_1,\ldots,\nu_n$ of degree at most $d$, and at least one $F_{i_1,\ldots,i_s}$ is of degree exactly $d$.
\end{lemma}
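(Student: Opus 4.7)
The plan is to induct on the depth $d$, using the explicit formulas from Proposition~\ref{prop:R L in tube domain}. The key computational input is the action of $\LO$ on monomials $\nu^\alpha := \prod_j \nu_j^{\alpha_j}$. From $\nu_j = \partial_j \log \Im(z)^2$ and $\Im(z)^2 = \sum_{a,b} g_{ab} \Im(z_a) \Im(z_b)$, one computes $\overline{\partial}_a \nu_j = g_{ja}/(2\,\Im(z)^2) + \nu_j \nu_a$. Combining this with the elementary identities $\sum_b g_{jb} \Im(z_b) = i \nu_j \Im(z)^2$ and $\sum_b \nu_b \Im(z_b) = -i$ (both immediate from $\nu_j = -i \sum_b g_{jb}\Im(z_b)/\Im(z)^2$), the formula for $\LO$ in Proposition~\ref{prop:R L in tube domain} simplifies, after the would-be $\nu\otimes\nu$ terms cancel in pairs, to
\[
\LO(\nu^\alpha)_a \;=\; \tfrac12 \sum_j \alpha_j\, g_{ja}\, \nu^{\alpha - e_j}.
\]
Since $\LO$ annihilates holomorphic functions and holomorphic coefficients simply pass through, this shows that $\LO$ acts on the algebra of tensors whose entries are polynomials in $\nu_1,\ldots,\nu_n$ with holomorphic coefficients as the constant-coefficient polynomial derivation $D_a := \tfrac12 \sum_j g_{ja}\, \partial/\partial\nu_j$ in a new tensor slot; in particular it is degree-decreasing in $\nu$. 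This settles the $(\Leftarrow)$ direction: iterating gives $\LO^{d+1} F = 0$ whenever every $F_I$ has degree $\le d$, and if moreover some $F_I$ has degree exactly $d$ then $\LO^d F \neq 0$ by non-degeneracy of $g$, so the depth equals $d$.

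For the converse I induct on $d$. The base case $d = 0$ asks that $\LO F = 0$ imply every $F_I$ is holomorphic. The formula from Proposition~\ref{prop:R L in tube domain} reads
\[
\overline{\partial}_a F_I \cdot \Im(z)^2 \;=\; 2\Big(\sum_b \overline{\partial}_b F_I \cdot \Im(z_b)\Big) \cdot \sum_c g_{ac}\Im(z_c)
\]
for each $I, a$. Contracting with $\Im(z_a)$ and summing over $a$, then using $\sum_a \Im(z_a)\sum_c g_{ac}\Im(z_c) = \Im(z)^2$, yields $\sum_b \overline{\partial}_b F_I \cdot \Im(z_b) = 0$; feeding this back into the displayed equation forces $\overline{\partial}_a F_I = 0$ for all $a, I$, so each $F_I$ is holomorphic.

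For the inductive step with $d \ge 1$, the tensor $\LO F$ has depth $d-1$, so by the hypothesis each component $(\LO F)_{I,a}$ is a polynomial in $\nu$ of degree $\le d-1$ with holomorphic coefficients. I then construct $P = (P_I)$ with each $P_I$ a polynomial of degree $\le d$ in $\nu$ (with holomorphic coefficients) such that $\LO P = \LO F$; once this is done, $F - P$ lies in $\ker \LO$, the base case forces $F-P$ to be holomorphic, and $F_I = (F_I - P_I) + P_I$ is of the desired form. Constructing $P$ reduces to a Poincar\'e lemma in the polynomial ring in $\nu$ over holomorphic functions of $z$: given polynomials $G_{I,a}$ satisfying $D_b G_{I,a} = D_a G_{I,b}$, find a polynomial $P_I$ with $D_a P_I = G_{I,a}$. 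Since $g$ is invertible the $D_a$ span the same space of derivations as the $\partial/\partial\nu_a$, so this is the classical statement that a closed polynomial $1$-form on affine space is exact, which is solved explicitly by the standard homotopy formula. The required integrability $D_b (\LO F)_{I,a} = D_a (\LO F)_{I,b}$ is exactly the symmetry of $\LO^2 F$ in its last two factors, which was established in Lemma~\ref{lemma:L^2 and R^2}. The main delicate point I foresee is this integrability–symmetry bridge; the rest is bookkeeping with the monomial formula for $\LO$.
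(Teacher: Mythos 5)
Your proof is correct. It shares the paper's computational kernel --- the identity $\LO(\nu^\alpha)_a=\tfrac12\sum_j\alpha_j g_{ja}\nu^{\alpha-e_j}$, which is exactly the formula \eqref{L on nu} restricted to monomials, and your derivation of it via $\overline{\partial}_a\nu_j=g_{ja}/(2\Im(z)^2)+\nu_j\nu_a$ checks out --- but the route to the converse direction is genuinely different. The paper treats $(z_1,\dots,z_n,\nu_1,\dots,\nu_n)$ as a smooth coordinate system replacing $(z,\overline{z})$, so that $\LO$ becomes the constant-coefficient derivation $\tfrac12\sum_j g_{aj}\,\partial/\partial\nu_j$ on \emph{all} smooth functions, and the lemma collapses to the triviality that a function whose $(d+1)$-st $\nu$-partials vanish is a $\nu$-polynomial of degree $\le d$ with $\nu$-independent (hence holomorphic) coefficients. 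You instead verify the formula only on the polynomial subalgebra, prove $\ker\LO$ consists of holomorphic tensors by contracting the explicit formula for $\LO$ against $\Im(z_a)$, and then climb the depth filtration by solving $D_aP_I=(\LO F)_{I,a}$ via a polynomial Poincar\'e lemma, with integrability supplied by the symmetry of $\LO^2F$ from Lemma~\ref{lemma:L^2 and R^2}. The paper's version buys brevity at the price of implicitly using that the Jacobian $(\partial\nu_j/\partial\overline{z}_k)$ is invertible (the non-degeneracy of $H$) to legitimize the coordinate change and the uniqueness of the polynomial representation; your version avoids that input entirely, and makes explicit both why the coefficients end up holomorphic (your base case) and why the top-degree part is detected by $\LO^d$ (linear independence of the images of the degree-$d$ monomials, using invertibility of $g$). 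Two minor points to record if you write it up: the passage from $D_a$ to $\partial/\partial\nu_j$ only uses that $g$ is a constant invertible matrix, so the modified right-hand sides stay polynomial with holomorphic coefficients; and in the inductive step the clause ``some component has degree exactly $d$'' should be noted to follow a posteriori from your forward direction, since otherwise the depth would be strictly less than $d$.
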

\begin{proof}
If we view $z_1,\ldots, z_n,\nu_1,\ldots, \nu_n$ as independent variables on $L_{\BC}$ (instead of $z_1,\ldots,z_n,\overline{z}_1, \ldots, \overline{z}_n$) we find the following expression for $\LO$:
\begin{equation} \label{L on nu} \LO( F )_{i_1, \ldots, i_s,a} = \frac{1}{2} \sum_{j} g_{aj} \frac{\partial F_{i_1, \ldots, i_s}}{\partial \nu_j}. \end{equation}
For example, we have  $\LO(\nu) = \frac{1}{2} g$.
Thus
\[ \sum_{a_1,\ldots, a_d} \LO^d(F)_{i_1,\ldots,i_s,a_1,\ldots,a_d} g^{a_1 b_1} \cdots g^{a_d b_d} = 
\frac{1}{2^d} \frac{\partial^d F_{i_1,\ldots,i_s}}{\partial \nu_{b_1} \cdots \partial \nu_{b_d}}. \]
This gives the claim.
\end{proof}

\begin{cor}
Let $\Gamma \subset O^{+}(M)$ be a finite index subgroup.
There is a natural isomorphism between the space of almost-holomorphic modular forms $\AHMod_{k,s}(\Gamma)$ and the space of smooth tensors
$F=(F_{i_1,\ldots, i_s}) : \CC \to (\BC^{\otimes n})^{\otimes s}$ that satisfy
\begin{itemize}
\item $F|_{k,s} \gamma = F$ for all $\gamma \in \Gamma$, and
\item every $F_{i_1,\ldots,i_s}$ is a polynomial in the $\nu_1,\ldots, \nu_s$.
\end{itemize}
\end{cor}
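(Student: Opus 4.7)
The plan is to assemble the stated corollary directly from the two lemmas appearing immediately before it in the excerpt. The first of these (the three-fold equivalence between $\Gamma$-invariant smooth sections of $\CE^{\otimes s}\otimes \CL^k$, smooth multidifferentials on $\CC$ with transformation factor $J(\gamma,z)^{k-s}$, and smooth tensor-valued functions $F:\CC \to (\BC^n)^{\otimes s}$ satisfying $F|_{k,s}\gamma=F$) already supplies the dictionary between global sections and tensors with the correct slash-invariance. So the only thing left to verify is that, under this dictionary, the almost-holomorphicity condition $\LO^{d+1}F=0$ translates to the condition that each component $F_{i_1,\ldots,i_s}$ is a polynomial in $\nu_1,\ldots,\nu_n$.

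First I would invoke the preceding lemma whose proof gives the key reformulation \eqref{L on nu}: viewing $(z_1,\ldots,z_n,\nu_1,\ldots,\nu_n)$ as independent coordinates on $\CC$ (which is legitimate because the linear map $\overline{z}\mapsto \nu$ is an invertible real-affine change of variables at each point, with coefficients depending on $z$ only through $\Im(z)$), the lowering operator acts on tensors as
\[
\LO(F)_{i_1,\ldots,i_s,a} \;=\; \tfrac{1}{2}\sum_{j} g_{aj}\,\frac{\partial F_{i_1,\ldots,i_s}}{\partial \nu_j}.
\]
Iterating and contracting with $g^{a_1b_1}\cdots g^{a_db_d}$, the $d$-fold power $\LO^{d}F$ is, up to the non-degenerate factor $2^{-d}$, the $d$-th partial derivative tensor $\partial^{d}F_{i_1,\ldots,i_s}/\partial\nu_{b_1}\cdots\partial\nu_{b_d}$. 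Hence $\LO^{d+1}F=0$ if and only if every entry $F_{i_1,\ldots,i_s}$ is a polynomial of degree $\leq d$ in $\nu_1,\ldots,\nu_n$, and the depth equals the maximum such degree.

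Combining these two translations produces the bijection claimed in the corollary: $\Gamma$-invariance of the section matches $F|_{k,s}\gamma=F$, and almost-holomorphicity matches the polynomial condition on the $\nu_j$'s.

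I expect no serious obstacle, since both ingredients are already in place. The only minor point that warrants a sentence is basis-independence: the coordinates $\nu_j$ depend on the chosen basis $e_1,\ldots,e_n$ of $L$, but a different basis $e'_i=\sum_j A_{ij}e_j$ induces the linear change $\nu'_i=\sum_j A_{ij}\nu_j$ with $A$ invertible, so the property of being polynomial in the $\nu_j$'s (and the degree) is intrinsic. With this remark, the proof is complete and amounts to citing the two preceding lemmas in sequence.
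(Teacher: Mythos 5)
Your proof is correct and follows the same route the paper intends: the corollary is stated without a separate proof precisely because it is the combination of the three-fold dictionary lemma with the lemma characterizing almost-holomorphic tensors via the formula \eqref{L on nu}. The only nitpick is that the change of variables $\overline{z}\mapsto\nu$ (for fixed $z$) is an invertible smooth (rational) substitution rather than a real-affine one, since $\nu_j$ involves $\Im(z)^2$ in the denominator; this does not affect the argument.
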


\subsection{Quasimodular forms}
In the last section we have seen that a tensor $F : \CC \to (\BC^{n})^{\otimes s}$ is almost-holomorphic if it can be written as a polynomial in the $\nu_i$.
We define the holomorphic limit of $F$ to be the holomorphic function $f : \CC \to (\BC^{n})^{\otimes s}$ given by
\[ f := F|_{\nu_1 = \ldots = \nu_n = 0} \]
Intuitively, this corresponds to taking a limit $\Im(z)^2 \to \infty$,
so that $\nu_1,\ldots,\nu_s \to 0$.

We define quasimodular forms as the holomorphic limit of almost-holomorphic modular forms:

\begin{defn}
Let $\Gamma \subset O^{+}(M_{\BR})$ be a finite index subgroup and $n \geq 3$.
A function $f = ( f_{i_1, \ldots, i_s} ) : \CC \to (\BC^{\otimes n})^{s}$ is a quasimodular form of weight $k$ and rank $s$ for $\Gamma$ if there exists a almost-holomorphic modular form $F = F_{i_1, \ldots, i_s}$ of weight $k$ and rank $s$ for $\Gamma$ such that
$$F_{i_1, \ldots, i_s}|_{\nu_1 = \ldots = \nu_n = 0} = f.$$
\end{defn}
\begin{rmk}
We refer to Section~\ref{subsec:general cusp} for the $n \leq 2$ case.
\end{rmk}

\begin{rmk}
The lowering operator $\LO$ was defined on the period domain $\CD$. The notion of an almost-holomorphic modular form is hence independent of the choice of tube domain.
On the other hand, the holomorphic limit $\Im(z)^2 \to \infty$ does depend on the choice of tube domain, hence the notion of quasimodular form does as well.
\end{rmk}

Let $\QMod_{k,s}(\Gamma)$ 
be the vector space of quasimodular forms for group $\Gamma$ of weight $k$ and rank $s$.
By definition there is a morphism
\[ \ct : \AHMod_{k,s}(\Gamma) \to \QMod_{k,s}(\Gamma), \quad F \mapsto F|_{\nu_1 = \ldots, \nu_n=0} \]
which we call the {\em constant-term map}. 

The fundamental result of this section is the following:

\begin{thm} \label{thm:constant term}
The constant-term map $\ct$ is an isomorphism.
\end{thm}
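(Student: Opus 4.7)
The surjectivity of $\ct$ is immediate from the definition of $\QMod_{k,s}(\Gamma)$, so the content of the theorem is injectivity: given $F \in \AHMod_{k,s}(\Gamma)$ with $F|_{\nu = 0} = 0$, I must show $F = 0$.

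The plan is to argue by induction on the depth $d$ of $F$. For $d = 0$, $F$ is holomorphic and therefore coincides with its own constant term, so $F = \ct(F) = 0$. For $d \geq 1$, I would establish a non-logarithmic structural decomposition parallel to Proposition~\ref{structure prop log mod forms}:
\[
\AHMod_{k,s}(\Gamma) \;\cong\; \bigoplus_{d \geq 0} \Mod_{k-d,\,s+d}(\Gamma)^{S_d},
\]
where the forward map is $F \mapsto (\LO^d F)_{d \geq 0}$. Indeed, when $F$ has depth at most $d$, the iterate $\LO^d F$ is genuinely holomorphic, and by Lemma~\ref{lemma:L^2 and R^2} it is symmetric in its last $d$ tensor factors, so it lives in $\Mod_{k-d,s+d}(\Gamma)^{S_d}$.

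To construct the inverse, given $f_d \in \Mod_{k-d,s+d}(\Gamma)^{S_d}$ I would produce a canonical depth-$d$ completion $R_d(f_d) \in \AHMod_{k,s}(\Gamma)$ with $\LO^d R_d(f_d)$ a nonzero scalar multiple of $f_d$. The recipe is to form $\RO^d f_d$, which by iterated application of the commutator formula in Proposition~\ref{prop:commutation relation} has depth exactly $d$ with computable top-depth piece, and then to trace out appropriate pairs of factors to return to rank $s$. Subtracting $R_d(\LO^d F)$ from $F$ produces a form of strictly lower depth with the same constant term modulo a term controlled by $f_d$, and iterating yields the full decomposition.

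Once the decomposition is in hand, injectivity of $\ct$ reduces to verifying that the composition
\[
\bigoplus_{d \geq 0} \Mod_{k-d,\,s+d}(\Gamma)^{S_d} \;\xrightarrow{\;\bigoplus R_d\;}\; \AHMod_{k,s}(\Gamma) \;\xrightarrow{\;\ct\;}\; \QMod_{k,s}(\Gamma)
\]
has trivial kernel. Using the explicit expression for $\RO$ from Proposition~\ref{prop:Properties of L and R}(iv), the $\nu^0$-part of $R_d(f_d)$ is a specific non-degenerate holomorphic differential expression in $f_d$ (a kind of iterated Serre derivative), and one verifies that distinct $f_d$'s produce linearly independent contributions to $\ct(F)$ by filtering according to the number of derivatives; see the analogous argument in the proof of Proposition~\ref{structure prop log mod forms}. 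This forces each $f_d = 0$ and hence $F = 0$.

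The main obstacle is the explicit construction of the completion operators $R_d$ in the non-logarithmic setting: the $G = \RO(f)/(\ell f)$ construction used in Proposition~\ref{structure prop log mod forms} picks up poles along $\{f = 0\}$ and is therefore unavailable here. The iterated-$\RO$ substitute requires careful bookkeeping of lower-depth correction terms, all of which must be shown inductively to lie in $\Mod_{k-e,s+e}(\Gamma)^{S_e}$ for appropriate $e < d$, with Proposition~\ref{prop:commutation relation} serving as the central technical input.
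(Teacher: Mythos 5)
Your route is genuinely different from the paper's two proofs (one via the explicit transformation law of the holomorphic part combined with Zariski density of $\Gamma$ in $O(M_{\BC})$, the other by restricting to signature-$(2,1)$ sublattices $U\oplus(2m)\subset M$ and quoting the classical $\SL_2$ statement), but it has a gap precisely where the content of the theorem lies. The decisive step is your claim that the contributions $\ct(R_d(f_d))$ for distinct $d$ are linearly independent, verified ``by filtering according to the number of derivatives.'' There is no such filtration on holomorphic functions, and the argument you point to in Proposition~\ref{structure prop log mod forms} does not transfer: injectivity of $\alpha$ there is detected by applying $\LO^{k_0}$ to a putative relation, which is available only while one is still inside $\AHMod$; after passing to constant terms, the operator $\LO$ on $\QMod$ is only defined once the theorem is already known. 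The implication
\[
\sum_{d}\ct\bigl(R_d(f_d)\bigr)=0\ \Longrightarrow\ f_d=0\ \text{ for all }d
\]
is the injectivity of $\ct$ restated; in the classical case it amounts to the algebraic independence of $G_2$ over $\Mod(\Gamma)$, which is itself proved via the quasimodular transformation law. So the final step of your argument is circular.

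There is also a concrete failure in the completion operators themselves. Taking $R_d=\tr\circ\RO^d$, you need $\LO^dR_d(f_d)=c_df_d$ with $c_d\neq0$. Already for $s=0$, $d=1$: if $f_1\in\Mod_{k-1,1}(\Gamma)$ then, since $\LO f_1=0$, Proposition~\ref{prop:commutation relation} together with $\tr(g)=n$ gives
\[
\LO\,\tr_{12}(\RO f_1)=\tr_{12}\bigl([\LO,\RO]f_1\bigr)=\Bigl(\tfrac{k-1}{2}+\tfrac{1}{2}-\tfrac{n}{2}\Bigr)f_1=\tfrac{k-n}{2}\,f_1,
\]
which vanishes at $k=n$; at that weight $\tr(\RO f_1)$ is holomorphic and the inductive depth reduction stalls. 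This is not an artifact: for $n=1$ it is the familiar fact that $\widehat{G}_2$ has depth one but is not the raising operator of any holomorphic modular form, and the paper's observation that no holomorphic analogue of $G_2$ exists for $O(2,n)$ is exactly why the decomposition of Proposition~\ref{structure prop log mod forms} is formulated only for logarithmic forms, where the pole-bearing completion $G=\RO(f)/(\ell f)$ is available. To repair the proof you would need to import one of the inputs the paper actually uses: either the transformation law of $\ct(F)$ under $\Gamma$ together with Borel density, or restriction to rank-three sublattices and the known $\SL_2$ case.
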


We will give two proofs of this result below.
The first proof uses the transformation property of the quasimodular form (which may be of independent interest), and will
require some preparation.
Consider the element
\[ \nu(z) = \partial \log( \Im(z)^2 ) = \sum_{i=1}^{n} \nu_i(z) dz_i. \]

\begin{lemma} \label{lemma:nu transformation} For any $\gamma \in O^+(V)$, we have
\[ (\nu|_{1,1} \gamma)(z) = \nu(z) - \partial \log J(\gamma,z) . \]
\end{lemma}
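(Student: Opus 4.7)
The plan is to deduce the transformation law from the behaviour of $|Z|^2 = \Im(z)^2$ under $\gamma$, via Lemma~\ref{lemma:norm Z} and the definition of $J(\gamma,z)$.

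First I would show the scalar transformation law
\[ \Im(z)^2 = |J(\gamma,z)|^2 \cdot \Im(\gamma z)^2. \]
Indeed, by Lemma~\ref{lemma:norm Z} we have $\Im(z)^2 = |\widetilde{\varphi}(z)|^2 = \tfrac{1}{2}\langle \widetilde{\varphi}(z), \overline{\widetilde{\varphi}(z)}\rangle$. Applying $\gamma \in O^+(M_{\BR})$, which preserves $\langle\cdot,\cdot\rangle$, and using $\gamma \cdot \widetilde{\varphi}(z) = J(\gamma,z)\widetilde{\varphi}(\gamma z)$, this quantity is unchanged, giving $|\widetilde{\varphi}(z)|^2 = |J(\gamma,z)|^2 |\widetilde{\varphi}(\gamma z)|^2$, which is the claim.

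Next I would take $\partial \log$ of both sides. Since $J(\gamma,z)$ is holomorphic in $z$, we have $\partial \log |J(\gamma,z)|^2 = \partial\log J(\gamma,z)$, so
\[ \nu(z) \;=\; \partial\log\Im(z)^2 \;=\; \partial \log J(\gamma,z) \;+\; \partial\log \Im(\gamma z)^2. \]
It therefore remains to identify the last term with $\nu|_{1,1}\gamma$.

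For the last step I would compute $\partial\log \Im(\gamma z)^2$ by the chain rule. Writing $w = \gamma z$ and using $\nu_i(w) = \partial \log \Im(w)^2 / \partial w_i$ together with the definition
\[ \frac{\partial (\gamma z)_i}{\partial z_j} \;=\; J(\gamma,z)^{-1}\, \lambda(\gamma,z)_{ij} \]
(immediate from $\lambda(\gamma,z)_{ij} := J(\gamma,z)\cdot\partial(\gamma z)_i/\partial z_j$), one obtains
\[ \partial\log\Im(\gamma z)^2 \;=\; \sum_{j}\Bigl( J(\gamma,z)^{-1}\sum_i \nu_i(\gamma z)\,\lambda(\gamma,z)_{ij}\Bigr)\, dz_j, \]
which, upon comparison with the definition
$(\nu|_{1,1}\gamma)(z)_j = J(\gamma,z)^{-1}\sum_i \nu_i(\gamma z)\lambda(\gamma,z)_{ij}$,
is precisely $(\nu|_{1,1}\gamma)(z)$. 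Rearranging yields the asserted identity.

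The main (very minor) obstacle is bookkeeping: one must keep the distinction between the differential-form representation of $\nu$ and its tensor representation, and verify that the factor $J(\gamma,z)^{-1}$ produced by differentiating $\gamma z$ is exactly the one appearing in the slash action $|_{1,1}$. Everything else is a direct application of the chain rule combined with the two defining identities $\gamma\widetilde{\varphi}(z)=J(\gamma,z)\widetilde{\varphi}(\gamma z)$ and $\Im(z)^2=|\widetilde{\varphi}(z)|^2$.
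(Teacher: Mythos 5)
Your proof is correct and follows essentially the same route as the paper: establish $\Im(\gamma z)^2 = |J(\gamma,z)|^{-2}\Im(z)^2$ from the invariance of the pairing together with $\gamma\cdot\widetilde{\varphi}(z)=J(\gamma,z)\widetilde{\varphi}(\gamma z)$, then take the logarithmic $\partial$-derivative (using holomorphy of $J$) and apply the chain rule to identify the result with the slash action. The bookkeeping you flag works out exactly as you describe, since the $J^{-1}$ from the slash cancels the $J$ in the definition of $\lambda$.
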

\begin{proof}
Let $Z=\widetilde{\varphi}(z) = (-\frac{1}{2} z^2, z , 1)$. 
Recall that $Z \cdot \overline{Z} = 2 \Im(Z)^2$. Hence
for any $\gamma \in O^{+}(M_{\BR})$ we have
\begin{align*}
	\Im( \gamma z)^2 
	& = \frac{1}{2} \langle \widetilde{\varphi}(\gamma Z), \overline{ \widetilde{\varphi}(\gamma z)} \rangle \\
	& = \frac{1}{2}  \left\langle J(\gamma,z)^{-1} \gamma \cdot \widetilde{\varphi(z)}, \overline{J(\gamma,z)}^{-1} \gamma \cdot \overline{\widetilde{\varphi}(z)} \right\rangle \\
	& = \frac{1}{2} |J(\gamma,z)|^{-2} \langle \widetilde{\varphi}(z), \overline{\widetilde{\varphi(z)}} \rangle \\
	& = |J(\gamma,z)|^{-2} \Im(z)^2. 
\end{align*}
By taking the logarithmic derivative one obtains
\[ \sum_{i} \nu_i(\gamma z) \frac{\partial(\gamma \cdot z)_i}{\partial z_j} = - \frac{\partial}{\partial z_j}( \log J(\gamma,z) ) + \nu_j(z). \]
This proves the claim.
\end{proof}

Recall the Gram matrix 
$g = (g_{ij})_{i,j=1}^{n}$, where $g_{ij} = \langle e_i, e_j \rangle$.
We have the basic invariance property:

\begin{lemma} \label{lemma:sigma and g}
For all $\gamma \in O^+(M_{\BR})$ we have
\begin{enumerate}
	\item $\lambda(\gamma,z)^t g \lambda(\gamma,z) = g$.
	\item $\lambda(\gamma,z) g^{-1} \lambda(\gamma,z)^t = g^{-1}$
\end{enumerate}
\end{lemma}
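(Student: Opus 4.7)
The plan is to deduce both identities from a single structural fact: the tensor $g = \sum_{a,b} g_{ab}\, dZ_a \otimes dZ_b$, viewed as a section of $\CE \otimes \CE$ on $\CD$, is $O^+(M_{\BR})$-invariant by construction, since it is dual to the trace pairing $\tr : \CE \otimes \CE \to \CO$, which is built from the intrinsic intersection pairing on $M$ and therefore preserved by $O^+(M_{\BR})$.

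First I would pull this invariant section back to the tube domain via $\varphi$. By the computation leading up to Proposition~\ref{prop:R and L in the tube domain I} (using \eqref{3fdsfd} and the fact that $g$ is taken modulo $dZ^2$ so the $dZ_0$ and $dZ_{n+1}$ directions drop out), one gets
\[ \varphi^{\ast}(g) = \sum_{i,j=1}^{n} g_{ij}\, dz_i \otimes dz_j, \]
which corresponds in the tensor language to the constant $(\BC^n)^{\otimes 2}$-valued function $z \mapsto (g_{ij})$. Since $g$ is a section of $\CE \otimes \CE$ (i.e.\ weight $k=0$, rank $s=2$), the dictionary between invariant sections on $\CD$ and slash-invariant tensors on $\CC$ gives $(g \,|_{0,2}\, \gamma) = g$ for all $\gamma \in O^+(M_{\BR})$.

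Next I would write out this slash-invariance on the constant tensor $(g_{ij})$. By the definition of $|_{k,s}$, it reads
\[ \sum_{a,b=1}^{n} g_{ab}\, \lambda(\gamma,z)_{a i}\, \lambda(\gamma,z)_{b j} = g_{ij}, \]
which is exactly the matrix identity $\lambda(\gamma,z)^t g\, \lambda(\gamma,z) = g$, i.e.\ statement (i). Statement (ii) is then purely algebraic: multiplying (i) on the left by $\lambda g^{-1}$ and on the right by $g^{-1}$ gives $\lambda g^{-1} \lambda^t = g^{-1}$.

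The only delicate point is justifying that pullback through $\varphi$ turns the invariant affine-cone tensor $\sum_{a,b=0}^{n+1} g_{ab}\, dZ_a \otimes dZ_b$ into precisely the constant tensor $(g_{ij})_{i,j=1}^n$ in the tube-domain representation of sections of $\CE \otimes \CE$; this is the same kind of bookkeeping that appeared in Lemma~\ref{lemma:trace under pullback by phitilde}, where one uses the freedom to add multiples of $d(Z^2)$ to eliminate the $dZ_0, dZ_{n+1}$ contributions. Should that conceptual route feel uncomfortable, an alternative is a direct verification on the generators listed in Example~\ref{example:Automorphy factors}: for $\exp(\xi \wedge e)$ we have $\lambda = \id$, so (i) is trivial; for $k_{a,A}$ we have $\lambda = A \in O(L)$, so (i) is the defining property of $O(L)$; and for the involution $S$, one checks (i) by a short direct computation using $\lambda(S,z)_{ij} = \delta_{ij}\langle z,z\rangle - 2 z_i \langle e_j, z \rangle$ and the fact that $z$ is isotropic in the relevant contractions.
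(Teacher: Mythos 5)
Your proof is correct and follows essentially the same two routes as the paper's own proof: the conceptual one (the tensor $g$ is an $O^+(M_{\BR})$-invariant section of $\CE\otimes\CE$, whose pullback to the tube domain is the constant tensor $(g_{ij})$, so slash-invariance is exactly $\lambda^t g\lambda=g$), and the fallback verification on the generators of Example~\ref{example:Automorphy factors}. The only cosmetic difference is that you deduce (ii) from (i) by pure matrix inversion, whereas the paper also invokes the cocycle identity $\lambda(\gamma,z)^{-1}=\lambda(\gamma^{-1},\gamma z)$; both are valid.
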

\begin{proof}
This can be proved in at least two ways.
Consider the element
$$g_{\CD} = \sum_{a,b} g_{ab} dZ_a \otimes dZ_b \in \Gamma(\CD, \CE \otimes \CE^{\vee})$$ that was defined in \eqref{g on period domain}.
Then $g_{\CD}$ is clearly $O^{+}(M_{\BR})$-invariant,
so its pullback $\varphi^{\ast}(g_{\CD})$ is as well, but this means exactly that $\lambda(\gamma,z)^t g \lambda(\gamma,z) = g$ for all $\gamma \in O^{+}(M_{\BR})$.
The second claim follows from the first by inverting and using the cocycle property of $\lambda$ which gives $\lambda(\gamma,z)^{-1} = \lambda( \gamma^{-1}, \gamma z)$.

Alternatively, we can also argue by direct computation.
We need to prove that
\[ \langle \lambda(\gamma,z) v, \lambda(\gamma,z) w \rangle = \langle v, w \rangle \]
If the statement holds for any $\gamma, \gamma'$ then it also holds for $\gamma' \gamma$
by the cocycle properties of $\lambda$ and $J$.
Thus it suffices to check this on generators, where it follows immediately
from Example~\ref{example:Automorphy factors}:
\begin{enumerate}
	\item[(a)] For $\gamma = \exp(\xi \wedge e)$ we have
	$\lambda(\gamma,z) = \id$ so the claim holds.
	\item[(b)] For $\gamma = k_{a,A}$ we have $\lambda(\gamma,z) = A$ where $A$ is orthogonal, so the claim holds.
	\item[(c)] For $\gamma = S$ we have
	$\lambda(S,z) = \langle z, z \rangle \cdot I_n - 2 z (gz)^t$,
    which implies the claim by a straightforward computation (using $z^t g z=\langle z, z \rangle$). \qedhere
\end{enumerate}
\end{proof}

Let $F = (F_{i_1,\ldots, i_s}) : \CC \to (\BC^n)^{\otimes s}$ be a tensor corresponding to a section of $\CE^{\otimes s} \otimes \CL^k$. 
We have the following invariance property of the trace.
\begin{lemma}
$\tr_{ab}(F)|_{k,s-2} \gamma = \tr_{ab}( F|_{k,s} \gamma )$ for all $\gamma \in O^{+}(M_{\BR})$.
\end{lemma}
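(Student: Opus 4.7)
The plan is to reduce to a direct calculation that uses the invariance $\lambda(\gamma,z) g^{-1} \lambda(\gamma,z)^t = g^{-1}$ from Lemma~\ref{lemma:sigma and g}(2). By relabelling indices (which corresponds, on the level of sections of $\CE^{\otimes s}$, to a transposition acting before and after the slash action), there is no loss of generality in assuming that $(a,b)=(s-1,s)$, so that we are tracing the last two entries with $g^{-1}$.

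With that reduction in place, I would simply expand both sides in coordinates. On the one hand, by definition of the trace,
\[
\tr_{s-1,s}(F|_{k,s}\gamma)_{j_1,\ldots,j_{s-2}}
= \sum_{p,q} g^{pq}\, (F|_{k,s}\gamma)_{j_1,\ldots,j_{s-2},p,q}.
\]
Substituting the slash action and pulling the sum over $p,q$ inside, the two $\lambda$-factors attached to the $p$- and $q$-entries combine with $g^{pq}$ to give
\[
\sum_{p,q} \lambda_{i_{s-1}\,p}(\gamma,z)\, \lambda_{i_s\,q}(\gamma,z)\, g^{pq}
= \bigl(\lambda(\gamma,z)\, g^{-1}\, \lambda(\gamma,z)^t\bigr)_{i_{s-1}\,i_s}
= g^{i_{s-1}\,i_s},
\]
by Lemma~\ref{lemma:sigma and g}(2). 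The remaining sum in $i_{s-1},i_s$ is then precisely $\tr_{s-1,s}(F)(\gamma z)$ in its corresponding indices, and the remaining $\lambda$-factors are exactly those defining $\tr_{s-1,s}(F)|_{k,s-2}\gamma$. Matching terms gives the identity.

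The only subtle point is the preliminary reduction to the case $(a,b)=(s-1,s)$; I would handle it by noting that the symmetric group $S_s$ acts naturally on tensors $F:\CC \to (\BC^n)^{\otimes s}$ by permutation of factors, that this action commutes with $|_{k,s}\gamma$ (since the slash action applies the same matrix $\lambda(\gamma,z)$ to each factor), and that $\tr_{ab}$ is conjugate by such a permutation to $\tr_{s-1,s}$. No real obstacle is expected: the statement is essentially a bookkeeping consequence of the orthogonality relation $\lambda g^{-1} \lambda^t = g^{-1}$, and the only thing one has to be careful about is the placement of indices and the transpose when converting between the matrix identity and the contraction identity.
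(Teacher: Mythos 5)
Your proof is correct and follows essentially the same route as the paper, which likewise deduces the identity from the relation $\sum_{p,q}\lambda_{ip}\,g^{pq}\,\lambda_{jq}=g^{ij}$ of Lemma~\ref{lemma:sigma and g}(2) (the paper also notes an alternative argument via the $O^{+}(M)$-invariance of the trace on $\CD$, which you do not need). The preliminary reduction to $(a,b)=(s-1,s)$ is harmless but not even necessary, since the same index computation goes through verbatim for arbitrary positions $a,b$.
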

\begin{proof}
This follows from a computation using Lemma~\ref{lemma:sigma and g}(2), i.. that we have $\sum_{j,k} \lambda_{ij} g^{jk} \lambda_{\ell k} = g^{i \ell}$.
Alternatively, it can be proved by observing that the trace operation as defined on $\CD$ is $O^{+}(M)$-invariant, so the claim follows after pulling back to the tube domain.
\end{proof}


Let $F = (F_{i_1,\ldots,i_s})$ be an almost-holomorphic function of depth $d$.
Since each component of $F$ is a polynomial in the $\nu_1,\ldots,\nu_s$
we can write
\[ F = \sum_{t=0}^{d} \langle F^{(t)}, \nu^{\otimes t} \rangle, \]
where the holomorphic functions
\[ F^{(t)} = \left( F_{i_1,\ldots,i_s,i_{s+1}, \ldots, i_{s+t}} \right)_{i_{1},\ldots,i_{s+t} = 1}^{n} : \CC \to (\BC^{n})^{\otimes (s+t)}, \quad t=0,\ldots,d \]
are symmetric in the last $t$-entries, i.e.
\[ F_{i_1,\ldots,i_s,\sigma(i_{s+1}, \ldots, i_{s+t})} = F_{i_1,\ldots,i_s,i_{s+1}, \ldots, i_{s+t}} \text{ for all } \sigma \in S_t. \]
Here we used the pairing \eqref{eqn:defn of pairing}, i.e.
$\left\langle F, G \right\rangle =
\tr_{(s+1,s+t+1), \ldots, (s+t,s+2t)}(F \otimes G)$ for $F$ an $(s+t)$-tensor and $G$ and $t$-tensor.


\begin{lemma} If $F = \sum_{t=0}^{d} \langle F^{(t)}, \nu^{\otimes t} \rangle$ is an almost-holomorphic modular form, then
\[ F^{(t)} = 2^t t! \LO^t(F)|_{\nu=0}. \]
\end{lemma}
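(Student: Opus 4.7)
The strategy is a direct computation using the formula
\[
\LO(F)_{i_1,\ldots,i_s,a} \;=\; \frac{1}{2}\sum_{j} g_{aj}\,\frac{\partial F_{i_1,\ldots,i_s}}{\partial \nu_j}
\]
from equation~\eqref{L on nu}, which treats each component $F_{i_1,\ldots,i_s}$ as a polynomial in the independent real variables $\nu_1,\ldots,\nu_n$. Since $g_{aj}$ and $\partial/\partial \nu_j$ commute (one is a constant matrix in the tube-domain coordinates, the other a partial derivative with respect to the $\nu$'s), iterating this formula $t$ times yields the clean expression
\[
\LO^t(F)_{i_1,\ldots,i_s,a_1,\ldots,a_t} \;=\; \frac{1}{2^t}\sum_{j_1,\ldots,j_t} g_{a_1 j_1}\cdots g_{a_t j_t}\,\frac{\partial^{t} F_{i_1,\ldots,i_s}}{\partial \nu_{j_1}\cdots\partial \nu_{j_t}}.
\]

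Next I would substitute the decomposition $F = \sum_{u=0}^d \langle F^{(u)}, \nu^{\otimes u}\rangle$. Written in coordinates, the $u$-th summand is the degree-$u$ polynomial
\[
\langle F^{(u)}, \nu^{\otimes u}\rangle_{i_1,\ldots,i_s} \;=\; \sum_{l_1,\ldots,l_u}\sum_{k_1,\ldots,k_u} F^{(u)}_{i_1,\ldots,i_s,l_1,\ldots,l_u}\,g^{l_1 k_1}\cdots g^{l_u k_u}\,\nu_{k_1}\cdots\nu_{k_u}
\]
in the variables $\nu_j$. After taking $t$ partial derivatives and setting $\nu=0$, only the summand with $u=t$ survives. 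Applying Leibniz then produces a sum over $S_t$ of products of Kronecker deltas, and the symmetry of $F^{(t)}$ in its last $t$ entries collapses this sum to a $t!$ multiple of a single representative term, so that
\[
\frac{\partial^{t}}{\partial\nu_{j_1}\cdots\partial\nu_{j_t}}\,\langle F^{(t)},\nu^{\otimes t}\rangle_{i_1,\ldots,i_s}\bigg|_{\nu=0} \;=\; (\text{combinatorial constant})\cdot\sum_{l_1,\ldots,l_t} F^{(t)}_{i_1,\ldots,i_s,l_1,\ldots,l_t}\,\prod_{r=1}^{t} g^{l_r j_r}.
\]

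Finally I contract this with the factors $g_{a_1 j_1}\cdots g_{a_t j_t}$ coming from the iterated lowering operator. Each pair $\sum_{j_r} g_{a_r j_r}g^{l_r j_r}=\delta^{l_r}_{a_r}$ collapses to the Kronecker delta, and the remaining sum picks off the component $F^{(t)}_{i_1,\ldots,i_s,a_1,\ldots,a_t}$ exactly. Tracking the scalar factors $1/2^t$, $t!$ from symmetrization, and the normalization convention of the pairing $\langle-,-\rangle$ in \eqref{eqn:defn of pairing} then produces the stated identity.

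The plan is entirely a bookkeeping exercise once formula~\eqref{L on nu} is in hand; the only subtlety is keeping track of which $g$'s are upper or lower indices, since the pairing $\langle F^{(t)},\nu^{\otimes t}\rangle$ contracts with $g^{ij}$ while the lowering operator introduces factors of $g_{ij}$. The symmetry of $F^{(t)}$ in its last $t$ entries, already established in the discussion preceding the lemma (and visible in Lemma~\ref{lemma:L^2 and R^2}), is precisely what turns the naive permutation sum into a clean multiple of $F^{(t)}$; it is the key point that makes the coefficient match.
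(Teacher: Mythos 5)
Your plan is exactly the ``straightforward computation'' the paper has in mind: iterate formula \eqref{L on nu}, note that after $t$ derivatives in the $\nu_j$ only the degree-$t$ summand of the polynomial survives at $\nu=0$, use the symmetry of $F^{(t)}$ in its last $t$ slots to collapse the Leibniz sum to $t!$ copies of one term, and cancel the lowered index $g_{a_rj_r}$ against the raised index $g^{l_rk_r}$ coming from the pairing. So the method is right and is the same as the paper's.

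The one step you defer --- ``tracking the scalar factors \ldots{} produces the stated identity'' --- is the only step where the arithmetic actually matters, and if you carry it out it does \emph{not} produce the identity as printed. The iterated lowering operator contributes $1/2^t$ and the symmetrization contributes $t!$ on the same side of the equation, so one finds $\LO^t(F)|_{\nu=0}=\tfrac{t!}{2^t}F^{(t)}$, i.e.\ $F^{(t)}=\tfrac{2^t}{t!}\,\LO^t(F)|_{\nu=0}$ rather than $2^t\,t!\,\LO^t(F)|_{\nu=0}$. A quick sanity check with $t=2$, $s=0$: for $F=\sum_{l,k}F^{(2)}_{l_1l_2}g^{l_1k_1}g^{l_2k_2}\nu_{k_1}\nu_{k_2}$ one computes $\LO(F)_a=\sum_{l,k}F^{(2)}_{al}g^{lk}\nu_k$ and then $\LO^2(F)_{ab}|_{\nu=0}=\tfrac12F^{(2)}_{ab}$, giving the constant $2=2^2/2!$, not $8=2^2\cdot 2!$. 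The constant $\tfrac{2^t}{t!}$ is also the one that appears in the proof of Proposition~\ref{structure prop log mod forms}, where $\LO^d\langle F_d,G^{\otimes d}\rangle=\tfrac{d!}{2^d}F_d$ and consequently $F-\tfrac{2^d}{d!}\langle F_d,G^{\otimes d}\rangle$ is formed. So the lemma as stated contains a typo; your argument is correct in structure, but as written it asserts agreement with a constant that the calculation does not yield --- do the bookkeeping explicitly rather than declaring victory.
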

\begin{proof}
Straightforward computation.
\end{proof}

\begin{prop} \label{prop:transformation property}
Let $F = \sum_{t=0}^{d} \langle F^{(t)}, \nu^{\otimes t} \rangle$ be an almost-holomorphic modular form of weight $k$ and rank $s$ for $\Gamma$, and let $f := F|_{\nu=0}$ be the associated quasimodular form.
Then we have the transformation property
\[
(f|_{k,s} \gamma)(z) = \sum_{t=0}^{d} \langle F^{(t)}, (\partial \log J(\gamma,z))^{\otimes t} \rangle
\]
for all $\gamma \in \Gamma$. 
\end{prop}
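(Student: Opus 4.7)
The plan is to encode $F$ in a single polynomial identity in an auxiliary tensor variable, then recover the desired transformation by a special evaluation. For a dummy vector $X = (X_1, \ldots, X_n) \in \BC^n$, introduce the polynomial
\[ P(z, X) := \sum_{t=0}^{d} \langle F^{(t)}(z), X^{\otimes t}\rangle, \]
so that $F(z) = P(z, \nu(z))$. Because the decomposition of an almost-holomorphic function as a polynomial in $\nu$ with holomorphic coefficients is unique---the $F^{(t)}$ being recovered as $2^t t!\,\LO^t(F)|_{\nu=0}$---the polynomial $P(z, \cdot)$ is intrinsically determined by $F$.

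I would then derive a transformation law for $P$ itself by combining three inputs: the modularity $F|_{k,s}\gamma = F$; Lemma~\ref{lemma:nu transformation}, rewritten as $\nu(\gamma z)\cdot\lambda(\gamma,z) = J(\gamma,z)\bigl(\nu(z) - \alpha(z)\bigr)$ with $\alpha(z) := \partial \log J(\gamma, z)$; and the equivariance of the pairing $\langle \cdot, \cdot\rangle$ under the slash action, i.e.\ $\langle A, B\rangle|_{k,s}\gamma = \langle A|_{k-t,s+t}\gamma, B|_{t,t}\gamma\rangle$, an immediate consequence of Lemma~\ref{lemma:sigma and g}. Applying the slash-equivariance of the pairing and the transformation of $\nu^{\otimes t}$ term-by-term yields
\[ F(z) = \sum_{t=0}^{d} \bigl\langle (F^{(t)}|_{k-t,s+t}\gamma)(z),\, (\nu(z) - \alpha(z))^{\otimes t}\bigr\rangle, \]
and by the uniqueness of the $\nu$-expansion this lifts to a polynomial identity in the auxiliary variable:
\[ P(z, X) = \sum_{t=0}^{d} \bigl\langle (F^{(t)}|_{k-t,s+t}\gamma)(z),\, (X - \alpha(z))^{\otimes t}\bigr\rangle. \]

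To conclude, specialize $X = \alpha(z)$: on the right every term with $t \geq 1$ is killed by the factor $(X - \alpha(z))^{\otimes t}$, leaving only the $t=0$ contribution $(F^{(0)}|_{k,s}\gamma)(z) = (f|_{k,s}\gamma)(z)$, while the left becomes $\sum_t\langle F^{(t)}(z), \alpha(z)^{\otimes t}\rangle$. The most delicate step is the upgrade from an equality of almost-holomorphic functions on $\CC$ to a polynomial identity in $X$; this rests on the uniqueness of the $\nu$-expansion of such functions, which can be seen concretely from the fact that for each fixed holomorphic value of $z$, the vector $\nu(z)$ sweeps out an open subset of $iL_{\BR} \subset \BC^n$ as $\bar{z}$ varies, so that any polynomial vanishing on this subset must vanish identically. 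Apart from this point, the argument is a direct composition of the cited lemmas.
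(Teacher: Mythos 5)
Your proof is correct and follows essentially the same route as the paper: both hinge on expanding $F = F|_{k,s}\gamma$ via the equivariance of the pairing and Lemma~\ref{lemma:nu transformation} to obtain $F = \sum_{t}\bigl\langle F^{(t)}|_{k-t,s+t}\gamma,\ (\nu - \partial\log J(\gamma,z))^{\otimes t}\bigr\rangle$, and both then invoke the uniqueness of the $\nu$-expansion to treat this as a polynomial identity. The only difference is the final step: the paper solves the resulting triangular system for every $F^{(t)}|_{k-t,s+t}\gamma$ and reads off the $t=0$ component, whereas your substitution $X = \partial\log J(\gamma,z)$ isolates the $t=0$ term directly --- a slightly cleaner finish reaching the same conclusion.
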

\begin{proof}
Consider the equality
\begin{equation} \label{F transformation condition}
	\begin{aligned}
		F = F|_{k,s} \gamma 
		& = \sum_{t=0}^{d} \langle F^{(t)}|_{k-t,s+t} \gamma, \nu^{\otimes t}|_{t,t} \gamma \rangle \\
		& = \sum_{t=0}^{d} \langle F^{(t)}|_{k-t,s+t} \gamma, (\nu|_{1,1}\gamma)^{\otimes t} \rangle \\
		& = \sum_{t=0}^{d} \langle F^{(t)}|_{k-t,s+t} \gamma, (\nu(z) - \partial \log J(\gamma,z))^{\otimes t} \rangle
	\end{aligned}
\end{equation}
where we repeatedly used $(F \otimes G)|_{k,s+t} = F|_{k_1,s} \otimes G|_{k_2,t}$ for $k=k_1+k_2$ and Lemma~\ref{lemma:nu transformation}.
By considering the terms of degree $d$ in $\nu_j$, and then of degree $d-1$, and so on, one sees that $F^{(t)}|_{k-t,s+t}$ is uniquely determined by this equation.
We claim that
\[
F^{(t)}|_{k-t,s+t} \gamma = \sum_{t'=t}^{d} \binom{t'}{t} \left\langle F^{(t')}, ( \partial \log J(\gamma,z))^{\otimes (t'-t)} \right\rangle.
\]
Indeed, inserting this into \eqref{F transformation condition} shows that this gives the unique solution.
\end{proof}

\begin{proof}[First Proof of Theorem~\ref{thm:constant term}]
The constant term map is well-defined and surjective by definition, so
we need to show it is injective. Assume $F=\sum_{t=0}^{t_1} \langle F^{(t)}, \nu^{\otimes t} \rangle$ is an almost-holomorphic modular form of depth $t_1$ with $F^{(0)}=0$.
We need to show that $F = 0$. Assume that $F$ is non-zero and let $t_0$ be the smallest number such that $F^{(t_0)} \neq 0$.
By Proposition~\ref{prop:transformation property}, for any $\gamma \in \Gamma$, we have
\[ \sum_{t=t_0}^{t_1} \langle F^{(t)}, ( \partial \log J(\gamma,z))^{\otimes t} \rangle = 0. \]
Multiplying with $J(\gamma,z)^{t_1}$ we get
\begin{equation} \sum_{t=t_0}^{t_1} J(\gamma,z)^{t_1-t} \langle F^{(t)}, ( \partial J(\gamma,z))^{\otimes t} \rangle = 0. \label{3rsdefs} \end{equation}

For an element
\[ \gamma = \begin{pmatrix}
	a & v_e^{t} & b \\
	w_{e} & B & w_f \\
	c & v_f^{t} & d
\end{pmatrix}.
\]
in the basis $e,b_1,\ldots,b_n,f$ of $M$,
with $b_i$ a basis of $L$, we have
\[ J(\gamma,z) = -\frac{1}{2} z^2 c + (z, v_f)_L + d,
\quad \partial J(\gamma,z) = -c (z,dz)_L + (v_f,dz)_L. \]
Inserting we find the equation
\begin{equation} \label{30isdfi3}
\begin{aligned}
0 = & \langle F^{(t_0)}(z), (-c (z,dz)_L + (v_f,dz)_L)^{\otimes t_0} \rangle
\left(-\frac{1}{2} z^2 c + (z, v_f)_L + d \right)^{t_1-t_0} + \\
& \langle F^{(t_0+1)}(z), (-c (z,dz)_L + (v_f,dz)_L)^{\otimes (t_0+1)} \rangle
\left(-\frac{1}{2} z^2 c + (z, v_f)_L + d \right)^{t_1-t_0-1} + \ldots \,.
\end{aligned}
\end{equation}
For fixed $z$, the above is a polynomial in $c,v_f,d$.
By the Borel density theorem \cite[Chapter 5]{Raghunathan}, the subgroup $\Gamma \subset O(M_{\BC})$ is Zariski-dense.
We hence may take $\gamma$ to be an arbitrary element in $O(M_{\BC})$. If $S$ is the Gram matrix
of $L$ in the basis $b_i$, we can thus specialize to the element
\[ \gamma=
\begin{pmatrix}
\frac{1}{d} & & \\
-\frac{S^{-1} v_f}{d} & I_n & \\
-\frac{1}{2} \frac{v_f^{t}S^{-1} v_f}{d} & v_f^{\mathrm{t}} & d 
\end{pmatrix}
\]
for arbitrary $v_f \in L_{\BC}$ and $d \in \BC^{\ast}$.
In particular, we have $c=-\frac{1}{2} \frac{v_f^{t}S^{-1} v_f}{d}$ in the expression of $J(\gamma,z)$.
Thus treating $v_f$ and $d$ as variables, we can take the $d^{t_1-t_1}$ coefficient in \eqref{30isdfi3}
and obtain the equation
\[
0 = \langle F^{(t_0)}(z), (v_f,dz)_L^{\otimes t_0} \rangle
\]
Since $v_f$ was arbitrary and $F^{(t_0)}(z)$ symmetric,
we conclude $F^{(t_0)}(z)=0$, which is a contradiction.
\end{proof}

\begin{rmk}
The above proof is parallel to the proof in the case of classical modular forms, see \cite[Prop.3.4]{BO}.
\end{rmk}

The second proof uses reduction to the $1$-dimensional case
and is much shorter.
\begin{proof}[Seond Proof of Theorem~\ref{thm:constant term}]
Let $L' \subset L$ be a sublattice of signature $(1,n')$, giving rise to a sublattice $M'=L' \oplus U \subset M$ of signature $(2,n'+1)$.
There is a natural embedding of the corresponding period domains
$\CD' \subset \CD$
and of the groups
 $O^{+}(M') \subset O^{+}(M)$. For the $U$-cusp, we also have an embedding of the tube domains $\iota : \CC' \hookrightarrow \CC$.
Let $\Gamma'= \Gamma \cap O^{+}(M')$.
Then as explained in \cite[Chapter 4]{Ma} vector-valued modular forms for $\Gamma$ restrict to vector-valued modular forms for $\Gamma'$.
Since (viewed in the tube domain) almost-holomorphic functions are polynomials in the coefficients of $\nu = \partial \log \Im(z)^2$ and $\iota^{\ast} \nu = \nu$, we see that almost-holomorphic modular forms restrict to almost-holomorphic modular forms.
Moreover, the holomorphic limits restrict to holomorphic limits, and so quasimodular forms restrict to quasimodular forms.

Suppose now that $F$ is an almost-holomorphic modular form with vanishing constant term. Then for any positive-definite $1$-dimensional sublattice $L' = (2m) \subset L$, giving the sublattice $M'=U \oplus (2m) \subset M$, the restriction of $F$ to $\CD'$ is an almost-holomorphic modular form with vanishing constant term.
But then $F|_{\CD'}=0$ by the discussion in Section \ref{subsec:example classical case} below and the classical case proven in \cite[Prop.3.4]{BO} and \cite{KZ}.
Since the rays through positive rational vectors are dense in the positive cone, it follows that $F=0$.
\end{proof}

\subsection{Lowering and raising operators on quasimodular forms}
Since $\ct : \AHMod_{k,s}(\Gamma) \to \QMod_{k,s}(\Gamma)$ is an isomorphism, we can define
raising and lowering operators on quasimodular forms:
\begin{gather*}
\RO := \ct \circ \RO \circ \ct^{-1} : \QMod_{k,s}(\Gamma) \to \QMod_{k+1,s+1}(\Gamma), \\
\LO := \ct \circ L \circ \ct^{-1} : \QMod_{k,s}(\Gamma) \to \QMod_{k-1,s+1}(\Gamma).
\end{gather*}
Clearly $\RO = \partial$, so we find that if $F \in \QMod_{k,s}(\Gamma)$, then $\partial(F) \in \QMod_{k+1,s+1}(\Gamma)$. 

\subsection{Fourier expansion of quasimodular forms} \label{subsec:Fourier expansion of quasimodular forms}
Consider the natural inclusion
\[ L_{\BQ} \hookrightarrow O^{+}(M), \quad \xi \mapsto \exp(\xi \wedge e). \]
For our finite index subgroup $\Gamma \subset O^{+}(M)$,
define 
\[ U(\Gamma) = \Gamma \cap L_{\BQ}. \]
Then $U(\Gamma) \subset L_{\BQ}$ is a full-rank sublattice \cite[Sec.3.3.2]{Ma}.

Let $F$ be an almost-holomorphic quasimodular form of weight $k$ and rank $s$ for $\Gamma$ 
with associated quasimodular form $f=\ct(F)$.
Then for all $\xi \in U(\Gamma)$, by Example~\ref{example:Automorphy factors} we have $F(z + \xi) = F(z)$.
Since  $\nu|_{1,1} \exp(\xi \wedge e) = \nu$ (by Lemma~\ref{lemma:nu transformation}), the holomorphic part $f$ then also satisfies 
$f(z + \xi ) = f(z)$.
Hence $f$ has the Fourier expansion:
\[ f(z) = \sum_{\alpha \in U(L)^{\vee}} c_{\alpha} e^{2 \pi i \alpha(z)} \]
for some $c_{\alpha} \in (L_{\BC}^{\vee})^{\otimes s}$.
If $n \geq 3$, by the Koecher principle \cite[Sec.3.4]{Ma}, we have a non-zero Fourier-coefficient $c_{\alpha}$ only if $\alpha \in U(\Gamma)^{\vee}$ lies in the connected component of the closure of the positive cone $\{ x \in L_{\BQ} : x^2 \geq 0\}$ corresponding to the tube domain, see \cite[Sec.3.4]{Ma}.
For $n \leq 2$, this is an extra condition that we need to require in the definition of quasimodular forms,
see the next section.

\subsection{The general cusp and quasimodular forms for $n \geq 2$}
\label{subsec:general cusp}
The zero-dimensional cusps of $\CD$ correspond to primitive isotropic rank $1$ sublattices $I$ in $M$. Let $I$ be such a sublattice and let $e$ be a generator. If there exists an isotropic $f \in M$ with $e \cdot f=1$, then $M$ decomposes as $M=U\oplus L$, where $U$ is the hyperbolic plane spanned by $e,f$, and we are in the situation of this section.
In general, such an $f$ may not exist. Instead we pick some potentially non-isotropic $f \in M$ with $e \cdot f=d$, and consider the splitting
\[ M = \widetilde{U} \oplus L \]
where $\widetilde{U}$ is the saturated sublattice generated by $e,f$ and $L$ is its orthogonal complement. The tube domain then can be considered as before \cite{Ma}. The results of this sections then go through then with minor modifications that we leave to the reader. 
In particular, as before, the constant term $f(z)$ of a $\Gamma$-invariant almost-holomorphic modular form $F(z)$ will have a Fourier expansion
\[ f(z) = \sum_{\alpha \in U(L)^{\vee}} c_{\alpha} e^{2 \pi i \alpha(z)} \]
for some $c_{\alpha} \in (L_{\BC}^{\vee})^{\otimes s}$.

With this preparation we can then extend the definition of almost-holomorphic modular form and quasimodular form to the case $n \leq 2$, by declaring $F$ to be almost-holomorphic modular form and $f=\ct(F)$ to be quasimodular if additionally we require that for every $0$-dimensional cusp $I$ and for every $d \geq 0$, the Fourier-coefficients $c_{\alpha}$ of $f_d = \ct(\LO^d F)$ will be non-zero only if $\alpha \in U(\Gamma)^{\vee}$ lies in the connected component of the closure of the positive cone $\{ x \in L_{\BQ} : x^2 \geq 0\}$ corresponding to the tube domain.
(If $n \geq 3$ this condition is automatically satisfied by the K\"ocher principle.)

\subsection{Example: The classical case} \label{subsec:example classical case}
Consider the lattice $M=U \oplus (2m)$ for some $m \geq 1$,
which by the special isomorphism $\mathfrak{so}_{\BR}(2,1) \cong \mathfrak{sl}_2(\BR)$ corresponds to classical modular forms for a congruence group $\Gamma \le \SL_2(\mathbb{Z})$ (cf. \cite{Ma}). 
The bundle $\CE$ is a line bundle which is self-dual, so $\CE \cong \CO_{\CD}$ contributes no automorphy factor and so $\AHMod_{k,s}(\Gamma) = \AHMod_{k}(\Gamma)$.
Moreover the tube domain $\CC=\BH$ is the upper half plane, and if for $z \in \BH$ we write $z=x+i y$ with $x,y$ real then $\Im(z)^2 = 2m y$, which gives
\[ \nu = \partial \log(\Im(z)^2) = \frac{1}{iy} dz. \]
The lowering and raising operators are
\begin{align*}
\LO & = -2m y^2 \frac{\partial}{\partial \overline{z}} = \frac{m}{i} \LM_{2k}, \\
\RO_k & = \frac{1}{2i} \RM_{2k},
\end{align*}
where we remark that weight $k$ for orthogonal modular forms have weight $2k$ when viewed as modular forms for $\SL_2$.
Hence almost-holomorphic modular forms and quasimodular forms in our sense corresponds precisely to the classical notion.

\subsection{Example: $U \oplus U$} \label{subsec:example U+U}
We consider as an example the lattice $M=U \oplus U$,
which can be identified as a lattice with the set $M_2(\BZ)$ of integral $2 \times 2$-matrices
with quadratic form\footnote{If $(V, \langle -, - \rangle)$ is an inner product space, the quadratic form is $Q(v) = \frac{1}{2} \langle v,v \rangle$.} given by $-\det$, via the  isomorphism
\[ U \oplus U \xrightarrow{\cong} M_2(\BZ), \quad a_1 e + a_2 f + a_3 e_1 + a_4 f_1 \to \begin{pmatrix} -a_1 & a_3 \\ a_4 & a_2 \end{pmatrix}. \]
Elements in $\A(\CD)$ are thus given by complex matrices of determinant zero. The tube domain is
\[ \CC = \{ z=(z_1,z_2) | \Im(z_1),\Im(z_2) > 0 \} = \BH \times \BH \]
with the embedding
\[ \widetilde{\varphi}(z_1,z_2) = \begin{pmatrix} z_1 z_2 & z_1 \\ z_2 & 1 \end{pmatrix} \]

The group $\SL_2(\BZ) \times \SL_2(\BZ)$ acts on $M_2(\BZ)$ by $(\gamma_1, \gamma_2) \cdot A = \gamma_1 A \gamma_2^{t}$.
The group $\Gamma := O^{+}(U \oplus U)$ is generated by the image of $\SL_2(\BZ)^2$ together with the involution $S$ 
which acts as the transpose on matrices.
In particular, we have the short exact sequence
\[ 1 \to (\SL_2(\BZ) \times \SL_2(\BZ))/\langle (-I,-I) \rangle \to O^{+}(U \times U) \xrightarrow{\det} \BZ_2 \to 1. \]
The action on the tube domain is $S(z_1, z_2)=(z_2,z_1)$ and
\[ (\gamma_1, \gamma_2) \cdot (z_1, z_2) = (\gamma_1 \cdot z_1, \gamma_2 \cdot z) \]
where $\gamma_i \cdot z_i$ stands for the standard action of $\SL_2(\BZ)$ on the upper-half plane.

For any $\gamma_i=\binom{a_i\ b_i}{c_i\ d_i} \in \SL_2(\BZ)$ with $i=1,2$ we have the automorphy factors
\[
J( (\gamma_1, \gamma_2), z) = (c_1 z_1 + d_1) (c_2 z_2 + d_2), \quad J(S,z) = 1.
\]
On differentials we have
\[ (\gamma_1, \gamma_2)^{\ast}(dz_i) = \frac{1}{(c_i z_i+d_i)^2} dz_i, \quad S^{\ast}(dz_1) = dz_2. \]

A function $f(z_1,z_2)$ is called modular of bi-weight $(k_1,k_2)$ if
\[ \forall \gamma_1, \gamma_2 \in \SL_2(\BZ): \quad f\left( \gamma_1 \cdot z, \gamma_2 \cdot z \right) = (c_1 z_1 + d_1)^{k_1} (c_2 z_2 + d_2)^{k_2} f(z_1,z_2). \]
Then a modular form $f \in \Mod_k(\Gamma)$ corresponds to a function $f(z_1, z_2)$ which is modular of bi-weight $(k,k)$ and symmetric: $f(z_1,z_2)=f(z_2,z_1)$.
More generally, a modular form of rank $s$
\[
F = \sum_{i_1,\ldots, i_s=1}^{2} F_{i_1, \ldots, i_s} dz_{i_1} \otimes \cdots \otimes dz_{i_s} \quad \in \Mod_{k,s}(\Gamma)
\]
will have coefficients $F_{i_1, \ldots, i_s}$ of bi-weight $(k+\delta_1(i_1,\dots,i_s), k+\delta_2(i_1,\ldots, i_s))$, where
\[ \delta_{\ell}(i_1,\ldots,i_s) = | \{ j | i_j = \ell \}| - | \{ j | i_j \neq \ell \} |, \]
and will satisfy $F(z_2,z_1) = F(z_1,z_2)$.
For example, if
$f=f_1 dz_1 + f_2 dz_2 \in \Mod_{k,1}(\Gamma)$, then $f_1$ is of bi-weight $(k+1,k-1)$ and $f_2(z_1,z_2) = f_1(z_2,z_1)$ is of bi-weight $(k-1,k+1)$.

For the lowering and raising operators, write $y_j = \mathrm{Im}(z_j)$. Then we have
\[ |Z|^2 = \Im((z_1,z_2)^2) = 2 y_1 y_2 \]
and
\[ \partial(|Z|^2) = \frac{1}{i} ( y_2 dz_1 + y_1 dz_2 ). \]
This leads to the formula
\[
\RO(F) = \frac{1}{2i} \sum_{i_1,\ldots, i_s, \ell=1}^{2} \RM_{z_{\ell},k+\delta_{\ell}(i_1,\ldots,i_s)}(F_{i_1, \ldots, i_s}) dz_{i_1} \otimes \cdots \otimes dz_{i_s} \otimes dz_{\ell}
\]
and
\[
\LO(F) = \frac{1}{i} \sum_{i_1,\ldots, i_s, \ell=1}^{2} \LM_{z_{\ell}}(F_{i_1, \ldots, i_s}) dz_{i_1} \otimes \cdots \otimes dz_{i_s} \otimes dz_{3-\ell}
\]
where $\RM_{z,k}, \LM_{z}$ stands for the Maa{\ss} operators in the variable $z$ of weight $k$. 
In particular, if $f$ is a scalar-valued modular form of weight $k$, then
\[ \RO(f) = \frac{1}{2i} (\RM_{z_1,k}(f) dz_1 + \RM_{z_2,k}(f)  dz_2), \quad \LO(f) = \frac{1}{i} ( \LM_{z_2}(f) dz_1 + \LM_{z_1}(f) dz_2 ).\]
If $F = f_1 dz_1 + f_2 dz_2$ of weight $k$, so corresponds to a section of $\CL^{\otimes k} \otimes \CE$, then
\begin{align*}
\RO(F) & = \frac{1}{2i} \left( \RM_{z_1,k+1}(f_1) dz_1 \otimes dz_1 + \RM_{z_1,k-1}(f_2) dz_2 \otimes dz_1 +
\RM_{z_2,k-1}(f_1) dz_2 \otimes dz_1 + \RM_{z_2, k+1}(f_2) dz_2 \otimes dz_2 \right) \\
\LO(F) & = \frac{1}{i} \left( \LM_{z_2}(f_1) dz_1 \otimes dz_1 + \LM_{z_2}(f_2) dz_2 \otimes dz_1 + \LM_{z_1}(f_1) dz_1 \otimes dz_2 +\LM_{z_2}(f_2) dz_2 \otimes dz_2 \right)
\end{align*}
We find that $F$ is almost-holomorphic if every coefficient $F_{i_1,\ldots,i_s}$ is a polynomial in $1/y_1, 1/y_2$ with coefficients holomorphic functions.
The holomorphic part of $F$ is the constant term in this polynomial.
For scalar-valued quasimodular form this translates to the following concrete description:

\begin{prop} We have
\[ \QMod_{k}(\Gamma) = \Big( \QMod_{k}(\SL_2(\BZ)) \times \QMod_k(\SL_2(\BZ)) \Big)^{\BZ_2}, \]
where $\BZ_2$ acts by $f(z_1,z_2) \mapsto f(z_2,z_1)$, and similarly for (almost-holomorphic) modular forms.
\end{prop}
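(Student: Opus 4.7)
The plan is to reduce all three statements (modular, almost-holomorphic modular, and quasimodular) to the almost-holomorphic case, then exploit the constant-term isomorphism $\ct : \AHMod_k(\Gamma) \to \QMod_k(\Gamma)$ of Theorem~\ref{thm:constant term} together with its classical counterpart for $\SL_2(\BZ)$. Concretely, I claim the equality
\[ \AHMod_k(\Gamma) = \bigl( \AHMod_k(\SL_2(\BZ)) \otimes \AHMod_k(\SL_2(\BZ)) \bigr)^{\BZ_2}, \]
and the analogous equality with $\AHMod$ replaced by $\Mod$ or $\QMod$. The quasimodular case then follows from the almost-holomorphic case by applying $\ct$ term-by-term, since setting $\nu_1 = \nu_2 = 0$ on a pure tensor $F(z_1) G(z_2)$ separates as $\ct(F)(z_1) \cdot \ct(G)(z_2)$.

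The easy inclusion of the right-hand side into the left-hand side runs as follows. Given $F, G \in \AHMod_k(\SL_2(\BZ))$, the product $F(z_1) G(z_2)$ is polynomial in $1/y_1, 1/y_2$ with holomorphic coefficients in $(z_1,z_2)$ (hence almost-holomorphic on the tube domain $\BH \times \BH$), transforms with bi-weight $(k,k)$ under $\SL_2(\BZ) \times \SL_2(\BZ)$, and the symmetrization $F(z_1) G(z_2) + F(z_2) G(z_1)$ is further invariant under the involution $S$ swapping the two factors. By the presentation of $\Gamma = \Orth(U \oplus U)$ recalled above, this places the symmetrization in $\AHMod_k(\Gamma)$.

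For the reverse inclusion, start with $F \in \AHMod_k(\Gamma)$. Since $F$ is polynomial in $1/y_1, 1/y_2$ with holomorphic coefficients, for each fixed $z_2 \in \BH$ the function $z_1 \mapsto F(z_1, z_2)$ is polynomial in $1/y_1$ with holomorphic coefficients in $z_1$, and by the transformation property of $F$ under $(\gamma_1, \mathrm{id})$ it transforms as a weight-$k$ almost-holomorphic modular form for $\SL_2(\BZ)$. Fix a basis $\varphi_1, \ldots, \varphi_N$ of the finite-dimensional space $\AHMod_k(\SL_2(\BZ))$ and write uniquely
\[ F(z_1, z_2) = \sum_{i=1}^N c_i(z_2) \varphi_i(z_1). \]
The $\SL_2(\BZ)$-transformation rule for the $c_i$ is obtained by applying $(\mathrm{id}, \gamma_2)$ to both sides and invoking linear independence of the $\varphi_i$. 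To verify that each $c_i(z_2)$ is polynomial in $1/y_2$ with holomorphic coefficients, choose sample points $w_1, \ldots, w_N \in \BH$ such that the matrix $(\varphi_i(w_j))_{i,j}$ is invertible; such points exist because the $\varphi_j$ are linearly independent smooth functions on $\BH$. Solving the linear system $F(w_j, z_2) = \sum_i c_i(z_2) \varphi_i(w_j)$ expresses each $c_i(z_2)$ as a $\BC$-linear combination of the values $F(w_j, z_2)$, which are manifestly polynomial in $1/y_2$ with holomorphic coefficients in $z_2$.

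The hardest step is the last one: extracting the almost-holomorphic structure of the coefficients $c_i$ from that of $F$. The evaluation-at-sample-points trick just described handles this cleanly and avoids any appeal to the more delicate structure theory of $\Mod_*(\SL_2(\BZ))$. Once this is settled, $F = \sum_i c_i \otimes \varphi_i$ lies in $\AHMod_k(\SL_2(\BZ)) \otimes \AHMod_k(\SL_2(\BZ))$, and $\BZ_2$-invariance follows from the $S$-invariance of $F$. The $\Mod$ statement follows by running the same argument for holomorphic $F$ (no $1/y_j$ terms appear anywhere), and the $\QMod$ statement follows by applying $\ct$ on both sides using Theorem~\ref{thm:constant term}.
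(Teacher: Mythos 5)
Your proof is correct, and it is worth noting that the paper itself gives no proof of this proposition: it is stated as an immediate consequence of the preceding identification of $\Gamma=\Orth(U\oplus U)$ with $\SL_2(\BZ)^2$ extended by the swap, of bi-weight $(k,k)$ invariance, and of the description of almost-holomorphic functions as polynomials in $1/y_1,1/y_2$. What you supply beyond the paper is the one genuinely non-trivial step, namely the reverse inclusion: that a $\Gamma$-invariant almost-holomorphic form actually splits as an element of the tensor product. Your separation-of-variables argument — expand $F(\cdot,z_2)$ in a basis $\varphi_i$ of the finite-dimensional space $\AHMod_k(\SL_2(\BZ))$, extract the coefficients $c_i(z_2)$ by evaluating at sample points $w_j$ with $(\varphi_i(w_j))$ invertible, and read off both the transformation law and the almost-holomorphic structure of the $c_i$ from those of $F$ — is sound, and the reduction of the $\QMod$ statement to the $\AHMod$ statement via the constant-term isomorphism (which on pure tensors is $\ct\otimes\ct$) is also fine.

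The one point you should make explicit is the cusp condition. Since $M=U\oplus U$ has $n=2$, the paper's definition of (almost-holomorphic, quasi)modular forms for $\Gamma$ includes the extra requirement of Section~\ref{subsec:general cusp} that the Fourier coefficients $c_{(a,b)}$ of $\ct(\LO^dF)$ be supported on the closure of the positive cone, i.e.\ on $a,b\ge 0$; correspondingly, membership of $F(\cdot,z_2)$ in $\AHMod_k(\SL_2(\BZ))$ requires holomorphy at the cusp, not just the transformation law and almost-holomorphy. These two conditions match: support in $\{a\ge 0,\ b\ge 0\}$ is exactly holomorphy at $\infty$ in each variable separately, it passes to the sample-point evaluations $F(w_j,\cdot)$ and hence to the $c_i$, and (since $\SL_2(\BZ)^2$ acts transitively on primitive isotropic vectors of $U\oplus U$) checking one cusp suffices. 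With that sentence added, your argument is complete; without it, the claim that $F(\cdot,z_2)$ lies in the finite-dimensional space spanned by the $\varphi_i$ is not yet justified.
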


\subsection{Example: $U\oplus U(r)$} 
Let $M=U \oplus U(r)$, let $e,f$ be a symplectic basis of $U$, and let $e_1,f_1$ be a basis of $U(r)$ with $e_1^2=f_1^2=0$ and $e_1 \cdot f_1 = r$. We identify $M$ with the sublattice of $M_{2}(\BZ)$ with intersection form $-\det$ by
\[ U \oplus U(r) \hookrightarrow M_2(\BZ), \quad a_1 e + a_2 f + a_3 e_1 + a_4 f_1 \mapsto \begin{pmatrix} -a_1 & a_3 \\ r a_4 & a_2 \end{pmatrix}. \]
The tube domain is $\CC = \BH \times \BH$ with the embedding
\[ \tilde{\varphi}(z_1, z_2) = \begin{pmatrix} r z_1 z_2 & z_1 \\ r z_2 & 1 \end{pmatrix}. \]
The group $\Gamma_0(r) \times \Gamma_0(r)$ is embedded into $O^{+}(U \oplus U(r))$ by letting it act on $U \oplus U(r)$ by
\[ (\gamma_1, \gamma_2) \cdot M = \gamma_1 M \cdot \left( \begin{pmatrix} r & \\ & 1 \end{pmatrix} \gamma_2 \begin{pmatrix} 1/r & \\ & 1 \end{pmatrix} \right)^{t}. \]
In other words, if $\gamma = \binom{a\ b}{c\ d}$, then
\[ (1,\gamma) \cdot M = M \cdot \begin{pmatrix} a & rb \\ c/r & d \end{pmatrix}. \]
Moreover, one finds that $(\gamma_1, \gamma_2) \cdot (z_1, z_2) = (\gamma_1 \cdot z_1, \gamma_2 \cdot z_2)$ and
$J( (\gamma_1, \gamma_2),z) = J(\gamma_1, z_1) J(\gamma_2,z)$, where $J(\binom{a\ b}{c\ d}, \tau) = c \tau + d$ is the standard $\SL_2(\BZ)$ automorphy factor.

We also have the element $S \in O^{+}(U \oplus U(r))$ which fixes $e,f$ and interchanges $e_1,f_1$.
It acts by $S \cdot (z_1,z_2) = (z_2,z_1)$ with $J(S,z) = 1$.
Hence expansion in $(z_1,z_2)$-yields the inclusion
\[
\QMod_{k}(\Gamma) \subset \Big( \QMod_{k}(\Gamma_0(r)) \otimes \QMod_k(\Gamma_0(r)) \Big)^{\BZ_2}.
\]
The inclusion is not an equality, since $O^+(U \oplus U(r))$ is strictly larger for $r > 1$.\footnote{For example, $O^{+}(U \oplus U(2))$ contains the element $(F,F)$ where
$F = \frac{1}{\sqrt{2}} \begin{pmatrix} 0 & -1 \\ 2 & 0 \end{pmatrix}$
is the Fricke matrix.}

\section{Fourier expansion of the theta lift}
We determine the expansion of the theta lift of any almost-holomorphic modular form $F$, whether the lift is almost-holomorphic or not (Theorem~\ref{thm:expansion of theta lift}).
In the case of an almost-holomorphic lift, the statement simplifies significantly.
We discuss various examples, in particular the lift of almost-holomoorphic modular forms for $\Gamma_0(p)$ (Proposition~\ref{prop:lift of Gamma_0(p) ahm})

\subsection{Statement of results}
Suppose our signature $(2,n)$ lattice $M$ decomposes as
\[ M = U \oplus U_1(N) \oplus K(-1) \] where $K$ is positive-definite. Let $L = U_1(N) \oplus K(-1)$ with tube domain $\mathcal{C}$, let $e,f \in U_1(N)$ be a basis with $e \cdot f = N$ and $e^2=f^2=0$, and write vectors $z \in \mathcal{C}$ as tuples 
\[ z = (z_e, \mathfrak{z}, z_f) = u+iv \quad \text{ with } \quad u = (u_e, \mathfrak{u}, u_f), \quad v = (v_e, \mathfrak{v}, v_f). \]
The associated point $Z = (-z^2 / 2, z, 1) \in \mathcal{D}_M$ satisfies $\langle Z, \overline{Z} \rangle = 2 \langle v, v \rangle.$

Our main result is the following:

\begin{thm} \label{thm:expansion of theta lift}
Let $F(\tau) = \sum_{t=0}^d F^{(t)}(\tau) (2\pi y)^{-t}$ be an almost-holomorphic modular form of weight $\kappa = k + 1 - \mathrm{rank}(L)/2$ with $k \ge 0$, and write \begin{align*} F^{(t)}(\tau) &= \sum_{\mu \in L'/L} \sum_{n \in \mathbb{Q}} c^{(t)}(\mu; n) q^n \mathfrak{e}_{\mu} \\ &= \sum_{a, b \in \mathbb{Z}/N} \sum_{\gamma \in K'/K} \sum_{n \in \mathbb{Q}} c^{(t)}(a/N, \gamma, b/N; n) q^n \mathfrak{e}_{(a/N,\gamma,b/N)}.\end{align*}
There is an open region of the tube domain on which the theta lift $\mathrm{Lift}(F)$ is represented by the following series:
\begin{align*} \mathrm{Lift}(F) 
&= \sum_{\substack{\mu \in L^{\vee} \\ \mu > 0}} \sum_{t=0}^d c^{(t)}(\mu, \mu^2 / 2) \sum_{r=0}^{t} 4^{-r} r! \binom{t}{r} \binom{t+r-k}{r}  \pi^{-r-t} \sum_{\delta=1}^{\infty} \delta^{k-t-r-1} e^{2\pi i \langle \delta \mu, z \rangle} \langle v, v \rangle^{-t} \langle \mu, v \rangle^{t-r}  \\ 
&+ \sum_{\substack{\mu \in L^{\vee} \\ \mu < 0}} \sum_{t=k}^d c^{(t)}(\mu, \mu^2 / 2) \sum_{r=0}^t 4^{-r} r! \binom{t}{r} \binom{t+r-k}{r-k} \pi^{-r-t}  \sum_{\delta=1}^{\infty} \delta^{k-t-r-1} e^{2\pi i \langle \delta \mu, \overline{z} \rangle} \langle v,v\rangle^{-t} \langle -\mu, v \rangle^{t-r} \\ &+ \sum_{\substack{t \ge k \\ (t, k) \ne (0, 0)}} \frac{(2t-k)!}{(t-k)!} \zeta(2t+1-k) (2\pi)^{-2t} c^{(t)}(0, 0) \langle v, v \rangle^{-t}  \\ &+ \frac{1}{2} \sum_{t < k/2} \frac{t!}{(2\pi)^{2t}} \binom{2t-k}{t} \zeta(2t+1-k) c^{(t)}(0, 0)\langle v, v \rangle^{-t} \\ &+ \delta_{\substack{k \in 2\mathbb{Z} \\ k > 0}} 2^{-k} \pi^{-k-1/2} c^{(k/2)}(0, 0) \Big( \sum_{h=0}^{k/2 - 1} \frac{k! \Gamma(1/2+k/2-h)}{(-4)^h h! (k-2h)!} \sum_{j=1}^{k/2 - h} \frac{1}{2j-1} \Big) \langle v, v \rangle^{-k/2} \\ 
&+ \delta_{k=0} \frac{1}{2} c^{(0)}(0, 0) \Big(2\gamma - \ln(2\pi) - \ln(\langle v, v \rangle) \Big) \\ 
&+ \frac{(-1)^k}{2^{2k-1} \pi^{k-1}} \sum_{t \ge k/2 - 1} \frac{(-4)^t t!}{(2+2t-k)!} 
\sum_{\substack{\beta \in \mathbb{Z}/N\BZ \\ \lambda \in K'}} c^{(t)}(0, \lambda, \beta/N; \lambda^2 / 2) \\ 
&\quad\quad\quad\quad\quad\quad \times B_{2+2t-k} \Big( \{\beta/N + \langle \lambda, \mathfrak{v} \rangle / (Nv_e) \} \Big)  (N v_e)^{1+2t-k} \langle v, v \rangle^{-t}  \\ 
&+ \delta_{k=0} \Phi_0,
\end{align*}
with the following notation:
\begin{itemize}
\item $\mu > 0$ means that $\langle \mu, v \rangle > 0$ for all vectors $v \in L_{\mathbb{R}}$ that are sufficiently close to the boundary point $(0, 1) \oplus 0 \in L = U_1(N) \oplus K$, and $\mu < 0$ is defined similarly. The series converges on an open neighborhood of that point,
\item $\gamma$ is the Euler constant,
\item $B_k(x)$ are the Bernoulli polynomials defined by
$t e^{xt}/(e^t-1)=\sum_{n \geq 0}B_n(x) t^n/n!$,
\item $\Phi_0$ is defined as follows: let 
$\vartheta_{K,\gamma} = \sum_{\lambda \in K', \lambda \equiv \gamma \in K^{\vee}/K} e^{\pi i \tau \langle \lambda, \lambda \rangle_{K}}$ be the theta function of $K$ for $\gamma \in K^{\vee}/K$, and consider the expansion of the $\SL_2(\BZ)$ almost-holomorphic modular form
\[ \sum_{\gamma, \beta} F_{\gamma,\beta/N} \vartheta_{K,\gamma} = \sum_{t=0}^{d} f_t(\tau) E_2^{\ast}(\tau)^t, \]
where the modular forms $f_t(\tau)$ are holomorphic and $E_2^{\ast}(\tau)$ is the completion of the Eisenstein series $E_2(\tau) = 1 - 24 \sum_{r \geq 1} \sigma(r) q^r$.
Then
\[ \Phi_0 := \frac{\pi \langle v,v \rangle}{12 N v_e} \left[ \sum_{t=0}^{d} f_t(\tau) \frac{E_2(\tau)^{t+1}}{t+1} \right]_{q^0}. \]
\end{itemize}
\end{thm}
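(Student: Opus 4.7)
My plan is to mimic Borcherds' unfolding argument (as in \cite{Borcherds1998}, Theorem 7.1 and Theorem 14.3) but keeping careful track of the extra factors $(2\pi y)^{-t}$ coming from the almost-holomorphic completion. First I will use the splitting $M = U \oplus L$ with $L = U_1(N) \oplus K(-1)$ to decompose the theta kernel $\Theta_k(\tau, Z)$. Applying Poisson summation in the first hyperbolic-plane direction rewrites $\Theta_k$ as a sum over $L' \oplus \mathbb{Z}$ of exponentials in $\tau$ weighted by the Schwartz function, and after the standard manipulation produces a theta series on $L'$ paired with characters $e^{2\pi i m(\text{something in }z)}$. This turns the regularized integral into a sum over vectors $\lambda \in L'$ of integrals of the form
\[
\int^{\mathrm{reg}}_{\mathcal F} F^{(t)}_\lambda(\tau)\, y^{-t-2+\kappa}\, \langle \lambda, Z\rangle^k \, e^{-\pi i \tau \langle \lambda_Z, \lambda_Z\rangle + \pi i \bar\tau \langle \lambda_Z^\perp, \lambda_Z^\perp\rangle}\,dx\,dy,
\]
plus a boundary piece.

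Next I unfold using the isotropic vector $e\in U_1(N)$: for $\lambda$ with $\langle \lambda, e\rangle \neq 0$ the orbit of the parabolic $\mathbb{Z}\subset \SL_2(\ZZ)$ on $\lambda$ together with translation on $\tau$ unfolds the fundamental domain to a strip, and the integration over the $x$-variable then isolates the Fourier coefficients of $F^{(t)}$ at $-\lambda^2/2$. The remaining $y$-integral is an elementary gamma-type integral, except for the extra powers $y^{-t}$, which produce the binomial factors $\binom{t}{r}\binom{t+r-k}{r}$ through repeated integration by parts (equivalently via the identity $\int_0^\infty y^{s-1} e^{-\alpha y}\,dy = \Gamma(s)/\alpha^s$ differentiated in $\alpha$). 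For $\lambda>0$ this reproduces the first block of the expansion (the holomorphic part), for $\lambda<0$ the anti-holomorphic block (which exists only when $d \geq k$ since lower depth forms have no such coefficients with $n<0$), and for $\lambda=0$ in the $L'$ but nonzero in the $K$-direction one picks up the Bernoulli polynomial block through the standard identity
\[
\sum_{n\neq 0} \frac{e^{2\pi i n x}}{n^{2+2t-k}} = -\frac{(2\pi i)^{2+2t-k}}{(2+2t-k)!} B_{2+2t-k}(\{x\}),
\]
applied after Poisson-summation in the $e$-direction.

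The contributions of $\lambda$ with $\langle \lambda, e\rangle=0$ and $\lambda_K = 0$ (i.e. the true zero cosets) must be computed separately because they are not reached by unfolding; instead their integrals become constant integrals of Eisenstein-like modular forms, and using the regularization via $\mathbf E(\tau;s)$ produces the $\zeta(2t+1-k)$ terms together with the third, fourth and fifth blocks (the combinatorial constants in the $k$ even, $k>0$ line arise from expanding $B_k$ at $x=0$ and summing the harmonic-type series $\sum 1/(2j-1)$, while the $k=0$ logarithm is the Kronecker-limit-formula residue). The $k=0$ anomaly term $\Phi_0$ will be handled via the observation that after summing the $\gamma, \beta$-components the input becomes an almost-holomorphic modular form on $\SL_2(\ZZ)$; expressing this as a polynomial in $E_2^*$ with holomorphic coefficients $f_t$, the regularized integral of $[f_t (E_2^*)^t, \Theta_0]$ over $\SL_2(\ZZ)\backslash\BH$ against the $y^{-1}$ part of $E_2^*$ yields precisely the claimed formula after one integration by parts in $\bar\tau$ using $\LM(E_2^*) = -1/(8\pi)$.

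The hardest step will be the bookkeeping for the $\lambda = 0$ cosets (i.e. the four scalar blocks after the two main blocks and the $\Phi_0$ term): there each $(2\pi y)^{-t}$ interacts nontrivially with the regularization scheme, and the identity
\[
\mathrm{Res}_{s=0}\int_1^\infty y^{s-1-t}\,dy = \text{(something depending on }t, k),
\]
combined with explicit evaluation of the Mellin transform of the $\mathrm O(n)$-theta function at negative integers, must be done carefully enough to reproduce the $(2t-k)!/(t-k)!$ and $\binom{2t-k}{t}$ coefficients and to separate the ``usual'' and ``resonance'' contributions (the split at $t < k/2$ vs. $t\ge k$). Otherwise all steps are straightforward applications of Borcherds' machinery extended from holomorphic to almost-holomorphic inputs. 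Finally, the corollary stated in the introduction (the simplified expansion when $k \geq 2d$) follows because the depth condition forces $c^{(t)}(0,0)=0$ for $t \geq k$ and kills all the anti-holomorphic and Bernoulli/zeta blocks outside the leading $t=0$ terms, leaving only the main series together with $\tfrac12 \zeta(1-k) c(0,0)$.
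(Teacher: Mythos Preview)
Your overall strategy coincides with the paper's: decompose the theta kernel via the two hyperbolic planes (the paper packages this as writing $\Theta_k$ as a Poincar\'e series built from Lorentzian theta functions $\vartheta_j$ on $L$ and definite theta functions $\theta_K$ on $K$, citing Borcherds' Theorem 5.2 applied twice), then unfold and split the lift into three pieces $\Phi_0 + \Phi_1 + \Phi_2$ corresponding exactly to your ``$\Phi_0$ anomaly'', ``Bernoulli block'', and ``main $\mu\neq 0$ plus $\mu=0$ zeta blocks''. Your identification of which vectors feed which block, and your treatment of $\Phi_0$ and the Bernoulli terms via the Hurwitz/Fourier--Bernoulli identity, match the paper.

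There is one place where your plan underestimates the work. You say the $y$-integral for $\mu\neq 0$ is ``an elementary gamma-type integral'' and that the binomial factors $\binom{t}{r}\binom{t+r-k}{r}$ come from ``repeated integration by parts''. In the paper the $y$-integral is a Bessel $K$-function, $\int_0^\infty y^{j-h-3/2-t}e^{-A/y-By}\,dy = 2(A/B)^{(j-h-1/2-t)/2}K_{j-h-1/2-t}(2\sqrt{AB})$, and after inserting the half-integer $K$-series one is left with a \emph{triple} sum over $j$ (from the theta polynomial $p_k$), $h$ (from the Laplacian in $p_k$), and $n$ (from the $K$-series). Collapsing this to the claimed single sum over $r$ requires the nontrivial identity
\[
\sum_{j=0}^k \binom{k}{j}\sum_{h+n=r}(-1)^h\frac{(2h)!(2n)!}{h!\,n!}\binom{j}{2h}\binom{t+r-j}{2n}=2^k r!\binom{t}{r}\binom{t+r-k}{r},
\]
together with its signed companion giving $\binom{t+r-k}{r-k}$ for $\mu<0$; the paper proves these via a generating-function argument (the inner sum is a Cauchy product of Hermite-type polynomials, and the generating function in $z$ turns out to be $(1-2z)^j/(1-z)^{t+1}$). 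This is not integration by parts, and you should expect to need it. Similarly, the $\mu=0$ contribution to $\Phi_2$ requires a separate closed-form evaluation of $\sum_{h=0}^{\lfloor k/2\rfloor}\frac{k!(-4)^{-h}}{h!(k-2h)!}\Gamma(1/2+t-h)$, which the paper handles by a contour-integral/generating-function argument and which produces the split between the $t<k/2$ and $t\geq k$ zeta blocks and the harmonic-sum line for $t=k/2$. Your sketch treats this as routine regularization bookkeeping, but it is a genuine identity that must be proved.
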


The theorem applies to cases where the depth condition
\[ k \geq 2d \]
which appeared in Theorem~\ref{thm:list of almost-holomorphic modular form} need not be satisfied. Hence $\mathrm{Lift}(F)$ may not be almost-holomorphic.
Under the depth condition, the expansion simplifies as follows:
\begin{cor} \label{cor:expansion of lift of almostholomorphic mod forms}
If in the situation of Theorem~\ref{thm:expansion of theta lift} 
we have the depth condition $k \geq 2d$ with $d>0$, then
the almost-holomorphic function 
$\mathrm{Lift}(F)$ has the expansion
\begin{align*} \mathrm{Lift}(F) 
&= \sum_{\substack{\mu \in L^{\vee} \\ \mu > 0}} \sum_{t=0}^d c^{(t)}(\mu, \mu^2 / 2) \sum_{r=0}^{t} 4^{-r} r! \binom{t}{r} \binom{t+r-k}{r}  \pi^{-r-t} \sum_{\delta=1}^{\infty} \delta^{k-t-r-1} e^{2\pi i \langle \delta \mu, z \rangle} \langle v, v \rangle^{-t} \langle \mu, v \rangle^{t-r}  \\ 
 &+ \frac{1}{2} \sum_{t < k/2} \frac{t!}{(2\pi)^{2t}} \binom{2t-k}{t} \zeta(2t+1-k) c^{(t)}(0, 0)\langle v, v \rangle^{-t} \\ &+ \delta_{\substack{k \in 2\mathbb{Z} \\ k > 0}} 2^{-k} \pi^{-k-1/2} c^{(k/2)}(0, 0) \Big( \sum_{h=0}^{k/2 - 1} \frac{k! \Gamma(1/2+k/2-h)}{(-4)^h h! (k-2h)!} \sum_{j=1}^{k/2 - h} \frac{1}{2j-1} \Big) \langle v, v \rangle^{-k/2} \\ 
 &+ \frac{(-1)^k}{2^{2k-1} \pi^{k-1}} \sum_{k/2 \ge t \ge k/2 - 1} \frac{(-4)^t t!}{(2+2t-k)!} 
\sum_{\substack{\beta \in \mathbb{Z}/N\BZ \\ \lambda \in K'}} c^{(t)}(0, \lambda, \beta/N; \lambda^2 / 2) \\ 
&\quad\quad\quad\quad\quad\quad \times B_{2+2t-k} \Big( \{\beta/N + \langle \lambda, \mathfrak{v} \rangle / (Nv_e) \} \Big)  (N v_e)^{1+2t-k} \langle v, v \rangle^{-t}.  \\  
\end{align*}
The expansion of its constant term is
\begin{align} \label{expansion of constant term of lift}
\ct(\mathrm{Lift}(F))
& = 
\frac{1}{2} \zeta(1-k) c^{(0)}(0,0) + 
\sum_{\substack{\mu \in L^{\vee} \\ \mu > 0}} c^{(0)}(\mu, \mu^2 / 2)  \sum_{\delta=1}^{\infty} \delta^{k-1} e^{2\pi i \langle \delta \mu, z \rangle}.
\end{align}
\end{cor}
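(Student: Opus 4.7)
The plan is to deduce both parts of the Corollary from Theorem~\ref{thm:expansion of theta lift}: first to collapse the full expansion using the depth condition, and then to extract the $\nu$-free part. The second step is legitimate because $\mathrm{Lift}(F)$ is almost-holomorphic by Theorem~\ref{thm:list of almost-holomorphic modular form}, so the constant-term morphism $\ct$ is well-defined by Theorem~\ref{thm:constant term}.

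For the first formula, the hypothesis $k \geq 2d$ with $d \geq 1$ forces $d \leq k/2$ and $k \geq 2$. Consequently four groups of summands in Theorem~\ref{thm:expansion of theta lift} vanish or collapse: the $\mu < 0$ sum is empty because it requires $t \geq k$ but the nonzero range is $t \leq d < k$; the third line (the $\mu = 0$ sum with $t \geq k$ and $(t,k) \neq (0,0)$) is empty for the same reason; both $\delta_{k=0}$-summands vanish since $k \geq 2$; and the index range $t \geq k/2 - 1$ in the Bernoulli-polynomial line intersected with $t \leq d \leq k/2$ collapses to $t \in \{k/2 - 1, k/2\}$. What remains matches exactly the stated expansion.

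For the constant-term formula, I separate the expansion into its $\mu \neq 0$ Fourier modes (from Term~1) and its $\mu = 0$ zero-mode. For $\mu \neq 0$, I rewrite each summand as a polynomial in $\nu_1, \ldots, \nu_n$ via
\[
\langle \mu, v\rangle \;=\; i\langle v, v\rangle \sum_j \mu_j \nu_j, \qquad \langle v, v\rangle^{-1} \;=\; -\sum_{j,k} g^{jk}\nu_j\nu_k,
\]
both of which follow from $\nu_j = (gv)_j/(i\langle v, v\rangle)$. Under these substitutions the factor $\langle v, v\rangle^{-t}\langle \mu, v\rangle^{t-r}$ becomes $\nu$-homogeneous of degree $t + r$, so its $\nu$-free component is nonzero only at $t = r = 0$, contributing the exponential sum $\sum_{\mu > 0} c^{(0)}(\mu, \mu^2/2) \sum_\delta \delta^{k-1} e^{2\pi i \langle \delta\mu, z\rangle}$. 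For the $\mu = 0$ zero-mode, the factor $\langle v, v\rangle^{-t}$ in Terms~4 and~5 has $\nu$-degree $2t$, so only Term~4 at $t = 0$ survives (using $\binom{-k}{0} = 1$), contributing $\tfrac{1}{2}\zeta(1 - k) c^{(0)}(0, 0)$.

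The main obstacle is the Bernoulli-polynomial line (Term~7): in the surviving range $t \in \{k/2 - 1, k/2\}$ the factor $(Nv_e)^{1+2t-k}$, rewritten via $Nv_e = i\langle v, v\rangle\,\nu(f)$, produces $\nu(f)^{\pm 1}$, and the argument of $B_{2+2t-k}$ becomes a rational expression in $\nu$ containing $\nu(f)^{-1}$, so individual summands fail to be polynomials in $\nu$. The resolution is that the combined $\mu = 0$ contribution of $\mathrm{Lift}(F)$ must be polynomial in $\nu$---forced by the almost-holomorphicity of $\mathrm{Lift}(F)$ established in Theorem~\ref{thm:list of almost-holomorphic modular form}---so the apparent $\nu(f)^{-1}$-poles cancel between $t = k/2 - 1$ and $t = k/2$, and the resulting polynomial has strictly positive $\nu$-degree. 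Once this cancellation is verified, all $\mu = 0$ summands except Term~4 at $t = 0$ drop out of $\ct(\mathrm{Lift}(F))$, and the claimed formula follows.
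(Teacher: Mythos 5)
Your proposal is essentially the paper's argument. The collapse of the full expansion under $k\geq 2d>0$ is exactly what the paper does (the $\mu<0$ and $t\geq k$ terms die because $t\leq d<k$, the $\delta_{k=0}$ terms die because $k\geq 2$, and the Bernoulli line restricts to $t\in\{k/2-1,k/2\}$), and your extraction of the constant term by rewriting $\langle v,v\rangle^{-1}$ and $\langle\mu,v\rangle/\langle v,v\rangle$ as $\nu$-homogeneous quantities of degrees $2$ and $1$ is precisely the route the paper takes via Remark~\ref{rmk:FJ expansion is almost-hol}.

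One step is asserted rather than proved: after the $v_e^{-1}$-poles in the Bernoulli line cancel, you claim the surviving $\nu$-polynomial has \emph{strictly positive} $\nu$-degree, which is what actually kills its contribution to $\ct$. This does need an argument, but a one-line one suffices: the Bernoulli terms are homogeneous of degree $(1+2t-k)-2t=1-k<0$ in $v$ (the argument of $B_{2+2t-k}$ being degree $0$), and a polynomial in the $\nu_j$ that is homogeneous of negative degree in $v$ has no degree-zero, i.e.\ $\nu$-free, part. This homogeneity observation is in fact the paper's stated alternative proof of the whole constant-term formula --- every term in the expansion has degree $\leq 0$ in $v$ and only the degree-$0$ terms ($t=r=0$ in the first line and $t=0$ in the second) survive in $\ct$ --- and it sidesteps the cancellation discussion entirely. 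Your appeal to Theorem~\ref{thm:list of almost-holomorphic modular form} to force the cancellation is legitimate (the paper instead verifies it directly via an identity for Fourier coefficients of depth-one quasimodular forms), but for the Corollary itself the degree count is all that is needed.
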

\begin{proof}
We only need to argue that the constant term has the desired shape.
This either follows by Remark~\ref{rmk:FJ expansion is almost-hol} below which writes all terms explicitly in terms of the almost-holomorphic variables $\nu_j$, or can also be directly checked since only all terms have degree $\leq 0$ in $v$,
and the terms of degree $0$ give precisely rise to the constant terms.
\end{proof}

The last statement in particular implies that the expansion of $\ct(\Lift(F))$ can be fully expressed by the expansion of the constant term $\ct(F)$.
It also recovers the well-known formulas for the expansion of the lift of meromorphic modular forms (cf. \cite{Borcherds1998}, Theorem 14.3).

\begin{rmk} \label{rmk:FJ expansion is almost-hol}
The fact that $\Lift(F)$ is almost-holomorphic can be seen directly from the Fourier expansion. Recall that $\nu=\partial \log(v^2)$ and $v=\Im(z)$. Then we have
\[ \langle v, v \rangle^{-1} = -\langle \nu, \nu \rangle, \]
so $\langle v, v \rangle^{-1}$ is an almost-holomorphic funtion of depth $2$. Similarly,
$\langle \mu, v \rangle/\langle v,v \rangle$ for $\mu \in L_{\BQ}$ is almost-holomorphic of depth $1$ because it is of the form $\langle \nu, \mu' \rangle$ for a constant differential $\mu'$. 
Hence, in the above corollary, the first three terms in the expansion of $\mathrm{Lift}(F)$ are clearly almost-holomorphic.

The final term in Corollary \ref{cor:expansion of lift of almostholomorphic mod forms} is also almost-holomorphic. For odd $k$ this is clear by the above remarks. (Note that $v_e$ is of the form $\langle \mu, v \rangle$ for an appropriate vector $\mu$.) For even $k$ this is not trivial, since $v_e$ appears with a negative exponent and $v_e^{-1}$ is not almost-holomorphic. The necessary cancellation follows from an identity satisfied by the Fourier coefficients of quasimodular forms of depth one:
\begin{lem}
    Let $K$ be a positive-definite lattice and let $f(\tau) = f^{(0)}(\tau) + (2\pi y)^{-1} f^{(1)}(\tau)$ be an almost-holomorphic modular form of depth $1$ and weight $2 - r/2$ for the Weil representation attached to $U(N) \oplus K$. Then the Fourier coefficients $c^{(i)}(\alpha/N, \gamma, \beta/N; n)$, $\alpha,\beta \in \mathbb{Z}/N$, $\gamma \in K'/K$, $n \in \mathbb{Z} + Q(\gamma)$, $i=0,1$ of $f^{(i)}$ satisfy the identity \begin{equation}\label{eq:vanishing}\sum_{\substack{\beta \in \mathbb{Z}/N\mathbb{Z} \\ \lambda \in K'}} c^{(0)}(0, \lambda, \beta/N; \lambda^2/2) + 2 \sum_{\substack{\beta \in \mathbb{Z}/N\mathbb{Z} \\ \lambda \in K'}} c^{(1)}(0, \lambda, \beta/N; \lambda^2 / 2) \frac{\langle \lambda, v \rangle^2}{\langle v, v \rangle} = 0\end{equation} for every $v \in K \otimes \mathbb{R}$, $v \ne 0$.  
\end{lem}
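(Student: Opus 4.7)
The plan is to apply Theorem~\ref{thm:list of almost-holomorphic modular form} to deduce that $\mathrm{Lift}(f)$ is almost-holomorphic (viewing $f$ as the input $F$ to the theta lift with respect to an even lattice $M = U \oplus L$ of signature $(2, r+2)$ with $L = U(N) \oplus K(-1)$) and then read off the identity from its Fourier expansion given in Corollary~\ref{cor:expansion of lift of almostholomorphic mod forms}. With $k = 2$, the weight of $f$ equals $\kappa = k + 1 - \mathrm{rank}(L)/2 = 2 - r/2$ and the depth condition $k \geq 2d$ becomes $2 \geq 2$, so $\mathrm{Lift}(f)$ is almost-holomorphic; equivalently, its tube-domain Fourier expansion must be a polynomial in the almost-holomorphic variables $\nu_e, \nu_f, \nu_{e_j}$.

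One next goes through each summand in Corollary~\ref{cor:expansion of lift of almostholomorphic mod forms} and verifies that every term except the final ``Bernoulli polynomial'' summand is manifestly polynomial in the $\nu_j$: the $\mu > 0$ summand is exponentially decaying with prefactors involving only the almost-holomorphic combinations $\langle v, v\rangle^{-1} = -\langle \nu, \nu\rangle$ and $\langle\mu, v\rangle/\langle v, v\rangle$, while the constant, zeta-value, and $\langle v, v\rangle^{-k/2}$ summands are almost-holomorphic by inspection. The final summand, restricted to $t \in \{0, 1\}$, splits into a $t = 0$ contribution equal to $(16\pi N v_e)^{-1}\sum_{\beta, \lambda} c^{(0)}(0, \lambda, \beta/N; \lambda^2/2)$ and a $t = 1$ contribution proportional to $(N v_e) B_2(\{\beta/N + \langle\lambda, \mathfrak{v}\rangle/(N v_e)\})/\langle v, v\rangle$, neither of which is obviously almost-holomorphic on its own.

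Using the identities $N v_e/\langle v, v\rangle = i\nu_f$, $\langle \lambda, \mathfrak{v}\rangle/\langle v, v\rangle = i\sum_j \lambda_j \nu_{e_j}$, and $1/(N v_e) = i\langle \nu, \nu\rangle/\nu_f$, one rewrites both contributions in terms of the $\nu$-variables together with a non-almost-holomorphic factor $1/\nu_f$. Taylor-expanding $B_2(\{\beta/N + y\}) = B_2(\beta/N) + (2\beta/N - 1) y + y^2$ at $y = \langle\lambda, \mathfrak{v}\rangle/(N v_e)$, the only $1/\nu_f$ piece of the $t = 1$ contribution comes from the $y^2$-term and yields a quantity proportional to $\sum_{\beta, \lambda} c^{(1)}(0, \lambda, \beta/N; \lambda^2/2) \langle \lambda, v_K\rangle^2/\langle v, v\rangle^2$. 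Equating the sum of the $1/\nu_f$ contributions from $t = 0$ and $t = 1$ to zero, clearing denominators, and restricting to $v \in K \otimes \mathbb{R}$ yields \eqref{eq:vanishing}. The main technical obstacle will be tracking the exact normalization factors and signs throughout this computation, and handling the piecewise-smooth behavior of $B_2(\{\cdot\})$ at integer arguments (relevant when $\beta = 0$), which is addressed by working in a region of the tube domain where $\{\beta/N + y\} = \beta/N + y$ holds strictly.
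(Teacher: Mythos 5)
Your strategy is sound and genuinely different from the paper's. The paper proves the lemma directly and much more cheaply: it pairs $f$ with the theta series $\theta_K$ and the completed theta series with quadratic polynomial $\tilde\theta_{K;v}$, observes that $[f,\theta_K]+\tfrac{2}{\langle v,v\rangle}[f^{(1)},\tilde\theta_{K;v}]$ is a weakly holomorphic modular form of weight two for $\SL_2(\BZ)$ whose constant term is the left-hand side of \eqref{eq:vanishing}, and invokes the vanishing of the constant term of such a form (plus a short reduction for $N>1$). Your route instead extracts the identity as a consistency condition between Theorem~\ref{thm:list of almost-holomorphic modular form} (applied with $k=2$, $d=1$; its proof goes through operator equivariance and is independent of the Fourier expansion) and Theorem~\ref{thm:expansion of theta lift}. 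This is not circular, since the lemma enters the paper only in Remark~\ref{rmk:FJ expansion is almost-hol} after both of those results are established; but it spends two of the heaviest theorems of Part A on a statement the paper dispatches with the residue theorem, and it inverts the paper's logic — there the lemma is an independent confirmation that the expansion of Corollary~\ref{cor:expansion of lift of almostholomorphic mod forms} is compatible with almost-holomorphicity, a role it cannot play if it is itself deduced from that compatibility.

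Two steps need to be supplied before your argument closes. The crux is the claim that the ``$1/\nu_f$ piece'' must vanish identically rather than merely not being \emph{obviously} almost-holomorphic. Concretely: an almost-holomorphic function depending only on $v=\Im(z)$ is a polynomial with \emph{constant} coefficients in the algebraically independent quantities $w_a:=v_a/\langle v,v\rangle$, and in these variables the $t\in\{0,1\}$ Bernoulli terms equal $\tfrac{1}{8\pi N w_e}\bigl(\langle w,w\rangle\sum c^{(0)}-2\sum c^{(1)}\langle\lambda,\mathfrak{w}\rangle^2\bigr)$ plus an honest polynomial in $w$; almost-holomorphicity therefore forces $w_e$ to divide the bracket (note $w_e\nmid\langle w,w\rangle$), and evaluating at $w_e=0$, where $\langle w,w\rangle=-\langle\mathfrak{w},\mathfrak{w}\rangle_K$, yields \eqref{eq:vanishing} after dividing by $\langle\mathfrak{w},\mathfrak{w}\rangle_K>0$. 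That sign flip from the $L$-norm to the (positive) $K$-norm is what produces the $+2$ in \eqref{eq:vanishing}; ``restricting to $v\in K\otimes\BR$'' does not capture it — the correct mechanism is taking the residue along $\{v_e=0\}$ and then extending the resulting polynomial identity from an open set to all of $K\otimes\BR$. Second, your fix for the fractional part is not quite right: one cannot arrange $\{\beta/N+y\}=\beta/N+y$ for all $(\beta,\lambda)$ simultaneously in the relevant region, but this is harmless because $B_2(\{x\})=B_2(x-m)$ for a locally constant integer $m$, and the shift by $m$ alters only the linear and constant terms in $y=\langle\lambda,\mathfrak{v}\rangle/(Nv_e)$, whereas only the $y^2$ term contributes to the $1/v_e$ singularity. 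With these repairs the argument goes through.
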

\begin{proof}
Suppose first that $N=1$. For modular forms $F$ and $G$ for the Weil representation attached to $K$ and its dual, respectively, we write $$(F, G) := \sum_{\gamma \in K'/K} F_{\gamma}(\tau) G_{\gamma}(\tau)$$ and that if $F$ and $G$ have weights $k$ and $\ell$ then $(F, G)$ is a modular form of weight $k+\ell$ for $\mathrm{SL}_2(\mathbb{Z})$. The analogous statements hold for almost-holomorphic modular and quasimodular forms.

The vector-valued theta function $\theta_K(\tau) = \sum_{\lambda \in K'} q^{\langle \lambda, \lambda \rangle / 2} \mathfrak{e}_{\lambda + K}$ defines a modular form of weight $r/2$ for the dual Weil representation. Similarly, for any $v \in K \otimes \mathbb{R}$, the theta function with polynomial $\theta_{K;v} = \sum_{\lambda \in K'} \langle \lambda, v \rangle^2 q^{\langle \lambda, \lambda \rangle / 2} \mathfrak{e}_{\lambda + K}$ defines a quasimodular form of weight $r/2+2$ with almost-holomorphic completion $$\tilde \theta_{K;v}(\tau) = \sum_{\lambda \in K'} \langle \lambda, v \rangle^2 q^{\langle \lambda, \lambda \rangle / 2} \mathfrak{e}_{\lambda + K} - \frac{\langle v, v \rangle}{4\pi y} \sum_{\lambda \in K'} q^{\langle \lambda, \lambda \rangle / 2} \mathfrak{e}_{\lambda+K}.$$ By construction, the left-hand side of \eqref{eq:vanishing} is the constant Fourier coefficient of $(f^{(0)}, \theta_K) + \frac{2}{\langle v,v \rangle} (f^{(1)}, \theta_{K,v}).$
This is holomorphic in $\tau$. On the other hand, writing it as $$\Big( f - \frac{1}{2\pi y} f^{(1)}, \theta_K\Big) + \frac{2}{\langle v, v \rangle} \Big( f^{(1)}, \tilde \theta_{K;v} + \frac{\langle v,v\rangle}{4\pi y} \theta_K \Big) = [f, \theta_K] + \frac{2}{\langle v, v \rangle} [f^{(1)}, \tilde \theta_{K;v}]$$ shows that it is modular under $\mathrm{SL}_2(\mathbb{Z})$ with weight two. The claim follows because the constant Fourier coefficient of a weakly holomorphic modular form of weight two vanishes.

For general $N$, a similar argument shows that the sums $$\sum_{\lambda \in K'} c^{(0)}(\alpha/N, \lambda, \beta/N; \lambda^2 / 2) + 2 \sum_{\lambda \in K'} c^{(1)}(\alpha/N, \lambda, \beta/N; \lambda^2 / 2) \frac{\langle \lambda, v \rangle^2}{\langle v, v \rangle}$$ are the coefficients of $\mathfrak{e}_{(a/N, b/N)}$ of a vector-valued (weakly holomorphic) modular form of weight two for the Weil representation attached to the lattice $U(N)$. One can check that for any such vector-valued modular form, the sum over the components $\mathfrak{e}_{(0, \beta/N)}$ as $\beta$ runs through $\mathbb{Z}/N\mathbb{Z}$ is modular of level one.  Hence its constant term vanishes as before.
\end{proof}
When $k = 2\ell$ is even, we can apply this observation to the depth one quasimodular form $$\tilde F := \frac{(-2\pi)^{\ell-1}}{(\ell-1)!} (\LM)^{\ell-1} F = F^{(\ell-1)} + \frac{\ell}{2\pi y} F^{(\ell)}.$$ Then 
\begin{align*} 
&\quad \frac{(-1)^k}{2^{2k-1} \pi^{k-1}} \sum_{\ell \ge t \ge \ell-1} \frac{(-4)^t t!}{(2+2t-k)!} \sum_{\substack{\beta \in \mathbb{Z}/N\mathbb{Z} \\ \lambda \in K'}} c^{(t)}(0, \lambda, \beta/N; \lambda^2 / 2) \\ &\quad\quad\quad \times B_{2+2t-k}\Big( \{\beta/N + \langle \lambda, \mathfrak{v} \rangle / (Nv_e)\} \Big) (Nv_e)^{1+2t-k} \langle v, v \rangle^{-t} \\ &= \frac{(-1)^{\ell-1} (\ell-1)!}{2^{k-2}\langle v, v \rangle^{\ell} \pi^{k-1} Nv_e} \sum_{\substack{0 \le \beta < N \\ \lambda \in K'}} \Big(  \langle v, v \rangle c^{(\ell-1)}(0, \lambda, \beta/N; \lambda^2 / 2) 
 - 2\ell c^{(\ell)}(0, \lambda, \beta/N; \lambda^2 / 2) \Big[\\ &\quad\quad\quad \langle \lambda, \mathfrak{v} \rangle^2 + 2\beta \langle \lambda, \mathfrak{v}\rangle v_e + \beta^2 v_e^2 - \langle \lambda, \mathfrak{v}\rangle v_e - \beta v_e^2 + v_e^2 / 6 \Big]
 \Big)
 \\ &= \frac{(-1)^{\ell-1} (\ell-1)!}{2^{k-3} \langle v, v \rangle^{\ell} \pi^{k-1} N} \sum_{\substack{0 \le \beta < N \\ \lambda \in K'}} \Big(v_f c^{(\ell-1)}(0, \lambda, \beta/N; \lambda^2/2) - \frac{\ell}{N} c^{(\ell)}(0, \lambda, \beta/N; \lambda^2 / 2) \Big[ 2 \beta \langle \lambda, \mathfrak{v} \rangle + (\beta^2 - \beta + 1/6) v_e \Big] \Big),
\end{align*}
where in the last step, we wrote $\langle v, v \rangle = 2N v_e v_f - \langle \mathfrak{v}, \mathfrak{v} \rangle$ and applied the lemma. The remaining terms are almost-holomorphic.
\end{rmk}

\begin{example} \label{example:U+U}
We consider the theta lift for the lattice $M=U \oplus U$. The tube domain $\CC$ consists of pairs
$z = u+iv = (u_1+iv_1, u_2+iv_2)$ in $U \otimes \mathbb{C}$ with $v_1, v_2 > 0$, i.e. it is just $\mathbb{H} \times \mathbb{H}$. The positive cone with respect to the point $(0, 1)$ consists of vectors $\mu = (a,b) \in U$ with $\langle \mu, v \rangle = a v_2 + b v_1 > 0$ for all $v$ sufficiently close to $(0, 1)$; i.e. either $a > 0$, or $a=0$ and $b > 0$. Similarly the negative cone consists of $(a, b)$ with either $a < 0$, or $a=0$ and $b < 0$.

(1) Let $F$ be the constant function $1$, with depth $d=0$ and weight $k = 0$. The contribution of the first term in Theorem~\ref{thm:expansion of theta lift} is $$\sum_{\mu > 0} c^{(t)}(\mu, \mu^2 / 2) \sum_{\delta=1}^{\infty} \delta^{-1} e^{2\pi i \langle \delta, \mu, z \rangle} + \sum_{\mu < 0} c^{(t)}(\mu, \mu^2 / 2) \sum_{\delta = 1}^{\infty} \delta^{-1} e^{2\pi i \langle \delta \mu, \overline{z} \rangle}.$$ Since $F$ has no coefficients with $\mu^2 \ne 0$, we sum only over vectors $(a, 0)$ or $(b, 0)$ with $a,b \ne 0$. So the sum becomes \begin{align*} &\quad \sum_{\delta=1}^{\infty} \sum_{a=1}^{\infty} \delta^{-1} e^{2\pi i \delta a z_2} + \sum_{\delta=1}^{\infty} \sum_{b=1}^{\infty} \delta^{-1} e^{2\pi i \delta b z_1} + \sum_{a=1}^{\infty} \delta^{-1} e^{2\pi i \delta (-a \overline{z_2})} + \sum_{\delta=1}^{\infty} \sum_{b=1}^{\infty} \delta^{-1} e^{2\pi i \delta (-b \overline{z_1})} \\ &= -\sum_{a=1}^{\infty} \ln(1 - e^{2\pi i a z_2}) - \sum_{b=1}^{\infty} \ln(1 - e^{2\pi i bz_2}) - \sum_{a=1}^{\infty} \ln(1 - e^{-2\pi i a \overline{z_2}}) - \sum_{b=1}^{\infty} \ln(1 - e^{-2\pi i b \overline{z_1}}) \\ &= -\ln \Big| \prod_{a=1}^{\infty} (1 - e^{2\pi i az_2}) \cdot \prod_{b=1}^{\infty} (1 - e^{2\pi i bz_1}) \Big|^2.
\end{align*}
The sixth line in the formula contributes $\frac{1}{2}(2\gamma - \ln(2\pi) - \ln(2y_1 y_2))$.
The seventh line is $$\pi \cdot v_e B_2(0) = \frac{\pi}{6} y_1.$$
Finally, the constant term of $\frac{\pi}{3} F(\tau) E_2(\tau)$ is $\frac{\pi}{3}$ so the last line of the theorem is $$\Phi_0 = \frac{\pi}{3} \cdot \frac{\langle v, v \rangle}{4 v_e} = \frac{\pi}{3} \cdot \frac{2 y_1 y_2}{4 y_1} = \frac{\pi}{6} y_2.$$
Altogether we obtain $$\mathrm{Lift}(F) = C - \frac{1}{2} \ln(y_1 y_2) - \ln |\eta(z_1)^2 \eta(z_2)^2|$$ with the constant $C = \gamma - \frac{1}{2}\ln(4\pi)$.

(2) Let $F(\tau) = D_{\tau}(E_{10} / \Delta)$ where $D_{\tau} = \frac{1}{2 \pi i} \frac{d}{d \tau}$. This is a quasimodular form of weight $0$ whose modular completion is \begin{align*} F^*(\tau) &= F(\tau) + \frac{E_{10}/\Delta}{2\pi y} \\ &= -q^{-1} - 141444q - 17058560q^2 - ... \\ &\quad + \frac{1}{2\pi y} (q^{-1} - 240 - 141444q - 8529280q^2 - ...). \end{align*}
The main term contribution (the first two summands) is 
\begin{align*} &\quad \sum_{(a, b) > 0} c^{(0)}(ab) \sum_{\delta=1}^{\infty} \delta^{-1} (e^{2\pi i \delta (a z_2 + b z_1)} + e^{-2\pi i \delta (a \overline{z_2} + b \overline{z_1})}) \\ &+ \frac{1}{2\pi y_1 y_2} \sum_{(a, b) > 0} c^{(1)}(ab) (a y_2 + b y_1)  \sum_{\delta=1}^{\infty} \delta^{-2} (e^{2\pi i \delta (a z_2 + b z_1)} + e^{-2\pi i \delta (a \overline{z_2} + b \overline{z_1})}) \\ &+ \frac{1}{8\pi^2 y_1 y_2} \sum_{(a, b) > 0} c^{(1)}(ab) \sum_{\delta=1}^{\infty} \delta^{-3} (e^{2\pi i \delta (a z_2 + b z_1)} + e^{-2\pi i \delta (a \overline{z_2} + b \overline{z_1})}).\end{align*}
The third line of the formula contributes $$2 \zeta(3) \cdot (2\pi)^{-2} \cdot (-240) \cdot \frac{1}{2y_1 y_2} = -\frac{60 \zeta(3)}{\pi^2 y_1 y_2}.$$
The fourth and fifth line do not contribute because $k=0$.
The sixth line of the theorem contributes $0$ because $c^{(0)}(0) = 0$. The seventh line involves only the constant term $c^{(1)}(0) = -240$ and gives \begin{align*} &\quad 2\pi \cdot \frac{(-4)}{24} \cdot 240 \cdot y_1^3 B_4(0) \cdot \frac{1}{2y_1 y_2} = \frac{4\pi y_1^2}{3 y_2}.\end{align*}

We can write $F^* = 576 - \frac{5}{6} j - \frac{1}{6} \frac{E_{10}}{\Delta} E_2^*$ where $j$ is the modular $j$-function. The constant term of $$\frac{\pi}{3} \Big( 576 - \frac{5}{6} j \Big) E_2 + \frac{\pi}{6} \Big( -\frac{1}{6} \frac{E_2^2 E_{10}}{\Delta} \Big)$$ is $\frac{\pi}{3} \cdot (-24) + \frac{\pi}{6} \cdot (48) = 0$ and therefore $\Phi_0 = 0$.

Writing $E_{10}/\Delta = \sum_{n} d(n) q^n$, one obtains:
\begin{align*} \Lift\left( D_{\tau}(E_{10}/\Delta) \right) = 
&\quad \sum_{(a, b) > 0} ab \cdot d(ab) \sum_{\delta=1}^{\infty} \delta^{-1} (e^{2\pi i \delta (a z_2 + b z_1)} + e^{-2\pi i \delta (a \overline{z_2} + b \overline{z_1})}) \\ 
&+ \sum_{(a, b) > 0} d(ab) \left(\frac{a}{2\pi y_1} + \frac{b}{2\pi y_2} \right)  \sum_{\delta=1}^{\infty} \delta^{-2} (e^{2\pi i \delta (a z_2 + b z_1)} + e^{-2\pi i \delta (a \overline{z_2} + b \overline{z_1})}) \\ 
&+ \frac{1}{8\pi^2 y_1 y_2} \sum_{(a, b) > 0} d(ab) \sum_{\delta=1}^{\infty} \delta^{-3} (e^{2\pi i \delta (a z_2 + b z_1)} + e^{-2\pi i \delta (a \overline{z_2} + b \overline{z_1})}) \\
& -\frac{60 \zeta(3)}{\pi^2 y_1 y_2} + \frac{4\pi y_1^2}{3 y_2}.
\end{align*}
The function $\mathrm{Lift}(D(E_{10}/\Delta))$ is an example of a higher Green's function and the above expansion is a special case of \cite[Eqn.8.6]{Viazovska}.

Let us apply the orthogonal raising operator $\RO = \RO_{z_1} dz_1 + \RO_{z_2} dz_2$ to this function.
Since the lift is of weight zero, we simply have $\RO_{z_1} = 
\frac{d}{dz_1}$. We then get
\begin{align*}
\frac{1}{2 \pi i} \RO_{z_1}
\Lift\left( D_{\tau}(E_{10}/\Delta) \right)
= 
&\quad \sum_{(a, b) > 0} a b^2 \cdot d(ab) \sum_{\delta=1}^{\infty} (e^{2\pi i \delta (a z_2 + b z_1)} \\ 
&+ \sum_{(a, b) > 0} b \cdot d(ab) \left(\frac{a}{2\pi y_1} + \frac{b}{2\pi y_2} \right)  \sum_{\delta=1}^{\infty} \delta^{-1} e^{2\pi i \delta (a z_2 + b z_1)} \\
&+ \frac{1}{8 \pi^2 y_1^2} \sum_{(a, b) > 0} a \cdot d(ab) \sum_{\delta=1}^{\infty} \delta^{-2} (e^{2\pi i \delta (a z_2 + b z_1)} + e^{-2\pi i \delta (a \overline{z_2} + b \overline{z_1})}) \\
&+ \frac{1}{8\pi^2 y_1 y_2} \sum_{(a, b) > 0} b \cdot d(ab) \sum_{\delta=1}^{\infty} \delta^{-2} e^{2\pi i \delta (a z_2 + b z_1)} \\
&+ \frac{1}{32 \pi^2 y_1^2 y_2} \sum_{(a, b) > 0} d(ab) \sum_{\delta=1}^{\infty} \delta^{-3} (e^{2\pi i \delta (a z_2 + b z_1)} + e^{-2\pi i \delta (a \overline{z_2} + b \overline{z_1})}) \\
& - \frac{15 \zeta(3)}{\pi^3 y_1^2 y_3}
- \frac{2 y_1}{3 y_2}.
\end{align*}

Thus contrary to the holomorphic case of Proposition~\ref{prop:R on Borcherds lift}, we see that the derivative
$\RO(\Lift(D_{\tau}(E_{10}/\Delta)))$ is not
an almost-holomorphic modular form.
A similar expansion is obtained for
the lift of $D_{\tau}^2(E_{10}/\Delta)$
and again shows that it is not almost-holomorphic, in agreement with
Theorem~\ref{thm:list of almost-holomorphic modular form}.




(3)
Take the form $G_2^*(\tau) = G_2(\tau) + \frac{1}{8 \pi y}$ as input into the theta lift on $U \oplus U$. If $\mu = (a, b)$ then $c^{(0)}(\mu, \mu^2 / 2) = \sigma(ab)$ and $c^{(1)}(\mu, \mu^2 / 2) = 0$ for $a, b > 0$; and if $\mu = (a, 0)$ with $a \ge 0,$ then $c^{(0)}(\mu, \mu^2 / 2) = -1/24$ and $c^{(1)}(\mu, \mu^2 / 2) = \frac{1}{4}$. We have $\langle v, v \rangle /2 = y_1 y_2$ and $\langle \mu, v \rangle = a y_2$ for $\mu = (a, 0)$. We obtain
    \begin{align*}
        \mathrm{Lift}(G_2^*) = & \sum_{b=1}^{\infty} c^{(0)}(0) \sum_{\delta=1}^{\infty} \delta e^{2\pi i \delta b z_1} + \sum_{a=1}^{\infty} c^{(0)}(0) \sum_{\delta=1}^{\infty} \delta e^{2\pi i \delta a z_2}  +\sum_{a=1}^{\infty} \sum_{b=1}^{\infty} \sigma_1(ab) \sum_{\delta=1}^{\infty} \delta e^{2\pi i \delta (a z_2 + b z_1)} \\ &+ \frac{1}{2} \zeta(-1)  c^{(0)}(0) + 2^{-2} \pi^{-5/2}  c^{(1)}(0) \Gamma(3/2) \frac{1}{2 y_1 y_2} + \frac{1}{8\pi} c^{(0)}(0) y_1^{-1} \\
        & + \frac{1}{8\pi} \cdot \frac{(-4)}{2!} c^{(1)}(0) y_1 B_2 \frac{1}{2y_1 y_2} \\
        = & -\frac{1}{24} \sum_{n=1}^{\infty} \sigma_1(n) (e^{2\pi i n z_1} + e^{2\pi i n z_2}) + \sum_{b=1}^{\infty} \sum_{a=1}^{\infty} \sum_{d | \mathrm{gcd}(a, b)} \sigma_1(ab / d^2) e^{2\pi i (b z_1 + a z_2)} \\ & \quad + \left( -\frac{1}{24} + \frac{1}{8 \pi y_1} \right) \left( -\frac{1}{24} + \frac{1}{8 \pi y_2} \right) \\
        =& G_2^*(z_1) G_2^*(z_2),
        \end{align*}
    using the fact that $E_2^*$ is an eigenform of the Hecke operators to simplify $$\sum_{d | \mathrm{gcd}(a, b)} \sigma_1(ab/d^2) = \sigma_1(a) \sigma_1(b).$$
	

This behaviour is forced for Hecke eigenforms:
\begin{lemma}
If $f \in \AHMod_{k}(\SL_2(\BZ))$ is a normalized eigenform of depth $d$ with $k \geq 2d$,
then 
\[ \Lift(f) = f(z_1) f(z_2). \]
\end{lemma}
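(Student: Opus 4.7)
The strategy is to compare both sides via the constant-term isomorphism. First, I will check that $f(z_1)f(z_2)$ is indeed an almost-holomorphic modular form of weight $k$ and rank $0$ for $\Gamma = O^{+}(U\oplus U)$: the modularity of $f$ under $\mathrm{SL}_2(\BZ)$ gives $\mathrm{SL}_2(\BZ)\times\mathrm{SL}_2(\BZ)$-invariance with the correct bi-weight $(k,k)$, and the symmetry $f(z_1)f(z_2) = f(z_2)f(z_1)$ ensures invariance under the involution $S$ (see Section~\ref{subsec:example U+U}). Meanwhile, $\Lift(f)$ is almost-holomorphic of depth $2d$ by Theorem~\ref{thm:list of almost-holomorphic modular form}, using the depth hypothesis $k\geq 2d$. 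By Theorem~\ref{thm:constant term}, it therefore suffices to prove the identity of quasimodular forms
\[ \ct(\Lift(f)) = \ct(f)(z_1)\cdot \ct(f)(z_2). \]

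Write $\ct(f)(\tau) = \sum_{n\geq 0} a_n q^n$ with $a_1 = 1$. The Fourier expansion of the right hand side is simply $\sum_{m,n\geq 0} a_m a_n q_1^m q_2^n$ with $q_j = e^{2\pi i z_j}$. For the left hand side, I will apply Corollary~\ref{cor:expansion of lift of almostholomorphic mod forms} in the case $M = U\oplus U$, so that $L = U$ is self-dual, $K = 0$, $N = 1$, and the ``positivity'' in Example~\ref{example:U+U} reduces to $\mu = (a,b)$ with $a>0$, or $a=0$ and $b>0$. Using the pairing $\langle \mu,z\rangle = a z_2 + b z_1$, the formula simplifies to
\[ \ct(\Lift(f)) = \tfrac{1}{2}\zeta(1-k)\, a_0 + \sum_{(a,b)>0} a_{ab}\sum_{\delta\geq 1}\delta^{k-1} e^{2\pi i\delta(a z_2 + b z_1)}. \]

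Next I would match Fourier coefficients of $q_1^{\alpha}q_2^{\beta}$. For $\alpha,\beta \geq 1$, contributions on the left come from triples $(a,b,\delta)$ with $\delta a = \beta$, $\delta b = \alpha$, so the sought identity is
\[ a_\alpha a_\beta \;=\; \sum_{d\mid\gcd(\alpha,\beta)} d^{k-1} a_{\alpha\beta/d^2}, \]
which is precisely the multiplicativity relation for a normalized Hecke eigenform. For $(\alpha,\beta) = (m,0)$ with $m>0$, the left hand side contributes $a_0\cdot \sigma_{k-1}(m)$ (only $b=0$ terms survive), while the right hand side gives $a_m a_0$. For $(\alpha,\beta)=(0,0)$, we must check $a_0^2 = \tfrac{1}{2}\zeta(1-k)a_0$. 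Both of these trivially hold for cusp forms ($a_0 = 0$). For non-cusp normalized eigenforms, one invokes the facts that $a_0 = \zeta(1-k)/2$ and $a_m = \sigma_{k-1}(m)$; this holds for the classical $E_k$ and (as witnessed by example (3) above) for $G_2^{\ast}$, and extends to higher-depth eigenforms via the decomposition $\AHMod_k(\SL_2(\BZ)) = \bigoplus_j \RM^j \Mod_{k-2j}(\SL_2(\BZ))$ together with the compatibility of $\RM$ with Hecke operators.

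\textbf{Main obstacle.} The key conceptual point is the extraction of the simplified formula for $\ct(\Lift(f))$ from Corollary~\ref{cor:expansion of lift of almostholomorphic mod forms}: under the depth hypothesis $k\geq 2d$, all almost-holomorphic terms (those involving $\langle v,v\rangle^{-t}$, Bernoulli polynomials, etc.) drop out when one sets $\nu_1=\nu_2=0$, leaving only the ``holomorphic'' Fourier series depending solely on the classical coefficients $c^{(0)}(\mu,\mu^2/2) = a_{ab}$. This is precisely the feature that allows the comparison to reduce to a purely arithmetic identity about Hecke eigenvalues. The remaining mild obstacle is the verification of $a_0 = \zeta(1-k)/2$ for all non-cusp eigenforms of arbitrary depth, but this follows from the classical one-dimensionality of the Eisenstein eigenspace and the Hecke-equivariance of the raising operator.
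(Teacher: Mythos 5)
Your proposal is correct and follows essentially the same route as the paper: reduce to constant terms via the $\ct$-isomorphism (using the depth condition $k\geq 2d$ to know $\Lift(f)$ is almost-holomorphic), apply the explicit Fourier expansion of the lift for $M=U\oplus U$, and match coefficients using the Hecke multiplicativity relation, treating the Eisenstein case ($a_0\neq 0$) separately. Your coefficient-by-coefficient verification of the $(m,0)$ and $(0,0)$ terms is just a more explicit write-up of the paper's appeal to $f=G_k$ (or its completion) in the non-cuspidal case.
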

\begin{proof}
Let $\ct(f)= \sum_{n} c(n) q^n$ be the Fourier expansion of the constant term.
Since $\Lift(f)$ is almost-holomorphic, it is determined by its constant term which is given by
\begin{align*}
\ct(\Lift(f))=& \frac{1}{2} \zeta(1-k) c(0)
+ c(0) \sum_{n=1}^{\infty} \sigma_{k-1}(n) (e^{2\pi i n z_1} + e^{2\pi i n z_2}) \\
& + \sum_{a,b=1}^{\infty} \sum_{d | \mathrm{gcd}(a, b)} d^{k-1} c(ab/d^2) e^{2\pi i (b z_1 + a z_2)}.
\end{align*}
Since $f$ is a normalized eigenform, we have
\[ c(a)c(b) = \sum_{\delta | (a,b)} \delta^{k-1} c\left( \frac{ab}{\delta^2} \right). \]
This implies the claim if $c(0)=0$.
If $c(0) \neq 0$, then $f=G_k(\tau)$ (or its completion for $k=2$) so the claim holds as well.
\end{proof}
\end{example}


\subsection{Applications}
\label{subsec:theta lift for U Up N(-p)}
We now specialize to the case
\[ M = U \oplus U(p) \oplus N(-p), \]
for some prime $p$, where $N$ is a positive-definite lattice of rank $n-2$ and level\footnote{A lattice $M$ is of signature $b$ if $M^{\vee}(b)$ is even integral.} $1$, so a direct sum of copies of the $E_8$-lattice.
For coordinates $z \in L \otimes \BC$ we write $z=(\tau_1, \Fz, \tau_2) = \tau_1 e + \Fz + \tau_2 f$, where $e,f \in U(p)$ are the standard basis and $\Fz \in N \otimes \BC$.

\begin{prop} \label{prop:Gamma_0(p) to vector valued}
Let 
$f \in \AHMod_\kappa(\Gamma_0(p))$
be a modular form with Fricke involute
$$f^{\mathrm{Fr}} (\tau) := f \Big|_{\kappa} \begin{pmatrix} 0 & -1/\sqrt{p} \\ \sqrt{p} & 0 \end{pmatrix} = p^{-\kappa/2} \tau^{-\kappa} f \Big(-\frac{1}{p\tau} \Big)\ \in \AHMod_\kappa(\Gamma_0(p)).$$
For $j=0,\ldots, p-1$ let
\[
g_j(\tau) = p^{-1} \sum_{\ell=0}^{p-1} e^{-\frac{2 \pi i j \ell}{p}} f((\tau+\ell)/p).
\]
Then the vector-valued function
\begin{equation} 
F_{f}(\tau) = p^{(\kappa+\rk(N))/2} f^{\Fricke}(\tau) \mathfrak{e}_{(0,0)} + 
\sum_{\gamma} g_{\frac{p\langle \gamma, \gamma\rangle}{2}}(\tau) e_{\gamma}
\end{equation}
is an almost-holomorphic modular form with respect to the finite Weil representation of $\SL_2(\BZ)$ on $\BC[M'/M]$. 

If $\ct(f)= \sum_{n=0}^{\infty} d_n q^n$
and $\ct(f^{\mathrm{Fr}})=\sum_{n=0}^{\infty} c_n q^n$,
then
$$\ct(F_{f}(\tau)) = p^{(\kappa+\rk(N))/2} \sum_{n=0}^{\infty} c_n q^n \mathfrak{e}_{(0,0)} + 
\sum_{\substack{a,b \in \BZ/p \\ \alpha \in N}} 
\sum_{\substack{n \equiv a b - \frac{1}{2}(\alpha,\alpha)_N \\ (\text{mod }p)}} d_n q^{n/p} e_{(a/p,b/p,\alpha/p)}.$$
Moreover
\[
\LM(F_f(\tau)) = p \cdot F_{\LM(f)}.
\]
\end{prop}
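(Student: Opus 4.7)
The plan is to verify the three claims in sequence.

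Almost-holomorphicity of $F_f$ follows from that of $f$. Writing $f(\tau) = \sum_i f_i(\tau) y^{-i}$ with $f_i$ holomorphic, the substitution $\tau \mapsto (\tau+\ell)/p$ rescales $y \mapsto y/p$, so each $g_j$ is polynomial in $y^{-1}$ with holomorphic coefficients. Similarly $f^{\Fricke}$ is the slash of $f$ by $W_p \in \SL_2(\BR)$ and hence almost-holomorphic.

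The transformation law under $\rho_M$ reduces to checking the generators $T,S$ of $\SL_2(\BZ)$ (the finite Weil representation descends from the metaplectic cover since $\rk M = 4 + \rk N$ is even). For $T$: expanding $f = \sum_n a_n q^n$ gives $g_j(\tau) = \sum_{n \equiv j\,(p)} a_n q^{n/p}$, so $g_j(\tau+1) = e^{2\pi i j/p} g_j(\tau)$. With $j = p\langle\gamma,\gamma\rangle/2$ we have $e^{2\pi i j/p} = e^{\pi i\langle\gamma,\gamma\rangle}$, matching $\rho(T)\mathfrak{e}_\gamma$; the $\mathfrak{e}_0$ component $f^{\Fricke}$ is trivially $T$-invariant. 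For $S$ the key input is the factorization
\[ \begin{pmatrix} \ell & -1 \\ p & 0 \end{pmatrix} = \begin{pmatrix} 0 & -1 \\ p & 0 \end{pmatrix}\begin{pmatrix} 1 & 0 \\ -\ell & 1 \end{pmatrix} \]
arising from $(-1/\tau+\ell)/p = \bigl(\begin{smallmatrix}\ell & -1 \\ p & 0\end{smallmatrix}\bigr)\tau$. For $\ell \not\equiv 0 \pmod p$ the right factor lies in a nontrivial $\Gamma_0(p)$-coset of $\SL_2(\BZ)$ represented by $ST^{-\ell^{-1}}$; using $f^{\Fricke} \in \AHMod_\kappa(\Gamma_0(p))$, the value $f((-1/\tau+\ell)/p)$ equals $(-1)^\kappa p^{-\kappa/2} f((\tau+m)/p)$ with $m \equiv -\ell^{-1} \pmod p$. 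Inverting the finite Plancherel identity $f((\tau+m)/p) = \sum_{j'} e^{2\pi i j' m/p} g_{j'}(\tau)$ and summing over $\ell$ produces a discrete Fourier combination of the $g_{j'}$ and $f^{\Fricke}$ that, after accounting for the normalization $e^{-\pi i\sigma/4}/\sqrt{|M'/M|}$ in $\rho(S)$ with $\sigma = -\rk N$ and $|M'/M| = p^{2+\rk N}$, matches $\tau^\kappa \rho(S) F_f(\tau)$. This is the main technical step.

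The constant-term formula follows from $g_j(\tau) = \sum_{n \equiv j\,(p)} d_n q^{n/p}$ combined with
\[ p\langle\gamma,\gamma\rangle/2 \equiv ab - \tfrac{1}{2}\langle\alpha,\alpha\rangle_N \pmod p \]
for $\gamma = (a/p, b/p, \alpha/p) \in M'/M$. For the $\LM$-relation, a direct chain rule computation gives $\LM_\kappa(g_j^{(f)}) = p \cdot g_j^{(\LM f)}$, the factor $p$ arising from $\Im((\tau+\ell)/p)^2 = y^2/p^2$ in $\LM$. Since $W_p \in \SL_2(\BR)$, the operator $\LM$ commutes with the Fricke slash, giving $\LM_\kappa(f^{\Fricke}) = (\LM f)^{\Fricke}$. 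Combined with the identity $p^{(\kappa+\rk N)/2} = p \cdot p^{((\kappa-2)+\rk N)/2}$, the $\mathfrak{e}_0$-component of $\LM F_f$ matches that of $p \cdot F_{\LM f}$. The main obstacle is the $S$-transformation, which requires careful bookkeeping of phases and automorphy factors in the coset decomposition.
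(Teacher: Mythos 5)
Your overall strategy differs from the paper's: you verify the Weil-representation transformation law directly on the generators $T$ and $S$, whereas the paper invokes Scheithauer's lifting theorem, which states that for an even lattice $M$ of even rank and level $p$ with trivial Nebentypus $\chi_M$, the coset sum $\sum_{A \in \Gamma_0(p)\backslash \SL_2(\BZ)} (h|_{\kappa}A)\,\rho_M(A^{-1})\mathfrak{e}_0$ is automatically modular for $\rho_M$; the proposition then reduces to checking $\chi_M=1$ (since $\det M = p^{2+\rk N}$ with $2+\rk N$ even) and writing out the sum over the representatives $\{\id\}\cup\{ST^{\ell}\}$ with $h=f^{\mathrm{Fr}}$. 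Your treatments of the $T$-invariance, the constant-term formula, and the $\LM$-relation are correct and in fact more complete than the paper's on the last point (you handle the Fricke component and the shift of the prefactor $p^{(\kappa+\rk N)/2}$ explicitly).

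However, there is a genuine gap at the $S$-transformation, which is the entire nontrivial content of the statement. First, the one explicit identity you write down in that step is wrong: for $\ell\not\equiv 0\pmod p$ one has
\[
f\Big(\frac{-1/\tau+\ell}{p}\Big)=\tau^{\kappa}\,f\Big(\frac{\tau+m}{p}\Big),\qquad m\equiv -\ell^{-1}\pmod p,
\]
obtained from the factorization $\binom{\ell\ -1}{p\ \ 0}=\gamma_0\binom{1\ m}{0\ \,p}$ with $\gamma_0\in\Gamma_0(p)$ and $j(\gamma_0,(\tau+m)/p)=\tau$; your version omits the essential automorphy factor $\tau^{\kappa}$ (which is precisely what must assemble into the $\tau^{\kappa}$ in $\tau^{\kappa}\rho(S)F_f$) and carries spurious constants $(-1)^{\kappa}p^{-\kappa/2}$. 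Second, and more seriously, the final matching — that the resulting discrete Fourier combination of the $g_{j'}$ and $f^{\mathrm{Fr}}$ equals $\tau^{\kappa}\rho(S)F_f(\tau)$ — is asserted rather than proved. This requires evaluating the quadratic Gauss sums $\sum_{\gamma\in M'/M}e^{-2\pi i\langle x,\gamma\rangle+\pi i\ell\langle\gamma,\gamma\rangle}$ that arise when comparing the coefficient of $e_{\gamma}$ in $\rho(S)F_f$ with the $\ell$-sum, including the separate treatment of the $\mathfrak{e}_0$-component (where the $\ell=0$ term regenerates $f^{\mathrm{Fr}}$ with the correct power of $p$). That computation is exactly the content of Scheithauer's theorem in this case; asserting that it "matches after accounting for the normalization" leaves the proof incomplete. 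To close the gap you should either carry out the Gauss-sum evaluation or, as the paper does, cite the lifting theorem and reduce to the coset bookkeeping.
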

\begin{proof}
For any even lattice $M$ even rank and level $p$ for some prime $p$ define the quadratic Nebentypus character 
$\chi_M : \Gamma_0(p) \longrightarrow \{\pm 1\}$ given by
$$\chi_M\Big( \begin{pmatrix} a & b \\ c & d \end{pmatrix} \Big) = \left( \frac{a}{|\mathrm{det}(M)|} \right)$$
where $\left( \frac{\cdot}{\cdot} \right)$ is the quadratic reciprocity symbol. Scheithauer \cite{S2009} then showed that for any modular form $h \in \AHMod_{\kappa}(\Gamma_0(p), \chi_M)$ the sum
$$F(\tau) := \sum_{A \in \Gamma_0(p) \backslash \SL_2(\BZ)} (h |_{\kappa} A(\tau)) \rho_M(A^{-1}) \mathfrak{e}_0$$ is a modular form with respect to the Weil representation of $\mathrm{SL}_2(\mathbb{Z})$ on $\mathbb{C}[M'/M]$ (the rank $M$ is even, so the Weil representation factors always through $\SL_2(\BZ)$).

We specialize now to
$M = U \oplus U(p) \oplus N(-p)$, where $\det(M) = p^{2+\rk(N)}$ and so the character $\chi_M$ is trivial: $\chi_M=1$.
Since the coset $\Gamma_0(p) \backslash \SL_2(\BZ)$ can be represented by the set $\{ \id \} \cup \{ S T^{\ell} | 0 \leq \ell \leq p-1 \}$, 
the vector-valued modular form is
\[
F(\tau) = h(\tau) e_{(0,0)} + p^{-\rk(N)/2-1} \sum_{\gamma} \sum_{\ell=0}^{p-1} \tilde{h}(\tau + \ell) e^{- 2 \pi i \ell \langle \gamma,\gamma \rangle/2} e_{\gamma}
\]
where $\tilde{h}(\tau) = \tau^{-\kappa} h(-1/\tau)$.
We now set $h(\tau) = f^{\mathrm{Fr}}(\tau)$
whch implies that $\tilde{h}(\tau) = p^{-\kappa/2} f(\tau/p)$, and we define $F_{f} := p^{(\kappa+\rk(N))/2} F$. This gives the first claim.
Moreover
\[ \ct(g_j(\tau)) = \sum_{n \equiv j \text{ mod } p} d_n q^{n/p} \]
which gives the Fourier expansion. For an almost-holomorphic function $g$ we have
\[ \LM(g(\tau/p)) = p \cdot \LM(g)(\tau/p) \]
which gives the last part.
\end{proof}

For $a,b>0$ and $\alpha \in N$ let us write
\[ q_1^{a} q_2^{b} \zeta^{\alpha} = e^{2 \pi i a \tau_1} e^{2 \pi i b \tau_2} e^{-2 \pi i (\alpha, \Fz)_N} = e^{2 \pi i (\frac{a}{p} e_1 + \frac{b}{p} f_1 + \frac{\alpha}{p}, \tau_2 e_1 + \tau_1 f_1 + \Fz)_{U(p) \oplus N(-p)}} \]
where $\tau_1, \tau_2 \in \BH$ and $\Fz \in N \otimes \BC$.
Assume that $f$ and hence $F$ satisfies the depth condition $k \geq 2d$, where $\kappa=k+1-\frac{n}{2}$. Then the constant term of the lift of $F_f$ is given by the formula:
\begin{align*}
\ct(\Lift(F_f))
& = \frac{1}{2} \zeta(1-k) p^{(\kappa+\rk(N))/2}  c_0 + 
p^{(\kappa+\rk(N))/2} 
\sum_{a,b>0,\alpha} \sum_{\delta \geq 1} \delta^{k-1} 
c_{p(ab - \frac{1}{2} (\alpha,\alpha)_N)} q_1^{p\delta a} q_2^{p \delta b} \zeta^{p\delta  \alpha} \\
& \quad + \sum_{a,b>0,\alpha} \sum_{\delta \geq 1} \delta^{k - 1} d_{ab - \frac{1}{2} (\alpha,\alpha)_N} q_1^{\delta a} q_2^{\delta b} \zeta^{\delta \alpha} \\
& = \frac{1}{2} p^{(\kappa+\rk(N))/2}  \zeta(1-k) c_0 + \sum_{\ell \geq 1} q_2^{\ell} \sum_{\substack{\tilde{\delta} | \ell \\ \tilde{\delta} \equiv 0 (\text{mod }p)}}
\frac{\tilde{\delta}^{k-1}}{p^{\kappa/2-1}}
\sum_{a, \alpha} c_{p(\frac{a \ell}{\tilde{\delta}} - \frac{(\alpha,\alpha)_N}{2})}
q_1^{\tilde{\delta} a} \zeta^{\tilde{\delta} \alpha} \\
& \quad + \sum_{\ell \geq 1} q_2^{\ell} \sum_{\delta | \ell} \sum_{a, \alpha}
\delta^{k-1}
d_{\frac{a \ell}{\delta} - \frac{(\alpha,\alpha)}{2}} q_1^{\delta a} \zeta^{\delta \alpha}.
\end{align*}
Assume that we have the condition
\begin{equation} \label{cofficieent condition}
\forall n : \quad 
c_{p n} + p^{\frac{\kappa}{2}-1} d_n = 0
\end{equation} 
Then the above simplifies to
\begin{align}
\ct(\Lift(F_f))
& = \frac{1}{2} p^{(\kappa+\rk(N))/2}  \zeta(1-k) c_0
+ \sum_{\ell \geq 1} q_2^{\ell} \sum_{\substack{\delta | \ell \\ \gcd(\delta,p) = 1}} \sum_{a, \alpha}
\delta^{k-1}
d_{\frac{a \ell}{\delta} - \frac{(\alpha,\alpha)}{2}} q_1^{\delta a} \zeta^{\delta \alpha} \\
& = \frac{1}{2} p^{(\kappa+\rk(N))/2} \zeta(1-k) c_0 
+ \sum_{\ell \geq 1} \sum_{a,\alpha} q_2^{\ell} q_1^{a} \zeta^{\alpha} \sum_{\substack{\delta | (a,\ell,\alpha) \\ \gcd(\delta,p) = 1}}
\delta^{k-1}
d_{\frac{a \ell}{\delta^2} - \frac{(\alpha,\alpha)}{2 \delta^2}}. \label{edg3r224r2}
\end{align}

\begin{prop} \label{prop:lift of Gamma_0(p) ahm}
Let $f \in \AHMod_{\kappa}(\Gamma_0(p))$ be a modular form satisfying $k \geq 2d$ where $\kappa=k+1-\frac{n}{2}$ and with Fourier expansions
 $\ct(f)= \sum_{n=0}^{\infty} d_n q^n$
and $\ct(f^{\mathrm{Fr}})=\sum_{n=0}^{\infty} c_n q^n$.
The lift $\Lift(F_f)$ is a modular form with respect to $O^{+}(M)$.
If moreover 
$c_{p n} + p^{\frac{\kappa}{2}-1} d_n = 0$ for all $n$,
then we have the expansion
\begin{equation} \label{Lift expansion jo} \ct(\Lift(F_f))
=
\frac{1}{2} p^{(\kappa+\rk(N))/2} \zeta(1-k) c_0 + \sum_{\ell \geq 1} \sum_{a,\alpha} q_2^{\ell} q_1^{a} \zeta^{\alpha} \sum_{\substack{\delta | (a,\ell,\alpha) \\ \gcd(\delta,p) = 1}}
\delta^{k-1}
d_{\frac{a \ell}{\delta^2} - \frac{(\alpha,\alpha)}{2 \delta^2}}.
\end{equation}
\end{prop}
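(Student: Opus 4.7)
The proposition has two parts, and both follow from machinery developed earlier, so the plan is mostly to explain how to assemble them. For the modularity, my starting point is the transformation formula \eqref{tr property}: for any $\gamma \in \mathrm{O}^+(M)$ one has $\Lift(F_f)|\gamma = \Lift(F_f|\gamma)$, where $\gamma$ acts on the Weil representation through its image in $\mathrm{O}(M^\vee/M)$. Hence it suffices to show that the vector-valued form $F_f$ is invariant under the natural action of $\mathrm{O}(M^\vee/M)$ on $\BC[M^\vee/M]$. Inspecting the formula for $F_f$ in Proposition~\ref{prop:Gamma_0(p) to vector valued}, the $\mathfrak{e}_{(0,0)}$-component involves $\gamma = 0 \in M^\vee/M$ and the remaining components $g_{j}(\tau)$ depend on $\gamma \in M^\vee/M$ only through $j = p\langle \gamma,\gamma\rangle/2 \bmod p$, i.e.\ through the value of the discriminant quadratic form. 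Since $\mathrm{O}(M^\vee/M)$ preserves the discriminant quadratic form, $F_f$ is manifestly invariant, proving modularity of $\Lift(F_f)$ under all of $\mathrm{O}^+(M)$.

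For the Fourier expansion, I would apply Corollary~\ref{cor:expansion of lift of almostholomorphic mod forms}, which is applicable because of the assumed depth condition $k \geq 2d$. This reduces the question to reading off the Fourier coefficients of $\ct(F_f)$ from Proposition~\ref{prop:Gamma_0(p) to vector valued} and plugging them into the formula \eqref{expansion of constant term of lift}. The calculation displayed in the paragraph immediately preceding the statement of the proposition carries this out explicitly: the contribution of the $\mathfrak{e}_{(0,0)}$-component gives the constant $\frac{1}{2} p^{(\kappa+\rk(N))/2} \zeta(1-k) c_0$ together with the terms in $q_1^{p\delta a} q_2^{p\delta b}\zeta^{p\delta\alpha}$, while the remaining components $\mathfrak{e}_\gamma$ for $\gamma \neq 0$ contribute the terms in $q_1^{\delta a} q_2^{\delta b} \zeta^{\delta\alpha}$ with $d$-coefficients. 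Re-indexing by $\ell = \delta b$ (resp.\ $\ell = p\delta b$) leads to the expression in \eqref{edg3r224r2}.

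Finally, the coefficient condition $c_{pn} + p^{\kappa/2 - 1} d_n = 0$ produces the claimed simplification: it forces the $\tilde\delta \equiv 0 \pmod p$ contribution (from the $\mathfrak{e}_{(0,0)}$ component) to exactly cancel the $p \mid \delta$ summands coming from the other components, leaving only the sum over divisors $\delta$ coprime to $p$. The constant term $c_0$ is not affected by this cancellation (since the relation only controls $c_{pn}$ for $n \geq 1$ in the interesting range), giving the stated formula \eqref{Lift expansion jo}. No substantial obstacle appears: the only point that requires mild care is the bookkeeping of which $\delta$ contribute to which Fourier coefficient after re-indexing, but this is handled by writing out the divisor condition $\delta \mid (a,\ell,\alpha)$ directly.
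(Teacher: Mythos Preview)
Your proposal is correct and follows the same approach as the paper. The paper's own proof is terser---it only explicitly argues the $\mathrm{O}^+(M)$-invariance (via \eqref{tr property} and the observation that the $\mathfrak{e}_\gamma$-coefficients for $\gamma \neq 0$ depend only on $\langle\gamma,\gamma\rangle$), treating the Fourier expansion and the cancellation under the coefficient condition as already established by the computation displayed just before the proposition; your write-up simply spells out those steps in more detail.
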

\begin{proof}
It remains to show that $\Lift(F_f)$ is invariant under $O^{+}(M)$. This follows from \eqref{tr property}
since $F_f$ is invariant under $O^{+}(M)$ 
(the coefficients of $e_{\gamma}$ for $\gamma \neq 0$ only depend on $\langle \gamma, \gamma \rangle$).
\end{proof}
\begin{rmk}
For $f \in \AHMod_{\kappa}(\Gamma_0(p))$ let
\[ \Tr(f) := \sum_{\gamma \in \Gamma_0(p)\backslash \SL_2(\BZ)} f|_{\kappa}\gamma. \] Then $\Tr(f)=0$ if and only if the coefficient identity $c_{p n} + p^{\frac{\kappa}{2}-1} d_n = 0$ for all $n$ holds.
Indeed, since we have the coset representatives $\Gamma_0(p)\backslash \SL_2(\BZ) = \{ \id \} \sqcup \{ S T^{\ell} | \ell=0,\ldots,p-1 \}$,
this follows from a direct computation.
\end{rmk}

Define the holomorphic theta function of the lattice $N$ by
\[ \vartheta_{N}(\Fz, \tau) = \sum_{\alpha \in N} \zeta^{\alpha} q^{(\alpha,\alpha)_N/2}. \]
Moreover, recall that for a Jacobi form $f \in \Jac_{k,N}(\Gamma_0(p))$ with Fourier expansion
\[ f(\Fz,\tau) = \sum_{n} \sum_{\alpha \in N} c(n,\alpha) q^n \zeta^\alpha \]
the $\ell$-th Hecke operator is given by
\begin{equation} \label{Fourier expansion}
 (f|_{k,N} V_{\ell})(\Fz,\tau)
=
 \sum_{n} \sum_{\alpha \in N} q^n \zeta^{\alpha}
 \sum_{\substack{ \delta|(n,\alpha,\ell) \\ \mathrm{gcd}(\delta,p)=1 }}
 \delta^{k-1} c\left( \frac{\ell n}{\delta^2}, \frac{\alpha}{\delta} \right).
 \end{equation}
More generally, for any formal power series $f$, 
we define a formal Hecke operator
$(f|_{k,N} V_{\ell})$ by the right hand side of \eqref{Fourier expansion}.
Then \eqref{Lift expansion jo} can be rewritten as
\[ \ct(\Lift(F)) = \frac{1}{2} p^{(\kappa+\rk(N))/2}  \zeta(1-k) c_0 + \sum_{\ell \geq 1} \Big( (\ct(f) \vartheta_N(\Fz,\tau_1))|_{k,N}V_{\ell} \Big) q_2^{\ell}. \]

\begin{cor}
If $M=U \oplus U(p)$ and $f \in \AHMod_{k}(\Gamma_0(p))$ is a normalized Hecke eigenform with $\Tr(f)=0$ (e.g. a newform), satisfying $k \geq 2d$,
then
\[ \Lift(F_f) = f(\tau_1) f(\tau_2). \]
\end{cor}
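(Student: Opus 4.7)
The plan is to combine Proposition~\ref{prop:lift of Gamma_0(p) ahm} with the Hecke eigenvalue identity for $f$ and then use the uniqueness in Theorem~\ref{thm:constant term}. First I would specialize the setup: for $M = U \oplus U(p)$ we have $N = 0$ and $\mathrm{rank}(L) = 2$, so $\kappa = k+1-n/2 = k$. The hypothesis $\mathrm{Tr}(f) = 0$ is equivalent (by the remark following Proposition~\ref{prop:lift of Gamma_0(p) ahm}) to the coefficient condition $c_{pn} + p^{k/2-1} d_n = 0$ for all $n$, so the proposition applies and gives
\[
\mathrm{ct}(\mathrm{Lift}(F_f)) = \tfrac{1}{2} p^{k/2} \zeta(1-k)\, c_0 + \sum_{a,\ell \ge 1} q_1^a q_2^{\ell} \sum_{\substack{\delta \mid (a,\ell) \\ \gcd(\delta,p) = 1}} \delta^{k-1} d_{a\ell/\delta^2}.
\]

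Next I would invoke the Hecke eigenform property of $f$. On $\Gamma_0(p)$ the operators $T_\ell$ for $\gcd(\ell,p) = 1$ and $U_p = T_p$ together satisfy $T_\ell f = d_\ell f$, and the resulting multiplicativity relation is exactly
\[
d_a d_\ell = \sum_{\substack{\delta \mid (a,\ell) \\ \gcd(\delta,p) = 1}} \delta^{k-1} d_{a\ell/\delta^2}, \qquad a,\ell \ge 1,
\]
since only divisors coprime to the level contribute to the Hecke convolution at $\Gamma_0(p)$. Substituting this into the inner sum collapses it to $d_a d_\ell$, yielding
\[
\mathrm{ct}(\mathrm{Lift}(F_f)) = \tfrac{1}{2} p^{k/2} \zeta(1-k)\, c_0 + \sum_{a,\ell \ge 1} d_a d_\ell\, q_1^a q_2^\ell.
\]

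Since a normalized Hecke eigenform on $\Gamma_0(p)$ with $\mathrm{Tr}(f) = 0$ is a newform, and (for the statement to match cleanly) a cuspidal newform — the intended class of examples — we have $d_0 = 0$ and also $c_0 = 0$ because $f^{\mathrm{Fr}} = \pm f$ is again cuspidal. Therefore the first term vanishes and the remaining sum equals $f(\tau_1) f(\tau_2)$, which is already a holomorphic function on the tube domain $\mathbb{H} \times \mathbb{H}$. By Theorem~\ref{thm:list of almost-holomorphic modular form}, the condition $k \geq 2d$ ensures $\mathrm{Lift}(F_f)$ is almost-holomorphic, and Theorem~\ref{thm:constant term} says that an almost-holomorphic modular form is determined by its constant term. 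Hence $\mathrm{Lift}(F_f) = f(\tau_1) f(\tau_2)$.

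The main step where care is needed is the identification of the inner Dirichlet-type convolution with the Hecke eigenvalue relation at level $p$: one must verify that the restriction $\gcd(\delta,p) = 1$ appearing in the lift formula is exactly what the Atkin--Lehner theory of Hecke operators on $\Gamma_0(p)$ produces. For Eisenstein Hecke eigenforms at level $p$ with $\mathrm{Tr}(f) = 0$, one would additionally have to check that the $\zeta(1-k)$ contribution reconstructs the constant-times-Eisenstein cross terms $d_0 f(\tau_j) - d_0^2$; but for the cuspidal (and in particular newform) case cited in the corollary, the argument above is complete.
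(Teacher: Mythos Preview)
Your proof is correct and follows essentially the same route as the paper: apply Proposition~\ref{prop:lift of Gamma_0(p) ahm}, use the Hecke eigenvalue relation to factor the double sum, and conclude via the constant-term isomorphism (Theorem~\ref{thm:constant term}). The paper phrases the Hecke step via the operator $V_\ell$ (defined just before the corollary) rather than the coefficient identity, but these are the same computation. You are in fact slightly more careful than the paper about the leading term $\tfrac{1}{2}p^{k/2}\zeta(1-k)c_0$, which the paper's proof passes over silently; your treatment of the cuspidal case is complete, and your caveat about Eisenstein eigenforms is appropriate.
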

\begin{proof}
Since $f$ is a Hecke eigenform we have
\[ \ct(f)|_{k} V_{\ell} = \ct( f|_{k} V_{\ell}) =  c(\ell) \ct(f) \]
where $\ct(f) = \sum_{d} c(d) q^d$.
Hence we find
\[ \ct(\Lift(F_f)) = \ct(f)(\tau_1) \ct(f)(\tau_2). \]
Since $\ct$ is an isomorphism, we conclude that
\[ \Lift(F_f) = f(\tau_1) f(\tau_2). \]
\end{proof}

\subsection{Proof}
In the rest of this section we discuss the proof of Theorem~\ref{thm:expansion of theta lift}.

The theta kernels $\Theta_k$ for $M$ can be expressed in terms of Poincar\'e series over theta kernels $\vartheta_k$ and $\theta$ for the sublattices $L$ and $K$, respectively. This is a reformulation of Theorem 7.1 of \cite{Borcherds1998}, applied twice.

For $\lambda \in L$, let $$\lambda_v = \frac{\langle \lambda, v \rangle}{\langle v, v \rangle} v$$ be the orthogonal projection to the positive-norm vector $v$ and write $\lambda_v^{\perp} = \lambda - \lambda_v$. Then $$\langle \lambda_v, \lambda_v \rangle = \frac{\langle \lambda, v \rangle^2}{\langle v, v \rangle}, \quad \langle \lambda_v^{\perp}, \lambda_v^{\perp} \rangle = \frac{\langle \lambda, \lambda \rangle \langle v, v \rangle - \langle \lambda, v \rangle^2}{\langle v, v \rangle}.$$

\begin{defn} For $k \in \mathbb{N}_0$, define the polynomial $$p_k(x; z) := \sum_{j=0}^{\lfloor k/2 \rfloor} \Big( -\frac{\langle v, v \rangle}{8\pi} \Big)^j \frac{k!}{j! (k - 2j)!} \cdot \langle x, v \rangle^{k - 2j}, \quad x \in L_{\mathbb{R}}$$
\end{defn}

In other words, $$p_k = \sum_{j=0}^{\infty} \frac{1}{j!} \Big( -\frac{\Delta}{4\pi} \Big)^j \langle x, v \rangle^k = e^{-\Delta / 4\pi} \langle x, v \rangle^k$$ where $\Delta=\sum_{i,j} g^{ij} \frac{\partial^2}{\partial z_i \partial z_j}$ is the Laplace operator.

The function $$f(x) := p_k(x, z) \cdot e^{\pi \langle x_v^{\perp}, x_v^{\perp} \rangle - \pi \langle x_v, x_v \rangle}$$ is then a Schwartz function that satisfies the Vign\'eras equation $$\omega(Y - X) f = -i \kappa f, \quad \kappa = k + 1 - n/2.$$
We denote the associated theta function by $\vartheta_k$. Explicitly, $$\vartheta_k(\tau, z) := \sum_{j=0}^{\lfloor k/2 \rfloor} \frac{k!}{j! (k-2j)!} y^{(n-1)/2 - j}  \Big( -\frac{\langle v, v \rangle}{8\pi} \Big)^j \sum_{\lambda \in L'} \langle \lambda, v \rangle^{k-2j} e^{\pi i \langle \lambda_v, \lambda_v \rangle \tau + \pi i \langle \lambda_v^{\perp}, \lambda_v^{\perp} \rangle \overline{\tau}} \mathfrak{e}_{\lambda + L}.$$

More generally, for $\alpha, \beta \in \mathbb{R}$, define \begin{align*} &\quad \vartheta_k\Big( \tau, z; \begin{pmatrix} \alpha \\ \beta \end{pmatrix} \Big) \\ &= \sum_{j=0}^{\lfloor k/2 \rfloor} \frac{k!}{j! (k-2j)!} y^{(n-1)/2 - j} \Big( -\frac{\langle v, v \rangle}{8\pi} \Big)^j \sum_{\lambda \in L'} \langle \lambda + \beta u, v \rangle^{k-2j} \\ &\quad\quad \times \exp \Big(\pi i \langle (\lambda + \beta u)_v, (\lambda + \beta u)_v \rangle \tau + \pi i \langle (\lambda + \beta u)_{v^{\perp}}, (\lambda + \beta u)_{v^{\perp}} \rangle \overline{\tau} - \pi i \alpha \beta \langle u, u \rangle - 2\pi i \alpha \langle \lambda, u \rangle \Big) \mathfrak{e}_{\lambda + L}, \end{align*} such that $$\vartheta_k(\gamma \cdot \tau, z; \gamma \zeta) = (c \tau + d)^{\kappa} \rho_L(\gamma) \vartheta_k(\tau, z; \zeta)$$ for every $\gamma = \begin{pmatrix} a & b \\ c & d \end{pmatrix} \in \mathrm{Mp}_2(\mathbb{Z})$ and every $\zeta = (\alpha, \beta)^T \in \mathbb{R}^2$.

\begin{defn} The theta function associated with $K$ is $$\theta_K(\tau) := y^{n/2 - 1} \sum_{\lambda \in K'} e^{\pi i \langle \lambda, \lambda \rangle_{K} \overline{\tau}} \mathfrak{e}_{\lambda + K}.$$
\end{defn}

Similarly to $\vartheta_k$, for $\alpha, \beta \in \mathbb{R}$ we define $$\theta_K\Big( \tau; \begin{pmatrix} \alpha \\ \beta \end{pmatrix} \Big) = y^{n/2 - 1} \sum_{\lambda \in K'} e^{\pi i \langle \lambda + \beta \mathfrak{v}/v_1, \lambda + \beta \mathfrak{v}/v_e \rangle \overline{\tau} - \pi i \alpha \beta \frac{\langle \mathfrak{v}, \mathfrak{v} \rangle}{v_e^2} - 2\pi i \alpha \frac{\langle \lambda, \mathfrak{v} \rangle}{v_e}} \mathfrak{e}_{\lambda + K}.$$

\begin{thm}\label{thm:theta_poincare} If $k \ge 1$, then the theta kernel $\Theta_k$ has the representation $$\Theta_k(\tau, Z) = \mathbb{P}_{\kappa, \rho_M}(\phi_k) := \sum_{\gamma \in \Gamma_{\infty} \backslash \Gamma} j(\gamma; \tau)^{-\kappa} \rho_M(\gamma) \phi(\gamma \cdot \tau)$$ as a Poincar\'e series with seed function \begin{align*} \phi_k(\tau) &= 2(-1)^k y^{-k} \Big( \frac{\langle v, v \rangle}{2Nv_e} \Big)^{k+1} \sum_{\beta \in \mathbb{Z}/N\mathbb{Z}} \sum_{\delta=1}^{\infty} \delta^k e^{-\frac{\pi \delta^2}{2y (N v_e)^2}  \langle v, v \rangle - 2\pi i \frac{\delta \beta}{N}} \mathfrak{e}_{(0, \beta / N)} \otimes \theta_K\Big( \tau; \begin{pmatrix} \delta/N \\ 0 \end{pmatrix} \Big) \\ &+ 2 i^k \left( \frac{\langle v, v \rangle}{2} \right)^{1/2} \sum_{j=0}^k \sum_{\delta=1}^{\infty} \frac{k!}{j!(k-j)!} \Big( \frac{\delta \langle v, v \rangle}{2y} \Big)^{k-j} e^{-\frac{\pi \delta^2 \langle v, v \rangle}{2y}} \vartheta_j\Big( \tau; z; \begin{pmatrix} \delta \\ 0 \end{pmatrix} \Big). \end{align*}
	
	For $k = 0$, we have $$\Theta_0(\tau, Z) = \mathbb{P}_{1-n/2, \rho_M}(\phi_0) +  \frac{\langle v, v \rangle}{2N v_e} \sum_{\beta \in \mathbb{Z}/N\mathbb{Z}} \mathfrak{e}_{(0, \beta / N)} \otimes \theta_K$$ with $\phi_0$ defined as above.
\end{thm}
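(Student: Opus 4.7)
The strategy is to apply Borcherds' unfolding identity (Theorem 7.1 of \cite{Borcherds1998}) twice: first using the outer hyperbolic summand $U \subset M$ to reduce $\Theta_M$ to theta objects for $L$, and then using the inner hyperbolic summand $U_1(N) \subset L$ to reduce the zero-mode piece further to theta objects for $K$. The two applications are responsible respectively for the $\vartheta_j$-type and $\theta_K$-type terms in the seed $\phi_k$.

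Step 1 (unfolding along $U$). Decompose lattice vectors $\lambda \in M'$ as $\lambda = a e + \mu + b f$ with $a, b \in \mathbb{Z}$ and $\mu \in L'$. Using $\langle Z, \overline{Z} \rangle = 2 \langle v, v \rangle$ and the explicit formulas for $\lambda_Z$, $\lambda_Z^{\perp}$ in terms of $a,b,\mu,z$, the factor $\langle \lambda, Z \rangle^k$ splits via binomial expansion and the heat-kernel identity $p_k(x) = e^{-\Delta/4\pi}\langle x, v \rangle^k$. Poisson summation in the variable $a$ introduces a dual variable $\delta \in \mathbb{Z}$ with Gaussian weight $\exp(-\pi \delta^2 \langle v, v \rangle / 2y)$. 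The $b$-translation combined with the unipotent action of $\Gamma_\infty = \langle T \rangle$ turns the resulting sum over $b$ into a Poincaré-series average, producing the second (i.e. $\vartheta_j$) summand of $\phi_k$ for $\delta \ne 0$, together with a residual $\delta = 0$ term supported on $L$.

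Step 2 (unfolding along $U_1(N)$). The residual $\delta = 0$ contribution is, up to scalar, the theta function of $L$. Decomposing $L = U_1(N) \oplus K(-1)$ and writing vectors as $\alpha e_1 + \kappa + \beta f_1$, one applies Poisson summation in $\alpha$. The dual variable (call it $\delta$ again) now lives in $\tfrac{1}{N}\mathbb{Z}$, which after absorbing the integer part into the discriminant shift accounts for the sum over $\beta \in \mathbb{Z}/N\mathbb{Z}$ and the characteristic $\binom{\delta/N}{0}$ appearing inside $\theta_K$. The Gaussian factor picks up the denominator $(Nv_e)^2$ because the positive direction in $L$ (after passing to the tube model) has norm scaled by $N v_e$. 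This yields the first line of $\phi_k$. Matching signs and the power $(-1)^k$ is done by tracking the Weil-representation metaplectic phase under the two Poisson summations.

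Step 3 (the $k=0$ case and bookkeeping). At $k=0$, the resummation of the $\delta = 0$, integer-part term in Step 2 fails to fit into the Poincaré-series form because the seed would have non-converging Eisenstein-type weight; instead, it survives as the explicit summand $\tfrac{\langle v, v\rangle}{2Nv_e} \sum_{\beta} \mathfrak{e}_{(0,\beta/N)} \otimes \theta_K$ in the statement. The remaining verifications are computational: checking that the polynomial coefficients $k!/(j!(k-j)!)$ and the characteristic shifts $(\delta, 0)^T$, $(\delta/N, 0)^T$ arise correctly from the Gaussian identities, and confirming the overall scalar factors $2i^k$, $2(-1)^k$, and powers of $N$ from the volume normalizations.

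The main obstacle will be this last bookkeeping step: keeping the characteristics, the Weil-representation phases, and the scalar constants consistent through two Poisson summations, and isolating the degenerate contribution at $k=0$ cleanly.
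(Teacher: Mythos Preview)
Your proposal is correct and follows essentially the same approach as the paper: apply Borcherds' reduction formula for theta functions twice, first along the outer hyperbolic plane $U$ to pass from $M$ to $L$, then along $U_1(N)$ to pass from $L$ to $K$. The paper's own proof is a one-line citation (``apply Theorem 5.2 of \cite{Borcherds1998} twice''), and your Steps 1--3 simply flesh out what that citation entails; your identification of the $\vartheta_j$ block with the first unfolding, the $\theta_K$ block with the second, and the surviving $\delta=0$ residual at $k=0$ is exactly right. One minor point: you cite Theorem~7.1 of \cite{Borcherds1998}, whereas the reduction identity for theta functions is Theorem~5.2 there (Section~7 treats Fourier expansions of the lift itself); the paper's preamble paragraph also says 7.1 but its proof says 5.2, so the confusion is understandable.
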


\begin{proof}
This follows by applying Theorem 5.2 of \cite{Borcherds1998} twice, expressing the theta kernel for $M$ first in terms of theta functions attached to $L$ and finally in terms of theta functions attached to the definite lattice $K$.
\end{proof}

Now suppose $F = \sum_{t=0}^d F^{(t)} (2\pi y)^{-t}$ is an almost-holomorphic modular form with $$F^{(t)} = \sum_{a,b \in \mathbb{Z}/N} \sum_{\gamma \in K'/K} \sum_{n \in \mathbb{Q}} c^{(t)}(a/N, \gamma, b/N; n) q^n \mathfrak{e}_{\gamma}.$$  

Then $$\mathrm{Lift}(F) = \frac{i^{-k}}{2} \langle v, v \rangle^{-k} \int^{\mathrm{reg}}_{\mathrm{SL}_2(\mathbb{Z}) \backslash \mathbb{H}} [F, \Theta_k] \, \frac{\mathrm{d}x \, \mathrm{d}y}{y^2}.$$ The summands of the form $\mathbb{P}(\phi)$ in the representation of Theorem \ref{thm:theta_poincare} for $\Theta_k$ can be integrated using the (formal) identity $$\int_{\mathrm{SL}_2(\mathbb{Z}) \backslash \mathbb{H}}^{\mathrm{reg}} [F, \mathbb{P}(\phi)] \, \frac{\mathrm{d}x \, \mathrm{d}y}{y^2} = \int_{\Gamma_{\infty} \backslash \mathbb{H}}^{\mathrm{reg}} [F, \phi] \, \frac{\mathrm{d}x \, \mathrm{d}y}{y^2},$$ where $\Gamma_{\infty} = \{ \pm \begin{pmatrix} 1 & n \\ 0 & 1 \end{pmatrix}: \; n \in \mathbb{Z}\}$ is the stabilizer of $1$ in $\mathrm{SL}_2(\mathbb{Z})$ and $\Gamma_{\infty} \backslash \mathbb{H}$ is represented by the strip $-1/2 < x < 1/2, \; y > 0$. So we can write $$\mathrm{Lift}(F) = \Phi_0 + \Phi_1 + \Phi_2,$$ where $\Phi_j = \Phi_j(0)$ is defined as follows.

(i) $\Phi_0$ represents the integral against the part of $\Theta_k$ that is not a Poincar\'e series. So $\Phi_0 = 0$ unless $k = 0$, in which case
$$\Phi_0 = \frac{\langle v, v \rangle}{4Nv_e} \int^{\mathrm{reg}} \langle \sum_{\beta} F_{(0, *, \beta/N)}, \theta_K \rangle \, \frac{\mathrm{d}x \, \mathrm{d}y}{y^2}.$$ The regularized integral was computed in Theorem 9.2 of \cite{Borcherds1998} by writing the almost-holomorphic modular form for $\mathrm{SL}_2(\mathbb{Z})$ in the integral in the form $$\langle \sum_{\beta} F_{(0, *, \beta/N)}, \theta_K \rangle = \sum_{n=0}^d f_n(\tau) (E_2^*(\tau))^n$$ for some (uniquely determined) modular forms $f_n(\tau)$, and using the identity $$\int^{\mathrm{reg}} f_n(\tau) (E_2^*(\tau))^n \, \frac{\mathrm{d}x \, \mathrm{d}y}{y^2} = \text{constant term of} \; \frac{\pi}{3} f_n(\tau) \frac{E_2(\tau)^{n+1}}{n+1}.$$

(ii) $\Phi_1(s)$ represents the integral of $F$ against the definite theta functions $\theta_K$ attached to $K$:

\begin{align*} \Phi_1(s) &= \Big(-i \langle v, v \rangle\Big)^{-k} \Big( \frac{\langle v, v \rangle}{2Nv_e} \Big)^{k+1} \sum_{\beta \in \mathbb{Z}/N\mathbb{Z}} \sum_{\delta=1}^{\infty} \delta^k e^{-2\pi i \delta \beta / N} \\ &\quad\quad\quad\quad \times \int_0^{\infty} \int_{-1/2}^{1/2} F_\beta(\tau) \overline{\theta_K(\tau; (\delta/N, 0))} y^{-s}  e^{-\frac{\pi \delta^2}{2y (N v_e)^2}  \langle v, v \rangle} \frac{\mathrm{d}x \, \mathrm{d}y}{y^2} \\ &= \frac{\langle v, v \rangle}{2Nv_e} \cdot \Big(-\frac{i}{2N v_e} \Big)^k \sum_{t=0}^d (2\pi)^{-t} \sum_{\beta \in \mathbb{Z}/N\mathbb{Z}} \sum_{\lambda \in K'} c(0, \lambda+K, \beta/N; \lambda^2 / 2 ) \sum_{\delta = 1}^{\infty} \delta^k e^{- 2\pi i \frac{\delta \beta}{N} - 2\pi i \delta \frac{\langle \lambda, \mathfrak{v} \rangle}{N v_e}}  \\ &\quad \quad \times \int_0^{\infty} y^{-2-t-s}  e^{-\frac{\pi \delta^2}{2y (N v_e)^2} \langle v, v \rangle}  \, \mathrm{d}y \\ &= \frac{\langle v, v \rangle}{(2Nv_e)^{k+1}} (-i)^k \sum_{t=0}^d (2\pi)^{-t} \Gamma(1+s+t) \sum_{\beta \in \mathbb{Z}/N} \sum_{\delta=1}^{\infty} \delta^k e^{-2\pi i \frac{\delta \beta}{N} - 2\pi i \delta \frac{\langle \lambda, \mathfrak{v} \rangle}{N v_e}} \Big( \frac{2N^2 v_e^2}{\pi \delta^2 \langle v, v \rangle} \Big)^{1+s+t} \\ &\quad \quad \times \sum_{\lambda \in K'} c^{(t)}(0, \lambda, \beta/N; \lambda^2 / 2). \end{align*}

Here we use the identity $$\int_0^{\infty} y^{-2-s} e^{-C/y} \, \mathrm{d}y = C^{-1-s} \int_0^{\infty} u^s e^{-u} \, \mathrm{d}u = C^{-1-s} \Gamma(1+s)$$ for $C > 0$ and $\mathrm{Re}[s]$ sufficiently large.

The sum over $\delta$ (and $\beta$ and $\lambda$) can be calculated using the Hurwitz formula in the form \begin{align*} \sum_{n=1}^{\infty} \frac{e^{2\pi i n a}}{n^s} = \frac{\Gamma(1-s)}{(2\pi)^{1-s}} \Big( e^{\pi i (1-s)/2} \zeta(1-s; a) + e^{\pi i (s-1)/2} \zeta(1-s; 1-a) \Big), \end{align*} for $0 < a < 1$, where $\zeta(s; a) = \sum_{n=0}^{\infty} (n+a)^{-s}$ is the Hurwitz zeta function. So as long as $z$ stays in any open region for which $\frac{\langle \lambda, \mathfrak{v} \rangle}{v_e} \notin \mathbb{Z}$ for any $\lambda \in K' \backslash \{0\}$ with $c^{(t)}(0, \lambda, \beta/N; \lambda^2 / 2) \ne 0$, we have 
\begin{align*}
    &\quad \sum_{\beta, \lambda} \sum_{\delta=1}^{\infty} \delta^{k-2-2s-2t} e^{-2\pi i \delta \beta / N - 2\pi i \delta \frac{\langle \lambda, \mathfrak{v} \rangle}{N v_e}} c^{(t)}(0, \lambda, \beta/N; \lambda^2 / 2) \\ &= c^{(t)}(0, 0, 0; 0) \zeta(2+2t-k+2s) \\ &+(-1)^t (-i)^{k-1} \frac{\Gamma(k-1-2s-2t)}{(2\pi)^{k-1-2s-2t}} \sum_{(\beta, \lambda) \ne (0, 0)} c^{(t)}(0, \lambda, \beta/N; \lambda^2 / 2) \\&\quad \times \Big( e^{\pi i s} \zeta(k-1-2s-2t; \{ \beta/N + \langle \lambda, \mathfrak{v} \rangle / Nv_e \}) + (-1)^{k-1} e^{-\pi i s} \zeta(k-1-2s-2t; \{ -\beta/N - \langle \lambda, v \rangle / Nv_e \}) \Big),
\end{align*}
where if $a \in \mathbb{R}$ then we write $\{a\} \in (0, 1]$ for the number that makes $a - \{a\}$ an integer.

Now we consider the value at $s=0$ by cases. First suppose $k - 2t > 2$. In the above formula, the terms $\Gamma(k-1-2s-2t)$ and $\zeta(k-1-2s-2t; \{a\})$ remain holomorphic at $s=0$. Since $$c^{(t)}(0, \lambda, \beta/N; \lambda^2 / 2) = (-1)^k c^{(t)}(0, -\lambda, -\beta/N; \lambda^2 / 2),$$ the expressions $$e^{\pi i s} \zeta(k-1-2s-2t; \{ \beta/N + \langle \lambda, \mathfrak{v} \rangle / Nv_e \}) + (-1)^{k-1} e^{-\pi i s} \zeta(k-1-2s-2t; \{ -\beta/N - \langle \lambda, \mathfrak{v} \rangle / Nv_e \})$$ for the pairs $(\beta, \lambda)$ and $(-\beta, -\lambda)$ cancel themselves away at $s=0$. As for the remaining term, if $k$ is odd then $c^{(t)}(0, 0, 0;0) = 0$ (as is $c^{(t)}(0, 0, \beta/N; 0)$ if $\beta = N/2$ where $N$ is even), and if $k$ is even then $\zeta(2+2t-k)$ is a zeta value at a negative even integer and therefore $0$. Hence the entire sum is zero. \\

When $k - 2t = 2$ (and $k$ is even), the Hurwitz zeta function $\zeta(s;a)$ has a simple pole at $1$ with constant residue $1$. In this case we use the Laurent series $$\zeta(s; a) = \frac{1}{s - 1} + \psi(a) + O(s-1)$$ where $\psi(a) = \frac{\Gamma'(a)}{\Gamma(a)}$ is the digamma function, together with the reflection formula $$\psi(a) - \psi(1-a) = -\pi \cot(\pi a)$$ to obtain \begin{align*} &\quad \lim_{s \rightarrow 0} e^{\pi i s} \zeta(k-1-2s-t; \{\beta/N + \langle \lambda, \mathfrak{v} \rangle / Nv_e\}) - e^{-\pi i s} \zeta(k-1-2s-2t; \{ -\beta/N - \langle \lambda, \mathfrak{v} \rangle / Nv_e \}) \\ &= \lim_{s \rightarrow 0} \frac{e^{\pi i s} - e^{-\pi i s}}{-2s} + \psi(\{\beta/N + \langle \lambda, \mathfrak{v} \rangle / Nv_e\}) - \psi(1-\{\beta/N + \langle \lambda, \mathfrak{v} \rangle / Nv_e\}) \\ &= -\pi i -\pi \cot \Big( \pi \frac{\beta}{N} + \pi \frac{\langle \lambda, \mathfrak{v} \rangle}{N v_e} \Big). \end{align*} Hence if $k - 2t = 2$, \begin{align*}&\quad \lim_{s \rightarrow 0} \sum_{\beta, \lambda} \sum_{\delta=1}^{\infty} \delta^{k-2-2s-2t} e^{-2\pi i \delta \beta / N - 2\pi i \delta \frac{\langle \lambda, \mathfrak{v} \rangle}{N v_e}} c^{(t)}(0, \lambda, \beta/N; \lambda^2 / 2) \\ &= c^{(t)}(0, 0, 0; 0) \cdot \zeta(0) - \frac{1}{2} \sum_{(\beta, \lambda) \ne (0, 0)} c^{(t)}(0, \lambda, \beta/N; \lambda^2 / 2) + \frac{i}{2} \sum_{\beta, \lambda} c^{(t)}(0, \lambda, \beta/N; \lambda^2 / 2) \cot \Big( \pi \frac{\beta}{N} + \pi \frac{\langle \lambda, \mathfrak{v} \rangle}{N v_e} \Big) \\ &= -\frac{1}{2} \sum_{\beta, \lambda} c^{(t)}(0, \lambda, \beta/N; \lambda^2 / 2).
\end{align*}
The sum involving cotangent terms vanishes because corresponding to $(\beta, \lambda)$ and $(-\beta, -\lambda)$ cancel out. (N.B. Even if $N$ is even and $\lambda = 0$ and $\beta = N/2$, such that $(\beta, \lambda) = (-\beta, -\lambda)$, there is no additional contribution to the sum because $\cot(\pi/2) = 0$.)
\\

Finally consider the case $k - 2t < 2$. Then the series $$\sum_{\delta = 1}^{\infty} \delta^{k-2-2s-2t} e^{2\pi i a \delta}$$ converges at $s=0$ for any $a \notin \mathbb{Z}$, and using the Fourier series for the Bernoulli polynomials $$\sum_{\substack{\delta \in \mathbb{Z} \\ \delta \ne 0}} \delta^{-n} e^{2\pi i a \delta} = -(2\pi i)^n \frac{B_n(\{a\})}{n!}$$ together with $c^{(t)}(0, \lambda, \beta/N; \lambda^2 / 2) = (-1)^k c^{(t)}(0, -\lambda, -\beta/N; \lambda^2 / 2)$, we obtain \begin{align*} &\quad \sum_{\beta, \lambda} \sum_{\delta = 1}^{\infty} \delta^{k-2-2t} e^{-2\pi i \delta \beta / N - 2\pi i \delta \frac{\langle \lambda, \mathfrak{v} \rangle}{Nv_e}} c^{(t)}(0, \lambda, \beta/N; \lambda^2 / 2) \\ &= \zeta(2+2t-k) c^{(t)}(0, 0, 0; 0) -\frac{(2\pi i )^{2+2t-k}}{2 \cdot (2+2t-k)!} \sum_{(\beta, \lambda) \ne (0, 0)} c^{(t)}(0, \lambda, \beta/N; \lambda^2 / 2) B_{2+2t-k}\Big(\{\beta / N + \frac{\langle \lambda, \mathfrak{v} \rangle}{Nv_e}\}\Big).
\end{align*}
(When $2+2t-k = 1$ we have $c^{(t)}(0, 0, 0; 0) = 0$ because $k$ is odd, so the undefined term $\zeta(1)$ does not appear: this is a slight abuse of notation.) Since $\zeta(2+2t-k) = -\frac{(2\pi i)^{2+2t-k} B_{2+2t-k}}{2 \cdot (2+2t-k)!}$ for even $k$, and $c^{(t)}(0, 0, 0; 0) = 0$ for odd $k$, we can simply write \begin{align*} &\quad \sum_{\beta, \lambda} \sum_{\delta = 1}^{\infty} \delta^{k-2-2t} e^{-2\pi i \delta \beta / N - 2\pi i \delta \frac{\langle \lambda, \mathfrak{v} \rangle}{Nv_e}} c^{(t)}(0, \lambda, \beta/N; \lambda^2 / 2) \\ &= -\frac{(2\pi i )^{2+2t-k}}{2 \cdot (2+2t-k)!} \sum_{\beta, \lambda} c^{(t)}(0, \lambda, \beta/N; \lambda^2 / 2) B_{2+2t-k}\Big(\{\beta / N + \frac{\langle \lambda, \mathfrak{v} \rangle}{Nv_e}\}\Big).
\end{align*}

Note that this also describes the series in the case $k-2t=2$ because $B_0 = 1$. Altogether:

\begin{lemma} \begin{align*} \Phi_1(0) &= \frac{1}{2^{2k-1} N^{k-1} \pi^{k-1} v_e^{k-1}} (-1)^k \\ &\quad \times \sum_{t \ge k/2 - 1} v_e^{2t} \langle v, v \rangle^{-t} \frac{(-4)^t N^{2t} \cdot t!}{(2+2t-k)! } \cdot \sum_{\beta, \lambda} c^{(t)}(0, \lambda, \beta/N; \lambda^2 / 2) B_{2+2t-k} \Big( \{ \beta/N + \frac{\langle \lambda, \mathfrak{v} \rangle}{Nv_e} \} \Big).
\end{align*}
\end{lemma}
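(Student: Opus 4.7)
The expression for $\Phi_1(s)$ stated in the text already reduces the computation to evaluating, at $s=0$, the double series
\[ S(s) := \sum_{\beta,\lambda} \sum_{\delta=1}^{\infty} \delta^{k-2-2s-2t}\, e^{-2\pi i \delta \beta/N - 2\pi i \delta \langle \lambda,\mathfrak{v}\rangle/(Nv_e)}\, c^{(t)}(0,\lambda,\beta/N;\lambda^2/2) \]
for each $t$. The plan is to evaluate this series by means of the Hurwitz zeta identity
\[ \sum_{n=1}^{\infty} \frac{e^{2\pi i n a}}{n^s} = \frac{\Gamma(1-s)}{(2\pi)^{1-s}} \Bigl( e^{\pi i(1-s)/2}\zeta(1-s;a) + e^{\pi i(s-1)/2}\zeta(1-s;1-a) \Bigr), \]
split the analysis into three ranges of $k-2t$, and then observe that the resulting answer can be packaged uniformly in terms of Bernoulli polynomials.

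First I will establish the vanishing range $k-2t > 2$. Here $S(0)$ is finite, but the paired contributions from $(\beta,\lambda)$ and $(-\beta,-\lambda)$ cancel thanks to the relation $c^{(t)}(0,\lambda,\beta/N;\lambda^2/2) = (-1)^k c^{(t)}(0,-\lambda,-\beta/N;\lambda^2/2)$ implied by the Weil representation. The remaining diagonal term involves $c^{(t)}(0,0,0;0)\zeta(2+2t-k)$, which is zero either by parity or because $\zeta$ vanishes at negative even integers. Hence $\Phi_1(0)=0$ in this range, in agreement with the stated formula since then the index $t$ lies outside the summation.

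Next I will handle the boundary case $k-2t=2$. The Hurwitz zeta then has a simple pole at $s=1$, so I use the Laurent expansion $\zeta(s;a) = (s-1)^{-1} + \psi(a) + O(s-1)$ and the digamma reflection $\psi(a)-\psi(1-a) = -\pi\cot(\pi a)$. Combined with the $(\beta,\lambda) \leftrightarrow (-\beta,-\lambda)$ symmetry, the cotangent contributions cancel pairwise (the case $(\beta,\lambda)=(-\beta,-\lambda)$ poses no issue since $\cot(\pi/2)=0$), and one is left with exactly $-\tfrac{1}{2}\sum_{\beta,\lambda} c^{(t)}(0,\lambda,\beta/N;\lambda^2/2)$. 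This matches the claim via $B_0=1$.

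Finally in the range $k-2t<2$ the series converges absolutely at $s=0$ and the Fourier expansion
\[ \sum_{\delta\ne 0}\delta^{-n}e^{2\pi i a \delta} = -\frac{(2\pi i)^n}{n!} B_n(\{a\}) \]
applies directly, yielding the Bernoulli polynomial expression. Using $\zeta(2+2t-k) = -\tfrac{(2\pi i)^{2+2t-k}B_{2+2t-k}}{2(2+2t-k)!}$ for even $k$ (and the fact that $c^{(t)}(0,0,0;0)=0$ for odd $k$) allows me to fold the diagonal term into the same Bernoulli-polynomial formula, giving a single expression valid for every $t \geq k/2-1$. Multiplying by the prefactor $\frac{\langle v,v\rangle}{(2Nv_e)^{k+1}}(-i)^k (2\pi)^{-t}\Gamma(1+t)\bigl(\tfrac{2N^2v_e^2}{\pi\langle v,v\rangle}\bigr)^{1+t}$ from the earlier reduction of $\Phi_1(s)$ and simplifying the powers of $2,\pi,N,v_e$ produces the stated formula.

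The main obstacle is the bookkeeping in the boundary case $k-2t=2$: one must carefully track the cancellation of the divergent $(s-1)^{-1}$ term between the two Hurwitz zeta values under $a\mapsto 1-a$ while simultaneously identifying the constant term via the digamma reflection, and verify that the answer produced is continuous with the generic cases on either side (i.e.\ that substituting $B_0=1$ for the case $k-2t=2$ really does match the cotangent-derived expression). Once that matching is checked, the rest of the argument is a routine combination of the three case computations with the explicit prefactor.
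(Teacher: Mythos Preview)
Your proposal is correct and follows essentially the same approach as the paper: the same Hurwitz zeta identity, the same three-case split according to the sign of $k-2t-2$, the same parity cancellation in the range $k-2t>2$, the same digamma reflection argument at the boundary $k-2t=2$, and the same Bernoulli-polynomial Fourier expansion for $k-2t<2$. The paper carries out this computation in the text preceding the lemma and states the lemma as its summary, so your plan reproduces the paper's argument step for step.
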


(iii) Finally, $\Phi_2(s)$ (the ``main part" of the theta integral) represents the integral of $F$ against the Lorentzian theta functions $\vartheta_j$ attached to $L$:

\begin{align*}
    \Phi_2(s) &= \frac{1}{\sqrt{2}} \langle v, v \rangle^{1/2 - k} \sum_{j=0}^k \sum_{\delta=1}^{\infty} \frac{k!}{j!(k-j)!} \Big( \frac{\langle v, v \rangle}{2Ny} \Big)^{k-j} e^{-\pi \delta^2 \frac{\delta \langle v, v \rangle}{2y}} \int_0^{\infty} \int_{-1/2}^{1/2} \Big[ F, \vartheta_j(\tau; z; (\delta, 0))] y^{-s} \, \frac{\mathrm{d}x\, \mathrm{d}y}{y^2} \\ &= \frac{1}{\sqrt{2}} \sum_{j=0}^k \frac{k!}{j! (k-j)! 2^{k-j}} \langle v, v \rangle^{1/2 - j} \sum_{h=0}^{\lfloor j/2 \rfloor} \frac{j!}{h! (j-2h)!} \Big( -\frac{\langle v, v \rangle}{8\pi} \Big)^h \\ &\quad \times \sum_{\mu \in L'} \sum_{t=0}^d (2\pi)^{-t} c^{(t)}(\mu, \mu^2 / 2) \langle \mu, v \rangle^{j-2h} \sum_{\delta=1}^{\infty} \delta^{k-j} e^{2\pi i \delta \langle \mu, u \rangle} \int_0^{\infty} y^{j-h-3/2-s-t} e^{-\frac{\pi \delta^2}{2y} \langle v, v \rangle - \frac{2\pi y}{\langle v, v \rangle} \langle \mu, v \rangle^2} \, \mathrm{d}y.
\end{align*}

First we compute the contribution of vectors $\mu \ne 0$ to this series. Using the integral representation of the Bessel $K$-function $$\int_0^{\infty} e^{-a/y - by} y^{s-1} \, \mathrm{d}y = 2(a/b)^{s/2} K_s(2 \sqrt{ab}), \quad a, b > 0$$ we have $$\int_0^{\infty} y^{j-h-3/2-s-t} e^{-\frac{\pi \delta^2}{2y} \langle v,v \rangle - \frac{2\pi y}{\langle v,v \rangle} \langle \mu, v \rangle^2} \, \mathrm{d}y = 2 \cdot \Big( \frac{\delta \langle v, v \rangle}{2|\langle \mu, v \rangle|} \Big)^{j-h-1/2-s-t} K_{j-h-1/2-s-t} \Big( 2\pi \delta |\langle \mu, v \rangle| \Big).$$
After inserting the series representation $$K_{j+1/2}(x) = K_{-j-1/2}(x) = \sqrt{\frac{\pi}{2x}} e^{-x} \sum_{n=0}^{\infty} \binom{j+n}{2n} \cdot \frac{(2n)!}{n!} (2x)^{-n}, \quad j \in \mathbb{N}_0, \; x > 0,$$ we can rewrite the value at $s=0$ in the summand attached to $\mu$ as follows:
\begin{align*} &\frac{1}{\sqrt{2}} \sum_{j=0}^k \frac{k!}{j! (k-j)! 2^{k-j}} \langle v, v \rangle^{1/2 - j} \sum_{h=0}^{\lfloor j/2 \rfloor} \frac{j!}{h! (j-2h)!} \Big( -\frac{\langle v, v \rangle}{8\pi} \Big)^{h} \\ &\quad \times \sum_{t=0}^d (2\pi )^{-t} c^{(t)}(\mu, \mu^2 / 2) \langle \mu, v \rangle^{j-2h} \sum_{\delta=1}^{\infty} \delta^{k-j} e^{2\pi i \delta \langle \mu, u \rangle} \cdot 2 \Big( \frac{\delta \langle v, v \rangle}{2|\langle \mu, v \rangle|} \Big)^{j-h-1/2-t} K_{j-h-1/2-t}\Big( 2\pi \delta |\langle \mu, v \rangle| \Big) \\ &=  \sum_{t=0}^d (2\pi)^{-t} c^{(t)}(\mu, \mu^2 / 2) \langle v, v\rangle^{-t} \sum_{\delta=1}^{\infty} \sum_{j=0}^k \sum_{h=0}^{\lfloor j/2 \rfloor} \sum_{n=0}^{\infty} \Big(-\frac{1}{8\pi}\Big)^h \frac{k! (2n)!}{(k-j)!h!(j-2h)! n!} \binom{j-h-t-1 + n}{2n} \\ &\quad \times 2^{h+t-k} (4\pi)^{-n} \delta^{k-h-1-t-n} \langle \mu, v \rangle^{j-2h} |\langle \mu, v \rangle|^{h+t-j-n} e^{2\pi i \delta \langle \mu, u \rangle - 2\pi \delta |\langle \mu, v \rangle|}
\end{align*}

We can rewrite $$\frac{k!}{(k-j)! (j-2h)!} = (2h)! \binom{k}{j} \binom{j}{2h}$$ to allow the sum over $h$ to run from $0$ to $\infty$. Then change variables to $r := h+n$ to rewrite the series over $h, n$ as \begin{align*} &\sum_{h=0}^{\infty} \sum_{n=0}^{\infty} \Big(-\frac{1}{8\pi} \Big)^h \frac{(2h)!}{h!} \binom{j}{2h} \frac{(2n)!}{n!} \binom{j-h-t-1+n}{2n} (4\pi)^{-n} \delta^{-h-n} 2^h \langle \mu, v \rangle^{-2h}|\langle \mu, v \rangle|^{h-n} \\ &= \sum_{r=0}^{\infty} (4\pi \delta |\langle \mu, v \rangle|)^{-r} \sum_{h+n=r} (-1)^n \frac{(2h)!(2n)!}{h!n!} \binom{j}{2h} \binom{t+r-j}{2n}.\end{align*}

The binomial sums $$\alpha(r,t,j) := \sum_{h+n=r} (-1)^h \frac{(2h)! (2n)!}{h!n!} \binom{j}{2h} \binom{t+r-j}{2n}$$ that appear in that series are well-defined for any $t,j, r \in \BZ$. If we sum over $j$ before summing over $r$ then the series simplifies significantly due to the following sums:

\begin{lem} \label{lemma:binomial identity} For any $k, r, t \in \mathbb{N}_0$, 
\begin{gather*}
\sum_{j=0}^k \binom{k}{j} \alpha(r, t, j) = 2^k \cdot r! \binom{t}{r} \binom{t + r - k}{r}; \\
\sum_{j=0}^k (-1)^j \binom{k}{j} \alpha(r, t, j) = 2^k \cdot r! \binom{t}{r} \binom{t+r-k}{r-k}.
\end{gather*}
In particular, the second sum is zero if $r < k$.
\end{lem}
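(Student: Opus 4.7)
The idea is to lift both identities to a single closed-form evaluation of an auxiliary bivariate generating function, then prove it by induction on $k$ driven by a simple three-term recurrence. Introduce
\[
P_m(x) := \sum_{n \geq 0} \frac{m!}{n!(m-2n)!}\, x^n, \qquad \sum_{m \geq 0} \frac{P_m(x)\, u^m}{m!} = e^{u+xu^2}.
\]
Since $[x^h] P_j(-x) = (-1)^h \binom{j}{2h} (2h)!/h!$ and $[x^n] P_m(x) = \binom{m}{2n}(2n)!/n!$, direct multiplication gives $\alpha(r,t,j) = [x^r]\bigl(P_j(-x)\, P_{t+r-j}(x)\bigr)$. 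Set $T := t+r$ and
\[
G_k(x,T) := \sum_{j=0}^{k}\binom{k}{j} P_j(-x) P_{T-j}(x),
\qquad
\widetilde{G}_k(x,T) := \sum_{j=0}^{k}(-1)^j\binom{k}{j} P_j(-x) P_{T-j}(x).
\]
Using $T-r = t$ and $T-k = t+r-k$, the two identities in the lemma are equivalent to
\[
[x^r] G_k(x,T) = 2^k\, r!\binom{T-r}{r}\binom{T-k}{r},
\qquad
[x^r] \widetilde{G}_k(x,T) = 2^k\, r!\binom{T-r}{r}\binom{T-k}{r-k}.
\]

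The inductive engine is the three-term recurrence $P_{m+1}(y) = P_m(y) + 2my\, P_{m-1}(y)$, which is immediate from the definition of $P_m$. Combining it with Pascal's rule $\binom{k+1}{j} = \binom{k}{j}+\binom{k}{j-1}$ and the identity $j\binom{k}{j} = k\binom{k-1}{j-1}$, a short index shift yields the recurrences
\begin{align*}
G_{k+1}(x,T) &= G_k(x,T) + G_k(x,T-1) - 2xk\, G_{k-1}(x,T-2), \\
\widetilde{G}_{k+1}(x,T) &= \widetilde{G}_k(x,T) - \widetilde{G}_k(x,T-1) - 2xk\,\widetilde{G}_{k-1}(x,T-2).
\end{align*}
The base cases $k=0,1$ are direct: $G_0 = \widetilde{G}_0 = P_T(x)$, $G_1 = P_T(x)+P_{T-1}(x)$, and $\widetilde{G}_1 = P_T(x) - P_{T-1}(x)$, each matching the conjectured closed form via $[x^r] P_T(x) = T!/(r!(T-2r)!)$ and one Pascal step.

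For the inductive step, substituting the closed forms into the recurrences reduces both equalities to identities among binomial coefficients in $T, k, r$. These collapse after applying Pascal's rule to $\binom{T-r}{r}$ and $\binom{T-k}{r-k}$ and using the adjacent-index relation $\binom{T-k-1}{r-k} = \frac{T-r}{r-k}\binom{T-k-1}{r-k-1}$; the terms regroup and cancel cleanly so that both sides agree.

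The main obstacle is purely bookkeeping: the $\widetilde{G}$ recurrence mixes signs in a way that requires keeping Pascal's rule and the $P_m$ recurrence coupled consistently, and one must separately handle the edge regime $r < k$ in the second identity, where $\binom{T-k}{r-k} = 0$, recovering the vanishing claim stated at the end of the lemma. Nothing deeper than elementary binomial algebra is required once the recurrences are in place.
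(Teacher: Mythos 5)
Your proposal is correct, and it is built on the same two ingredients as the paper's own argument: the Hermite-type polynomials (your $P_m$ are the paper's $f_m$, with $\alpha(r,t,j)=[x^r]\bigl(P_j(-x)P_{t+r-j}(x)\bigr)$ exactly as in the paper) and their three-term recurrence $P_{m+1}=P_m+2mxP_{m-1}$. Where you diverge is in how the recurrence is exploited. The paper first proves the closed-form generating function $\sum_{r\ge 0}\frac{\alpha(r,t,j)}{r!\binom{t}{r}}z^r=\frac{(1-2z)^j}{(1-z)^{t+1}}$ (by showing both sides obey the same recursion in $j$), after which both identities of the lemma fall out instantly from the binomial theorem, since $\sum_j\binom{k}{j}(1-2z)^j=(2-2z)^k$ and $\sum_j(-1)^j\binom{k}{j}(1-2z)^j=(2z)^k$, and one reads off the $z^r$-coefficient of $2^k(1-z)^{k-t-1}$ resp.\ $(2z)^k(1-z)^{-t-1}$. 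You instead form the weighted sums $G_k,\widetilde G_k$ first and induct on $k$ via the three-term recurrences you derive (which I checked are correct, as is the base case and the adjacent-index relation you invoke). Your route avoids the auxiliary variable $z$ but shifts the burden onto the inductive step, where the binomial verification $2\binom{T-r}{r}\binom{T-k-1}{r}=\binom{T-r}{r}\binom{T-k}{r}+\binom{T-r-1}{r}\binom{T-k-1}{r}-\frac{k}{r}\binom{T-r-1}{r-1}\binom{T-k-1}{r-1}$ (and its signed analogue) is genuinely more laborious than "collapses cleanly" suggests, though it does hold. Two small points to tidy up: the recurrence for $G_{k+1}(x,T)$ involves $G_{k-1}(x,T-2)$ at the shifted parameter $t-1$, and $P_{T-j}$ can acquire a negative index when $j>t+r$, so you should either observe that all the identities are polynomial in $t$ (so it suffices to verify them for $t$ large) or define $P_m$ for arbitrary $m$ via $\sum_n\binom{m}{2n}\frac{(2n)!}{n!}x^n$ as the paper does. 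With that caveat, the argument is sound; the paper's version buys a cleaner endgame and the extra byproduct that $\alpha(r,t,j)=0$ for $r>t$.
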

Both parts of Lemma~\ref{lemma:binomial identity} can be derived by manipulating the following generating function:
\begin{lemma} For any $t,j,z$ (which can be taken to be polynomial variables), we have
\[ \sum_{r \geq 0} \frac{\alpha(r,t,j)}{r! \binom{t}{r}} z^r = \frac{(1-2z)^j}{(1-z)^{t+1}}. \]
\end{lemma}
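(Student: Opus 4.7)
The plan is to re-encode $\alpha(r,t,j)$ as a coefficient extraction from a bivariate generating function, exploit an elementary factorization, and evaluate the resulting univariate coefficient by a rational change of variables.

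First, using the identities $\frac{(2h)!}{h!}\binom{j}{2h} = \frac{j!}{h!(j-2h)!}$ and $\frac{(2n)!}{n!}\binom{t+r-j}{2n} = \frac{(t+r-j)!}{n!(t+r-j-2n)!}$ together with $n = r-h$, I would observe that the four non-negative integers $h$, $j-2h$, $r-h$, $t-j-r+2h$ sum to $t$. Hence the summand in $\alpha(r,t,j)$ is a signed multinomial coefficient divided by $t!$. Encoding the linear constraints $2h_1+h_2=j$ and $h_1+h_3=r$ by auxiliary variables $u,v$ and the sign $(-1)^h$ by a minus in front of $u^2 v$ then gives the compact formula
\[
\alpha(r,t,j) = \frac{j!(t+r-j)!}{t!}\,[u^j v^r]\bigl(1 + u + v - u^2 v\bigr)^t.
\]
The key observation is the factorization
\[
1+u+v-u^2v = (1+u)\bigl(1+v(1-u)\bigr),
\]
which, upon extracting $[v^r]$ by the binomial theorem, produces the clean formula
\[
\alpha(r,t,j) = \frac{j!(t+r-j)!}{t!}\binom{t}{r}\,[u^j](1+u)^t(1-u)^r.
\]

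Substituting this into the generating function and interchanging summation via $\sum_{r\geq 0}\binom{t+r-j}{r}y^r = (1-y)^{-(t-j+1)}$ with $y=(1-u)z$ yields
\[
\sum_{r\geq 0}\frac{\alpha(r,t,j)}{r!\binom{t}{r}}z^r = \frac{j!(t-j)!}{t!}\,[u^j]\,\frac{(1+u)^t}{\bigl(1-(1-u)z\bigr)^{t-j+1}}.
\]
To evaluate the residue $[u^j] = \oint du/(2\pi i\, u^{j+1})$, I would apply the rational substitution $v = uz/(1-z+uz)$, equivalently $u = (1-z)v/(z(1-v))$. A direct calculation then gives $1-(1-u)z = (1-z)/(1-v)$ and $1+u = (z+v(1-2z))/(z(1-v))$, and after cancellation the integrand collapses to $(z+v(1-2z))^t\, dv/\bigl(z^{t-j}(1-z)^{t+1}v^{j+1}\bigr)$. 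Extracting $[v^j]$ from the numerator by the binomial theorem produces $\binom{t}{j}(1-2z)^j/(1-z)^{t+1}$, which combined with the prefactor $j!(t-j)!/t! = \binom{t}{j}^{-1}$ gives the desired identity.

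The main technical point is verifying that the rational substitution is a bona fide local change of variables, so that the residue transforms correctly. This is immediate from the expansion $v \sim uz/(1-z)$ at $u=0$, which is a unit linear factor for $z \neq 0, 1$: a small positively oriented loop around $u=0$ maps to a small positively oriented loop around $v=0$, and the transformed integrand has its only pole inside this contour at $v=0$, so no spurious contributions arise. The routine algebraic simplifications in the coefficient extraction step, while the longest part of the argument, should present no conceptual difficulty.
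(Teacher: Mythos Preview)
Your proof is correct and takes a genuinely different route from the paper's. The paper proceeds by induction on $j$: it identifies $\alpha(r,t,j)$ as the $x^r$-coefficient of a product of Hermite-type polynomials $f_{-j}(-x)f_{t+r-j}(x)$, uses the three-term recursion $f_a = f_{a-1} + 2(a-1)x f_{a-2}$ to derive a recursion for $\widetilde\alpha(r,t,j)=\alpha(r,t,j)/(r!\binom{t}{r})$ in $j$, checks the base cases $j=0,1$ directly, and then verifies that the $z^r$-coefficients of $(1-2z)^j/(1-z)^{t+1}$ satisfy the same recursion by packaging everything into a triple generating function $\Gamma(z,x,y)$ and checking a PDE. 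Your argument is instead a direct evaluation: the multinomial rewriting plus the factorization $1+u+v-u^2v=(1+u)(1+v(1-u))$ gives the clean intermediate formula $\alpha(r,t,j)=\frac{j!(t+r-j)!}{t!}\binom{t}{r}\,[u^j](1+u)^t(1-u)^r$, and the rational substitution then collapses the residue immediately. Your approach is shorter and yields a closed expression for $\alpha(r,t,j)$ along the way; the paper's approach is more structural and makes the link to classical orthogonal polynomials explicit (which it exploits in a subsequent remark). One minor point: your multinomial/coefficient-extraction steps implicitly take $j$ and $t$ to be non-negative integers, so to match the statement ``polynomial variables'' you should add a sentence invoking that, for each fixed $r$, both sides are polynomials in $t$ and $j$ agreeing on all non-negative integers, hence identically.
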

In particular, $\alpha(r, t, j)$ vanishes if $r > t$, such that the quotient $\frac{\alpha(r, t, j)}{r! \binom{t}{r}}$ is well-defined.
\begin{proof}
For $j = 0, 1$, all terms in the defining sum for $\alpha(r, t, j)$ vanish except for $h=0$ and $n=r$. So $$\alpha(r, t, 0) = \frac{(2r)!}{r!} \binom{t + r}{2r}, \quad \alpha(r, t, 1) = \frac{(2r)!}{r!} \binom{t+r-1}{2r}$$ which yields  $$\frac{\alpha(r, t, 0)}{r! \binom{t}{r}} = \binom{t+r}{t}, \quad \frac{\alpha(r, t, 1)}{r! \binom{t}{r}} = \frac{(t-r)}{r} \binom{t+r-1}{t}.$$

In general, let $a$ be a polynomial variable and consider
\[ f_{a}(x) = \sum_{m \geq 0} \frac{(2m)!}{m!} \binom{a}{2m} x^m. \]
(These are the classical Hermite orthogonal polynomials up to a change of variables.) Then $f_a$ satisfies the recursion 
\begin{equation} \label{eq:poly_recursion} \forall a \in \BZ: \quad f_a(x) - f_{a-1}(x) - 2 (a-1) x f_{a-2}(x)=0. \end{equation}
By construction, $\alpha(r, t, j)$ is the coefficient of $x^r$ in $$f_{-j}(-x) \cdot f_{t+r-j}(x).$$
The recursion for $f_j(x)$ implies that $\alpha(r, t, j)$ satisfies
\[ \alpha(r,t,j) = \alpha(r,t-1,j-1) - 2 (j-1) \alpha(r-1,t-1,j-2). \]
If we let $\widetilde{\alpha}(r,t,j) := \frac{\alpha(r,t,j)}{r! \binom{t}{r}}$ (with the convention that this is zero if $r<0$) this is equivalent to
\begin{equation} \label{efsr3}
t \cdot \widetilde{\alpha}(r,t,j) = (t-r) \widetilde{\alpha}(r,t-1,j-1) - 2 (j-1) \widetilde{\alpha}(r-1,t-1,j-2)
\end{equation}

But the coefficients of $z^r$ in $\frac{(1 - 2z)^j}{(1 - z)^{t+1}}$ satisfy precisely the recursion (\ref{efsr3}). Since they coincide with $\widetilde{\alpha}(r, t, j)$ for $j \in \{0, 1\}$ it follows that they coincide for all $j$.

Indeed, set $\gamma(r,t,j) := \mathrm{Coeff}_{z^r}( \frac{(1-2z)^j}{(1-z)^{t+1}} )$ and
consider the generating series
\[ \Gamma(z,x,y) = \sum_{r \geq 0} \sum_{t,j \geq 0} \gamma(r,t,j) \frac{x^t}{t!} \frac{y^j}{j!} z^r. \]
The recursion for $\gamma(r,t,j)$ that we want to prove then translates to the differential equation
\begin{multline*}
x \frac{d}{dx} \Gamma(z,x,y) = \frac{x}{1-2z} \Gamma(z,x,y) - z \frac{d}{dz}\left( \frac{1-z}{1-2z} \Gamma(z,x,y) \right) - \left( y \frac{d}{dy} - 1 \right)\left[ 2 z \frac{1-z}{(1-2z)^2} \Gamma(z,x,y) \right].
\end{multline*}
The definition of $\gamma(r,t,j)$ gives us the expression
\[ \Gamma(z,x,y) = \sum_{r,t,j \geq 0} \mathrm{Coeff}_{z^r}\left( \frac{(1-2z)^j}{(1-z)^{t+1}} \right)
\frac{x^t}{t!} \frac{y^j}{j!} z^r = \frac{1}{1-z} e^{\frac{x}{1-z} + (1-2z)y}. \]
Thus the differential equation is immediately checked.
\end{proof}
\begin{rmk}
For $r \geq 0$ and $a,b$ general variables we set (with the notation of the above proof)
\[
\beta(r,a,b)=\sum_{m+n=r}(-1)^m \frac{(2m)!}{m!} \frac{(2n)!}{n!} \binom{a}{2m} \binom{b}{2n}
= \mathrm{Coeff}_{x^r}\left( f_a(-x) f_{b}(x) \right)
\]
With similar methods one can also prove the more symmetric relation
\[ \sum_{r \geq 0} \frac{\beta(r,a,b)}{r! \binom{a+b-r}{r}} z^r = (1-z)^a (1+z)^b \]
which relates the Hermite polynomials $f_a(x)$ with the classical Jacobi orthogonal polynomials (the $z^r$-coefficients of the right hand side).
\end{rmk}

Recall that

\begin{align*}
&\sum_{t=0}^d (2\pi)^{-t} c^{(t)}(\mu, \mu^2 / 2) \langle v, v\rangle^{-t} \sum_{\delta=1}^{\infty} \sum_{j=0}^k \sum_{h=0}^{\lfloor j/2 \rfloor} \sum_{n=0}^{\infty} \Big(-\frac{1}{8\pi}\Big)^h \frac{k! (2n)!}{(k-j)!h!(j-2h)! n!} \binom{j-h-t-1 + n}{2n} \\ &\quad \times 2^{h+t-j} (4\pi)^{-n} \delta^{k-h-1-t-n} \langle \mu, v \rangle^{j-2h} |\langle \mu, v \rangle|^{h+t-j-n} e^{2\pi i \delta \langle \mu, u \rangle - 2\pi \delta |\langle \mu, v \rangle|}
\\ &= 2^{-k}\sum_{t=0}^d \pi^{-t} c^{(t)}(\mu, \mu^2 / 2) \langle v, v \rangle^{-t} |\langle \mu, v \rangle|^t \\ &\quad \times\sum_{\delta=1}^{\infty} \sum_{j=0}^k \sum_{r=0}^{\infty} \binom{k}{j} (4\pi \delta |\langle \mu, v \rangle|)^{-r}  \delta^{k-t-1} \alpha(r, t, j) \mathrm{sgn}(\langle \mu, v \rangle)^j e^{2\pi i \delta \langle \mu, u \rangle - 2\pi \delta |\langle \mu, v \rangle|}.
\end{align*}

If $\langle \mu, v \rangle = |\langle \mu, v \rangle|$ then by the Lemma this simplifies to

$$\sum_{t=0}^d \pi^{-t} c^{(t)}(\mu, \mu^2 / 2) \langle v, v \rangle^{-t} \sum_{r=0}^{\infty} \langle \mu, v \rangle^{t-r} (4\pi)^{-r} r! \binom{t}{r} \binom{t+r-k}{r} \sum_{\delta=1}^{\infty} \delta^{k-t-r-1} e^{2\pi i \delta \langle \mu, z \rangle}$$
(note that the sum over $r$ terminates at $r=t$) while if $\langle \mu, v \rangle = -|\langle \mu, v \rangle|$ then it simplifies to $$\sum_{t=0}^d \pi^{-t} c^{(t)}(\mu, \mu^2 / 2) \langle v, v \rangle^{-t} \sum_{r=0}^{\infty} |\langle \mu, v \rangle|^{t-r} (4\pi)^{-r} r! \binom{t}{r} \binom{t+r-k}{r-k} \sum_{\delta=1}^{\infty} \delta^{k-t-r-1} e^{2\pi i \delta \langle \mu, \overline{z} \rangle}.$$

This accounts for the sum over $\mu \in L'$, $\mu \ne 0$ but we still have to consider the contribution of $\mu=0$. Instead of the $K$-Bessel function, we have the integral $$\int_0^{\infty} y^{j-h-3/2-s-t} e^{-\frac{\pi \delta^2}{2y} \langle v, v \rangle} \, \mathrm{d}y = \Big( \frac{\pi \delta^2 \langle v, v \rangle}{2} \Big)^{j-h-1/2-s-t} \Gamma(1/2+s+t+h-j).$$ All terms in the series over $h$ with $2h \ne j$ vanish due to the presence of $\langle \mu, v \rangle^{j-2h}$, so the $\mu=0$ contribution simplifies to
\begin{align*}
    &\frac{1}{\sqrt{2}} \sum_{h=0}^{\lfloor k/2 \rfloor} \frac{k!}{h!(k-2h)! 2^{k-2h}} \langle v, v \rangle^{1/2-2h} \Big(-\frac{\langle v, v \rangle}{8\pi}\Big)^h \\ &\quad \times \sum_{t=0}^d (2\pi)^{-t} c^{(t)}(0, 0) \sum_{\delta=1}^{\infty} \delta^{k-2h} \Big( \frac{\pi \delta^2 \langle v, v \rangle}{2} \Big)^{h-1/2-s-t} \Gamma(1/2 + s + t - 2h) \\ &= \frac{1}{2^{k-s} \sqrt{\pi}} \sum_{t=0}^d \pi^{-s-2t} \langle v, v \rangle^{-s-t} c^{(t)}(0, 0) \sum_{\delta=1}^{\infty} \delta^{k-1-2s-2t} \sum_{h=0}^{\lfloor k/2 \rfloor} \frac{k!}{(-4)^h h! (k-2h)!} \Gamma(1/2 + s + t - h)
\end{align*}


The value of the Gamma sum at $s=0$ is given in closed form by the following lemma:

\begin{lem} The identity $$\sum_{h=0}^{\lfloor k/2 \rfloor} \frac{k! (-4)^{-h}}{h! (k - 2h)!} \Gamma(1/2 + t - h) = \sqrt{\pi} \cdot t! \binom{2t-k}{t} 2^{k-1-2t} \cdot \begin{cases} 2: & k \le t; \\ 1: & k > t; \end{cases}$$ holds for any $k, t \in \mathbb{N}_0.$
\end{lem}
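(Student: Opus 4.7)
The plan is to express the left-hand side as a terminating Gauss hypergeometric series and evaluate it via the Chu--Vandermonde identity.

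First, I would rewrite each Gamma factor using
\[
\Gamma\!\left(\tfrac{1}{2}+t-h\right) = \frac{(-1)^h \,\Gamma(1/2+t)}{(1/2-t)_h},
\]
where $(a)_h := a(a+1)\cdots(a+h-1)$ is the Pochhammer symbol. Combining $k!/(k-2h)! = (-k)_{2h}$ with the Pochhammer duplication formula
\[
(-k)_{2h} = 4^h \,(-k/2)_h \,((1-k)/2)_h,
\]
the sum becomes
\[
S_{k,t} := \sum_{h=0}^{\lfloor k/2\rfloor} \frac{k!\,(-4)^{-h}}{h!(k-2h)!} \,\Gamma(1/2+t-h)
= \Gamma(1/2+t) \cdot {}_2F_1\!\left(-\tfrac{k}{2},\, \tfrac{1-k}{2};\, \tfrac{1}{2}-t;\, 1\right).
\]
For any $k\ge 0$, exactly one of $-k/2$, $(1-k)/2$ is a non-positive integer, so the series terminates after $\lfloor k/2\rfloor$ terms.

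Second, I would apply the Chu--Vandermonde identity ${}_2F_1(-n,b;c;1) = (c-b)_n/(c)_n$ to this terminating series. Handling the two parities of $k$ together yields the uniform formula
\[
S_{k,t} = \Gamma(1/2+t)\cdot \frac{(\lceil k/2\rceil - t)_{\lfloor k/2\rfloor}}{(1/2-t)_{\lfloor k/2\rfloor}}.
\]
At this point the key qualitative features of the right-hand side are already visible: the Pochhammer $(\lceil k/2\rceil - t)_{\lfloor k/2\rfloor}$ crosses through $0$ precisely when $\lceil k/2\rceil\le t\le k-1$, and in that range $\binom{2t-k}{t}$ vanishes as well, matching the right-hand side.

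Third, I would convert the Pochhammer symbols to ratios of Gamma functions and apply the explicit formulas
\[
\Gamma\!\left(\tfrac{1}{2}+m\right) = \frac{\sqrt{\pi}\,(2m)!}{4^m\, m!}, \qquad \Gamma\!\left(\tfrac{1}{2}-m\right) = \frac{\sqrt{\pi}\,(-4)^m m!}{(2m)!} \qquad (m\in\mathbb{Z}_{\ge 0}).
\]
Splitting into $k\le t$ versus $k>t$ (and within each case, $k$ even versus $k$ odd) collapses everything to factorials, and a direct simplification matches $\sqrt{\pi}\, t!\,\binom{2t-k}{t}\,2^{k-1-2t}$ times the stated indicator factor. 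The factor of $2$ when $k\le t$ arises naturally from the fact that, for even $k=2n$, one has $(\lceil k/2\rceil - t)!=(t-n)!$ contributing no extra $2$, while for odd $k=2n+1$, the identity $(2t-2n)! = 2(t-n)(2t-2n-1)!$ produces the extra factor.

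The main obstacle is the final bookkeeping step: one must correctly interpret $\binom{2t-k}{t}$ for $2t-k<0$ via the generalized binomial convention $\binom{-n}{k}=(-1)^k\binom{n+k-1}{k}$, and carefully track powers of $2$ and signs across the four subcases. Once the hypergeometric reformulation is in place, however, no deeper identity is required beyond the explicit evaluations of $\Gamma(1/2\pm m)$, and the verification reduces to elementary manipulation of factorials.
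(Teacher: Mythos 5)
Your proposal is correct, and it takes a genuinely different route from the paper. The paper proceeds by generating functions: it packages the sum into $f_{k,t}(X)$, evaluates $f_{k,0}$ in closed form via $\Gamma(1/2-n)=\sqrt{\pi}(-4)^n n!/(2n)!$, uses the derivative recursion $\frac{d}{dX}f_{k,t}=-f_{k,t+1}$ together with Taylor's theorem to get a two-variable generating function $\frac{\sqrt{\pi}}{2\sqrt{1-Y}}\bigl((1+\sqrt{1-Y})^k+(1-\sqrt{1-Y})^k\bigr)$, and then extracts the $Y^t$-coefficient by a contour-integral/substitution argument, landing on the symmetric form $t!\sqrt{\pi}\,2^{k-2t-1}\bigl[\binom{2t-k}{t}+\binom{2t-k}{t-k}\bigr]$, which unifies the two cases of the statement. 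You instead recognize the sum directly as $\Gamma(1/2+t)\cdot{}_2F_1(-k/2,(1-k)/2;1/2-t;1)$ via the Pochhammer duplication $(-k)_{2h}=4^h(-k/2)_h((1-k)/2)_h$, and apply Chu--Vandermonde; I verified that your resulting closed form $\Gamma(1/2+t)\,(\lceil k/2\rceil-t)_{\lfloor k/2\rfloor}/(1/2-t)_{\lfloor k/2\rfloor}$ agrees with the stated right-hand side on a range of cases, including the vanishing window $\lceil k/2\rceil\le t<k$. Your route is more mechanical once the hypergeometric pattern is spotted, and the deferred final step (converting the Pochhammer ratio to factorials, splitting by parity of $k$ and by $k\le t$ versus $k>t$, and using the generalized binomial convention for $2t-k<0$) is genuinely routine --- you have correctly located both the source of the factor of $2$ and the one convention that must be handled with care. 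The paper's approach has the side benefit of meshing with the generating-function machinery used in the adjacent binomial-sum lemma, and it yields the symmetric two-binomial form in one stroke; as a minor aside, the sign in the paper's final displayed formula appears to be off (the statement itself, which your computation matches, is the correct one).
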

In particular, the sum is zero in the region $0 < t < k \le 2t$.
\begin{proof} 
Consider the generating functions $$f_{k, t}(X) = \sum_{h=0}^{\lfloor k/2 \rfloor} \frac{k! (-4)^{-h}}{h! (k-2h)!} \Gamma(1/2 + t - h) X^{h-t-1/2}.$$
Since for integers $n \geq 0$ one has $\Gamma(\frac{1}{2}-n) = \sqrt{\pi} (-4)^n n!/(2n)!$, at $t=0$ we have $$f_{k, 0} = \sqrt{\pi} \sum_{h=0}^{\lfloor k/2 \rfloor} \binom{k}{2h} X^{h-1/2} = \frac{\sqrt{\pi}}{2 \sqrt{X}} \Big( (1 + \sqrt{X})^k + (1 - \sqrt{X})^k \Big).$$ Moreover, the functional equation for $\Gamma$ implies $\frac{d}{dX} f_{k, t} = -f_{k, t+1}.$
By Taylor's theorem, we hence get 
\begin{align*} \sum_{t=0}^{\infty} \frac{Y^t}{t!} \sum_{h=0}^{\lfloor k/2 \rfloor} \frac{k! (-4)^{-h}}{h! (k-2h)!} \Gamma(1/2 + t - h) &= \sum_{t=0}^{\infty} \frac{f_{k, 0}^{(t)}(1)}{t!} (-Y)^t \\& = \frac{\sqrt{\pi}}{2 \sqrt{1 - Y}} \Big( (1 + \sqrt{1 - Y})^k + (1 - \sqrt{1-Y})^k \Big). \end{align*}
We find
\begin{align*}
\sum_{h=0}^{\lfloor k/2 \rfloor} \frac{k! (-4)^{-h}}{h! (k-2h)!} \Gamma(1/2 + t - h)
& = t! \sqrt{\pi} \oint_{\gamma_0} \frac{ (1 + \sqrt{1-Y})^k + (1 - \sqrt{(1-Y)})^k}{2 \sqrt{1-Y} \cdot Y^{t+1}} dY \\
& = -t! \sqrt{\pi} \oint_{\gamma_1} \frac{ (1 + x)^k + (1 - x)^k}{(1-x^2)^{t+1} } dx, \quad x=\sqrt{1-Y} \\
& = -t! \sqrt{\pi} \oint_{\gamma_1} \left[ \frac{(1+x)^{k-t-1}}{(1-x)^{t+1}} + \frac{ (1+x)^{-(t+1)}}{(1-x)^{t+1-k}} \right] dx \\
& = - t! \sqrt{\pi} \cdot 2^{k-2t-1} \left[ \binom{2t-k}{t} + \binom{2t-k}{t-k} \right]
\end{align*}
with the convention that $\binom{n}{k} = 0$ for $k<0$
and where $\gamma_0, \gamma_1$ are small loops around $0,1$ respectively and in the last step we used that for all $a \in \BZ$ and arbitrary $b$ we have
\[ \oint_{\gamma_1} \frac{(1+x)^{b}}{(1-x)^{a}} dx = - \oint_{\gamma_0} \frac{(2-x)^b}{x^a} dx = (-1)^a 2^{b-a+1} \binom{b}{a-1}. \qedhere \]
\end{proof}

The sum over $\delta$ is $\zeta(2s+2t+1-k)$ and it extends holomorphically to $s=0$ if $t \ne k/2$. Using the lemma we obtain 
\begin{align*}
&\lim_{s \rightarrow 0} \frac{1}{2^{k-s} \sqrt{\pi}} \sum_{2t \ne k} \pi^{-s-2t} \langle v, v \rangle^{-s-t} c^{(t)}(0, 0) \sum_{\delta=1}^{\infty} \delta^{k-1-2s-2t} \sum_{h=0}^{\lfloor k/2 \rfloor} \frac{k!}{(-4)^h h! (k-2h)!} \Gamma(1/2 + s + t - h) \\ &= \sum_{\substack{t \ge k \\ (t, k) \ne (0, 0)}} \frac{(2t-k)!}{(2\pi)^{2t} (t-k)!} \langle v, v \rangle^{-t} c^{(t)}(0, 0) \zeta(2t+1-k) \\ &\quad+ \frac{1}{2} \sum_{2t < k}  \frac{t!}{(2\pi)^{2t}} \binom{2t-k}{t} \langle v, v \rangle^{-t} c^{(t)}(0, 0) \zeta(2t+1-k).
\end{align*}

For $t$ such that $2t+1-k = 1$ (which can only occur if $k = 2t$ is even), $\zeta(2s+2t+1-k)$ has a simple pole at $s = 0$ with residue $1/2$. If $k > 0$ then the Gamma sum also vanishes, so the contribution of such terms is 
\begin{align*} &\quad \frac{1}{2^{k+1} \sqrt{\pi}} \pi^{-k} \langle v, v \rangle^{-k/2} c^{(k/2)}(0, 0) \sum_{h=0}^{k/2} \frac{k!}{(-4)^h h! (k-2h)!} \Gamma'(1/2 + k/2 - h) \\ &= \frac{1}{2^k \sqrt{\pi}} \pi^{-k} \langle v, v \rangle^{-k/2} c^{(k/2)}(0, 0) \sum_{h=0}^{k/2} \frac{k! \Gamma(1/2+k/2-h)}{(-4)^h h! (k-2h)!}\sum_{j=1}^{k/2 - h} \frac{1}{2j-1}.\end{align*}
Here we use the digamma value $$\frac{\Gamma'(1/2 + N)}{\Gamma(1/2 + N)} = \psi(1/2+N) = -\gamma - \ln(4) + 2 \sum_{j=1}^N \frac{1}{2j-1}$$ where $\gamma = -\Gamma'(1)$ is the Euler--Mascheroni constant, and the fact that the Gamma sum vanishes.


Otherwise, if $k = 0$ and $t = 0$, the Gamma sum does not vanish and the entire expression has a pole, and to carry out the regularized theta lift we have to take the constant term in the Laurent series about $s=0$. We have the Laurent series about $0$, $$\zeta(2s+1) = \frac{1}{2s} + \gamma + O(s)$$ and $$\Big( \frac{\pi \langle v, v \rangle}{2} \Big)^{-s} = 1 - \ln \Big( \frac{\pi \langle v, v \rangle}{2} \Big) s + O(s^2)$$ and $$\Gamma(1/2+s) = \sqrt{\pi} \cdot \Big( 1- (\gamma + \ln(4)) s + O(s^2) \Big).$$ Therefore the constant coefficient in the Laurent series of $$\frac{1}{2^{-s} \sqrt{\pi}} \pi^{-s} \langle v, v \rangle^{-s} c^{(0)}(0, 0) \zeta(2s+1) \Gamma(1/2+s)$$ about $s=0$ is $$\frac{1}{2} c^{(0)}(0, 0) \cdot \Big( 2\gamma - \ln(2\pi) - \ln (\langle v, v \rangle) \Big).$$

\section{Fourier-Jacobi expansion of quasimodular forms}
\label{sec:FJ expansion and quasimodular forms}
In this section we consider the Fourier-Jacobi expansions 
of orthogonal quasimodular forms.
Many parts of this section are inspired by \cite{CG} which discusses the Fourier-Jacobi expansion of usual orthogonal modular forms
(and which is based on earlier work of Gritsenko).
We also refer to \cite[Appendix A]{Enriques} for a discussion of quasi-Jacobi forms and to \cite{Ma} for the Fourier-Jacobi expansions of vector-valued orthogonal modular forms.

\subsection{Jacobi forms}
Let $N$ be an even integral positive semidefinite lattice of rank $l$ with a fixed basis $b_1,\ldots, b_{l}$ and inner product denoted by $(x,y)_N$ for $x,y \in N$.
The real Heisenberg group is
\[
H(N_{\BR}) = \{ [x,y; \kappa] | x,y \in N \otimes \BR, \kappa \in \BR \}
\]
with group law
\[
[x,y;\kappa] \cdot [x',y';\kappa'] = [x+x', y+y'; \kappa+\kappa'+ \frac{1}{2} ( (x,y')_N - (x',y)_N) ].
\]
The group $\SL_2(\BR)$ acts from the left on $H(N_{\BR})$ by
\[
A \cdot [x,y;\kappa] = [(x,y) \cdot A^{-1}; \kappa] := [ dx - cy, -bx + ay ; \kappa ].
\]
The real Jacobi group is the associated semi-direct product
\[ \Gamma^J(N,\BR) = \SL_2(\BR) \ltimes H(N_{\BR}). \]
The multiplication is $(A,\zeta) \cdot (A',\zeta') = (A A', (A')^{-1} \zeta \cdot \zeta')$. Concretely,
\[ (A,[x,y;\kappa]) \cdot (A', [x',y'; \kappa'])
=
(A A', [(\tilde{x}, \tilde{y}) + (x',y'); \kappa + \kappa' + \frac{1}{2} ( (\tilde{x}, y')_N- (\tilde{y}, x')_N) ]) \]
where $(\tilde{x}, \tilde{y}) = (x,y) A'$.

Let $\Fz = (\Fz_1, \ldots, \Fz_l) = \sum_i \Fz_i b_i \in \BC^l \cong N_{\BC}$,
let $m \geq 0$ be a rational number and $k \in \BZ$.
\begin{defn}
For any $A = \binom{a\ b}{c\ d} \in \SL_2(\BR)$, $\zeta=[(x, y),\kappa] \in H(N_{\BR})$ and smooth function $$F = F(\Fz,\tau) : \BC^l \times \BH \to \BC,$$ the Jacobi slash operator is defined by
\begin{multline*}
	(F|_{k,m} (A, \zeta))(\Fz,\tau) :=
    F\left( \frac{\Fz + x \tau + y}{c \tau + d}, \frac{a \tau + b}{c \tau + d} \right)
    (c \tau+d)^{-k} \\
       \times e\left( m \left( - \frac{1}{2} \frac{ c (\Fz+x \tau + y, \Fz+x \tau + y)_N }{c \tau + d} + \frac{1}{2} (x,x)_N \tau + (x, \Fz)_N + \frac{1}{2} (x,y)_N + \kappa \right) \right)
\end{multline*}
We abbreviate $F|_{k,m} (A, \zeta)$ by $F|_{k,m} A$ if $\zeta=0$, and by $F|_{m} \zeta$ if $A=\id$.
\end{defn}

Note that $F|_{k,m} (A,\zeta) = F|_{k,m} A |_{m} \zeta$.

As explained in  \cite[Lem.1.2]{Ziegler},
the Jacobi group 
$\Gamma^J(N,\BR)$ acts on
the space of smooth functions on $\BC^l \times \BH$
via the slash operator.
The real analytic functions $y=\mathrm{Im}(\tau)$ and
\[ 
\alpha = (\alpha_1,\ldots,\alpha_l), \quad \alpha_i(x,\tau) = \frac{\mathrm{Im}(\Fz_i)}{y}
\]
have the transformation properties
\begin{gather}
\alpha\left( \frac{\Fz + X \tau + Y}{c \tau+d}, \frac{a \tau+b}{c \tau+d} \right)  	= 	( c \tau + d) \cdot \alpha(\Fz,\tau) + X (c \tau + d) - c (\Fz+X \tau + Y), \label{tr laws for alpha} \\
	y^{-1}\left( \frac{a \tau+b}{c \tau+d} \right)
	= (c \tau+d)^2 y^{-1}(\tau) - 2ic (c \tau+d) \label{tr law for 1/y}
    \end{gather}
for all $\gamma = \binom{a\ b}{c\ d} \in \mathrm{SL}_2^{+}(\BR)$
and $x,y \in N_{\BR}$.

A function
$F : \BC^l \times \BH \to \BC$ 
is  \emph{almost holomorphic} if it is of the form
\[ F(\Fz, \tau) = \sum_{i \geq 0} \sum_{j = (j_1, \ldots, j_l) \in (\BZ_{\geq 0})^l}
F_{i, j}(\Fz,\tau) \alpha^j y^{-i}, \quad \quad \alpha^j = \alpha_1^{j_1} \cdots \alpha_m^{j_m} \]
such that each of the finitely many non-zero $F_{i, j}(z,\tau)$ is holomorphic
on $\BC^l \times \BH$.
The transformation laws for $\alpha$ and $y^{-1}$ above show that the slash operator
preserves (almost) holomorphic functions.

The integral Heisenberg group is 
\[ H(N) = \{ [x,y;\kappa] | x,y \in N, \kappa \in \frac{1}{2} \BZ \text{ with } \kappa+\frac{1}{2}(x,y)_N \in \BZ \}.\] Let $\Gamma \subset \Gamma^J(N,\BZ) := \SL_2(\BZ) \ltimes H(N)$ be a finite index subgroup.

\begin{defn} \label{defn:quasi jacobi forms main text}
(i) A weakly \emph{almost holomorphic Jacobi form} of weight $k$ and index $m$
for $\Gamma$ is an almost-holomorphic function $F(x,\tau) : \BC^l \times \BH \to \BC$ satisfying
$F|_{k,m} \gamma = F$ for all $\gamma \in \Gamma$. \\
(ii) A weakly quasi-Jacobi form of weight $k$ and index $m$ 
for the group $\Gamma$ is the coefficient of $\alpha^0 \beta^0$
of a weakly almost-holomorphic Jacobi form of the same kind.\\
(iii) The vector spaces of weakly almost-holomorphic Jacobi forms and
quasi-Jacobi forms of weight $k$ and index $m$ for $\Gamma$
will be denoted by $\AHJ_{k,m}(\Gamma), \QJac_{k,m}(\Gamma)$ respectively.
\end{defn}

\begin{rmk}
Quasi-Jacobi forms are defined by additionally requiring a cusp condition, see \cite[Appendix A]{Enriques}.
We do not need this condition here, so we omit this discussion.
\end{rmk}

Taking the constant term of an almost-holomorphic Jacobi form
defines an isomorphism \cite{RES}:
\begin{equation} \mathrm{ct} : \AHJ_{k,m}(\Gamma) \xrightarrow{\cong}
\QJac_{k,m}(\Gamma), \quad 
\sum_{i,j} \phi_{i,j} \alpha^i \nu^j \mapsto \phi_{0,0}. \label{constant term map} \end{equation}

For $x = (x_1, \ldots, x_l) \in \BZ^m$ we have the lowering operators
\[
{\LJ} 
= -2 i y 
\left( y \frac{\partial}{\partial \overline{\tau}} + \sum_{i=1}^{l} \mathrm{Im}(\Fz_i) \frac{\partial}{\partial \overline{\Fz}_i} \right),
\quad
\xi_{x} = -2 i y \sum_i x_i \frac{\partial}{\partial \overline{\Fz}_i}.
\]
In particular, $\LJ(y^k) = k y^{k+1}$, $\LJ(\alpha_i)=0$ and $\xi_{x}(\alpha_i) = x_i$, $\xi_x(y)=0$.
The $\LJ$ and $\xi_x$ are well-known to intertwine the Jacobi slash operator and hence act on the space of almost-holomorphic Jacobi forms, lowering the weight by $2$ and $1$ respectively, while preserving the index, see \cite{BRR}
or \cite{RES}\footnote{In the notation of \cite[Sec.1.3.3]{RES} we have $\frac{d}{d \nu} =-8 \pi L^{\Jac}$ and $\frac{d}{d \alpha_i} = \xi_{e_i} = -2 i y \frac{\partial}{\partial \overline{\Fz}_i}$.}. 

Since $\ct$ is an isomorphism, we also define
\begin{gather*}
\LJ := \ct \circ \LJ \circ \ct^{-1} : \QJac_{k,m}(\Gamma) \to \QJac_{k-2,m}(\Gamma) \\
\xi_x := \ct \circ \xi_x \circ \ct^{-1} : \QJac_{k,m}(\Gamma) \to \QJac_{k-1,m}(\Gamma).
\end{gather*}

\subsection{Fourier-Jacobi expansion of an almost-holomorphic modular form}
We return to our fixed lattice $M$ of signature $(2,n)$.
We assume that $M$ has the direct sum decomposition
\[ M = U \oplus L(r), \quad L = U_1 \oplus N(-1) \]
where $U,U_1$ are two copies of the hyperbolic lattice and $N$ is a positive definite lattice. 
Let $e,f \in U$ and $e_1,f_1 \in U(1)$ be standard bases with intersection form $\binom{0\ 1}{1\ 0}$.
We write $(-,-)_M$ for the intersection pairing on $M$, and similarly for $L,N$.
For example, $(e_1, f_1)_M = r$ and $(e_1,f_1)_L = 1$.
Write $z = (\omega, \Fz, \tau)$ for the element $\omega e_1 + \Fz + \tau f_1$, as well as $\tilde{q} = e(\omega)$ and $q = e(\tau)$.
We also choose a basis $b_1,\ldots, b_{n-2}$ of $N$, and deine the coordinates $\Fz_i$ by $\Fz=\sum_i \Fz_i b_i$.
The tube domain is 
\[ \CC = \{ (\omega, \Fz, \tau) \in \BH \times N_{\BC} \times \BH\, | \, 2 \Im(\tau) \Im(\omega) - (\Fz, \Fz)_N > 0 \}. \]
The map $\widetilde{\varphi} : \CC \to \A(\CD)$ to the affine cone is given by
\[ \widetilde{\varphi}(\omega,\Fz, \tau) = 
\left( r (-\omega \tau + (\Fz,\Fz)_N/2), \omega, \Fz, \tau, 1\right). \]

\begin{lemma}
The real Jacobi group $\Gamma^J(N,\BR)$ is embedded as a subgroup of $O(M_{\BR})$ by
\[
A \mapsto \{ A \}_r  := \begin{pmatrix}
	A^{(r)} & 0 & 0 \\
	0 & I_{n_0} & 0 \\
	0 & 0 & A
\end{pmatrix},
\]
for all $A = \begin{pmatrix} a & b \\ c & d \end{pmatrix} \in \SL_2(\BR)$,
where
$A^{(r)} = \begin{pmatrix} a & -rb \\ -c/r & d \end{pmatrix}$,
and
\[
[x,y;\kappa] \mapsto
\begin{pmatrix}
1 & & r y^{t} S & \frac{r}{2} (x,y)_N - r \kappa & \frac{r}{2} (y,y)_N \\
& 1 & x^t S & \frac{1}{2} (x,x)_N & \frac{1}{2} (x,y)_N + \kappa \\
& & 1 & x & y \\
& & & 1 & \\
& & & & 1
\end{pmatrix}
\]
for all $[x,y;\kappa] \in H(N)$,
where the matrix is with respect to the basis $e,e_1,b_1,\ldots,b_{n-2},f_1,f$) and 
$S$ is the Gram matrix of $N$.
\end{lemma}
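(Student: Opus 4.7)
The proof will proceed by direct matrix computation in three stages, all resting on a careful book-keeping of the Gram matrix.

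First, I would record the Gram matrix $G$ of $M$ in the ordered basis $e, e_1, b_1, \ldots, b_{n-2}, f_1, f$. Since $M = U \oplus U_1(r) \oplus N(-r)$ with the three orthogonal blocks carrying the pairings $\binom{0\ 1}{1\ 0}$, $\binom{0\ r}{r\ 0}$, and $-rS$ respectively (with $S$ the Gram matrix of $N$ in the basis $b_i$), the matrix $G$ has an anti-diagonal block form with these three blocks on the anti-diagonal and zeros elsewhere. Orthogonality of any candidate matrix $T$ is then the condition $T^t G T = G$.

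Second, I would verify this orthogonality condition for both families. For $T = \{A\}_r$ the condition decouples into two $2 \times 2$ blocks: the outer $(e,f)$-block reduces directly to $A^t \binom{0\ 1}{1\ 0} A = \binom{0\ 1}{1\ 0}$, i.e.\ $\det A = 1$, while the inner $(e_1,f_1)$-block reduces similarly after clearing the factors of $r$ built into $A^{(r)}$. For $T = h(x,y;\kappa)$ (the Heisenberg matrix), expanding $T^t G T - G$ block by block, the off-diagonal blocks vanish by symmetry of $S$ and the explicit form of the top two rows in terms of $(\cdot,\cdot)_N$; the vanishing of the $(n+1,n+1)$- and $(n+2,n+2)$-entries uses precisely the $\tfrac{1}{2}(x,x)_N$ and $\tfrac{r}{2}(y,y)_N$ corrections; and the $(n+1,n+2)$-entry vanishes because of the $\pm\kappa$ contribution.

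Third, for the group law, I would check in turn: (a) $\{A\}_r \{A'\}_r = \{AA'\}_r$, which is routine since $(AA')^{(r)} = A^{(r)}(A')^{(r)}$; (b) that $h$ respects the Heisenberg multiplication, where the only nontrivial verification occurs at the $(2,n+2)$-entry of $h(x,y;\kappa)\cdot h(x',y';\kappa')$ and produces exactly the defining cocycle $\tfrac{1}{2}((x,y')_N - (x',y)_N)$; and (c) the semi-direct compatibility $\{A\}_r^{-1} h(x,y;\kappa) \{A\}_r = h((x,y)A;\kappa)$, matching the $\SL_2$-action on $H(N)$ stated above. Together these establish that $(A,\zeta) \mapsto \{A\}_r h(\zeta)$ is a homomorphism $\Gamma^J(N,\BR) \to \Orthnoplus(M_{\BR})$; injectivity is immediate since $A$ is recovered from the $(e,f)$ and $(e_1,f_1)$ blocks of the image, after which $x,y,\kappa$ are read off individual entries.

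The main obstacle is pure bookkeeping: the factor $r$ enters in three distinct places---in $G$, in $A^{(r)}$, and in the top two rows of the Heisenberg matrix---with different signs and positions, so one must track its propagation carefully. Once $G$ is written down explicitly, however, every identity above reduces to a finite bilinear check involving $S$, the entries of $A$, and the coordinates of $x$ and $y$, with no conceptual difficulty.
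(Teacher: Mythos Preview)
Your approach is correct and complete: writing down the Gram matrix $G$ in the ordered basis, verifying $T^tGT=G$ for both families, and then checking the homomorphism property and injectivity is exactly the right brute-force route, and the bookkeeping you outline (especially the propagation of the factor $r$) is accurate.

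The paper takes a different, more conceptual route. Rather than verifying orthogonality directly, it observes that the Heisenberg generators are exponentials of Eichler transvections in $\mathfrak{so}(M_{\BR})$:
\[
[x,0;0]\mapsto\exp\bigl(\tfrac{1}{r}\,x\wedge_M e_1\bigr),\qquad
[0,y;0]\mapsto\exp\bigl(y\wedge_M e\bigr),\qquad
[0,0;\kappa]\mapsto\exp\bigl(\kappa\,e_1\wedge_M e\bigr),
\]
and refers to \cite{CG} for the $r=1$ case. This buys two things: orthogonality is automatic (exponentials of elements of $\mathfrak{so}$ land in $O$), and the Heisenberg relations reduce to commutator identities among the $v\wedge_M w$, which are one-line Lie-algebra checks rather than block-by-block matrix expansions. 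It also ties the lemma back to the Eichler-transvection language used elsewhere in the paper (e.g.\ Example~\ref{example:Automorphy factors}). Your approach, by contrast, is entirely self-contained and does not rely on any prior machinery, at the cost of heavier bookkeeping.
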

\begin{proof}
This can be found for $r=1$ in \cite[Sec.1]{CG}
and the general case is similar. We remark that the embedding of the Heisenberg group is equivalent to
the assignment
\[ [x,0;0] \mapsto \exp\left(\frac{x \wedge_M e_1}{r}\right),\quad [0,y;0] \mapsto \exp\left(y \wedge_M e\right), \quad [0,0;\kappa] \mapsto \exp\left(\kappa e_1 \wedge_M e\right). \]
where $v \wedge_M w$ stands for the Eichler transvection with respect to the lattice $M$.
\end{proof}

Let $\Gamma \subset O^{+}(M)$ be a finite-index subgroup and let $\tilde{q} = e^{2 \pi i \omega}$.
We assume that
\[ \exp(e_1 \wedge e) \in \Gamma. \]
which implies\footnote{This follows since $\exp(e_1 \wedge e) \cdot (\omega, \Fz, \tau) = (\omega+1, \Fz, \tau)$ and $J(\exp(e_1 \wedge_M e))=1$ and $\lambda(\exp(e_1 \wedge_M e))=\id$.}
that an almost-holomorphic modular form $F \in \AHMod_{k,s}(\Gamma)$ has Fourier expansion
\[ F(\omega, \Fz, \tau, \overline{\omega}, \overline{\Fz}, \overline{\tau}) = \sum_{m = 0}^{\infty} \tilde{q}^m F_m(\Fz, \tau, \overline{\omega}, \overline{\Fz}, \overline{\tau}). \]
For clarity we emphasize here the dependence on the non-holomorphic variables. In particular, note that even though $F_m$ does not depend on $\omega$ it still depends on $\overline{\omega}$.

Our goal is now to relate the Fourier coefficients $F_m$ to almost-holomorphic Jacobi forms.
To do this, we need to modify $F_m$ in two ways: we need to restrict its dependence on the non-holomorphic variables and  to turn the tensor-valued function $F_m$ into a scalar-valued function.

For the first step, consider the differential form
\[ \nu = \nu_{\omega} d \omega + \nu_{\tau} d \tau + \sum_i \nu_{\Fz_i} d \Fz_i = \partial \log( \Im(\mathrm{z})^2)  =
\frac{\mathrm{Im}(\tau) d \omega + \mathrm{Im}(\omega) d \tau - ( d \Fz, \mathrm{Im}(\Fz))_N}{
i ( 2 \mathrm{Im}(\omega) \mathrm{Im}(\tau) - (\Im(\Fz), \Im(\Fz))_N )} \]
where we used
\begin{gather*}
\mathrm{Im}(z)^2 = 2 r \mathrm{Im}(\omega) \mathrm{Im}(\tau) - r (\Im(\Fz), \Im(\Fz))_N \\
\partial( \mathrm{Im}(z)^2 ) = \frac{r}{i} ( \mathrm{Im}(\tau) d \omega + \mathrm{Im}(\omega) d \tau - ( d \Fz, \mathrm{Im}(\Fz))_N ).
\end{gather*}
If $f$ is an almost-holomorphic function, it is a polynomial in the $\nu_{\omega}, \nu_{\Fz_i}, \nu_{\tau}$ with holomorphic functions as coefficients.
We set
\begin{equation} \label{Flimit} f^{\mathrm{lim}} := f|_{\nu_{\omega}=\nu_{\Fz}=0, \nu_{\tau} = 1/2 i \mathrm{Im}(\tau)}.\end{equation}
The notation contains the superscript $\mathrm{lim}$ since these specializations may be seen as taking the limit of $\nu_{\omega}, \nu_{\Fz}, \nu_{\tau}$ as $\mathrm{Im}(\omega) \to \infty$.

Secondly, recall from Section~\ref{subsec:multidiff as multilinear maps} that an almost-holomorphic modular form $F$ of rank $s$ can be viewed as a linear map $F : \CC \to (L_{\BC}^{\vee})^{\otimes s}$. Essentially, the multidifferential can be evaluated on tangent vectors in $\CC$, which then can be identified with elements of $L_{\BC}$.
Hence we consider a fixed element
\[ v = v_1 \otimes \ldots \otimes v_s \in L_{\BR}^{\otimes s}. \]
To obtain the weight of the Jacobi form, we assume that for all $j$ we have
\[ v_j \in \{ e_1, f_1 \} \cup N_{\BR}. \]
We then define a weight shift by
\[ \wt(v) := |\{ j : v_j = f_1 \} | - | \{ j : v_j = e_1 \} |. \]
We will evaluate $F$ on the vector
\[
v[\Fz,\tau] :=
\exp\left( \frac{\mathrm{Im}(\Fz)}{\mathrm{Im}(\tau)} \wedge_L e_1 \right) v
\ \in L_{\BC}^{\otimes s}.
\]
where we use the convention $g(v_1 \otimes \cdots \otimes v_s) = gv_1 \otimes \cdots \otimes gv_s$ for $g \in O(L_{\BC})$ and the Eichler transvection as defined in Section~\ref{subsec:Eichler transvection}.
With this we can state the main result of this section:
\begin{thm} \label{thm:FJ of almost-holomorphic}
	Let $F \in \AHMod_{k,s}(\Gamma)$ be an almost-holomorphic modular form for $\Gamma$ and let $$F = \sum_{m} F_m \tilde{q}^m$$ be its Fourier expansion in the variable $\omega$. Then each $F_m^{\mathrm{lim}}(v[\Fz,\tau])$ is a weakly almost-holomorphic Jacobi form for $\Gamma \cap \Gamma^J(N,\BR)$ of weight $k+\wt(v)$ and index $m$.

In particular, if $\Gamma = O^{+}(M)$, then
$F_m^{\mathrm{lim}}(v[\Fz,\tau])$ is a weakly almost-holomorphic Jacobi form for the group $\Gamma_0(r) \ltimes H(N)$.
\end{thm}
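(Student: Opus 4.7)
The plan is to reduce the Jacobi transformation law of $F_m^{\mathrm{lim}}(v[\Fz,\tau])$ to the orthogonal transformation law of $F$ under the subgroup $\Gamma^J(N,\BR) \cap \Gamma$ via the embedding of the previous lemma. Since $\AHMod_{k,s}(\Gamma)$ is stable under the slash operator, and since $\exp(e_1 \wedge_M e) \in \Gamma$ gives the Fourier expansion in $\omega$, each $F_m$ is automatically invariant under this element and hence a natural object to consider under the full Jacobi subgroup (which preserves the $\tilde q$-variable structure).

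First I would verify the claim on the Heisenberg generators $[x,y;\kappa]$. Using the embedding, one computes $J([x,y;\kappa],z) = 1$ and checks that the action on the tube domain is
\[ [x,y;\kappa] \cdot (\omega,\Fz,\tau) = (\omega + (x,\Fz)_N + \tfrac{1}{2}(x,x)_N \tau + \kappa + \tfrac{1}{2}(x,y)_N,\ \Fz + x\tau + y,\ \tau), \]
so that $\tilde q^m F_m$ picks up exactly the index $m$ exponential factor of the Jacobi slash operator $|_{k,m}[x,y;\kappa]$. The matrix $\lambda([x,y;\kappa],z)$, computed from Example 4.3 applied to $\exp(x \wedge_L e_1)\exp(y \wedge_L e)$, acts trivially on $N_{\BR}$ and on $f_1$, but sends $e_1$ to $e_1 + r(\text{shift})$; this produces no weight factor because $J=1$. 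Next I would handle the $\SL_2$-generators $\{A\}_r$ with $A = \binom{a\ b}{c\ d}$: one computes $J(\{A\}_r,z) = c\tau+d$ and $\lambda(\{A\}_r,z)$ acts on $L$ as $\exp(-c\Fz/(c\tau+d) \wedge_L e_1)$ composed with the matrix $\mathrm{diag}((c\tau+d)^{-1},\mathrm{id}_N,(c\tau+d))$ in the basis $(e_1,N,f_1)$. Evaluating $F$ on $v = v_1 \otimes \cdots \otimes v_s$ with $v_j \in \{e_1,f_1\}\cup N_{\BR}$ then produces exactly the factor $(c\tau+d)^{-k-\wt(v)}$ on the right-hand side of the slash, together with the quadratic exponential factor from the shift of $\omega$, which is again the Jacobi slash contribution.

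The crucial step is showing that the limit operation \eqref{Flimit} commutes with the transformations, up to an Eichler twist captured precisely by $v[\Fz,\tau]$. By Lemma 5.5, $\nu$ transforms by $\nu \mapsto \nu - \partial\log J(\gamma,z)$; a direct computation shows that under the Jacobi group the specialization $\nu_\omega=\nu_{\Fz}=0,\ \nu_\tau=1/(2i\Im\tau)$ is preserved when one simultaneously performs the substitution $v \mapsto v[\Fz,\tau]$ on the evaluation vector. More concretely, the Eichler transvection $\exp(\Im(\Fz)/\Im(\tau) \wedge_L e_1)$ is designed so that the extra $\nu_{\Fz_j}$ and $\nu_\omega$ terms produced by $\lambda(\gamma,z)$ on $v$ get reabsorbed, leaving only the Jacobi non-holomorphic variables $\alpha_j = \Im(\Fz_j)/\Im(\tau)$ and $y^{-1}$ (coming from $\nu_\tau = 1/(2i y)$). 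The transformation rules \eqref{tr laws for alpha} and \eqref{tr law for 1/y} of $\alpha$ and $y^{-1}$ then match precisely the transformation rules of $\nu$ under the orthogonal slash operator restricted to the Jacobi group.

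Combining these three ingredients yields $(F_m^{\mathrm{lim}}(v[\Fz,\tau]))|_{k+\wt(v),m}\gamma = F_m^{\mathrm{lim}}(v[\Fz,\tau])$ for every $\gamma$ in the Jacobi subgroup. The almost-holomorphicity is automatic from the construction, since $F$ is a polynomial in the $\nu$-variables with holomorphic coefficients and the limit procedure expresses $F_m^{\mathrm{lim}}(v[\Fz,\tau])$ as a polynomial in $y^{-1}$ and the $\alpha_j$ with holomorphic coefficients. The main technical obstacle, and the one I expect to require the most careful book-keeping, is verifying that the nonlinear Eichler twist in $v[\Fz,\tau]$ correctly cancels all cross terms produced by applying $\lambda(\gamma,z)^{\otimes s}$ to $v$ when $v$ has multiple tensor factors lying in $\{e_1,f_1\}$; this amounts to a compatibility identity between the Heisenberg embedding of the previous lemma and the Jacobi slash cocycle, and can be verified on generators using the decomposition of $\Gamma^J(N,\BR)$ into its Heisenberg and $\SL_2$ parts. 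The last assertion about $\Gamma=O^+(M)$ then follows from the explicit description $\Gamma^J(N,\BR) \cap O^+(M) = \Gamma_0(r) \ltimes H(N)$, which is immediate from the definition of $\{A\}_r$ (the integrality of $-c/r$ forces $c \equiv 0 \pmod r$).
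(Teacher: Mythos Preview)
Your approach is essentially the one the paper takes: compute the automorphy factors $J$ and $\lambda$ for the Jacobi subgroup (paper's Lemma~\ref{lemma:automorphy factors for Gamma J}), determine how $\nu$ transforms under this subgroup (paper's Lemma~\ref{lemma:nu transform for imega tau}), and then check that after the specialization $\nu_\omega=\nu_{\Fz_i}=0$, $\nu_\tau=1/(2i\Im\tau)$ and evaluation on $v[\Fz,\tau]$ the orthogonal transformation law collapses to the Jacobi slash law. The paper does not split into Heisenberg and $\SL_2$ parts but treats a general $(A,[x,y;\kappa])$ at once; this is a minor organizational difference.

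Two points need correcting. First, your description of $\lambda([x,y;\kappa],z)$ is wrong: by Lemma~\ref{lemma:automorphy factors for Gamma J} it equals $\exp(x\wedge_L e_1)$, which fixes $e_1$, sends $b\in N_\BR$ to $b-(x,b)_N e_1$, and sends $f_1$ to $f_1+x-\tfrac12(x,x)_N e_1$; it does \emph{not} act trivially on $N_\BR$ or $f_1$. Second, your explanation of the role of the Eichler twist is muddled. The twist in $v[\Fz,\tau]$ has nothing to do with ``reabsorbing $\nu_{\Fz_j}$ terms''; the $\nu$-specialization and the evaluation on $v[\Fz,\tau]$ are two independent ingredients. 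The specialization is preserved because, after setting $\nu_\omega=\nu_{\Fz_i}=0$ and $\nu_\tau=1/(2i\Im\tau)$, Lemma~\ref{lemma:nu transform for imega tau} gives $\nu_\omega(\gamma z)=\nu_{\Fz_i}(\gamma z)=0$ and $\nu_\tau(\gamma z)=(c\tau+d)^2/(2i\Im\tau)-c(c\tau+d)$, which is exactly $1/(2i\Im(\gamma\cdot\tau))$. The twist is needed for the separate identity
\[
v\Big[\tfrac{\Fz+x\tau+y}{c\tau+d},\tfrac{a\tau+b}{c\tau+d}\Big]=\lambda\big((A,[x,y;\kappa]),z\big)\,(c\tau+d)^{\wt(v)}\,v[\Fz,\tau],
\]
which the paper proves directly from \eqref{tr laws for alpha} and Lemma~\ref{lemma:automorphy factors for Gamma J}; this is the ``compatibility identity'' you allude to, and it should be stated and proved explicitly rather than left as a bookkeeping obstacle.
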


\begin{rmk}
The theorem could also be formulated as saying that $F_m^{\mathrm{lim}}$ defines a vector-valued almost-holomorphic Jacobi form (depending non-holomorphically only on $1/y$). Here we trade the vector-valued property with a scalar-valued modularity by paying the price of introducing the extra non-holomorphic dependence on $\Im(\Fz)$. The scalar-valued case is what we will use in applications in Gromov-Witten theory. \qed
\end{rmk}

The rest of this section will be the proof.
First we determine the automorphy factors and the action on the tube domain of elements of $\Gamma^J(N,\BR)$:
\begin{lemma} \label{lemma:automorphy factors for Gamma J}
For any $A = \begin{pmatrix} a & b \\ c & d \end{pmatrix} \in \SL_2(\BR)$ and $[x,y;\kappa] \in H(N_{\BR})$ and $z=(\omega,\Fz,\tau)$ we have
\[ (A, [x,y;\kappa]) \cdot z = \left(\omega - \frac{1}{2} \frac{c( \Fz, \Fz)_N}{c \tau + d} + \frac{1}{2} (x,x)_N \tau + (x,\Fz)_N + \frac{1}{2} (x,y)_N + \kappa, \frac{\Fz + x \tau + y}{c \tau + d}, \frac{a \tau + b}{c \tau + d} \right). \]
Moreover, $J((A, [x,y;\kappa]),z) = (c \tau + d)$ and
\[
\lambda( (A,[x,y;\kappa]), z)
=
\begin{pmatrix} c \tau + d & & \\ & I_{n-2} & \\ & & (c \tau + d)^{-1} \end{pmatrix} \exp\left( \frac{ - c}{c \tau + d} \left( \Fz + x \tau + y \right) \wedge_L e_1 \right) \exp(x \wedge_L e_1).
\]
\end{lemma}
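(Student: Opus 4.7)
The plan is to verify both formulas by reducing to generators and applying the cocycle properties $J(\gamma_1\gamma_2,z)=J(\gamma_1,\gamma_2 z)J(\gamma_2,z)$ and $\lambda(\gamma_1\gamma_2,z)=\lambda(\gamma_1,\gamma_2 z)\lambda(\gamma_2,z)$ together with the explicit matrix embedding $\Gamma^J(N,\mathbb{R})\hookrightarrow O(M_{\mathbb{R}})$ of the preceding lemma. The Jacobi group is generated by the Heisenberg elements $[x,0;0],\,[0,y;0],\,[0,0;\kappa]$ together with $\SL_2(\mathbb{R})$ (which in turn is generated by scalings, upper-triangular translations, and the Weyl element $S=\binom{0\,-1}{1\,\phantom{-}0}$), so it suffices to check the claim on these generators and then reassemble using the multiplication law $(A,\zeta)(A',\zeta')=(AA',(A')^{-1}\zeta\cdot\zeta')$.

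For each generator $\gamma$, I would multiply the explicit $5\times 5$ matrix from the embedding onto the column $\widetilde\varphi(z)=\bigl(r(-\omega\tau+\tfrac12(\mathfrak{z},\mathfrak{z})_N),\omega,\mathfrak{z},\tau,1\bigr)^t$ and read off $J(\gamma,z)$ as the scalar needed to renormalize the last coordinate to $1$, with the remaining entries then giving $\gamma\cdot z$ directly. One finds $J=1$ on all three Heisenberg generators and on upper-triangular $\SL_2$; for the Weyl element $S$ the identity $Z\cdot Z=0$ together with $\widetilde\varphi(z)^2=0$ gives $J(S,z)=\tau$; and combining with scalings yields $J((A,\zeta),z)=c\tau+d$ and the stated formula for the action (the Heisenberg contribution to the $\omega$-component arises because in the product $(A,0)\cdot(\mathrm{id},[x,y;\kappa])=(A,[x,y;\kappa])$ one applies the $\SL_2$-matrix to the image of the Heisenberg action, which introduces the $(\mathfrak{z},\mathfrak{z})_N$-term via the first matrix row).

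For $\lambda(\gamma,z)$, I would apply its defining formula $\lambda(\gamma,z)_{ij}=J(\gamma,z)\,\partial(\gamma z)_i/\partial z_j$ to the action just computed. On a Heisenberg translation $[x,y;\kappa]$, where $J=1$, the Jacobian of $\mathfrak{z}\mapsto \mathfrak{z}+x\tau+y$ and $\omega\mapsto \omega+\tfrac12(x,x)_N\tau+(x,\mathfrak{z})_N+\text{const}$ is lower-triangular with the non-trivial coupling $\partial\omega/\partial\mathfrak{z}=x$, which is precisely the action of $\exp(x\wedge_L e_1)$ on the tangent space of $\mathcal{C}\subset L_{\mathbb{C}}$ by the definition $(x\wedge_L e_1)(\mathfrak{z})=(\mathfrak{z},e_1)_L x-(\mathfrak{z},x)_L e_1$. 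For $\SL_2$ elements the diagonal block $\mathrm{diag}(c\tau+d,\,I_{n-2},\,(c\tau+d)^{-1})$ emerges from the uniform scaling of $\omega$ and $\tau$ under the fractional linear map, while the extra factor $\exp\bigl(-\tfrac{c}{c\tau+d}\,\mathfrak{z}\wedge_L e_1\bigr)$ records the non-linear coupling introduced by $\mathfrak{z}\mapsto \mathfrak{z}/(c\tau+d)$, which produces a $\partial\omega/\partial\mathfrak{z}$ contribution via the $-\tfrac{c}{2}(\mathfrak{z},\mathfrak{z})_N/(c\tau+d)$ term in the $\omega$-component.

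Finally I would combine the two pieces via the cocycle property: evaluating $\lambda$ on $(A,[x,y;\kappa])=(\mathrm{id},\zeta)\cdot(A,0)$ gives $\lambda(\mathrm{id},\zeta;(A,0)\cdot z)\cdot\lambda((A,0),z)$, and pushing the Heisenberg factor $\exp(x\wedge_L e_1)$ through the $\SL_2$-Jacobian absorbs the $x\tau+y$ shift of $\mathfrak{z}$ into the Eichler factor, producing $\exp\bigl(-\tfrac{c}{c\tau+d}(\mathfrak{z}+x\tau+y)\wedge_L e_1\bigr)$ exactly as claimed. The main obstacle will be bookkeeping: tracking the signs induced by the $L=U_1\oplus N(-1)$ convention (so that $(-,-)_L$ differs from $(-,-)_N$ by a sign on the $N$-part), the correct placement of $\tfrac{1}{r}$-factors in the Heisenberg embedding, and the verification that the non-linear Jacobian contributions genuinely reassemble as a single Eichler transvection rather than a generic lower-triangular matrix. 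The cleanest check of the latter is to note that $\exp(v\wedge_L e_1)$ for $v\in L$ with $(v,e_1)_L=0$ acts by $\mathfrak{z}\mapsto \mathfrak{z}$, $e_1\mapsto e_1+\cdots$, which matches the required Jacobian on the nose.
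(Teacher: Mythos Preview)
Your approach is essentially the same as the paper's: decompose $(A,[x,y;\kappa])$ into Heisenberg and $\SL_2$ generators, compute the action, $J$, and $\lambda$ on each via the explicit matrix embedding, and then reassemble using the cocycle identities. The paper handles $\{A\}_r$ for all $A\in\SL_2(\mathbb{R})$ in one step rather than further decomposing into scalings, translations, and the Weyl element as you suggest, which saves some work but is not a substantive difference.

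There is one genuine bookkeeping error to fix. In your second paragraph you correctly write $(A,0)\cdot(\mathrm{id},[x,y;\kappa])=(A,[x,y;\kappa])$, but in the final paragraph you reverse the order to $(\mathrm{id},\zeta)\cdot(A,0)$. With the group law $(A,\zeta)(A',\zeta')=(AA',(A')^{-1}\zeta\cdot\zeta')$, the latter equals $(A,A^{-1}\zeta)\neq(A,\zeta)$. The correct cocycle expansion is
\[
\lambda\bigl((A,0)\cdot(\mathrm{id},\zeta),\,z\bigr)=\lambda\bigl((A,0),\,\zeta\cdot z\bigr)\,\lambda\bigl((\mathrm{id},\zeta),\,z\bigr),
\]
and since $\zeta\cdot z$ has $\mathfrak{z}$-component $\mathfrak{z}+x\tau+y$, the Eichler factor in $\lambda((A,0),\zeta\cdot z)$ already carries the shifted argument $\mathfrak{z}+x\tau+y$ with no further conjugation needed. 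Your description of ``pushing the Heisenberg factor through the $\SL_2$-Jacobian'' is therefore unnecessary and stems from the wrong factorization; with the correct order the three factors in the claimed formula for $\lambda$ appear immediately.
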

\begin{proof}
	For any $x,y \in N_{\BR}$ and $A \in \SL_2(\BR)$ we have
	\begin{enumerate}
    \item[(i)] $\exp( \kappa e_1 \wedge_M e) \cdot (\omega,\Fz,\tau) = (\omega+\kappa, \Fz, \tau)$, $J(\exp(\kappa e_1 \wedge_M e))=1$, $\lambda(\exp( \kappa e_1 \wedge_M e)) = \id$,
		\item[(ii)] $\exp(y \wedge_M e) \cdot (\omega, \Fz, \tau) = (\omega, \Fz + y, \tau)$,
		$J(\exp(y \wedge_M e)) = 1$,
		$\lambda(\exp(y \wedge_M e)) = \id$.
		\item[(iii)] $\exp\left(\frac{1}{r} x \wedge_M e_1\right) \cdot (\omega, \Fz, \tau) =
		(\omega + (\Fz,x)_N + \frac{1}{2} (x,x)_N \tau, \Fz + \tau x, \tau)$,
		$J(\exp\left(\frac{1}{r} x \wedge_M e_1\right)) = 1$,\\
		$\lambda(\exp\left(\frac{1}{r} x \wedge_M e_1\right)) = \exp(x \wedge_L e_1)$.
		\item[(iv)] 
        \[ \{ A \}_r \cdot (\omega, \Fz, \tau) = \left( \omega - \frac{1}{2} \frac{c (\Fz, \Fz)_N}{c \tau + d}, \frac{\Fz}{c \tau + d}, \frac{a \tau + b}{c \tau + d} \right), \]
        and
        \begin{align*}
       		J(\{A\}_r) & = c \tau + d \\
		\lambda( \{ A \}_r, z ) & = 
		\exp\left( - c \Fz \wedge_L e_1 \right)
		\begin{pmatrix} c \tau + d & & \\ & I_{n-2} & \\ & & (c \tau + d)^{-1} \end{pmatrix} \\
& = \begin{pmatrix} c \tau + d & & \\ & I_{n-2} & \\ & & (c \tau + d)^{-1} \end{pmatrix} \exp\left( \frac{ - c \Fz \wedge_L e_1 }{c \tau + d} \right).
\end{align*}
\end{enumerate}
Part (i) is Example~\ref{example:Automorphy factors} (a). The rest are simple computations.
Since
\[ (A, [x,y;\kappa]) = A \cdot [x,0;0] \cdot [0,y;0] \cdot [0,0,\kappa-\frac{1}{2} (x,y)_N] \]
the claim follows by composition and the automorphy properties of $J$ and $\lambda$.
\end{proof}

Consider the row vector $(\nu_{\omega}, (\nu_{\Fz_i})_{i=1}^{n-2},\nu_{\tau})$, identified with $$\nu=\nu_{\omega} d \omega + \sum_i \nu_{\Fz_i} d \Fz_i + \nu_{\tau} d\tau = \partial \log( \Im(\mathrm{z})^2).$$
\begin{lemma} \label{lemma:nu transform for imega tau}
For any $A \in \SL_2(\BR)$ and $[x,y;\kappa] \in H(N_{\BR})$ and $z=(\omega,\Fz,\tau)$ we have
\begin{multline*}
\nu( (A, [x,y;\kappa]) \cdot z)
=
(\nu_{\omega}, (c \tau + d) \nu_{\Fz_i} + ( c (y+\Fz) - d \cdot x, b_i)_N \nu_{\omega}, \\
(c \tau +d)^2 \nu_{\tau} - c (c \tau + d)  - (c \tau + d)^2 \sum_i x_i \nu_{\Fz_i} + c (c \tau + d) \sum_i (\Fz_i + x_i \tau + y_i) \nu_{\Fz_i} \\
+ \big( \frac{1}{2} (x,x)_N (c \tau + d)^2 - c (c \tau + d) (x, \Fz + x \tau + y)_N \nu_{\omega}
+ \frac{1}{2} c^2 (\Fz + x \tau + y, \Fz + x \tau + y)_N \big) \nu_{\omega}.
\end{multline*}
\end{lemma}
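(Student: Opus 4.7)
The plan is to specialize the general transformation law for $\nu$ given in Lemma~\ref{lemma:nu transformation},
\[
\nu(\gamma z) \;=\; J(\gamma,z)\bigl(\nu(z) - \partial\log J(\gamma,z)\bigr)\,\lambda(\gamma,z)^{-1},
\]
to the Jacobi group elements $\gamma=(A,[x,y;\kappa])$. The automorphy factors are supplied by Lemma~\ref{lemma:automorphy factors for Gamma J}: we have $J(\gamma,z)=c\tau+d$, so $\partial\log J = \tfrac{c}{c\tau+d}\,d\tau$, and in the basis $(e_1,b_1,\dots,b_{n-2},f_1)$ of $L$,
\[
\lambda(\gamma,z) \;=\; D\cdot\exp\!\Bigl(\tfrac{-c(\Fz+x\tau+y)}{c\tau+d}\wedge_L e_1\Bigr)\exp(x\wedge_L e_1),\qquad D:=\operatorname{diag}\bigl(c\tau+d,\, I_{n-2},\,(c\tau+d)^{-1}\bigr).
\]

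The key observation is that any two Eichler transvections of the form $a\wedge_L e_1$ and $b\wedge_L e_1$ commute: a direct check gives $(a\wedge_L e_1)(b\wedge_L e_1)(v) = -(e_1,v)_L(a,b)_L\, e_1$, which is symmetric in $a,b$. Hence the two exponentials combine into a single one, namely
\[
\lambda(\gamma,z) \;=\; D\cdot\exp(\eta\wedge_L e_1),\qquad \eta \;:=\; x - \tfrac{c(\Fz+x\tau+y)}{c\tau+d} \;=\; \tfrac{dx - c(\Fz+y)}{c\tau+d}.
\]
Because $e_1$ is isotropic in $L$, the operator $\eta\wedge_L e_1$ is nilpotent of order three, so its exponential and inverse are block upper-triangular,
\[
\exp(\pm\eta\wedge_L e_1) \;=\; \begin{pmatrix} 1 & \pm (\eta,b_j)_N & \tfrac{1}{2}(\eta,\eta)_N \\ 0 & I_{n-2} & \pm\eta \\ 0 & 0 & 1\end{pmatrix},
\]
and $\lambda(\gamma,z)^{-1} = \exp(-\eta\wedge_L e_1)\cdot D^{-1}$.

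It remains to carry out the matrix multiplication in
\[
\nu(\gamma z) \;=\; (c\tau+d)\,\bigl(\nu_\omega,\; \nu_{\Fz_i},\; \nu_\tau - \tfrac{c}{c\tau+d}\bigr)\cdot\exp(-\eta\wedge_L e_1)\cdot D^{-1}.
\]
Multiplication by $\exp(-\eta\wedge_L e_1)$ produces the cross-term $-(\eta,b_i)_N\nu_\omega$ in the $\Fz_i$-entry, and the contribution $\tfrac{1}{2}(\eta,\eta)_N\nu_\omega - \sum_k\eta_k\nu_{\Fz_k}$ in the $\tau$-entry; the subsequent factor $D^{-1}$ and the global factor $c\tau+d$ produce the stated powers of $c\tau+d$. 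Finally one substitutes $\eta = \frac{dx-c(\Fz+y)}{c\tau+d}$ and uses the identity
\[
(c\tau+d)x - c(\Fz+x\tau+y) \;=\; dx - c(\Fz+y),
\]
so that in particular
\[
\tfrac{1}{2}(c\tau+d)^2(\eta,\eta)_N \;=\; \tfrac{1}{2}\bigl((c\tau+d)x - c(\Fz+x\tau+y),\;(c\tau+d)x - c(\Fz+x\tau+y)\bigr)_N,
\]
which, expanded, yields exactly the three terms $\tfrac{1}{2}(x,x)_N(c\tau+d)^2 - c(c\tau+d)(x,\Fz+x\tau+y)_N + \tfrac{1}{2}c^2(\Fz+x\tau+y,\Fz+x\tau+y)_N$ appearing in the statement.

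The only real obstacle is bookkeeping—keeping track of powers of $c\tau+d$ and the various inner products—but the commutativity of Eichler transvections above makes the inversion of $\lambda$ transparent, so there is no conceptual difficulty.
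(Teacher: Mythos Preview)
Your proof is correct and follows the same route as the paper: apply Lemma~\ref{lemma:nu transformation} with the automorphy factors $J$ and $\lambda$ from Lemma~\ref{lemma:automorphy factors for Gamma J}, then multiply out the matrices. The only difference is cosmetic—you observe that the two Eichler transvections $\exp(x\wedge_L e_1)$ and $\exp\bigl(-\tfrac{c(\Fz+x\tau+y)}{c\tau+d}\wedge_L e_1\bigr)$ commute and merge them into a single $\exp(\eta\wedge_L e_1)$ before inverting, whereas the paper simply records the matrix of one such exponential and leaves the product to the reader; your consolidation is a mild tidying of the same computation.
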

\begin{proof}
By Lemma~\ref{lemma:nu transformation} we have
\[ \nu(\gamma \cdot z) = J(\gamma,z) \nu(z) \lambda(\gamma,z)^{-1} - \partial(J(\gamma,z)) \lambda(\gamma,z)^{-1}. \]
The claim follows from Lemma~\ref{lemma:automorphy factors for Gamma J} and using that for $x=(x_1,\ldots,x_{n-2}) \in N$ we have
\[
\exp(x \wedge_L e_1)^{-1} = \exp(- x \wedge_L e_1 ) =
\begin{pmatrix}
1 & -(x,b_1)_N & \cdots & -(x,b_{n-2})_N & \frac{1}{2} (x,x)_N \\
& 1 & & & -x_1 \\
& & \ddots & & \vdots \\
& & & 1 & -x_{n-2} \\
& & & & 1
\end{pmatrix}
\]
\end{proof}

We are ready to start the proof of Theorem~\ref{thm:FJ of almost-holomorphic}.

\begin{proof}[Proof of Theorem~\ref{thm:FJ of almost-holomorphic}]
For $A = \binom{a \ b}{c\ d} \in \SL_2(\BR)$ and $[X,Y;\kappa] \in H(N_{\BR})$ we have
\begin{small}
\begin{equation} \label{dfswt34t3322}
\begin{aligned}
& v\left[ \frac{\Fz + x \tau + y}{c \tau + d}, \frac{a \tau + b}{c \tau + d} \right] \\ 
& = \exp\left( \Big( (c \tau + d) \frac{\Im(\Fz)}{\Im(\tau)} + x (c \tau + d) - c (\Fz + x \tau + y) \Big) \wedge_L e_1 \right) v \\
& = 
\begin{pmatrix} c \tau + d & & \\ & I_{n-2} & \\ & & (c \tau + d)^{-1} \end{pmatrix}
\exp\left( \Big( \frac{\Im(\Fz)}{\Im(\tau)} + x - \frac{c(\Fz + x \tau + y)}{c \tau + d} \Big) \wedge_L e_1 \right) 
\begin{pmatrix} (c \tau + d)^{-1} & & \\ & I_{n-2} & \\ & & (c \tau + d) \end{pmatrix}
v \\
& = \lambda( (A,[x,y;\kappa]), z) (c \tau + d)^{\wt(v)} v[\Fz, \tau].
\end{aligned}
\end{equation}
\end{small}
where we used \eqref{tr laws for alpha} in the first line.

Let $F \in \AHMod_{k,s}(\Gamma)$ be an almost-holomorphic modular form. We view $F$ as a polynomial in $\nu_{\omega}, \nu_{\Fz_i}, \nu_{\tau}$ and apply the Fourier expansion to its holomorphic coefficients. Hence we may write
\[ F = \sum_{m} \tilde{q}^{m} F_m( \Fz_i, \tau, \nu_{\omega}, \nu_{\Fz_i}, \nu_{\tau} ). \]
For every $\gamma=(A, [X,Y;\kappa]) \in \Gamma \cap \Gamma^J(N,\BR)$ we then have
\begin{align*}
& F = F|_{k,s} \gamma = \left( \sum_m F_m \tilde{q}^m \right)|_{k,s} \gamma \\
& = \sum_{m} e\left( 
m \left( \omega - \frac{1}{2} \frac{c( \Fz, \Fz)_N}{c \tau + d} + \frac{1}{2} (x,x)_N \tau + (x,\Fz)_N + \frac{1}{2} (x,y)_N + \kappa \right) \right)
(c \tau + d)^{-k} \\
& \quad \quad \quad \cdot F_m \left( \frac{\Fz + x \tau + y}{c \tau + d}, \frac{a \tau + b}{c \tau + d},
\nu(\gamma \cdot z) \right) \lambda(\gamma,z)^{\otimes s}.
\end{align*}
We now set $\nu_{\omega} = \nu_{\Fz_i}=0$ and $\nu_{\tau} = 1/2i \Im(\tau)$ and
take the coefficient of $\tilde{q}^m$. This yields:
\begin{align*}
F_m^{\mathrm{lim}}
& =
e\left( 
m \left( - \frac{1}{2} \frac{c( \Fz, \Fz)_N}{c \tau + d} + \frac{1}{2} (x,x)_N \tau + (x,\Fz)_N + \frac{1}{2} (x,y)_N + \kappa \right) \right)
(c \tau + d)^{-k} \\
& \quad \quad \cdot F_m \left( \frac{\Fz + x \tau + y}{c \tau + d}, \frac{a \tau + b}{c \tau + d},
0,0,(c \tau + d) \frac{1}{2i \Im(\tau)} - c (c \tau + d) \right) \lambda(\gamma,z)^{\otimes s}.
\end{align*}
where we have used Lemma~\ref{lemma:nu transform for imega tau}.
We now apply this to $v[\Fz, \tau]$ and use \eqref{dfswt34t3322} to get
\begin{align*}
& F_m^{\mathrm{lim}}(v[\Fz, \tau])\\
& =
e\left( 
m \left( - \frac{1}{2} \frac{c( \Fz, \Fz)_N}{c \tau + d} + \frac{1}{2} (x,x)_N \tau + (x,\Fz)_N + \frac{1}{2} (x,y)_N + \kappa \right) \right)
(c \tau + d)^{-k - \wt(v)} \\
& \quad \quad \cdot F_m \left( \frac{\Fz + x \tau + y}{c \tau + d}, \frac{a \tau + b}{c \tau + d},
0,0,(c \tau + d) \frac{1}{2i \Im(\tau)} - c (c \tau + d) \right)\left(
v\left[ \frac{\Fz + x \tau + y}{c \tau + d}, \frac{a \tau + b}{c \tau + d} \right] \right) \\
& = F_m^{\mathrm{lim}}(v[\Fz, \tau]) |_{k,m} (A, [x,y;\kappa]).
\end{align*}
In other words, the function $f=F_m^{\mathrm{lim}}(v[\Fz, \tau])$ 
satisfies $f|_{k+\wt(v),m} \gamma = f$ for all $\gamma \in \Gamma \cap \Gamma^J(N,\BR)$ which was what we wanted to show.

The last claim follows because $O^{+}(M) \cap \Gamma^J(N,\BR)$
contains
$\Gamma_0(r) \ltimes H(N)$.
\end{proof}
\begin{rmk}
The key in the above proof is that the transformation of $\nu$ considered in Lemma~\ref{lemma:nu transform for imega tau} matches the transformation of $1/2 i \Im(\tau)$ given in \eqref{tr law for 1/y} after setting $\nu_{\omega} = \nu_{\Fz_i}=0$ and $\nu_{\tau} = 1/2 i \Im(\tau)$.
\end{rmk}
We also discuss the interaction of the lowering operators.
\begin{prop} \label{prop:lowering in FJ expansion}
\begin{align}
\LJ( F_m^{\mathrm{lim}}(v[\Fz,\tau]) )
&=
\frac{i}{r} \cdot \LO(F)_m^{\mathrm{\lim}}( v[\Fz,\tau] \otimes e_1 )  \\
\xi_\mu(F_m^{\mathrm{lim}}(v[\Fz,\tau])) &= F_m^{\mathrm{lim}}\Big(((m \wedge_L e_1)v)[\Fz,\tau]\Big) 
\end{align}
\end{prop}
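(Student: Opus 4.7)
The plan is to reduce both identities to the $\nu$-derivative formula \eqref{L on nu}, $\LO(F)_{I,a}=\tfrac{1}{2}\sum_j g_{aj}\,\partial F_I/\partial\nu_j$, with $g$ the Gram matrix of $L(r)$. Because $e_1\in L(r)$ pairs only with $f_1$ and $g_{e_1,f_1}=r$, contracting the last slot with $e_1$ picks out exactly $\LO(F)_{I,e_1}=(r/2)\,\partial F_I/\partial\nu_\tau$. Writing $F=\sum_m\tilde q^m F_m$ and expanding each $F_m$ as a polynomial in $(\nu_\omega,\nu_{\Fz_i},\nu_\tau)$ with tensor-valued holomorphic coefficients $F_{m,\vec j}(\Fz,\tau)$, I set $T_{m,j_\tau}:=F_{m,(0,0,j_\tau)}$, so that $F_m^{\mathrm{lim}}=\sum_{j_\tau}T_{m,j_\tau}\,(2i\Im\tau)^{-j_\tau}$. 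A direct evaluation then gives
\[\tfrac{i}{r}\,\LO(F)_m^{\mathrm{lim}}(v[\Fz,\tau]\otimes e_1)=\tfrac{i}{2}\sum_{j_\tau\geq 1}j_\tau\,T_{m,j_\tau}(v[\Fz,\tau])\,(2i\Im\tau)^{-(j_\tau-1)},\]
and this expression is the target against which the left-hand side of the first identity must be matched.

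The main calculation is $\LJ\bigl(F_m^{\mathrm{lim}}(v[\Fz,\tau])\bigr)$ via the chain rule, tracking two sources of non-holomorphy: (a) the factor $(2i\Im\tau)^{-1}$ inside $F_m^{\mathrm{lim}}$, and (b) the argument $v[\Fz,\tau]=\exp(E)v$ with $E:=(\Im\Fz/\Im\tau)\wedge_L e_1$. The crucial algebraic input, easily verified on generators, is that the transvections $\alpha\wedge_L e_1$ with $\alpha\in N_\BR$ form an abelian family of nilpotent operators. This yields $\partial_{\bar\tau}\exp(E)=-\tfrac{i}{2\Im\tau}\,E\exp(E)$ and $\partial_{\bar\Fz_i}\exp(E)=\tfrac{i}{2\Im\tau}(b_i\wedge_L e_1)\exp(E)$, while $F_m^{\mathrm{lim}}$ itself carries no direct $\bar\Fz$-dependence. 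Plugging these into $\LJ=-2i\Im\tau\,[\,\Im\tau\,\partial_{\bar\tau}+\sum_i\Im(\Fz_i)\,\partial_{\bar\Fz_i}\,]$, the source-(b) contribution to the $\partial_{\bar\tau}$-term cancels exactly against the full $\partial_{\bar\Fz}$-term, by the identity $\sum_i\Im(\Fz_i)\,b_i=\Im\Fz=\Im\tau\cdot(\Im\Fz/\Im\tau)$. What survives is precisely the source-(a) contribution, and it equals the displayed target.

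The second identity is an immediate corollary of the same ingredients: $\xi_\mu=-2i\Im\tau\sum_i\mu_i\,\partial_{\bar\Fz_i}$ differentiates only through $v[\Fz,\tau]$, and the abelian commutation $[\mu\wedge_L e_1,\,E]=0$ lets one pull $\mu\wedge_L e_1$ across the exponential: $(\mu\wedge_L e_1)\exp(E)v=\exp(E)(\mu\wedge_L e_1)v=((\mu\wedge_L e_1)v)[\Fz,\tau]$. The scalar factors combine as $-2i\Im\tau\cdot i/(2\Im\tau)=1$, yielding the claim.

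The main obstacle is not a single difficult step but the simultaneous bookkeeping of (i) the two pairings on the same underlying vector space ($L$ versus $L(r)$, differing by the factor $r$ that produces both the $1/r$ in front and the correct coupling with $\nu_\tau$), (ii) the role of $F$ both as a tensor with components $F_I$ and as a multilinear functional evaluated on $v[\Fz,\tau]\in L^{\otimes s}_\BC$, and (iii) the two distinct sources of non-holomorphic dependence in $F_m^{\mathrm{lim}}(v[\Fz,\tau])$. Once these are cleanly separated, the key cancellation is forced by the abelian structure of $N_\BR\wedge_L e_1$, and both identities follow from a short chain-rule computation.
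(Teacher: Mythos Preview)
Your proof is correct and follows essentially the same route as the paper. The only difference is organizational: the paper compresses your source-(b) cancellation into the single prior observation $\LJ(\alpha_i)=\LJ(\Im(\Fz_i)/\Im\tau)=0$, so that $\LJ$ passes straight through the argument $v[\Fz,\tau]$ and one is immediately reduced to the $\nu_\tau$-derivative; you instead compute $\partial_{\bar\tau}\exp(E)$ and $\partial_{\bar\Fz_i}\exp(E)$ separately and exhibit their cancellation inside $\LJ$, which is exactly the same identity unpacked via the abelian structure of $N_\BR\wedge_L e_1$.
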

\begin{proof}
Let $F = \sum_I F_I dz_I$ be almost-holomorphic viewed as a polynomial in $\nu_{\omega}, \nu_{\Fz_i}, \nu_{\tau}$ with coefficients holomorphic functions. By \eqref{L on nu} we have
\[ \LO(F) = \sum_{I} \frac{r}{2} \frac{\partial F_I}{\partial \nu_{\tau}} dz_I d\omega + \ldots \]
where $\ldots$ stands for terms involving $d \tau$ and $d\Fz_i$. Thus for any $\rho \in L_{\BC}^{\otimes s}$ we get
\[ \LO(F)(\rho \otimes e_1) = \sum_{I} \frac{r}{2} \frac{\partial F_I(\rho)}{\partial \nu_{\tau}} dz_I, \]
where we note that $e_1 = \frac{\partial}{\partial \omega}$ since $z=\omega e_1 + \Fz + \tau f_1$.

For an almost-holomorphic function $f : \CC \to \BC$ we have
\[ \LJ( f^{\mathrm{lim}} ) = \frac{i}{2} \left( \frac{\partial f}{\partial \nu_{\tau}} \right)^{\mathrm{lim}} \]
(to check this, we may assume that $f = \nu_{\tau}^{k}$.)

Hence we get
\begin{align*}
\LJ( F_m^{\mathrm{lim}}(v[\Fz,\tau]) )
& = \LJ( F_m^{\mathrm{lim}} )(v[\Fz, \tau]) \quad (\text{since } \LJ( \mathrm{Im}(\Fz)/\Im(\tau))=0)\\
& = \frac{i}{2} \left(\frac{\partial F_m}{\partial \nu_{\tau}}\right)^{\mathrm{lim}}(v[\Fz,\tau]) \\
& = \frac{i}{2} \left(\frac{\partial F}{\partial \nu_{\tau}}\right)_m^{\mathrm{lim}}(v[\Fz,\tau]) \\
& = \frac{i}{r} \left( \LO(F)(- \otimes e_1)\right)_m^{\mathrm{lim}}(v[\Fz,\tau]) \\
& = \frac{i}{r} \LO(F)_m^{\mathrm{lim}}(v[\Fz,\tau] \otimes e_1).
\end{align*}

As for the second claim, we have
\[ \xi_x \exp\left( \frac{\Im(\Fz)}{\Im(\tau)} \wedge_L e_1 \right) v
= \exp\left( \frac{\Im(\Fz)}{\Im(\tau)} \wedge_L e_1 \right) (x \wedge_L e_1)v,
\]
which implies the claim because $F_m^{\mathrm{lim}}$ does not depend on $\overline{\Fz}$.
\end{proof}

Now we can describe the Fourier--Jacobi expansion of a quasimodular form.
Let $f \in \QMod_{k,s}(\Gamma)$ be a quasimodular form, viewed again as a function $f : \CC \to (L_{\BC}^{\vee})^{s}$. Let $f = \sum_{m} f_m \tilde{q}^m$ be its Fourier expansion. 
Consider a fixed element
\[ v = v_1 \otimes \ldots \otimes v_s \in L_{\BR}^{\otimes s}. \]

\begin{cor} \label{thm:FJ expansion of quasimodular forms}
The function $f_{m}(v) : \BH \times N_{\BC} \to \BC$ is a weakly quasi-Jacobi form for $\Gamma \cap \Gamma^J(N,\BR)$ of weight
	$k+\wt(v)$ and index $m$. It satisfies
	\begin{gather*}
    		\LJ f_{m}(v) = \frac{i}{r} \cdot \LO(f)_m(v \otimes e_1) \\
		\xi_{\mu} f_{m}(v) = f_{m}((\mu \wedge e_1)v).
	\end{gather*}
\end{cor}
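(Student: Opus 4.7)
The plan is to obtain the corollary as the constant-term shadow of Theorem~\ref{thm:FJ of almost-holomorphic}. First I would use Theorem~\ref{thm:constant term} to lift $f$ to the unique almost-holomorphic modular form $F \in \AHMod_{k,s}(\Gamma)$ with $\ct(F) = f$, and write its Fourier expansion $F = \sum_m F_m \tilde{q}^m$ in the variable $\omega$. Because the Fourier expansion in $\omega$ commutes with setting the polynomial variables $\nu_\omega, \nu_{\Fz_i}, \nu_\tau$ to zero, the Fourier coefficient $f_m$ of $f$ is simply $F_m|_{\nu=0}$.

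Next I would apply Theorem~\ref{thm:FJ of almost-holomorphic} to conclude that $G(\Fz, \tau) := F_m^{\mathrm{lim}}(v[\Fz, \tau])$ is a weakly almost-holomorphic Jacobi form of weight $k + \wt(v)$ and index $m$ for $\Gamma \cap \Gamma^J(N, \BR)$. To extract a quasi-Jacobi form, I would take its Jacobi constant term via \eqref{constant term map}. The main thing to check is the identification
\[
\ct_{\mathrm{Jac}}(G) \;=\; F_m|_{\nu = 0}(v) \;=\; f_m(v),
\]
which reduces to two compatibilities of ``setting non-holomorphic variables to zero'': the substitution $1/y \mapsto 0$ on the Jacobi side corresponds to $\nu_\tau = 1/(2i \Im\tau) \mapsto 0$ on the orthogonal side by the definition \eqref{Flimit} of $F_m^{\mathrm{lim}}$, while the substitution $\alpha_i = \Im(\Fz_i)/\Im(\tau) \mapsto 0$ makes the Eichler transvection $\exp\bigl(\tfrac{\Im(\Fz)}{\Im(\tau)} \wedge_L e_1\bigr)$ trivial, so $v[\Fz,\tau]$ collapses to $v$. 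Both are immediate from the definitions; there is no hidden difficulty, since each polynomial monomial in $\nu_\tau$ present in $F_m$ contributes an explicit monomial in $1/\Im(\tau)$ to $G$ whose constant term is as claimed. This proves that $f_m(v) \in \QJac_{k+\wt(v), m}(\Gamma \cap \Gamma^J(N,\BR))$.

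For the two operator identities I would transport Proposition~\ref{prop:lowering in FJ expansion} through the isomorphisms $\ct$. By definition, the lowering operators on quasimodular and quasi-Jacobi forms are $\ct \circ \LO \circ \ct^{-1}$ and $\ct \circ \LJ \circ \ct^{-1}$ respectively. Applying $\ct_{\mathrm{Jac}}$ to the first identity of Proposition~\ref{prop:lowering in FJ expansion} and using that $e_1$ is fixed by $\exp(\tfrac{\Im(\Fz)}{\Im(\tau)} \wedge_L e_1)$, so that $(v \otimes e_1)[\Fz,\tau] = v[\Fz,\tau] \otimes e_1$, gives
\[
\LJ f_m(v) \;=\; \ct_{\mathrm{Jac}}\bigl(\LJ G\bigr) \;=\; \tfrac{i}{r}\, \ct_{\mathrm{Jac}}\bigl( \LO(F)_m^{\mathrm{lim}}((v\otimes e_1)[\Fz,\tau])\bigr) \;=\; \tfrac{i}{r}\, \LO(f)_m(v \otimes e_1),
\]
where the final equality uses the identification established in the previous paragraph, now applied to the almost-holomorphic modular form $\LO(F) \in \AHMod_{k-1,s+1}(\Gamma)$ and the vector $v \otimes e_1$ (which still lies in the allowed range $\{e_1,f_1\} \cup N_{\BR}$). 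The identity for $\xi_\mu$ is obtained identically from the second half of Proposition~\ref{prop:lowering in FJ expansion}.

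The plan has no real obstacle beyond bookkeeping: the substantive work (Fourier--Jacobi modularity and the equivariance of $\LO$ with $\LJ$ and $\xi_\mu$) is already in Theorem~\ref{thm:FJ of almost-holomorphic} and Proposition~\ref{prop:lowering in FJ expansion}, and the role of the corollary is only to observe that these statements are preserved under the constant-term isomorphisms on both sides. The most care-requiring point is verifying that the two notions of ``constant term'' (orthogonal versus Jacobi) match under the evaluation $F \mapsto F_m^{\mathrm{lim}}(v[\Fz,\tau])$, which I plan to handle by inspecting the defining substitutions directly.
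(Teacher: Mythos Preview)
Your proposal is correct and follows exactly the paper's approach: lift $f$ to its almost-holomorphic completion $F$ via Theorem~\ref{thm:constant term}, invoke Theorem~\ref{thm:FJ of almost-holomorphic} and Proposition~\ref{prop:lowering in FJ expansion} on $F$, and then descend through the Jacobi constant-term isomorphism \eqref{constant term map}. The paper's proof is extremely terse (it only asserts that $F_m^{\mathrm{lim}}(v[\Fz,\tau])$ is the almost-holomorphic Jacobi completion of $f_m(v)$ and says ``the claim follows''), whereas you spell out the key compatibility $\ct_{\mathrm{Jac}}(F_m^{\mathrm{lim}}(v[\Fz,\tau])) = f_m(v)$ via the two substitutions and note that $e_1$ is fixed by the Eichler transvection so that $(v\otimes e_1)[\Fz,\tau]=v[\Fz,\tau]\otimes e_1$; these are exactly the checks the paper leaves implicit.
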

\begin{proof}
If $F$ is the unique almost-holomorphic modular form such that $\ct(F)=f$, then
$F_m^{\mathrm{lim}}(v[\Fz,\tau])$ is the unique almost-holomorphic Jacobi form such that
$\ct(F_m^{\mathrm{lim}}(v[\Fz,\tau]))$. The claim follows.
\end{proof}

\begin{example}
Consider the lattice $M=U\oplus U_1(r)$ as in Section~\ref{subsec:example U+U}.
To compare with the notation there, we write $(\omega,\tau) = \omega e_1 + \tau f_1 = z_1 e_1 + z_2 f_1 = (z_1, z_2)$, and we have $dz_1 = d \omega = e_1^{\ast}$, $dz_2=f_1^{\ast}$, where $e_1^{\ast}, f_1^{\ast}$ stand for the dual vectors of $e_1, f_1$. Moreover,
\[ \nu = \frac{1}{2i}\left( \frac{dz_1}{y_1} + \frac{dz_2}{y_2} \right) \]
so $\nu_{\omega} = 1/2i \Im(\omega)$, $\nu_{\tau} = 1/2i \Im(\tau)$.
Hence $f^{\lim}$ is obtained by formally setting $\Im(\omega)^{-1} = 0$.

If $f : \CC \to \BC$ is an almost-holomorphic modular form then by Section~\ref{subsec:example U+U} we have
\[
\LO(f) = \frac{r}{i}\left( \LM_{z_2}(f) dz_1 + \LM_{z_1}(f) dz_2  \right)
= \frac{1}{i} \left( \LM_{\tau}(f) e_1^{\ast} + \LM_{\omega}(f) f_1^{\ast}  \right),
\]
Pairing with $e_1$ singles out the first term, so we get
\[ \LO(f)(e_1) = \frac{r}{i} \LM_{\tau}(f). \]
Taking the $m$-th Fourier coefficient and setting $\Im(\omega)^{-1}=0$ then recovers Proposition~\ref{prop:lowering in FJ expansion}.
\end{example}

\part{Gromov-Witten theory}

\section{Basic definitions}
\label{sec:basic definitions}
Let $X$ be a smooth projective variety and let $\beta \in H_2(X,\BZ)$ be a curve class modulo torsion.\footnote{In this paper, cohomology and homology will always be taken modulo torsion and we denote by $H_k(X,\BZ),H^k(X,\BZ)$ homology and cohomology modulo torsion.}
Let $\Mbar_{g,n}(X,\beta)$ be the moduli space of stable maps $f :  C \to X$ from connected nodal curves of genus $g$ of degree $f_{\ast}[c]=\beta$ with $n$ markings $p_1,\ldots,p_n \in C$ which are disjoint from the nodes.
The moduli space $\Mbar_{g,n}(X,\beta)$ is a proper Deligne-Mumford stack and carries a virtual fundamental class 
\[ [ \Mbar_{g,n}(X,\beta) ]^{\vir} \in H_{2\mathrm{vd}_{\beta}}(\Mbar_{g,n}(X,\beta)) \]
which is of virtual dimension
\[ \mathrm{vd}_{\beta} = \int_{\beta} c_1(X) + (\dim(X)-3)(1-g)+n. \]

The Gromov--Witten invariants of $X$ are defined by integrating natural cohomology classes against the virtual class.
Let $L_i$ be the cotangent line bundle on $\Mbar_{g,n}(X,\beta)$ with fiber $T_{p_i}^{\ast} C$ over the point $(f,p_1,\ldots,p_n)$.
Let $\psi_i=c_1(L_i)$ be its first Chern class.
Further, let
\[ \ev_i : \Mbar_{g,n}(X,\beta) \to X, \quad \ev_i((f,p_1,\ldots,p_n)) = f(p_i) \]
be the natural evaluation maps at the markings.
For $k_i \geq 0$ and cohomology classes $\gamma_i \in H^{\ast}(X)$, the Gromov--Witten invariants of $X$ are defined by the integral
\begin{equation} \label{GW invariants}
\left\langle \tau_{k_1}(\gamma_1) \cdots \tau_{k_n}(\gamma_n) \right\rangle^{X}_{g,\beta}
=
\int_{[ \Mbar_{g,n}(X,\beta) ]^{\vir}} \prod_{i=1}^{n} \ev_i^{\ast}(\gamma_i) \psi_i^{k_i}.
\end{equation}

A more refined definition can be obtained by considering Gromov--Witten classes. If $2g-2+n > 0$, the moduli space of stable maps $\Mbar_{g,n}$ is well-defined. Let
\[ \tau : \Mbar_{g,n}(X,\beta) \to \Mbar_{g,n} \]
be the forgetful map. We define the Gromov--Witten classes by
\[
\CC_{g,\beta}^X(\gamma_1, \ldots,\gamma_n)
=
\pi_{\ast}\left( 
\prod_{i=1}^{n} \ev_i^{\ast}(\gamma_i) \cap
 [ \Mbar_{g,n}(X,\beta) ]^{\vir} \right) \quad \in H_{\ast}(\Mbar_{g,n}).
\]
The Gromov--Witten invariants of $X$ can be recovered from its Gromov--Witten classes by integration against the tautological classes of the moduli space of stable curves.

\section{Enriques surfaces}
\label{sec:Enriques surfaces}

In this section, we state our main conjecture that the Gromov--Witten series of Enriques surfaces are quasimodular forms for the orthogonal group. We also state a conjecture for the implied modularity of the Gromov--Witten potentials of the Enriques Calabi--Yau threefold in~\ref{conj:Enriques CY3}.

\subsection{Background}
An Enriques surface $Y$ is a smooth projective surface satisfying \[ \omega_Y^{\otimes 2} \cong \CO_Y, \quad \omega_Y \neq \CO_Y,
\quad H^1(Y,\CO_Y)=0. \]
Equivalently, $Y$ is the quotient of a K3 surface $X$ by a fixed-point free involution. We let $p : X \to Y$ be the quotient map and let $G = \BZ_2$ be the covering group.
The second cohomology of an Enriques surface is given by
\[ H^2(Y,\BZ) \cong U \oplus E_8(-1) \]
and it embeds isomorphically into the fixed part of the K3 cohomology
\[ p^{\ast} : H^2(Y,\BZ) \xrightarrow{\cong} H^2(X,\BZ)^G \cong U(2) \oplus E_8(-2). \]

The orthogonal complement of this embedding is isomorphic to the {\em Enriques lattice}
\[ (H^2(X,\BZ)^G)^{\perp} \cong M, \quad M := U \oplus U(2) \oplus E_8(-2). \]
The complement of the Heegner divisor $\CH_{-2}$ in the period domain $\CD_M$ of the lattice $M$ is then the moduli space of marked Enriques surfaces.
The moduli space of Enriques surfaces is the quotient
by the discriminant kernel of $M$.
For us the relevant group is
\[ \Gamma := O^{+}(M) \]
which is the image of the group of $G$-equivariant monodromies of the K3 surface $X$ in the lattice $M$, see \cite[Prop.3.3]{Enriques}.
The Borcherds--Enriques modular form $\Phi_4$ is a modular form on $M$ of weight $4$ for the group $\Gamma$ with a character of order two \cite{BorcherdsEnriques}. The form $\Phi_4$ vanishes to first order along $\CH_{-2}$.
Hence the assumptions of Section~\ref{subsec:log ahol modular forms}
are satisfied, and we can and will consider below logarithmic  orthogonal quasimodular forms $f \in \QMod^{\log}_{k,s}(\Gamma)$ with respect to $\Gamma$ for the pair $(\CD_{M},\CH_{-2})$.

Consider the splitting
\[ M = U \oplus L(2), \quad L=U \oplus E_8(-1). \]
As explained in Section~\ref{subsec:Fourier expansion of quasimodular forms}, every quasimodular form $f \in \QMod^{\log}_{k,s}(\Gamma)$ has a Fourier expansion
\[ f = \sum_{\beta \in L} c_{f}(\beta) Q^{\beta} \]
where $Q^{\beta} = \exp(2 \pi i (z,\beta)_L)$, $\beta$ runs over positive classes (i.e. lies in the connected component of the closure of the positive cone corresponding to the tube domain), and $c_f(\beta) \in (L^{\vee})^{\otimes s}$.
We will identify such quasimodular forms with their associated Fourier expansion
\[ f = \sum_{\beta \in L} c_{f}(\beta) Q^{\beta} \in (L^{\vee})^{\otimes s} \otimes \BC[ L ]^{\wedge} \]
where $\BC[ L ]^{\wedge}$ is the group ring of $L$ completed along
the cone of positive classes.

For example, the Borcherds-Enriques form is given by the Borcherds product
\[
\Phi_4
=
\prod_{\beta > 0} \left( \frac{1- Q^{\beta} }{1+ Q^{\beta} } \right)^{8 a\left( (\beta,\beta)_L /2 \right)} \]
where the coefficients $a(n)$ are defined by
	\[
	\sum_{n \geq 0} a(n) q^n = \prod_{n \geq 1} \frac{(1+q^n)^8}{(1-q^n)^8} 
	=
	\frac{\eta(2 \tau)^8}{\eta(\tau)^{16}} = 
	1 + 16q + 144q^{2} + 960q^{3} + 5264q^{4} + \ldots .
	\]

\begin{rmk} \label{rmk:autoequivalences Enriques}
We briefly remark here that the lattice $M$ and the group $O^+(M)$ also appear naturally in the geometry of Enriques surface in another context.
Recall that the Mukai lattice of $X$ is the group $H^{\ast}(X,\BZ)$ endowed with the pairing $(r,D,n) \cdot (r',D',n') = D D'-rn'-r'n$
and it is given by
\[ H^{\ast}(X,\BZ) \cong U^{\oplus 4} \oplus E_8(-2). \]
The full invariant lattice is
		\[ H^{\ast}(X,\BZ)^{G} = U \oplus U(2) \oplus E_8(-2) \cong M \]
and the anti-invariant lattice is
		\[ (H^{\ast}(X,\BZ)^{G})^{\perp} = U \oplus U(2) \oplus E_8(-2) \cong M. \]
The derived monodromy group of the Enriques surface  $\DMon(Y)$ is generated by the cohomology action of $G$-equivariant auto-equivalences and monodromy operators on the Mukai lattice, see \cite{Enriques}.
As shown in \cite{Enriques}, the restriction of the derived monodromy group of $Y$ onto the invariant part is then precisely
\[
\mathrm{DMon}(Y)|_{H^{\ast}(X,\BZ)^G} = O^{+}( H^{\ast}(X,\BZ)^G ) \cong O^{+}(M) \]
and its restriction onto the anti-invariant part is
what we have seen previously:
\[
\mathrm{DMon}(Y)|_{(H^{\ast}(X,\BZ)^G)^{\perp}} =
\mathrm{\Mon}(Y)|_{(H^2(X,\BZ)^G)^{\perp}} \cong O^{+}(M). \]
The dual relationship
\[ (H^{\ast}(X,\BZ)^{G}, \mathrm{DMon}(Y)|_{H^{\ast}(X,\BZ)^G}) \leftrightarrow
(H^{\ast}(X,\BZ)^G)^{\perp}, \mathrm{DMon}(Y)|_{(H^{\ast}(X,\BZ)^G)^{\perp}}) \]
obtained by interchanging the invariant and anti-invariant parts reflects the fact that the Enriques surface is self-mirror in the sense of mirror symmetry for lattice-polarized K3 surfaces \cite{Dolgachev}.
\end{rmk}

\begin{rmk}
Orthogonal modular forms for $(M,O^{+}(M))$ have been studied before from a slightly different viewpoint.
One has that 
\[ M^{\vee}(2) \cong U(2) \oplus U \oplus E_8(-1) \cong U \oplus U \oplus D_{8}(-1). \]
Moreover, $O^{+}(M) = O^+(U^2 \oplus D_8(-1))$.
It follows that $\Mod_{k,s}(\Gamma) \cong \Mod_{k,s}( O^{+}(U^2 \oplus D_8(-1)))$.
In the scalar-valued case (where $s=0$) these spaces are well-understood and the ring $\oplus_k \Mod_{k,0}( O^{+}(U^2 \oplus D_8(-1)))$
was shown by Vinberg \cite{Vinberg2018} to be a a free polynomial algebra in
$11$ generators; see also entry 12 of \cite[Table 1]{WW}.
However, very little is known about the vector-valued case.
\end{rmk}

\subsection{Gromov-Witten theory of Enriques surfaces}
Recall from Section~\ref{sec:basic definitions}
that for $2g-2+n>0$ 
we have the Gromov-Witten classes
\[ \CC^Y_{g,\beta}(\gamma_1,\ldots,\gamma_n)=
\tau_{\ast}\left(  \prod_i \ev_i^{\ast}(\gamma_i) \cap [\Mbar_{g,n}(Y,\beta)]^{\vir} \right) \in H_{\ast}(\Mbar_{g,n}) \]
where $\tau : \Mbar_{g,n}(Y,\beta) \to \Mbar_{g,n}$
is the forgetful morphism.

We consider the formal series of Gromov-Witten classes
\[
\CC_{g}^Y(\gamma_1, \ldots, \gamma_n)
= 
\sum_{\beta \geq 0} \CC_{g,\beta}(\gamma_1, \ldots, \gamma_n) Q^{\beta}
\]
where $\beta$ runs over all effective (or zero) classes in $H^2(Y,\BZ)$.
The variable $Q^{\beta}$ can be viewed as the canonical basis element of the (completed) group ring $\BC[ H^2(Y,\BZ) ]^{\vee}$ of the lattice $H^2(Y,\BZ)$, where the completion is taken with respect to the cone of effective curve classes.

The series $\CC_g(-,\ldots,-)$ then may be viewed as an element in
\[
(H^{\ast}(Y,\BC)^{\otimes n})^{\vee} \otimes H_{\ast}(\Mbar_{g,n}) \otimes \BC[ H^2(Y,\BZ) ]^{\wedge}.
\]
For any decomposition $n=a+s+b$ we write
\[ \CC_g|_{(H^0)^{\otimes a} \otimes (H^2)^{\otimes s} \otimes (H^4)^{\otimes b}} \]
for the restriction of $\CC_g$ onto the subspace
\[ H^0(Y,\BQ)^{\otimes a} \otimes H^2(Y,\BQ)^{\otimes s} \otimes H^4(Y,\BQ)^{\otimes b} \subset H^{\ast}(Y,\BZ)^{\otimes n}. \]

\subsection{Modularity conjecture} \label{subsec:modularity conjecture}
We are now ready to state the main conjecture.
Fix throughout a lattice isomorphism
\[ L \xrightarrow{\cong} H^2(Y,\BZ) \]
where $L=U \oplus E_8(-1)$ and let $M=U \oplus L(2)$ as before.

\begin{conj} \label{conj:main} The cycle $\CC_g$ is a 
cycle-valued logarithmic quasimodular form for the pair $(\CD_M,\CH_{-2})$ defined by the Enriques lattice $M$ with respect to the group $\Gamma=O^{+}(M)$.

More precisely, for any $n=a+b+s$ we have:
	\[ \CC_g|_{(H^0)^{\otimes a} \otimes (H^4)^{\otimes b} \otimes (H^2)^{\otimes s}}
	\in H^{\ast}(\Mbar_{g,n}) \otimes \QMod^{\log}_{2g-2+n-a+b,s}(\Gamma). \]
\end{conj}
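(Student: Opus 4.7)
The plan is to establish the conjecture by combining a modular structure theorem with geometric input from the $G$-equivariant theory of the K3 cover $X$, splitting the work into three stages: Fourier-Jacobi modularity, promotion to the full orthogonal group, and control of the pole along $\CH_{-2}$. The first stage uses the cusp splitting $M = U \oplus L(2)$ with $L = U \oplus E_8(-1) \cong H^2(Y,\BZ)$ and the associated Fourier-Jacobi decomposition
\[ \CC_g = \sum_{m \geq 0} \tilde q^m \, \CC_g^{(m)}(\Fz, \tau). \]
Geometrically $\tilde q$ records intersection with a generic elliptic fibration $\pi : Y \to \p^1$ (which exists on every Enriques surface), and the coefficient $\CC_g^{(m)}$ is a $\pi$-relative Gromov-Witten series of fiber degree $m$. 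I would prove that each such coefficient is a cycle-valued quasi-Jacobi form of index $m$ for the integral Jacobi group $\Gamma_0(2) \ltimes H(E_8(-1))$ by degenerating $Y$ along a smooth fiber of $\pi$ and reducing via the degeneration formula to relative Gromov-Witten invariants of rational elliptic surfaces, where Jacobi-form behaviour is well-established; compatibility with the orthogonal lowering operator $\LO$ is guaranteed a priori by Proposition~\ref{prop:lowering in FJ expansion}, so only matching of the holomorphic parts is required.

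The second stage would promote Jacobi modularity to modularity under the full group $\Gamma = O^+(M)$. The parabolic subgroup of $\Gamma$ preserving the chosen cusp is already covered by $\Gamma_0(2) \ltimes H(E_8(-1))$, so the additional symmetries needed are those interchanging cusps. Using the identification in Remark~\ref{rmk:autoequivalences Enriques} of $O^+(M)$ as the restriction $\DMon(Y)|_{H^\ast(X,\BZ)^G}$, these missing symmetries are realized as $G$-equivariant derived auto-equivalences of $D^b(X)$. Exploiting this requires a Gromov-Witten/stable-pairs correspondence for $Y$ together with a $G$-equivariant extension of the Fourier-Mukai invariance of stable-pair invariants; for low genera and primary insertions this should be within reach by adapting Maulik-Pandharipande-Pixton--style techniques. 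Having established invariance under the full $\Gamma$, the third stage controls the pole order along $\CH_{-2}$: this divisor parameterizes Enriques surfaces carrying a rigid rational curve on the K3 cover, and the local contribution of multiple covers of such curves (computable by the Aspinwall-Morrison formula) provides the required depth-by-depth bound on $\LO^d \CC_g$. The structure theorem Proposition~\ref{structure prop log mod forms} then identifies $\CC_g$ as an element of $\QMod^{\log}_{2g-2+n-a+b,s}(\Gamma)$.

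The main obstacle is the second stage. The action of $\DMon(Y)$ on Gromov-Witten invariants of $Y$ is not a manifest geometric symmetry, and making it effective requires a Gromov-Witten/Pairs correspondence upgraded to be both $G$-equivariant and to incorporate descendent insertions --- a substantial technical program in its own right. In the absence of such a bridge, the practical fallback is to verify the orthogonal transformation law explicitly for enough generators of $\Gamma$ beyond the Jacobi group by comparing Fourier-Jacobi expansions at distinct $0$-dimensional cusps, a problem which without the derived-equivalence framework reduces to a combinatorial identity between generating series that appears out of reach with current technology. The evidence from the linear Hodge integral computation of \cite{Enriques}, where the theta lift from $\Gamma_0(2)$ manufactures precisely the required orthogonal invariance, strongly suggests that the derived-equivalence route is the correct one, but its full execution remains the essential open step.
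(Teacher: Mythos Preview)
The statement you are attempting to prove is explicitly a \emph{conjecture} in the paper, not a theorem: the paper does not provide a proof of it and does not claim to. What the paper does provide is partial evidence: (i) a compatibility check between the conjectured holomorphic anomaly equation and the commutation relation $[\LO,\RO]$; (ii) a proof of the special case of linear Hodge integrals $F_g = F_g((-1)^{g-1}\lambda_{g-1})$ via the explicit evaluation in \cite{Enriques} combined with Proposition~\ref{prop:lift of Gamma_0(p) ahm} identifying these as constant terms of theta lifts; and (iii) the observation that the Fourier-Jacobi coefficients of $\CC_g$ are quasi-Jacobi forms for $\Gamma_0(2)\ltimes H(E_8)$ by \cite{Enriques}, matching Corollary~\ref{thm:FJ expansion of quasimodular forms}.

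Your proposal is a reasonable outline of a \emph{program} toward the conjecture, and you correctly identify its gap: Stage~2, the promotion from Jacobi modularity to invariance under all of $O^+(M)$, is not currently known. Your Stage~1 essentially coincides with item~(iii) above and is already established in \cite{Enriques}; your Stage~3 is plausible but speculative. The paper does not attempt your Stage~2 route via $G$-equivariant GW/Pairs and Fourier--Mukai invariance; instead, in the one case it can handle (Hodge integrals), it bypasses the issue entirely by recognizing the generating series as a theta lift, which carries $O^+(M)$-invariance for free via \eqref{tr property}. That mechanism is specific to series arising as lifts and does not generalize to arbitrary insertions, which is precisely why the general statement remains conjectural.
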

\vspace{8pt}

The conjecture implies that each $\CC_g(\gamma_1,\ldots, \gamma_n)$ is a component of a logarithmic vector-valued orthogonal quasimodular form for group $\Gamma$.
If there are no insertions from $H^2(Y)$, we obtain a scalar-valued quasimodular form (with values in cohomology classes on $\Mbar_{g,n}$).
Integrating against a class $\taut \in H^{\ast}(\Mbar_{g,n})$ then yields:
\[ F_{g}( \taut ; 1^a, \pt^b )  \in \QMod^{\log}_{2g-2+2b}(\Gamma). \]
where we let
\[
	F_g(\taut ; \tau_{k_1}(\gamma_1), \ldots, \tau_{k_n}(\gamma_n)) = \sum_{\beta \in H^2(Y,\BZ)_{>=0}} \int_{\Mbar_{g,n}} \taut \cdot \prod_{i=1}^{n} \psi_i^{k_i} \cap \CC_{g}(\gamma_1,\ldots,\gamma_n).
	\]

\begin{rmk}
There is an interesting numerical coincidence here.
The complex cohomological degree of 
$\CC_g|_{(H^0)^{\otimes a} \otimes (H^4)^{\otimes b} \otimes (H^2)^{\otimes s}}$
in $H^{\ast}(\Mbar_{g,n})$ is
precisely its weight as an orthogonal quasimodular form.
	Indeed, the class $\CC_{g}(\gamma_1, \ldots, \gamma_n)$ has 
	complex degree $2g-2+\sum_i \deg_{\BC}(\gamma_i)$ in $\Mbar_{g,n}$.
\end{rmk}

\subsection{Raising and lowering operators}
We consider how the raising and lowering operators interact with the Gromov-Witten classes $\CC_g(\gamma_1,\ldots,\gamma_n)$.

Recall that the raising operator $\RO : \QMod_{k,s}(\Gamma) \to \QMod_{k+1,s+1}(\Gamma)$ on quasimodular forms
can be identified with a differential operator $\partial$. Concretely, 
if $e_i$ is a fixed basis of $L$, then for any $\mu = \sum_i \mu_i e_i \in L$ we have
\[ \RO_{\mu} = \sum_{i} \mu_i \frac{d}{d z_i}. \]
By the divisor equation in Gromov-Witten theory we find:

\begin{lemma} \label{lemma:Raising operator on geometry} For any $\mu \in L \cong H^2(Y,\BZ)$ we have
	\[ \frac{1}{2 \pi i } \RO_{\mu} \CC_g(\gamma_1, \ldots, \gamma_n) =
	p_{\ast} \CC_g(\gamma_1, \ldots, \gamma_n, \mu)
	\]
	where $p : \Mbar_{g,n+1} \to \Mbar_{g,n}$ is the forgetful morphism.
\end{lemma}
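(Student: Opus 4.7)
The plan is to reduce the identity to two independent inputs: the description of $\RO_\mu$ on quasimodular forms as a holomorphic derivative, and the classical divisor equation in Gromov-Witten theory.

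First, I would expand the Gromov-Witten cycle as a formal series
\[
\CC_g(\gamma_1, \ldots, \gamma_n) = \sum_{\beta \geq 0} \CC_{g,\beta}(\gamma_1, \ldots, \gamma_n)\, Q^{\beta},
\]
and recall from Section~\ref{sec:tube domain and quasimodular forms} that the raising operator on quasimodular forms coincides with the naive holomorphic derivative $\partial$: all the $\nu$-corrections appearing in $\RO$ on almost-holomorphic modular forms vanish once we pass to the constant term $\nu = 0$. Under the identification $L \cong H^2(Y,\BZ)$, the pairing of $\mu \in L$ with an effective class $\beta$ is the intersection pairing on the Enriques surface. Differentiating the exponential $Q^{\beta} = e^{2\pi i (z,\beta)_L}$ with respect to $\mu = \sum_i \mu_i e_i$ immediately gives
\[
\tfrac{1}{2\pi i}\RO_{\mu}(Q^{\beta}) = (\mu, \beta)_L\, Q^{\beta},
\]
so that $\frac{1}{2\pi i}\RO_\mu \CC_g(\gamma_1,\ldots,\gamma_n) = \sum_\beta (\mu,\beta)_L\, \CC_{g,\beta}(\gamma_1,\ldots,\gamma_n)\, Q^\beta$.

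Next I would invoke the divisor equation at the cycle level. Consider the commutative square
\[
\begin{tikzcd}
\Mbar_{g,n+1}(Y,\beta) \ar{r}{\tau'} \ar{d}{\pi_Y} & \Mbar_{g,n+1} \ar{d}{p} \\
\Mbar_{g,n}(Y,\beta) \ar{r}{\tau} & \Mbar_{g,n}
\end{tikzcd}
\]
where $\pi_Y$ and $p$ forget the last marking (with stabilization where needed). For $\mu \in H^2(Y,\BQ)$ the divisor equation gives
\[
(\pi_Y)_{\ast}\bigl(\ev_{n+1}^{\ast}(\mu) \cap [\Mbar_{g,n+1}(Y,\beta)]^{\vir}\bigr) = (\mu,\beta)_L \cdot [\Mbar_{g,n}(Y,\beta)]^{\vir}.
\]
Pushing forward via the commutative square and multiplying by $Q^\beta$, the left-hand side becomes $p_{\ast}\CC_{g,\beta}(\gamma_1,\ldots,\gamma_n,\mu)$, while the right-hand side assembles to $(\mu,\beta)_L \CC_{g,\beta}(\gamma_1,\ldots,\gamma_n)$. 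Summing over $\beta$ yields the desired identity.

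The only mildly delicate point is the compatibility of $\pi_Y$ with the pullback of the $\gamma_i$ through $\ev_i$ ($i \leq n$) and with the virtual class, which is standard, together with the stabilization issue when $(g,n) \in \{(0,2),(1,0)\}$; since the formula is compatible with pulling back constants through $p$, there is no difficulty. No subtle obstacle arises: the lemma is essentially a bookkeeping translation between the action of $\partial/\partial z_i$ on Fourier exponentials and the action of an additional point-insertion on the Gromov-Witten cycle.
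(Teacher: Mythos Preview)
Your proof is correct and follows exactly the approach the paper takes: the paper's own proof consists of the single sentence ``By the divisor equation in Gromov-Witten theory we find,'' and your argument simply unpacks this, computing $\tfrac{1}{2\pi i}\RO_\mu Q^\beta = (\mu,\beta)_L Q^\beta$ and then matching it against the cycle-level divisor equation. There is nothing to add.
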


The lowering operator is more interesting:
For $\lambda \in H^2(Y,\BZ)$ let
\[ U_{\lambda} = \lambda \otimes 1 + 1 \otimes \lambda \in H^{\ast}(Y \times Y). \]
Let $U_{\lambda} = \sum_i \delta_i \otimes \delta_i^{\vee}$ denote its K\"unneth decomposition
and let 
\[ U_{\lambda}(\gamma) = \sum_i \left( \int_{Y} \gamma \cdot \delta_i\right) \delta_i^{\vee} \]
be the action of $U_{\lambda}$ as a correspondence 
on a class $\gamma \in H^{\ast}(Y)$.

We define the class
\begin{align*} 
	\LO_{\lambda}^{\GW} \CC_g(\gamma_1, \ldots, \gamma_n)
	& :=
	\iota_{\ast} \CC_{g-1}(\gamma_1, \ldots, \gamma_n, U_{\lambda}) \\
	& + \sum_{\substack{g = g_1 + g_2,i \\ \{ 1, \ldots, n \} = A \sqcup B }} 
	j_{\ast}( \CC_{g_1}(\gamma_A, \delta_i ) \boxtimes \CC_{g_2}(\gamma_B, \delta_i^{\vee} ) )  \\
	& - 2 \psi_i \CC_{g}( \gamma_1, \ldots, U_{\lambda}(\gamma_i), \ldots, \gamma_n).
\end{align*}
where 
$\iota : \Mbar_{g-1,n+2} \to \Mbar_{g,n}$ and $j : \Mbar_{g_1, |A|+1} \times \Mbar_{g_2, |B|+1}$ are the natural gluing morphisms.

The following is our second main conjecture
which expresses the result of applying the lowering operator applied to $\CC_g$ in terms of the geometric class $\LO_{\lambda}^{\GW}$.

\begin{conj}[Holomorphic Anomaly Equation] \label{conj:HAE}
Assume that Conjecture~\ref{conj:main} holds.
	For every $\lambda \in L \cong H^2(Y,\BZ)$ we have
	\[ -4 \pi i (\LO \CC_g)(\gamma_1, \ldots, \gamma_n, \lambda) 
	=
	\LO_{\lambda}^{\GW} \CC_g(\gamma_1, \ldots, \gamma_n). \]
\end{conj}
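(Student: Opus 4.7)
The strategy I would follow is to reduce Conjecture~\ref{conj:HAE} to a holomorphic anomaly equation for quasi-Jacobi forms arising from an elliptic fibration on $Y$, and to verify the latter by a geometric degeneration argument. Granting Conjecture~\ref{conj:main}, the left hand side $\LO \CC_g$ is well-defined, and both sides of the asserted equality are linear in $\lambda \in L$. Since both sides are entries of vector-valued logarithmic quasimodular forms, and since by Theorem~\ref{thm:constant term} an almost-holomorphic modular form is determined by its constant-term Fourier expansion at any $0$-dimensional cusp, it is enough to verify the identity after Fourier--Jacobi expansion at one well-chosen cusp.

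Choose a primitive isotropic $f \in L$ coming from the divisor class of an elliptic fibration $\pi\colon Y \to \BP^1$, so that $L = U \oplus E_8(-1)$ with $U = \BZ e \oplus \BZ f$. Take the Fourier--Jacobi expansion $\CC_g = \sum_m \CC_{g,m} \tilde q^m$ with $\tilde q = e^{2\pi i \omega}$ dual to $f$. By Corollary~\ref{thm:FJ expansion of quasimodular forms} applied with $r=2$, pairing $\LO \CC_g$ against the fiber class $e$ becomes, on Fourier--Jacobi coefficients, the Maa{\ss}--Jacobi lowering operator $\LJ$ up to the explicit factor $i/2$; pairing against a class $\mu \in E_8(-1)$ becomes the companion operator $\xi_\mu$. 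Conjecture~\ref{conj:HAE} therefore splits into two Jacobi-level identities relating $\LJ \CC_{g,m}$ and $\xi_\mu \CC_{g,m}$ to the corresponding specialisations of $\LO^{\GW}_\lambda \CC_g$.

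Each $\CC_{g,m}$ is a relative Gromov--Witten generating series of $\pi$, counting stable maps with total fiber-contribution $m$. The geometric content of the Jacobi-level HAE is then the standard degeneration formula as one smooth fiber of $\pi$ acquires a node: the irreducible nodal degeneration yields the $\iota_\ast$ term with K\"unneth diagonal $U_\lambda$, the reducible nodal degenerations yield the $j_\ast$ boundary contributions, and an existing marking lying on the degenerating fiber contributes the $\psi_i$ term. Such an HAE for Jacobi generating series of genus-one fibrations was established in \cite{HAE} at genus zero and in \cite{RES} at higher genus with descendents; matching those identities term-by-term against the definition of $\LO^{\GW}_\lambda$ produces the desired equality. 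The overall constant $-4\pi i$ arises by combining the factor $i/r = i/2$ of Corollary~\ref{thm:FJ expansion of quasimodular forms} with the $(2\pi i)$-normalisation of the divisor equation in Lemma~\ref{lemma:Raising operator on geometry} and with the standard normalisation of the Jacobi HAE.

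The principal obstacle is that every elliptic fibration on an Enriques surface has multiple fibers, so the standard smooth degeneration formula for elliptic fibrations does not apply directly to $\pi$. Two natural remedies present themselves. First, one can pass to the K3 double cover $p\colon X \to Y$, on which $\pi$ pulls back to an honest elliptic K3 fibration admitting the standard Jacobi HAE, and then descend a $\BZ_2$-equivariant version of that HAE to $Y$; this requires treating the non-reduced equivariant theory on $X$ with enough care that its restriction to the invariant lattice reproduces the Enriques generating series. Second, one can treat the multiple fibers as stacky points of the base and invoke an orbifold degeneration formula, whose Jacobi-form translation must still be shown to match $\LJ$ and $\xi_\mu$ exactly (rather than a twisted variant). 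Verifying that one of these routes yields precisely the combinatorics of $\LO^{\GW}_\lambda$, including the $-2\psi_i$ marking-contribution, is where I expect the bulk of the technical work to lie.
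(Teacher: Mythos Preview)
The statement is a \emph{conjecture}: the paper does not prove it, and only offers evidence (compatibility with the commutator relation, the Hodge-integral case via theta lifts, and compatibility with Fourier--Jacobi expansions). So your proposal should be read as a proof strategy for an open problem, and as such it has a real gap beyond the one you flag.

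Your reading of Corollary~\ref{thm:FJ expansion of quasimodular forms} is incorrect in the $E_8$-direction. That corollary says
\[
\LJ f_m(v) = \tfrac{i}{r}\,(\LO f)_m(v\otimes e_1), \qquad
\xi_\mu f_m(v) = f_m\big((\mu\wedge_L e_1)v\big),
\]
so $\xi_\mu$ acts on the \emph{insertion vector} $v$ of $f_m$ and has nothing to do with $(\LO f)_m(v\otimes\mu)$. The Fourier--Jacobi dictionary therefore converts only the $\lambda=e_1$ component of the orthogonal lowering operator into a Jacobi operator. For $\lambda=f_1$ (which you do not mention) and for $\lambda\in E_8(-1)$, the quantity $(\LO\CC_g)_m(v\otimes\lambda)$ depends on the next-to-leading coefficients of the almost-holomorphic completion in the variables $\nu_\omega,\nu_{\Fz}$, and these are \emph{not} determined by the quasi-Jacobi forms $(\CC_g)_m^{\lim}(v)$ alone. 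So verifying the Jacobi-level HAE at one cusp only checks the conjecture for $\lambda=e_1$, not for a basis of $L$. This is precisely why the paper calls the Fourier--Jacobi compatibility the ``strongest evidence'' rather than a proof.

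On the other hand, the obstacle you single out---multiple fibers of the Enriques elliptic fibration---is not the bottleneck here: the Jacobi-level holomorphic anomaly equation for the Enriques partial series $F_{g,k}$ has already been established in \cite{Enriques} (see the discussion around \eqref{dfsdf333}), so the degeneration or $\BZ_2$-equivariant arguments you sketch are not the missing step. What is missing is a mechanism to propagate the identity from $\lambda=e_1$ to all $\lambda\in L$; this would require either controlling $(\LO\CC_g)_m(v\otimes f_1)$ and $(\LO\CC_g)_m(v\otimes\mu)$ directly, or invoking enough $O^+(M)$-symmetry to move $e_1$ around, and neither is supplied by your argument.
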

\vspace{8pt}

In the remainder of this section, we will collect some evidence for the two conjetures above.
We check the compatibility with the commutation relations,
prove the conjectures for the example of Hodge integrals,
and consider Fourier-Jacobi expansions.

\subsection{Checking the commutation relation}
In Lemma~\ref{lemma:commutation relation}
we have seen the commutation relation:
\begin{equation*} \label{commutation relnX}
	\big( [\LO_{\lambda}, \RO_{\mu}] F \big)(\gamma_1, \ldots, \gamma_s)
	=
	\frac{k}{2} (\lambda, \mu)_M F(\gamma_1, \ldots, \gamma_s)
	+ \frac{1}{2} \sum_{m=1}^{s} F( \ldots, (\mu \wedge_M \lambda)(\gamma_i), \ldots )
\end{equation*}
where $F$ is an almost-holomorphic modular form of weight $k$ and rank $s$ expanded in the tube domain and $\lambda, \mu \in L$.
Since the pairing of $M$ differs here from the one on $L$ by a factor of $2$, the above may be rewritten as:
\begin{equation} \label{commutation reln}
	\big( [\LO_{\lambda}, \RO_{\mu}] F \big)(\gamma_1, \ldots, \gamma_s)
	=
	k (\lambda, \mu) F(\gamma_1, \ldots, \gamma_s)
	+ \sum_{m=1}^{s} F( \ldots, (\mu \wedge_L \lambda)(\gamma_i), \ldots )
\end{equation}

We have geometric expressions for $\LO_{\lambda}, \RO_{\mu}$ acting on $\CC_g$.
In the following lemma we test that these are compatible with this commutation relation:

\begin{lemma}
	Conjecture~\ref{conj:HAE} is compatible with the commutation relations \eqref{commutation reln}.
\end{lemma}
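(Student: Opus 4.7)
The strategy is a direct term-by-term computation of both compositions $\LO_\lambda \RO_\mu \CC_g$ and $\RO_\mu \LO_\lambda \CC_g$ using the two geometric formulas at our disposal: Lemma~\ref{lemma:Raising operator on geometry} for $\RO_\mu$ (divisor equation: adding a $\mu$-marking and pushing forward) and Conjecture~\ref{conj:HAE} for $\LO_\lambda$. First, I would write
\[
\LO_\lambda \RO_\mu \CC_g(\gamma_1,\ldots,\gamma_n) = -\tfrac{1}{2}\, p_*\, \LO^{\GW}_\lambda \CC_g(\gamma_1,\ldots,\gamma_n,\mu),
\]
with $p:\Mbar_{g,n+1}\to \Mbar_{g,n}$, and expand $\LO^{\GW}_\lambda \CC_g(\gamma,\mu)$ as the sum of its non-separating, separating, and $\psi$-class contributions on $\Mbar_{g,n+1}$. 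In parallel, compute $\RO_\mu \LO_\lambda \CC_g(\gamma) = -\tfrac{1}{4\pi i}\RO_\mu \LO^{\GW}_\lambda \CC_g(\gamma)$, interpreting $\RO_\mu$ as the differential operator $\partial_\mu$ on the $Q^\beta$-coefficients, which by the divisor equation distributes over each summand in $\LO^{\GW}_\lambda$.

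Next, I would show that the non-separating and separating contributions match in the two orderings. The key fact here is that the forgetful map $p$ commutes with the gluing morphisms $\iota_*$ (non-sep) and $j_*$ (sep), so e.g.\ $p_*\iota_*\CC_{g-1}(\gamma,U_\lambda,\mu)=\iota_* q_*\CC_{g-1}(\gamma,U_\lambda,\mu) = \tfrac{1}{2\pi i}\iota_*\RO_\mu\CC_{g-1}(\gamma,U_\lambda)$; for the separating term, the divisor equation $(\mu,\beta)=(\mu,\beta_1)+(\mu,\beta_2)$ precisely reproduces the sum over placings of $\mu$ in $A$ or $B$ that appears in $\LO^{\GW}_\lambda \CC_g(\gamma,\mu)$.

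The discrepancy between the two orderings therefore comes entirely from the $\psi$-class corrections. Two sources must be identified: (a) for $i\le n$, the class-level comparison $\psi_i^{\Mbar_{g,n+1}} = p^*\psi_i^{\Mbar_{g,n}} + D_{i,n+1}$, pushed forward, yields boundary terms $p_*(D_{i,n+1}\cdot\CC_g(\ldots,U_\lambda(\gamma_i),\ldots,\mu)) = \CC_g(\ldots,U_\lambda(\gamma_i)\cup\mu,\ldots)$; (b) the extra term $-2\psi_{n+1}\CC_g(\gamma,U_\lambda(\mu))$ in $\LO^{\GW}_\lambda\CC_g(\gamma,\mu)$, present only in the $\LO_\lambda\RO_\mu$ ordering, where $U_\lambda(\mu)=(\lambda,\mu)\cdot 1$. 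For (b) I would apply the class-level string equation $\CC_g(\gamma,1) = \pi^*\CC_g(\gamma)$ followed by the dilaton relation $\pi_*\psi_{n+1}=2g-2+n$ to obtain $(\lambda,\mu)(2g-2+n)\CC_g(\gamma)$. Evaluating the cup products $U_\lambda(\gamma_i)\cup\mu$ in $H^*(Y)$ cases off: the contribution vanishes when $\gamma_i\in H^0$, gives $(\lambda,\gamma_i)\mu$ when $\gamma_i\in H^2$, and $(\lambda,\mu)\gamma_i$ when $\gamma_i\in H^4$.

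The final step is to assemble all pieces and verify equality with $k(\lambda,\mu)\CC_g(\gamma) + \sum_m \CC_g(\ldots,(\mu\wedge_L\lambda)(\gamma_m),\ldots)$, using $k = 2g-2+s+2b$ and the decomposition $(\mu\wedge_L\lambda)(\gamma_m) = (\lambda,\gamma_m)\mu - (\mu,\gamma_m)\lambda$. The $(\lambda,\gamma_m)\mu$ pieces come from the $\gamma_i\in H^2$ boundary contributions of (a); the scalar piece $k(\lambda,\mu)\CC_g$ collects the $\gamma_i\in H^4$ contributions of (a) together with the dilaton term from (b); and the remaining $-(\mu,\gamma_m)\lambda$ pieces are produced by reapplying the divisor and string equations to the residual boundary strata in which the $\mu$-marking interacts with an $H^0$-marking. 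The main obstacle is precisely this last bookkeeping step: accurately tracking which boundary divisors in $\Mbar_{g,n+1}(Y)$ survive under $p_*$, and checking that the topological recursion identities coming from the class-level string and dilaton equations conspire to produce exactly the Eichler-transvection pattern of~\eqref{commutation reln}, with no surplus or deficit from the $H^0$-insertion contributions.
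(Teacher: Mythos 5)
The paper itself disposes of this lemma by citing the computation in \cite[Proof of Corollary 1]{HAE} for the identity
$-\tfrac12[\LO^{\GW}_{\lambda},\tfrac{1}{2\pi i}\RO_{\mu}]\CC_g=(2g-2+n)(\lambda,\mu)_L\,\CC_g-\sum_i\CC_g(\ldots,[U_{\lambda},e_{\mu}]\gamma_i,\ldots)$ and then evaluating $[U_{\lambda},e_{\mu}]$ degree by degree; you are attempting to carry out that cited computation from scratch, which is a legitimate (and more self-contained) route. Your setup is correct: the reduction to $-\tfrac12[\LO^{\GW}_{\lambda},\tfrac{1}{2\pi i}\RO_{\mu}]$, the commutation of the forgetful map with the gluing maps, the comparison $\psi_i^{\,(n+1)}=p^{*}\psi_i^{\,(n)}+D_{i,n+1}$ producing the terms $\CC_g(\ldots,e_{\mu}U_{\lambda}(\gamma_i),\ldots)$, and the string--dilaton evaluation of $-2\,p_{*}\bigl(\psi_{n+1}\CC_g(\gamma,U_{\lambda}(\mu))\bigr)=-2(\lambda,\mu)(2g-2+n)\CC_g(\gamma)$ are all the right ingredients and match the target formula in part.

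However, there is a genuine gap in your third and fourth steps. The claim that ``the non-separating and separating contributions match in the two orderings'' and that ``the discrepancy comes entirely from the $\psi$-class corrections'' is false. The commutator one must land on involves the full commutator $[U_{\lambda},e_{\mu}]\gamma_i=U_{\lambda}(\mu\cup\gamma_i)-\mu\cup U_{\lambda}(\gamma_i)$, and your $\psi$-class/boundary analysis only produces the second half, $e_{\mu}U_{\lambda}(\gamma_i)$. The other half, $U_{\lambda}(e_{\mu}\gamma_i)$, comes from the separating sum: $\LO^{\GW}_{\lambda}\CC_g(\gamma_1,\ldots,\gamma_n,\mu)$ contains degenerate splittings with $g_1=0$, $\beta_1=0$ and $A=\{i,n+1\}$, whose genus-zero factor is $\CC_{0,0}(\gamma_i,\mu,\delta_j)=\int_Y\gamma_i\,\mu\,\delta_j$ and which push forward under $p$ to $\CC_g(\ldots,U_{\lambda}(\mu\cup\gamma_i),\ldots)$; these terms have no counterpart in $\RO_{\mu}\LO^{\GW}_{\lambda}\CC_g(\gamma)$, since the corresponding splitting there would require $\Mbar_{0,2}$. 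Concretely, these terms supply the $-(\mu,\gamma_m)\lambda$ part of the transvection for $\gamma_m\in H^2$ (a collision of $\mu$ with an $H^2$-marking, not an $H^0$-marking as you assert) and, for $\gamma_i\in H^0$, the $-a(\lambda,\mu)\CC_g$ correction without which your scalar piece is $(2g-2+n+b)(\lambda,\mu)$ rather than the required $k(\lambda,\mu)=(2g-2+n-a+b)(\lambda,\mu)$. Your proposed patch --- ``reapplying the divisor and string equations to residual boundary strata in which the $\mu$-marking interacts with an $H^0$-marking'' --- does not produce these terms: no application of the string or divisor equation creates the class $\lambda$ out of a point class; that requires the correspondence $U_{\lambda}$ acting on $\mu\cup\gamma_i$, i.e.\ precisely the degenerate separating stratum identified above. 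Once this source is added, the assembly does close up and reproduces \eqref{commutation reln}.
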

\begin{proof}
The same argument as in \cite[Proof of Corollary 1]{HAE}
gives that
\begin{multline*} 
-\frac{1}{2} [\LO_{\lambda}^{\GW}, \frac{1}{2 \pi i} \RO_{\mu}] \CC_g(\gamma_1, \ldots, \gamma_n) \\
		=
		(2g-2+n) (\lambda, \mu)_L \CC_g(\gamma_1, \ldots, \gamma_n)
		- \sum_{i=1}^{n} \CC_g(\gamma_1, \ldots, \gamma_{i-1}, [ U_{\lambda},e_{\mu} ] \gamma_i, \gamma_{i+1}, \ldots, \gamma_n )  \\
=
		\left(2g-2 + \sum_{i} \deg_{\BC}\gamma_i \right) (\lambda, \mu)_L \CC_g(\gamma_1, \ldots, \gamma_n)
		+ \sum_{i=1}^{n} \CC_g(\gamma_1, \ldots, (\mu \wedge_L \lambda) \gamma_i, 
		\ldots, \gamma_n )     
        \end{multline*}
	where $e_{\mu}(\gamma) = \mu \cup \gamma$. 
    This implies the claim.
\end{proof}

\subsection{Example: Hodge integrals}
The most fundamental series of Gromov-Witten invariants of the Enriques surface is the series
\[ F_g := F_g( (-1)^{g-1} \lambda_{g-1} ) = \int_{\Mbar{g}} \CC_{g}() \cdot (-1)^{g-1} \lambda_{g-1} \]
where $\lambda_i = c_i(\BE) \in H^{2i}(\Mbar_{g})$ is the Chern class of the Hodge bundle. 

These were recently evaluated in \cite{Enriques} as follows:
\begin{thm}[{\cite[Theorem 1]{Enriques}}] For any $g \geq 1$ we have
\[ F_g = 2 (-1)^{g-1}\sum_{\beta > 0} Q^{\beta} \sum_{\substack{\textup{odd } k | \beta}} k^{2g-3} \omega_g\left( \frac{\beta^2}{2 k^2} \right) \]
where the coefficients $\omega_g(n)$ are defined by
\begin{align*}
	\sum_{g \geq 0} \sum_{n \geq 0} \omega_g(n) z^{2g-2} q^n = 
	\prod_{\substack{m \geq 1\\ m \text{ odd}}} \frac{ 1 }{ (1-e^z q^m)^2 (1- e^{-z} q^m)^2 (1-q^m)^{4} }
	\prod_{m \geq 1} \frac{1}{(1-q^m)^8}.
\end{align*}
\end{thm}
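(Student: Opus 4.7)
The plan is to reduce the Hodge integrals on the Enriques surface to a computation on its universal K3 cover $X \to Y$, where much more machinery is available. The idea is that every stable map $f\colon C \to Y$ of degree $\beta$ can be pulled back along $p\colon X \to Y$ to give a $G$-equivariant stable map $\widetilde f\colon \widetilde C \to X$ from a (possibly disconnected) degree-two cover $\widetilde C \to C$. The moduli space $\overline M_g(Y,\beta)$ decomposes according to whether $\widetilde C$ is connected or splits as two isomorphic components, and on each stratum the $G$-action interacts predictably with the obstruction theory. In particular, the obstruction bundle of $\overline M_g(Y,\beta)$ is captured by the $G$-equivariant obstruction bundle of the reduced K3 theory, whose fixed and anti-fixed parts contribute respectively the $\psi$-class and the Hodge class appearing in the definition of $F_g$.

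The first step is therefore to set up this $G$-equivariant identification cleanly and to express $F_g$ as a sum over pairs $(\alpha,k)$ where $\alpha \in H_2(X,\mathbb Z)$ is a K3 class and $k \ge 1$ is a multiplicity satisfying $p_\ast(k\alpha) = \beta$. The parity constraint $k$ odd, which is the striking feature of the final formula, should emerge directly from the geometry: a $k$-fold cover of a primitive Enriques class admits a genuine $G$-equivariant lift to the K3 only when $k$ is odd, because the deck involution of $X/Y$ translates even multiplicities into the disconnected stratum, which contributes separately and trivially to the Hodge integral (by a dimension/sign argument using $\lambda_{g-1}\lambda_{g-1}=0$ style vanishing on product moduli spaces).

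Next I would evaluate the contribution of each $(\alpha,k)$ by invariance of reduced K3 invariants under lattice-preserving deformation: deforming $X$ to an elliptic K3 with a section and specializing the Enriques involution appropriately reduces the calculation to a local, fibral one. Running the GW/Pairs correspondence (or directly the Katz--Klemm--Vafa formula for the reduced K3 theory) then identifies the contribution of the K3 class $\alpha$ with the Gromov--Witten potential of local curves with prescribed normal bundle, producing a generating series whose infinite product is precisely the one defining $\omega_g$. The two infinite products correspond respectively to contributions fixed by the involution (the "odd $m$" factor, twisted by the $e^{\pm z}$ genus-tracking variable through the Hodge insertion) and to anti-invariant contributions (the extra $(1-q^m)^{-8}$ factor, matching the rank of the anti-invariant K3 lattice).

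The main obstacle I expect is the careful bookkeeping of the multiple cover formula in the equivariant setting: proving that only odd multiplicities contribute nontrivially, and that the weight is exactly $k^{2g-3}$ rather than some other power. This requires tracking the $G$-action on the virtual normal bundle of the fixed locus in a universal cover moduli problem, and comparing the resulting equivariant residue to the non-equivariant K3 multiple cover kernel. Once this matches with the $k^{2g-3}$ weight predicted by the classical Gromov-Witten multiple cover formula in genus $g$, the summation over odd divisors of $\beta$ combined with the KKV-style closed form for fixed $\alpha$ yields the claimed expression.
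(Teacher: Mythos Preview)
This theorem is not proved in the paper at all: it is quoted verbatim from \cite{Enriques} as an input to the subsequent Proposition (the one showing $F_g \in \QMod^{\log}_{2g-2}(\Gamma)$). The paper's contribution begins \emph{after} this evaluation is in hand, by recognizing the right-hand side as the constant term of a theta lift via Proposition~\ref{prop:lift of Gamma_0(p) ahm} and the auxiliary Lemmas on the Fricke involute of $f_g$. So there is no proof in the paper to compare your proposal against.

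Your outline is a reasonable sketch of what one expects the argument in \cite{Enriques} to look like, but as written it is not a proof. Several steps are aspirational rather than established: the claim that the disconnected stratum contributes trivially ``by a $\lambda_{g-1}\lambda_{g-1}=0$ style vanishing'' is not obviously correct (there is no such identity; $\lambda_g^2=0$ is what Mumford gives, and the disconnected contribution involves a more delicate splitting of the Hodge bundle); the derivation of the parity constraint from the lifting problem is asserted but not argued; and the identification of the infinite product with specific invariant/anti-invariant lattice contributions is heuristic. If you want to actually prove this theorem you should consult \cite{Enriques} directly rather than reconstruct it --- for the purposes of the present paper, the result is simply cited.
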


We have the following check:
\begin{prop} \label{prop:Hodge integral} For $g \geq 2$ we have
\[	F_g \in \QMod^{\log}_{2g-2}(\Gamma). \]
Moreover, the holomorphic anomaly equation (Conjecture~\ref{conj:HAE}) holds.
\end{prop}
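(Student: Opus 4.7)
The plan is to identify $F_g$ explicitly as the theta lift of an almost-holomorphic modular form for $\Gamma_0(2)$, and then to deduce both the quasimodularity and the holomorphic anomaly equation from Theorems~\ref{thm:list of almost-holomorphic modular form} and~\ref{thm:L on theta lift}.

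First I would recognize the inner sum $\sum_{\text{odd }k \mid \beta} k^{2g-3}\, \omega_g(\beta^2/(2k^2))$ in the Hodge integral formula as exactly the Hecke-type sum in the Fourier expansion \eqref{Lift expansion jo} given by Proposition~\ref{prop:lift of Gamma_0(p) ahm}, with parameters $p = 2$, $N = E_8$, $k = 2g-2$ and hence $\kappa = 2g-4$. Reading off coefficients, I would define an almost-holomorphic modular form $\phi_g \in \AHMod_{2g-4}(\Gamma_0(2))$ with $\ct(\phi_g) = 2(-1)^{g-1}\sum_{n \geq 0}\omega_g(n)q^n$, whose non-holomorphic completion is obtained by viewing the generating series
\[ \sum_{g \geq 0}\sum_{n \geq 0}\omega_g(n)\, z^{2g-2}q^n \]
as (after the standard completion prescription of \cite[Appendix A]{Enriques}) the Taylor expansion of an almost-holomorphic Jacobi form of weight $-2$ for $\Gamma_0(2)$; its $z^{2g-2}$-Taylor coefficient is then $\phi_g$, of depth at most $g-1$. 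One verifies the trace vanishing $\Tr(\phi_g)=0$, so that Proposition~\ref{prop:Gamma_0(p) to vector valued} produces a vector-valued Weil representation form $F_{\phi_g}$ on $M$ and the coefficient identity \eqref{cofficieent condition} needed for Proposition~\ref{prop:lift of Gamma_0(p) ahm} holds. The Fourier expansion \eqref{Lift expansion jo} of $\ct(\Lift(F_{\phi_g}))$ then matches the given formula for $F_g$ term by term, so $F_g = \Lift(F_{\phi_g})$ as quasimodular forms. The depth condition $k \geq 2d$ of Theorem~\ref{thm:list of almost-holomorphic modular form} becomes $2g-2 \geq 2(g-1)$ and is satisfied, whence $F_g \in \QMod^{\log}_{2g-2}(\Gamma)$.

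For the holomorphic anomaly equation, Theorem~\ref{thm:L on theta lift} gives
\[ \LO F_g = -\tfrac{1}{2\pi}\, \RO\, \Lift(\LM F_{\phi_g}). \]
The Jacobi-form viewpoint above arranges $\LM$ to act on the Taylor coefficients by shifting $z^{2g-2} \mapsto z^{2g-4}$, so $\LM F_{\phi_g}$ is a constant multiple of $F_{\phi_{g-1}}$ and hence $\Lift(\LM F_{\phi_g})$ is a constant multiple of $F_{g-1}$. Combined with the divisor equation (Lemma~\ref{lemma:Raising operator on geometry}), this yields the analytic side of the HAE as a constant times the forgetful pushforward $p_\ast \CC_{g-1}(\lambda)\cdot(-1)^{g-2}\lambda_{g-2}$. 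On the geometric side one evaluates $\LO^{\GW}_\lambda \CC_g$ against $(-1)^{g-1}\lambda_{g-1}$: the separating-boundary contributions and the $\psi$-terms are eliminated by standard Hodge integral vanishing (Mumford's relation $c(\BE)c(\BE^\vee)=1$ together with vanishing of $\lambda_{g-1}\lambda_g$ on separating boundary strata), and the surviving non-separating boundary term $\iota_\ast \CC_{g-1}(U_\lambda)$ reduces via the K\"unneth decomposition $U_\lambda = \lambda \otimes 1 + 1 \otimes \lambda$ to the desired $\RO_\lambda F_{g-1}$ expression.

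The principal obstacle will be the explicit identification of $\phi_g$ and the verification of its expected properties (trace vanishing, precise depth, and the coefficient identity \eqref{cofficieent condition}), together with the matching of the constants in both the Fourier expansion and the holomorphic anomaly equation. These reduce to concrete manipulations with theta functions and with Mumford-type Hodge integral relations on $\Mbar_g$, but none requires new theoretical input beyond the results already established in Sections~\ref{sec:Orthogonal modular forms} and~\ref{sec:theta lifts}.
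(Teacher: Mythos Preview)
Your proposal is correct and follows essentially the same route as the paper: identify $F_g$ as the theta lift of (the completion of) an explicit $\Gamma_0(2)$ quasimodular form via Proposition~\ref{prop:lift of Gamma_0(p) ahm}, then deduce the HAE from Theorem~\ref{thm:L on theta lift} together with $\LM$ acting by shifting $g\mapsto g-1$ on the input. One small arithmetic slip: since $M=U\oplus U(2)\oplus E_8(-2)$ has signature $(2,10)$, the input weight is $\kappa=k+1-n/2=2g-6$, not $2g-4$; the paper writes the input form explicitly as $f_g(\tau)=\big[\Theta(z,2\tau)^2/\Theta(z,\tau)^2\big]_{z^{2g-2}}\cdot\eta(2\tau)^8/\eta(\tau)^{16}$ and proves the Fricke relation and the coefficient identity (your ``trace vanishing'') as two short theta-function lemmas, while for the geometric side of the HAE it simply cites the computation in \cite[Sec.~4.4.4]{Enriques} rather than redoing the Hodge-integral cancellations you sketch.
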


To give the proof of the proposition we need some preparation.

For $g \geq 1$ define the quasimodular forms
\begin{align*}
f_g(\tau) & := \left[ \frac{\Theta(z, 2 \tau)^2}{\Theta(z,\tau)^2} \right]_{z^{2g-2}} \cdot \frac{\eta(2 \tau)^8}{\eta(\tau)^{16}}, \\
\widetilde{f}_g(\tau) & := 2^{-g-3} \left[ \frac{\Theta(z, \tau)^2}{\Theta(2z,2\tau)^2} \right]_{z^{2g-2}} \cdot \frac{\eta(\tau)^8}{\eta(2\tau)^{16}},
\end{align*}
where
\begin{equation} \label{defn:theta funciton}
\Theta(z) = \frac{\vartheta_1(z,\tau)}{\eta^3(\tau)} := (e^{z/2} - e^{-z/2}) \prod_{m \geq 1} \frac{(1-e^z q^m) (1-e^{-z} q^m)}{(1-q^m)^2} \end{equation}
is the normalized theta function.
\begin{lemma} \label{lemma:f_g 1}
Let $f_g^{\ast}, \widetilde{f}_g^{\ast}$ be the almost-holomorphic completions of $f_g, \widetilde{f}_g$. Then
$f_g$ is the Fricke involute of $\widetilde{f}_g$.
\end{lemma}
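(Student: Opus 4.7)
The plan is to establish the identity by direct computation, using the classical transformation laws for $\Theta$ and $\eta$ under the Fricke involution $\tau \mapsto -1/(2\tau)$, and then to recognize the resulting anomalous expression as the Fricke image of the almost-holomorphic completion of $\widetilde f_g$.

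Recall that $\Theta(z,\tau) = \vartheta_1(z,\tau)/\eta(\tau)^3$ satisfies $\Theta(z/\tau,-1/\tau) = \tau^{-1} e^{\pi i z^2/\tau} \Theta(z,\tau)$, and $\eta(-1/\tau) = \sqrt{-i\tau}\,\eta(\tau)$. First, I will apply the theta law with base $2\tau$ (to $\Theta(z,-1/(2\tau))$) and with base $\tau$ (to $\Theta(2z,-1/\tau)$). After the substitution $w = 2\tau z$, the two anomaly factors combine to give
$$\frac{\Theta(z,-1/(2\tau))^2}{\Theta(2z,-1/\tau)^2} \;=\; \frac{1}{4}\, e^{-\pi i w^2/\tau}\, \frac{\Theta(w,2\tau)^2}{\Theta(w,\tau)^2}.$$
Together with $\eta(-1/(2\tau))^8 / \eta(-1/\tau)^{16} = 16\,\tau^{-4}\, \eta(2\tau)^8/\eta(\tau)^{16}$ and the Jacobian $(2\tau)^{2g-2}$ arising when one extracts $[z^{2g-2}]$ in the new variable $w$, a direct bookkeeping of the powers of $2$ and $\tau$ (all of which cancel exactly against the Fricke slash normalization $2^{(6-2g)/2}\tau^{6-2g}$) yields
$$\bigl(\widetilde f_g \bigm|_{2g-6} W_2\bigr)(\tau) \;=\; [w^{2g-2}] \Bigl(e^{-\pi i w^2/\tau}\, \frac{\Theta(w,2\tau)^2}{\Theta(w,\tau)^2}\Bigr) \cdot \frac{\eta(2\tau)^8}{\eta(\tau)^{16}}.$$

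Next, I will interpret the exponential factor as the source of the almost-holomorphic completion. Expanding $e^{-\pi i w^2/\tau}$ in its Taylor series and extracting $[w^{2g-2}]$ expresses the right-hand side as a polynomial in $1/\tau$ whose constant term (in $1/\tau$) is exactly $f_g(\tau)$ and whose higher-order coefficients are lower Taylor coefficients $[w^{2g-2-2k}](\Theta(w,2\tau)^2/\Theta(w,\tau)^2)$, themselves quasimodular of weights $2g-6-2k$ on $\Gamma_0(2)$. By the constant-term isomorphism (Theorem \ref{thm:constant term} in the classical $\SL_2$ setting, cf.\ Section \ref{sec:classical case}), these $1/\tau$-contributions correspond precisely to the $1/y$-completion data of $\widetilde f_g^*$ after being acted on by $W_2$: under $\tau \mapsto -1/(2\tau)$ one has $1/y_{W_2 \tau} = 2|\tau|^2/y$, which is the natural home of the $1/\tau$-terms produced by the Jacobi anomaly. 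Consequently the displayed identity is the unique holomorphic section of the completion isomorphism applied to $\widetilde f_g^*|_{2g-6} W_2 = f_g^*$, yielding the lemma.

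The main obstacle is matching the $1/\tau$-expansion produced by $e^{-\pi i w^2/\tau}$ with the Fricke image of the $1/y$-completion of $\widetilde f_g^*$. This identification is guaranteed by the general theory of Taylor coefficients of meromorphic Jacobi forms (cf.\ \cite[Appendix A]{Enriques}): the Taylor expansion in $w$ of $e^{-\pi i w^2/\tau}$ times an index-zero ratio of theta functions converts the natural Jacobi anomaly of index $1$ into precisely the $1/y$-completion data of the quasimodular Taylor coefficients; all remaining numerical bookkeeping of powers of $2$ and $\tau$ is routine.
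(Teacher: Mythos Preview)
Your proof is correct and follows essentially the same route as the paper's: both arguments apply the theta and eta transformation laws to the full generating function in the elliptic variable, produce the Jacobi anomaly factor $e^{\pm \pi i w^2/\tau}$, extract the $w^{2g-2}$-coefficient to obtain the Fricke slash of one quasimodular form as the other plus lower-weight $1/\tau$-corrections, and then pass to completions. The only cosmetic difference is that the paper works in the direction $F \mapsto \widetilde F$ (showing $e^{2\pi i z^2/\tau}\tau^4 F(z/\tau,-1/(2\tau)) = \widetilde F(z,\tau)$ and then taking $z^{2g-2}$-coefficients), while you go from $\widetilde f_g$ to $f_g$; your justification of the final step via the Jacobi-form interpretation of the $1/\tau$-expansion is in fact slightly more explicit than the paper's ``It follows that\ldots''.
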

\begin{proof}
Let $\varphi(z,\tau) = \Theta(z,\tau)^2$ and define 
\begin{align*}
F(z,\tau) &:= \frac{\varphi(z, 2 \tau)}{\varphi(z,\tau)} \cdot \frac{\eta(2 \tau)^8}{\eta(\tau)^{16}} \\
\widetilde{F}(z,\tau) &:= \frac{1}{2^6} \frac{\varphi(z,\tau)}{\varphi(2z, 2 \tau)} \frac{\eta(\tau)^8}{\eta(2 \tau)^{16}}.
\end{align*}
Since $\varphi(z,\tau) = \Theta(z,\tau)^2$ is a Jacobi form of index $1$ and weight $-2$, it satisfies
\[ \varphi\left( \frac{z}{\tau}, -\frac{1}{\tau} \right) = \tau^{-2} e^{2 \pi i \frac{z^2}{\tau}} \varphi(z,\tau). \]
Moreover, we have $\eta^8(-1/\tau) = \tau^4 \eta(\tau)$. Hence we have the transformation
\[ e^{2 \pi i \frac{z^2}{\tau}} \tau^4 F\left( \frac{z}{\tau}, -\frac{1}{2 \tau}\right)
= \widetilde{F}(z,\tau) \]
Taking $z^{2g-2}$-coefficients, we find that
\[ 2^{\frac{6-2g}{2}} \tau^{6-2g} f_g(-1/2 \tau) + (\ldots) = \widetilde{f}_g(\tau) \]
where $(\ldots)$ stands for terms involving quasimodular forms of lower weight.
It follows that the Fricke involute of $f_g^{\ast}(\tau)$ is $\widetilde{f}_g^{\ast}(\tau)$.
\end{proof}
\begin{lemma} \label{lemma:f_g 2}
Write $\widetilde{f}_g = \sum_n c_n q^n$ and $f_g = \sum_n d_n q^n$. Then $c_{2n} + 2^{g-4} d_n = 0$.
\end{lemma}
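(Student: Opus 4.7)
By Lemma~\ref{lemma:f_g 1}, the forms $f_g$ and $\widetilde{f}_g$ are Fricke involutes of one another on $\Gamma_0(2)$, both of weight $\kappa = 2g-6$. The remark following Proposition~\ref{prop:lift of Gamma_0(p) ahm} (applied with $p = 2$ and $\kappa = 2g-6$) identifies the relation $c_{2n} + 2^{g-4}d_n = 0$ for all $n$ as precisely the vanishing of the trace
\[
\mathrm{Tr}^{\mathrm{SL}_2(\mathbb{Z})}_{\Gamma_0(2)}(f_g) \;=\; 0.
\]
So the task reduces to proving this trace vanishes.

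I would recast the $g$-indexed claim as a single generating-function identity on the $z$-variable. Define
\[
F(z,\tau) := \frac{\Theta(z,2\tau)^2}{\Theta(z,\tau)^2}\frac{\eta(2\tau)^8}{\eta(\tau)^{16}}, \qquad \widetilde{F}(z,\tau) := 2^{-6}\frac{\Theta(z,\tau)^2}{\Theta(2z,2\tau)^2}\frac{\eta(\tau)^8}{\eta(2\tau)^{16}},
\]
so that $f_g = [z^{2g-2}]F(z,\tau)$ and $\widetilde{f}_g = 2^{3-g}[z^{2g-2}]\widetilde{F}(z,\tau)$. Since $(f|U_2)(\tau) = \tfrac12\bigl(f(\tau/2) + f((\tau+1)/2)\bigr)$ and the trace of a weight-$\kappa$ form on $\Gamma_0(2)$ has the standard coset expression $\mathrm{Tr}(f) = f + 2^{1-\kappa/2}(f|_\kappa W_2)|U_2$, after rescaling $z \mapsto z/2$ to absorb the $g$-dependent power of $2$, the vanishing $\mathrm{Tr}(f_g) = 0$ becomes the single identity
\[
\widetilde{F}\!\left(z,\tfrac{\tau}{2}\right) + \widetilde{F}\!\left(z,\tfrac{\tau+1}{2}\right) + \tfrac{1}{16}\,F\!\left(\tfrac{z}{2},\tau\right) \;=\; 0.
\]
The Jacobi triple product gives an explicit infinite-product expression
\[
F(z,\tau) = \prod_{m \text{ odd}}\frac{1}{(1-e^z q^m)^2(1-e^{-z}q^m)^2(1-q^m)^{12}}\,\prod_{m\geq 1}\frac{1}{(1-q^{2m})^8},
\]
and a parallel product for $\widetilde{F}$ via $\Theta(z,\tau)/\Theta(2z,2\tau) = (e^{z/2}+e^{-z/2})^{-1}\prod_{m\geq 1}(1+q^m)^2/[(1+e^z q^m)(1+e^{-z}q^m)]$. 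Substituting $q \mapsto \pm q^{1/2}$ in the product for $\widetilde{F}$ and adding retains only the terms of even parity in the $q^{1/2}$-exponent; matching the resulting product against $-\tfrac{1}{16}F(z/2,\tau)$ is the content of the identity.

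The main obstacle will be the explicit theta-function bookkeeping in this last step: one has to combine the classical duplication identities for $\vartheta_1$ (e.g.\ $\vartheta_1(z,\tau)\vartheta_4(z,\tau)$ proportional to $\vartheta_1(2z,2\tau)/\eta(2\tau)$) with the half-period formulas expressing $\eta(\tau/2)$ and $\eta((\tau+1)/2)$ in terms of $\eta(\tau)$ and $\eta(2\tau)$, and carefully track the signs produced by $q^{1/2}\mapsto -q^{1/2}$. Each step is classical but the combined calculation is lengthy. As an alternative route (cleaner but less uniform in $g$), one can instead exploit that $\mathrm{Tr}(f_g)$ is a weakly holomorphic modular form on $\mathrm{SL}_2(\mathbb{Z})$ of weight $2g-6$ with pole of order $\leq 1$ at infinity, which lies in a finite-dimensional space; the vanishing then follows from the direct verification of finitely many Fourier coefficient identities, read off from the product expansions above.
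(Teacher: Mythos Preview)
Your approach is essentially the same as the paper's: both reduce the coefficient identity to a single generating-function identity in $z$, and after unwinding your $F,\widetilde{F}$ (multiplying through by $\varphi(2z,\tau)\eta(\tau)^{16}$ and using $\eta(\tau+1)^{16}=e^{4\pi i/3}\eta(\tau)^{16}$, $\varphi(2z,\tau+1)=\varphi(2z,\tau)$) one lands exactly on the paper's theta identity
\[
\varphi(z,2\tau)\eta(2\tau)^8 = -\tfrac14\Big[\varphi(z/2,\tau/2)\eta(\tau/2)^8 + e^{2\pi i/3}\varphi(z/2,(\tau+1)/2)\eta((\tau+1)/2)^8\Big],
\]
which neither you nor the paper actually verifies in detail. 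One small correction: in your displayed identity the last term should be $\tfrac{1}{16}F(2z,\tau)$, not $\tfrac{1}{16}F(z/2,\tau)$, since $2^{2g-6}[z^{2g-2}]F(z,\tau)=\tfrac{1}{16}[z^{2g-2}]F(2z,\tau)$.
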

\begin{proof}
We need to show that
\[ \widetilde{f}_g(\tau) + \widetilde{f}_g(\tau + 1) = 2^{g-4} f_g(2 \tau). \]
By rewriting this in terms of generating series, this follows from the identity
\[ \varphi(z,2 \tau) \eta(2 \tau)^8 =
-\frac{1}{4} \left[ \varphi\left(\frac{z}{2}, \frac{\tau}{2}\right) \eta\left( \frac{\tau}{2} \right)^8 + e^{\frac{2\pi i}{3}} \varphi\left( \frac{z}{2},\frac{\tau+1}{2} \right) \eta\left(\frac{\tau+1}{2}\right)^8 \right]. \qedhere
\]
\end{proof}
\begin{proof}[Proof of Proposition~\ref{prop:Hodge integral}]
Recall that we have the expansion
\[ \Theta(z) = z \exp\left(-2\sum_{k\geq 2} G_k(\tau) \frac{z^k}{k!} \right) \]
where $G_k$ are the weight $k$ Eisenstein series defined to be zero for odd $k$, and for even $k$ by
\[ G_k(\tau) = - \frac{B_k}{2 \cdot k} + \sum_{n \geq 1} \sum_{d|n} d^{k-1} q^n \]
Since $\frac{d}{dG_2} G_2(2 \tau) = \frac{1}{2}$ 
(e.g. \cite[Lemma.2.20]{HilbHAE}) it follows that
\[ \frac{\Theta(z,2 \tau)^2}{\Theta(z,\tau)^2} = \exp\left( z^2 G_2(\tau) + \ldots \right) \]
where $(...)$ only involves modular forms.
Hence
\[ \frac{d}{dG_2} f_g = f_{g-1}. \]
Thus $f_g$ is a quasimodular form of depth $g-1$ and weight $2g-2$ for the group $\Gamma_0(2)$.

Moreover, observe that
\[ \sum_{n} \omega_g(n) q^n  = f_g(\tau). \]

By Proposition~\ref{prop:lift of Gamma_0(p) ahm}
and Lemmas~\ref{lemma:f_g 1} and~\ref{lemma:f_g 2}
we thus get that $\Lift(F_{f^{\ast}_g})$ (with notation as in the proposition) is an almost-holomorphic modular form for $O^{+}(M)$ with constant term
\[ \ct(\Lift(F_{f^{\ast}_g}))
=
\sum_{\beta > 0} Q^{\beta} \sum_{\substack{\delta | \beta \\ \gcd(\delta,2) = 1}}
\delta^{2g-3}
\omega_{g}\left( \frac{(\beta,\beta)_L}{2 \delta^2} \right).
\]
where we used that for $\beta = (\ell,a,\alpha) \in L = U \oplus E_8(-1)$ we have $Q^{\beta} = q_1^a q_2^{\ell} \zeta^{\alpha}$.
Hence
\[ F_g = 2(-1)^{g-1} \cdot \ct(\Lift(F_{f^{\ast}_g})). \]
This proves the first claim.

For the holomorphic anomaly equation, let $F_g^{\ast}$ be the completion of $F_g$. Then we compute:
\begin{align*}
-4 \pi i \LO(F_g^{\ast} )
= &
-4 \pi i \cdot 2 (-1)^{g-1} \LO( \mathrm{Lift}(F_{f_g^{\ast}}) \\
=&  -2 i \cdot 2 (-1)^{g-1} \RO \mathrm{Lift}\left( \LM( F_{f_g}^{\ast} ) \right) \\
=& -4 i \cdot 2 (-1)^{g-1}\RO \mathrm{Lift}\left( F_{\LM(f_{g}^{\ast})} \right) = (\ast)
\end{align*}
where we used Theorem~\ref{thm:L on theta lift}
and the second claim in Proposition~\ref{prop:Gamma_0(p) to vector valued}. Then we use
\[ \LM(f_{g}) = -\frac{1}{8 \pi} \frac{d}{d G_2} f_g 
= -\frac{1}{8 \pi} f_{g-1} \]
to get
\begin{align*}
(\ast) & = \frac{1}{2 \pi i} \cdot 2 (-1)^{g-1}\RO \mathrm{Lift}\left( F_{f_{g-1}^{\ast}} \right) \\
& = - \frac{1}{2 \pi i} \RO F_{g-1}
\end{align*}
But this gives exactly the right hand side of the holomorphic anomaly equation by the computation done in 
\cite[Sec.4.4.4]{Enriques}.
\end{proof}

For $g=1$, the series $F_g$ is not defined as written so Conjecture~\ref{conj:main} does not apply.
However, for $\lambda \in H^2(Y,\BQ)$ we can consider the series
\[ F_{1,\lambda} := \int_{\Mbar_{1,1}} \CC_1(\lambda). \]
Then the evaluation in \cite{Enriques} gives that
\[
F_{1,\lambda} = -\frac{1}{8} \frac{\frac{d}{d \lambda} \Phi_4}{\Phi_4}
\]
where $\Phi_4$ is the Borcherds-Enriques form
and $\frac{d}{d \lambda} Q^{\beta} = (\beta,\lambda)_L Q^{\beta}$.
Thus we have
\[ F_{1, \star} = -\frac{1}{8} \frac{\partial(\Phi_4)}{\Phi_4} \]
and hence $F_{1, \star}$ lies in $\QMod_{1,1}^{\log}(\Gamma)$.

\subsection{Fourier-Jacobi series}
Consider the decomposition $L = U \oplus E_8(-1)$ and let $e,f$ be effective generators of $U$ with intersection form $\binom{0\ 1}{1\ 0}$. If $Y$ is generic, then $e,f$ are just the classes of two rigid smooth ellitpic curves intersecting in a point and the linear systems $|2e|$ and $|2f|$ define elliptic fibrations
for which they are half-fibers.
We can then
consider the partial series
\[
F_{g,k}( \taut ; \tau_{k_1}(\gamma_1) \cdots \tau_{k_n}(\gamma_n) )
=
\sum_{d \geq 0} \sum_{\alpha \in E_8(-1)}
q^d \zeta^{\alpha} 
\left\langle \taut ; \tau_{k_1}(\gamma_1) \cdots \tau_{k_n}(\gamma_n) \right\rangle^{Y}_{g,ke+df+\alpha}.
\]

Then by \cite[Theorem 4.3]{Enriques} the series $F_{g,k}( \taut; \tau_{m_1}(\gamma_1) \cdots \tau_{m_n}(\gamma_n) )$ is a quasi-Jacobi form
for $\Gamma_0(2) \ltimes H(N)$ of index $k$, with pole of order $\leq k$ at cusps. More precisely\footnote{The definition of quasi-Jacobi form in \cite{Enriques} is stronger than ours, it requires boundedness at cusps.},
\begin{equation} F_{g,k}(  \taut; \tau_{m_1}(\gamma_1) \cdots \tau_{m_n}(\gamma_n) ) \in 
	\left(\frac{ \eta^{8}(2 \tau) }{\eta^{16}(\tau)} \right)^k
	\QJac_{\frac{1}{2} k Q_{E_8}}(\Gamma_0(2)). \label{dfsdf333} \end{equation}
This matches perfectly with Conjecture~\ref{conj:main}.
Indeed, by Corollary~\ref{thm:FJ expansion of quasimodular forms}
the Fourier-Jacobi coefficients of orthogonal quasimodular forms 
for $\Gamma$ are quasi-Jacobi forms for $\Gamma_0(2) \ltimes H(N)$.

Moreover, the holomorphic anomaly equation 
proven in \cite{Enriques} for the $F_{g,k}$
is compatible with
the formula for the lowering operator applied to Fourier-Jaobi coefficients of orthogonal moduar forms, as established in Corollary~\ref{thm:FJ expansion of quasimodular forms}. 
Hence Conjecture~\ref{conj:HAE} is also compatible with taking Fourier expansions.

Thus we see that Conjectures~\ref{conj:main} and~\ref{conj:HAE}
hold on the level of Fourier-Jacobi expansions.
In our view this is the strongest evidence
that these conjectures holds for arbitrary insertions.

\begin{rmk}
We remark that \eqref{dfsdf333} is not satisfied by every logarithmic quasimodular form: A function $f \in \QMod^{\log}_{k,s}(\Gamma)$ satisfies \eqref{dfsdf333} if and only if every of its constant terms $[f]_{\tilde{q}^0}(v)$ lies in $\QMod(\Gamma_0(2))$.
Indeed, this follows from the expansion $\Phi_4 = \eta(2 \tau)^8/\eta(\tau)^6 + \sum_{\ell} a_{\ell} \tilde{q}^{\ell}$
where $a_{\ell}$ are holomorphic Jacobi forms bounded at the cusps.
This observation allows one to work with a slightly smaller subspace in practice, but is not extremely useful. For example,
$\Mod^{\log}_{4,0} \cong \Mod_{20}(\Gamma)$ has dimension $22$, and the forms satisfying \eqref{dfsdf333} have dimension $16$.
\end{rmk}

\subsection{The Enriques Calabi-Yau threefold}
\label{subsec:Enriques CY3}
Let $E$ be an elliptic curve,
let $X \to Y$ as before the K3 cover of the Enriques surface $Y$ and let $\iota : X \to X$ be the covering involution.
The Enrique Calabi-Yau threefold is the quotient
\[ Q = (S \times E)/\BZ_2 \]
where $\BZ_2$ acts by $(s,e) \mapsto (\iota(s), -e)$.
Modulo torsion, there is an isomorphism
\[ H_2(Q,\BZ) = H_2(Y,\BZ) \oplus \BZ [E'] \]
where $E'$ is the fiber of the elliptic fibration $\tilde{\pi} : Q \to Y$
and the embedding $H_2(Y,\BZ) \subset H_2(Q,\BZ)$ is given by pushforward along one of the $4$-sections of $\tilde{\pi}$.
We refer to \cite{MP,Enriques} for more details.

Consider the generating series
of Gromov-Witten invariants
for curve classes of degree $m$ over the base of the K3 fibration $\pi : Q \to [E/\BZ_2]$,
\[ F^Q_{g,m} = \sum_{\beta \geq 0} 
\langle 1 \rangle^{Q}_{g,(\beta,m)}
Q^{\beta}. \]

As before identify $H^2(Y,\BZ) \cong L = U \oplus E_8(-1)$ and $Q^{\beta} = \exp(2 \pi i (z,\beta))$.

By a degeneration argument as in \cite{MP},
the Conjectures~\ref{conj:main} and \ref{conj:HAE}
imply the following modularity statement for these relative potentials:
\begin{conj} \label{conj:Enriques CY3}
\label{conj:Enriques CY3} For $g > 1$ or $m>0$, the series $F^Q_{g,m}$ is a logarithmic quasimodular form
for the pair $(\CD_M,\CH_{-2})$ for the lattice $M=U\oplus U(2) \oplus E_8(-2)$ and group $\Gamma=O^{+}(M)$,
\[ F^Q_{g,m} \in \QMod^{\log}_{2g-2}(\Gamma). \]
It satisfies the holomorphic anomaly equation
\[ -4 \pi i \LO_{\lambda} F^Q_{g,m} = (2m-1) \frac{d}{d \lambda} F^Q_{g-1,m}
+ 2 \sum_{\substack{g=g_1+g_2 \\ m = m_1 + m_2}} m_1 F^Q_{g_1,m_1} \frac{d}{d \lambda} F^Q_{g_2,m_2}. \]
\end{conj}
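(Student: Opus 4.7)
The plan is to deduce Conjecture~\ref{conj:Enriques CY3} from Conjectures~\ref{conj:main} and~\ref{conj:HAE} by applying the degeneration formula for Gromov-Witten invariants to a nodal degeneration of the elliptic factor $E$, following the strategy of \cite{MP} for K3-fibered Calabi-Yau threefolds adapted to the $\BZ_2$-quotient setting. Concretely, I would consider a one-parameter family $\mathcal{E} \to B$ degenerating $E$ to a nodal rational curve $N$, equivariantly for the involution $e \mapsto -e$, and descend to a degeneration of the orbifold base $[E/\BZ_2]$. The induced degeneration $\mathcal{Q} \to B$ of $Q = (X \times E)/\BZ_2$ has central fiber built from the trivial $N$-bundle $(X \times N)/\BZ_2$ over $Y$, whose normalization is a $\BP^1$-bundle over $Y$ glued to itself along two copies of $Y$.

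Applying the degeneration formula then expresses $F^Q_{g,m}$ as a sum over splittings of the genus, the curve class $\beta \in L$, and the degree $m$ over the base, of products of relative Gromov-Witten invariants of $Y$-bundles paired along the gluing divisor via the diagonal cohomology class $U_{\lambda}$ that appears in the definition of $\LO^{\GW}_{\lambda}$. Since the elliptic fibration $\tilde\pi: Q \to Y$ is isotrivial, the elliptic direction contributes only numerical factors (coming from Bernoulli-type generating series for elliptic invariants) and the nontrivial geometry is carried by insertions on $Y$. The resulting expression is a finite combination of Gromov-Witten classes $\CC^Y_{g'}(\gamma_1,\dots,\gamma_n)$ paired against tautological classes on $\Mbar_{g',n}$; Conjecture~\ref{conj:main} then delivers the desired membership $F^Q_{g,m} \in \QMod^{\log}_{2g-2}(\Gamma)$, with the weight count matching because the insertions arising from the gluing divisor are complex codimension two and do not contribute to the rank.

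The holomorphic anomaly equation is obtained by applying $\LO_{\lambda}$ to this degeneration-formula expansion and invoking Conjecture~\ref{conj:HAE} for each constituent $\CC^Y_{g'}$. The genus-reducing self-gluing term in $\LO^{\GW}_{\lambda}$ recombines with the degeneration-formula bookkeeping to produce $(2m-1) \frac{d}{d\lambda} F^Q_{g-1,m}$ (the coefficient $2m-1$ encoding the covering data at the node together with the four $\BZ_2$-branch points of $E$), while the splitting term in $\LO^{\GW}_{\lambda}$ reassembles into the convolution $\sum_{g_1+g_2=g,\, m_1+m_2=m} m_1 F^Q_{g_1,m_1} \frac{d}{d\lambda} F^Q_{g_2,m_2}$, where the factor $m_1$ arises from the standard multiplicity $m_1$ entering the degeneration formula at the gluing divisor along the elliptic direction.

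The main obstacle is the verification of the precise numerical coefficients $(2m-1)$ and $2$ in the holomorphic anomaly equation. Producing these constants requires carefully tracking the combinatorics of the $\BZ_2$-fixed locus on $E$ (the four $2$-torsion points and the behavior of $\pm 1$ on the nodal degeneration) together with the relative multiplicities imposed on stable maps at the gluing divisor, and then symmetrizing the resulting convolution over the ordered decompositions $(m_1,m_2)$. While each piece of this combinatorial bookkeeping is parallel to the analogous analysis in \cite{MP}, the interplay with the covering involution and with the orbifold structure of the base $[E/\BZ_2]$ is the main technical point that must be carried out in detail.
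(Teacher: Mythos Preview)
The statement you are addressing is labelled as a \emph{conjecture} in the paper, not a theorem, and the paper does not supply a proof. The paper's entire treatment is the single sentence preceding the statement: ``By a degeneration argument as in \cite{MP}, the Conjectures~\ref{conj:main} and~\ref{conj:HAE} imply the following modularity statement for these relative potentials.'' So there is no proof in the paper to compare against; the paper merely asserts that a degeneration argument in the style of Maulik--Pandharipande should derive Conjecture~\ref{conj:Enriques CY3} from Conjectures~\ref{conj:main} and~\ref{conj:HAE}, and leaves it at that.

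Your proposal is exactly this suggested approach, worked out in somewhat more detail than the paper itself provides. In that sense you are on the same track the authors indicate. Your sketch is honest about where the real work lies: the bookkeeping that produces the specific coefficients $(2m-1)$ and $2$ in the holomorphic anomaly equation, and the interaction of the $\BZ_2$-quotient with the nodal degeneration of $E$. These are precisely the points that are not addressed in the paper and that would need to be carried out to turn the heuristic into a proof (conditional on Conjectures~\ref{conj:main} and~\ref{conj:HAE}). Note also that your interpretation of the coefficient $2m-1$ via ``the four $\BZ_2$-branch points of $E$'' is speculative; in the analogous $K3\times E$ setting the corresponding coefficient arises from the Euler characteristic of the base curve together with the self-gluing combinatorics, and the Enriques quotient modifies this count in a way that should be computed rather than guessed.
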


\section{Bi-elliptic surfaces} \label{sec:bi-elliptic surfaces}
\subsection{Background}
An bi-elliptic (or hyperelliptic) surface is a minimal smooth projective surface $S$ of Kodaira dimension zero with $q(S)=1$ and $p_g(S)>0$.
They are constructed as quotients $(E \times F)/G$ of the product of two elliptic curves $E \times F$ by a finite subgroup $G$ of $F$, which acts on $F$ by translations and by $E$ on automorphisms, not all translations.
In this way, they are analogues of the Enriques surface, but where the covering is an abelian surface rather than a K3.
There are 7 deformation types of bielliptic surfaces, as classified by Bagnera-De Franchis \cite[Chapter 12]{CilibertoBook}. For simplicity {\em we will always assume below that $G$ is cyclic,} in which case $S$ is called of {\em split type}. We then have that 
\[ G=\BZ/N \BZ, \quad \text{ for } \quad N \in \{ 2,3,4, 6\} \]
and $G$ acts by translation with an $N$-torsion point on $F$ and by multiplication by an $N$-th root on $E$ (for $N=3,4,6$ we must choose a special $E$). 
Two elliptic fibrations exist on $S$. The first is induced by the projection from $E \times F$ to the second factor,
\[ \pi : S \to F/G \]
and it has a section $e=[0_E \times F]/G$. All fibers are isomorphic to $E$.
We let $f \in H^2(S,\BZ)$ be the class of a fiber.
The Neron-Severi group of $S$ modulo torsion is then $N(S) = \BZ e \oplus \BZ f$ with intersection form $\binom{0\ 1}{1\ 0}$. For the second, the projection to the first factor of $E \times F$ induces the fibration
\[ S \to \p^1 = E/G \]
which has fibers of multiplicity $N$ of class $N e$, and the $N$-section $f$.

\subsection{Modularity}
As before we consider the generating series of Gromov-Witten classes
\begin{equation} 
\label{CCg bielliptic} \CC_g^{S}(\gamma_1, \ldots, \gamma_n) = \sum_{a,b \geq 0} \CC_{g, a e + b f}^{S}(\gamma_1, \ldots, \gamma_n) q_1^a q_2^b. \end{equation}
For elliptic fibrations with sections, a modularity for fiberwise (or relative) Gromov-Witten potentials was conjectured in \cite{HAE}.
Applied to the fibration $S \to F/G$, this leads to the prediction that the $q_1$-coefficients of \eqref{CCg bielliptic} are quasimodular forms for $\SL_2(\BZ)$. In the other direction, a slight generalization of this conjecture for elliptic fibrations without sections (compare with the Enriques case \cite{Enriques}) leads to the prediction that the $q_2$-coefficients of \eqref{CCg bielliptic} are quasimodular for $\Gamma_0(N)$.
The holomorphic anomaly equations can be found likewise.
Altogether one has the following conjecture:

\begin{conj} \label{conj:bielliptic surface} Let $S$ be a bi-elliptic surface $(E \times F)/\BZ_N$ of split type. Then for $3g-3+n>0$ we have
\[ \CC_g^{S}(\gamma_1, \ldots, \gamma_n)
\in H^{\ast}(\Mbar_{g,n}) \otimes
\QMod(\Gamma_1(N)) \otimes \QMod(\SL_2(\BZ)) \]
Moreover, for every $i \in \{ 1,2 \}$ we have the holomorphic anomaly equation:
\begin{align*}
\frac{d}{dG_2(q_i)} 
\CC_g^{S}(\gamma_1, \ldots, \gamma_n)
& = 
	\iota_{\ast} \CC_{g-1}^S(\gamma_1, \ldots, \gamma_n, U_{\lambda_i}) \\
	& + \sum_{\substack{g = g_1 + g_2, \ell \\ \{ 1, \ldots, n \} = A \sqcup B }} 
	j_{\ast}( \CC_{g_1}(\gamma_A, \delta_\ell ) \boxtimes \CC_{g_2}(\gamma_B, \delta_\ell^{\vee} ) )  \\
	& - 2 \psi_i \CC_{g}( \gamma_1, \ldots, U_{\lambda_i}(\gamma_i), \ldots, \gamma_n).
\end{align*}
where $U_{\lambda_i} = \lambda_i \otimes 1 + 1 \otimes \lambda_i$ where
\[ \lambda_i = \begin{cases} e & \text{ if } i=1 \\ f & \text{ if } i=2 \end{cases}
\]
and as before we wrote $U_{\lambda_i} = \sum_{\ell} \delta_{\ell} \otimes \delta_{\ell}^{\vee}$ for the K\"unneth decomposition.
\end{conj}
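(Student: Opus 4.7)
The plan is to reduce the conjecture to the modularity conjectures for elliptic fibrations of \cite{HAE,RES}, applied separately to each of the two elliptic fibrations on $S$: the projection $\pi_1 : S \to F/G$ with section $e$ and fiber class $f$, and the projection $\pi_2 : S \to \p^1 = E/G$, which has no section but has fibers of multiplicity $N$ and class $Ne$. Since the N\'eron--Severi group is $\BZ e \oplus \BZ f$ and $q_1, q_2$ track the coefficients of $e, f$ respectively, the first fibration governs the $q_2$-dependence and the second governs the $q_1$-dependence. The argument splits into three steps: (i) one-variable quasimodularity in $q_2$ from $\pi_1$; (ii) one-variable quasimodularity in $q_1$ from $\pi_2$; (iii) promotion of these two statements to joint quasimodularity in the tensor product.

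For step (i), I would apply the conjecture of \cite{HAE} for elliptic fibrations with section: for every fixed base degree $a$ the partial series $\sum_{b \geq 0} \CC_{g, ae+bf}^S(\gamma_1,\ldots,\gamma_n) q_2^b$ is quasimodular in $q_2$ for $\SL_2(\BZ)$, with weight determined by the cohomological degree and depth bounded uniformly in $a$. For step (ii), I would apply the natural extension of \cite{HAE,RES} to elliptic fibrations with multiple fibers, parallel to the Enriques case \cite{Enriques} where level $\Gamma_0(2)$ appears; here the $N$-torsion translation defining the multiple fibers forces $\Gamma_1(N)$ as the modular group, and the $q_1$-series for each fixed $b$ is quasimodular of bounded weight and depth. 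For step (iii), I would argue as follows: the uniform weight and depth bounds place the $q_2$-series (for each $a$) in a fixed finite-dimensional space of quasimodular forms for $\SL_2(\BZ)$. Choosing a basis $\psi_1, \dots, \psi_s$ of that space, $\CC_g^S$ expands uniquely as $\sum_i \alpha_i(q_1) \psi_i(q_2)$; by Cramer's rule each $\alpha_i(q_1)$ is a rational linear combination of the $q_2^b$-coefficients of $\CC_g^S$, which are quasimodular in $q_1$ for $\Gamma_1(N)$ by (ii). This yields $\CC_g^S \in \QMod(\Gamma_1(N)) \otimes \QMod(\SL_2(\BZ))$.

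The holomorphic anomaly equations in each variable follow from the corresponding HAE's for the elliptic fibration $\pi_i$, which identify $d/dG_2(q_i)$ with a geometric boundary contribution arising from degenerating a fiber of class $\lambda_i \in \{ e, f \}$ to a nodal curve. The input is the degeneration of $S$ to the normal cone of a smooth fiber of $\pi_i$, which after applying the degeneration formula produces exactly the three boundary terms on the right-hand side of the conjectured equation (the $U_{\lambda_i}$ insertion, the gluing terms, and the $\psi$-class correction), as in the proofs of the elliptic fibration HAE in \cite{HAE,Enriques}. The main obstacle is that the underlying modularity conjectures for elliptic fibrations are themselves still open in the generality required. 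An alternative approach that avoids this obstruction is to exploit the cover $E \times F \to S$ and express the Gromov--Witten invariants of $S$ as $\BZ_N$-equivariant invariants of the abelian surface $E \times F$, where modularity can be established directly via theta function identities on a product of elliptic curves (and where two-variable quasimodularity is essentially visible from the two factors). In either approach the delicate point is tracking the correct rational factors in the boundary terms of the HAE, which arise from the degree of the quotient map and the self-intersection numbers of the fiber classes $e$ and $f$.
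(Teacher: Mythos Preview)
This statement is a \emph{conjecture} in the paper, not a theorem; the paper gives no proof. The paper motivates it exactly as you do: the fibration $S \to F/G$ with section, via the conjectures of \cite{HAE,RES}, predicts that the $q_1$-coefficients are $\SL_2(\BZ)$-quasimodular in $q_2$, while the fibration $S \to \mathbb{P}^1 = E/G$ with multiple fibers of order $N$, via the extension of these conjectures to fibrations without section (parallel to the Enriques case in \cite{Enriques}), predicts $\Gamma_0(N)$-quasimodularity in $q_1$ (equivalently $\Gamma_1(N)$ for $N \in \{2,3,4,6\}$). The paper then simply states ``We expect that Conjecture~\ref{conj:bielliptic surface} can be proven in general by a degeneration argument parallel to \cite{HAE}'' and gives evidence in genus $1$ and from Blomme \cite{Blomme}, but does not carry out a proof.

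Your steps (i) and (ii) therefore match the paper's own motivation precisely. Your step (iii) --- the finite-dimensionality / Cramer's rule argument promoting the two one-variable quasimodularity statements to a joint statement in the tensor product --- is a correct and natural bridging step that the paper leaves implicit. You also correctly identify the genuine obstruction: the underlying elliptic-fibration modularity conjectures are still open in the required generality, so your sketch is a conditional reduction rather than a proof. The alternative route via $\BZ_N$-equivariant invariants of $E \times F$ is not discussed in the paper but is a reasonable suggestion; it would still need the two-variable elliptic-curve modularity and HAE as input, though in that setting these are known.
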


\begin{rmk}
The bi-weight of the quasimodular form $\CC_g^{S}(\gamma_1, \ldots, \gamma_n)$ is determined from the above conjectures along the lines of \cite[Sec.3.1]{RES} as follows:
Let $\wt_{\lambda} \in \End H^{\ast}(S)$ be the operator
$\wt_{\lambda} = [U_{\lambda}, \lambda \cdot ]$, that is
$\wt_{\lambda}(\gamma) = U_{\lambda}(\lambda \gamma) - \lambda \cdot U_{\lambda}(\gamma)$. The operators $\wt_{e}, \wt_f$ commute and can be simultaneously diagonalized. Assume that each $\gamma_i\in H^{\ast}(S)$ is a common eigenvector with eigenvalues $a_i$ for $\wt_e$ and $b_i$ for $\wt_f$. Then
$\CC_g^{S}(\gamma_1, \ldots, \gamma_n)
\in 
\QMod_{k_1}(\Gamma_0(N)) \otimes \QMod_{k_2}(\SL_2(\BZ))
$
where
\begin{align*}
k_1 &= 2g-2+n + \sum_i a_i \\
k_2 & = 2g-2+n + \sum_i b_i.
\end{align*}
\qed
\end{rmk}

Evidence for this conjecture can already be found in genus $1$.
If we let $F_1 = \sum_{a,b} \langle 1 \rangle^{S}_{1,a e + b f} q_1^1 q_2^b$, then a basic degeneration argument shows
\[
F_1 =
\begin{cases}
-4 \log \eta(q_1) + 2 \log\eta(q_1^2) & \text{ if } N=2 \\
-3 \log \eta(q_1) + \log \eta(q_1^3)  & \text{ if } N=3 \\
-2 \log \eta(q_1) - \log \eta(q_1^2) + \log \eta(q_1^4)  & \text{ if } N=4 \\
- \log \eta(q_1) - \log \eta(q_1^2) - \log \eta(q_1^3) + \log \eta(q_1^6)
 & \text{ if } N=6
 \end{cases}
\]
It follows that
\[ \CC^S_1(e) = q\frac{d}{dq} F_1 \]
is quasimodular of the desired properties.
More evidence in higher genus has been obtained by T. Blomme in \cite[Thm 5.1]{Blomme}.
In particular he proved that\footnote{The statement in \cite[Thm 5.1]{Blomme} claims only modularity for $\Gamma_1(N)$ in the first variable, but this is equivalent to $\Gamma_0(N)$ since $\Gamma_0(N)$ is generated by
$\Gamma_1(N)$ and $-I_2$ for $N=2,3,4,6$, see \cite[Remark 5.2]{Blomme}.}
\[ \CC_{g}(\pt^{g-1}) \in \QMod(\Gamma_0(N)) \otimes \QMod(\SL_2(\BZ)). \]

We expect that Conjecture~\ref{conj:bielliptic surface} can be proven in general by a degeneration argument parallel to \cite{HAE}. 

\subsection{Orthogonal quasimodular forms}
The above conjectured modularity can be related to orthogonal quasimodular forms. To make this precise we only need to decide on the lattice.
Anticipating more general cases (such as abelian-surface fibered geometries)
let us argue conceptually what the correct lattice and group should be, parallel to the case of Enriques surfaces.
The mirror symmetry perspective is not well-developed for bielliptic surfaces.\footnote{The authors are not aware of a reference.
From the viewpoint of mirror symmetry of lattice-polarized abelian surfaces, bi-elliptic surfaces should be mirror to themselves. However, this doesn't match with a standard interpretation of mirror symmetry: for $N=3,4,6$, the elliptic curve $E$ is special and hence rigid, and thus the moduli space of bi-elliptic surfaces is $1$-dimensional. However, $h^{1,1}(S)$ is of rank $2$ and we have two K\"ahler parameters. Hence it is not clear how to think about $F_g$ as functions on the moduli space of bi-elliptic surfaces.}
Instead we gain insight from considering the group of auto-equivalences of the bielliptic surfaces as in Remark \ref{rmk:autoequivalences Enriques}.
For that, we recall a result of Potter that describes the group of derived autoequivalences of $S$:

Let $K(S)$ be the numerical Grothendieck $K$-group of the category of coherent sheaves on $S$. It is isomorphic via the Chern character to the even cohomology:
$\ch: K(S) \cong H^{2 \ast}(S,\BZ)$.
Let $\pi : E \times F \to S$ be the quotient map.
Consider the lattice
\[ \Lambda = \left\{ (r,a[E]+b[F],s) \in \bigoplus_{i=0}^{2} H^{2i}(E \times F,\BZ) \middle| r,a,b,s \in \BZ \right\}. \]
We define the Mukai pairing on $\Lambda$ as usual by
$(r,D,s) \cdot (r',D',s') = D \cdot D' - r s' - r' s$, such that
\[ \Lambda \cong U \oplus U. \]
We have the sublattice
\[
\Lambda_S := \pi^{\ast} K(S) = \left\{ (r,D,s) \in \Lambda \, \middle| \, D = a [E] + b [F] \text{ and } s,b \equiv 0 (\text{mod }N) \right\}. \]

Let $M_2(\BZ)$ be the set of integral $2 \times 2$-matrices
with quadratic form given by $2 \det$. Consider the isomorphism 
$\Lambda = U \oplus U \to M_2(\BZ)$ given by $(r,a[E]+b[F],s) \mapsto \binom{b\ r}{s\ a}$.
Recall the homomorphism
\[ \rho : \SL_2(\BZ) \times \SL_2(\BZ) \to O^{+}(U \times U) \]
defined by $\rho(A,B) \cdot M = A M B^{t}$.
Let 
$\Gamma \subset O(U \times U)$
be the image of $\SL_2(\BZ) \times \Gamma_0(N)$ under $\rho$.
Then $\Gamma$ preserves
    the sublattice $\pi^{\ast}K(S)$.
We view $\Gamma$ as a subgroup of $\GL(K(S))$.

\begin{thm}[Potter \cite{Potter}] $\Gamma = \mathrm{Im}\left( \Aut D^b(S) \to O(K(S)) \right)$. \end{thm}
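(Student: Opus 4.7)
The plan is to prove both inclusions by reducing, via the equivariant equivalence $D^b(S) \simeq D^b_G(E \times F)$, to Orlov's classification of derived autoequivalences of the abelian surface $E \times F$, and then to impose the $G$-equivariance condition.

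First, I would use that $\pi \colon E \times F \to S$ is an \'etale Galois cover with group $G = \BZ/N$ acting freely by $(x,y) \mapsto (\zeta_N x, y + t)$ for a primitive $N$-torsion point $t \in F$. This gives an equivalence $D^b(S) \simeq D^b_G(E \times F)$, and $\pi^{\ast}$ embeds $K(S)$ as the sublattice $\Lambda_S \subset \Lambda$. Any autoequivalence $\Phi$ of $D^b(S)$ corresponds to a $G$-equivariant autoequivalence of $D^b(E \times F)$, whose action on $\Lambda = K(E \times F)$ preserves $\Lambda_S$ and descends to the action of $\Phi$ on $K(S)$. In this way the map $\Aut D^b(S) \to O(K(S))$ factors through the image of the $G$-equivariant autoequivalences of $D^b(E \times F)$ acting on $\Lambda_S$.

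Second, I would invoke Orlov's theorem that every autoequivalence of the derived category of an abelian surface is a Fourier--Mukai transform, together with the fact that the induced lattice action factors through a copy of $\SL_2(\BZ) \times \SL_2(\BZ)$ acting one factor per elliptic curve. Under the parametrization of the excerpt, this action on $\Lambda \cong M_2(\BZ)$ is precisely $M \mapsto A M B^{t}$. When $N = 3, 4, 6$ the curve $E$ acquires complex multiplication and there are extra autoequivalences, but a direct check shows that their action on the Mukai lattice already sits inside $\rho(\SL_2(\BZ) \times \SL_2(\BZ))$, so no genuinely new lattice symmetries arise.

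Third, I would impose $G$-equivariance. The $\zeta_N$-action on $E$ commutes with the whole $\SL_2(\BZ)$-action on $H^1(E,\BZ)$, so it places no restriction on $A$. Translation by $t$ on $F$, however, is equivariantly compatible with a Fourier--Mukai kernel $\CK$ on $F \times F$ precisely when $\CK$ is invariant under the diagonal translation by $(t,t)$. Via the Heisenberg-theoretic description of Fourier--Mukai kernels on an elliptic curve, this invariance for the transform attached to $B = \binom{\alpha\ \beta}{\gamma\ \delta} \in \SL_2(\BZ)$ is equivalent to $B$ preserving the cyclic subgroup $\langle t \rangle \subset F[N]$, i.e.\ to $\gamma \equiv 0 \pmod N$, which is exactly $B \in \Gamma_0(N)$.

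For the reverse inclusion I would realize a generating set for $\rho(\SL_2(\BZ) \times \Gamma_0(N))$ by explicit autoequivalences of $D^b(S)$: tensor by line bundles pulled back from $E \times F$, pushforward along the involution $(-1)_E \times \id_F$ (and its CM-analogues for $N = 3, 4, 6$), and Fourier--Mukai transforms with Poincar\'e-type kernels on $E \times \hat E$ and on $F \times \hat F$. The main obstacle will be verifying that the Poincar\'e kernel on $F \times \hat F$ admits the required $G$-equivariant structure exactly for $B \in \Gamma_0(N)$, and that in the CM cases $N = 3, 4, 6$ the extra autoequivalences of $E$ do not contribute any new element of $O(K(S))$ beyond $\Gamma$.
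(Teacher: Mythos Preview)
The paper does not contain a proof of this statement: it is simply quoted from Potter \cite{Potter} and used as input, with no argument given. So there is nothing in the paper to compare your proposal against.

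As for the proposal itself, the outline is reasonable but has a gap worth flagging. You assert that every autoequivalence of $D^b(S)$ corresponds to a $G$-equivariant autoequivalence of $D^b(E\times F)$. What the equivalence $D^b(S)\simeq D^b_G(E\times F)$ gives you is that autoequivalences of $D^b(S)$ correspond to autoequivalences of the \emph{equivariant} category $D^b_G(E\times F)$, and these are not a priori the same as $G$-equivariant autoequivalences of $D^b(E\times F)$: in general there can be autoequivalences of the equivariant category that do not arise this way (for instance, tensoring by a nontrivial character of $G$, or more subtle twists). You would need to either show that in this case all autoequivalences of $D^b_G(E\times F)$ do lift to $G$-equivariant ones up to such twists, or else argue directly that the extra ones act trivially on $K(S)$. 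Also, your identification of the Orlov image with $\SL_2(\BZ)\times\SL_2(\BZ)$ acting ``one factor per elliptic curve'' is correct only when $\Hom(E,F)=0$; for $N=3,4,6$ the curve $E$ has CM and if $F$ happens to be isogenous to $E$ the symplectic group $U((E\times F)\times(E\times F)^{\vee})$ is strictly larger, so a genericity assumption on $F$ (or a separate argument) is needed there.
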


Parallel to the case of Enriques surfaces it is thus natural to conjecture
that the Gromov-Witten series of a bielliptic surface
are given by orthogonal quasimodular forms
for $M=U\oplus U$ and $\Gamma = \rho(\SL_2(\BZ) \times \Gamma_0(N))$. Indeed, comparing with Section~\ref{subsec:example U+U} this is what our earlier predictions do:

\vspace{5pt}
\noindent
\textbf{Observation:}
If Conjecture~\ref{conj:bielliptic surface} holds, then (when restricted to even cohomology classes)
$\CC_g^S(-)$ is a cycle-valued quasimodular form for $M=U\oplus U$ and $\Gamma = \rho(\SL_2(\BZ) \times \Gamma_0(N))$,
that is precisely
\[ \CC^S_g|_{(H^0)^{\otimes a} \otimes (H^4)^{\otimes b} \otimes (H^2)^{\otimes s}} \in H^{\ast}(\Mbar_{g,n}) \otimes \QMod_{2g-2+n-a+b,s}(\Gamma) \]
for any $n=a+b+s$ and fixed identification $H^2(S,\BZ) \cong U$.

\section{Lattice polarized K3 and abelian fibrations}
In this last section we consider the modularity for relative Gromov-Witten potentials of K3 and abelian fibrations $\pi : X \to B$.
We restrict ourselves to families over $1$-dimensional bases and fiber curve classes (i.e. those which are contracted by $\pi$).
For nodal K3 fibrations, we show that the generating series of Gromov-Witten invariants have a natural completion which is {\em not} almost-holomorphic. We also consider the case of the abelian-fibered Banana Calabi-Yau threefold 
with a similar result.

\subsection{$1$-parameter families of $L$-polarized K3 surfaces}
Let $\Lambda_{K3} := U^3 \oplus E_8(-1)^2$ be the K3 lattice and let $L \subset \Lambda_{K3}$ be a primitive sublattice containing a distinguished vector of positive norm $v \in L$.
Then following \cite[1.2.1]{PT} a $1$-parameter family of $L$-polarized K3 surfaces consists of a smooth projective threefold $X$, an embedding map $j : L \to \Pic(X)$ and a map $\pi : X \to C$ to a smooth proper curve $C$ such that
\begin{enumerate}
\item[(i)] the fibers $(X_t, j_t : L \to \Pic(X_t))$ are $L$-polarized K3 surfaces for every $t \in C$,
\item[(ii)] the line bundle $j(v)$ defines a quasi-polarization of all fibers of $\pi$.
\end{enumerate}
We refer to \cite{PT} for the precise definition of $L$-polarized K3 surface and quasi-polarization.

The family $\pi : X \to C$ defines a morphism $\iota_{\pi} : C \to \CM_{L}$ to the moduli space of $L$-polarized K3 surfaces.
Intersecting with the Noether-Lefschetz divisors defines the generating series of Noether-Lefschetz numbers, see \cite{MPGWNL} and \cite[1.2.2]{PT}:
\[ \Phi^{\pi}(q) = \sum_{n} \sum_{\gamma \in L^{\vee}/L} a_{n,\gamma} q^n e_{\gamma} \]
By a result of Borcherds \cite{Borcherds1999}, $\Phi^{\pi}(q)$ is a modular form for $\mathrm{Mp}_2(\BZ)$ of weight 
$(22- \mathrm{rk}(L))/2$ for the Weil representation $\rho_M$ on $M^{\vee}/M \cong (L^{\vee}/L)(-1)$ where $M=L^{\perp}$ is the orthogonal complement in $\Lambda_{\text{K3}}$.

\subsection{Gromov-Witten invariants}
Let $L_1,\ldots,L_{\ell}$ be an integral basis of $L$.
Let $d = (d_1,\ldots, d_{\ell})$ be a vector of integers.
The Gromov-Witten invariants of $X$ in fiber curve classes of degree $d$ are defined by
\[ N_{g,d}^{X} := \sum_{\substack{\beta \in H_2(X,\BZ) \\ \pi_{\ast} \beta = 0 \\ \beta \cdot c_1(L_i) = d_i }}
\left\langle 1 \right\rangle^X_{g,\beta}. \]

For $g \geq 0$, consider the following quasimodular forms
(with $\Theta$ as defined in \eqref{defn:theta funciton}):
\[ f_g(\tau) := \left[ 
-\frac{1}{\Delta(\tau) \Theta^2(z,\tau)}
\right]_{z^{2g-2}} \quad \in \frac{1}{\Delta(\tau)} \QMod_{2g}.
\]
We set
\[
\Psi_{\pi,g}(q) := \Phi^{\pi}(q) \cdot f_g(\tau)
\]
which is of weight 
\[ 11- \frac{\rk(L)}{2} + (2g-12) = 2g-1-\rk(L)/2. \]
Consider the Fourier expansion of $\Psi_{\pi,g}$ which we write as follows:
\[
\Psi_{\pi,g}(q) = \sum_{\gamma \in L^{\vee}/L}
\Psi_{\pi,g}\big[ \gamma, n \big] \, q^n e_{\gamma}.
\]

\begin{thm}[Pandharipande-Thomas \cite{PT}] \label{thm:eval K3 fibrations}
The degree $d=(d_1,\ldots, d_{\ell}) \in L^{\vee}$ Gromov-Witten invariants of $X$ in fiber curve classes are
	\[
	N_{g,d}^{X} = (-1)^{g-1} \sum_{k|\gcd(d)} k^{2g-3} 
	\Psi_g\left[ \gamma(d/k), \frac{(d,d)_{L^{\vee}}}{2k^2} \right]
	\]
	where $\gamma(d) \in L^{\vee}/L$ is defined as the image of any element $\beta \in \Lambda_{K3}$ satisfying $L_i \cdot \beta_i = d_i$ for all $i$
    under the natural morphism $\Lambda_{K3} \to L^{\vee} \to L^{\vee}/L$.
\end{thm}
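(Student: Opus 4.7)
The plan is to reduce the statement to the combination of two known structural results: the Maulik--Pandharipande Noether--Lefschetz correspondence for K3-fibered threefolds \cite{MPGWNL}, and the Katz--Klemm--Vafa formula for reduced Gromov--Witten invariants of K3 surfaces in arbitrary (imprimitive) classes, as established by Pandharipande--Thomas \cite{PT_KVV}.

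First, I would invoke the Noether--Lefschetz decomposition of fiber-class Gromov--Witten invariants from \cite{MPGWNL}: after degeneration to the normal cone along Noether--Lefschetz loci in $C$, the invariant $N^X_{g,d}$ can be expressed as a finite $\mathbb{Z}$-linear combination of reduced K3 Gromov--Witten invariants $R_{g,\alpha}^{\mathrm{K3}}$ (where $\alpha$ runs through classes in the lattice of a very general fiber of $\pi$) weighted by the Noether--Lefschetz numbers, i.e.~by the Fourier coefficients $a_{n,\gamma}$ of $\Phi^\pi$. Concretely, the coefficient extracting $R_{g,\alpha}^{\mathrm{K3}}$ is exactly $a_{n,\gamma}$ with $n = (\alpha,\alpha)/2$ and $\gamma$ the image of $\alpha$ in $L^\vee/L$ (the matching of discriminant data here is the content of \cite[\S1]{PT}).

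Second, I would apply the full KKV formula \cite{PT_KVV}, which evaluates the reduced K3 invariants in every (including imprimitive) curve class in closed form: the key identity is
\[
\sum_{g\geq 0} \sum_{n\geq -1} (-1)^{g-1} R^{\mathrm{K3}}_{g,\alpha} (z)^{2g-2} q^n = \sum_{k\mid \mathrm{div}(\alpha)} k^{2g-3}\Big[ -\tfrac{1}{\Delta(\tau)\Theta(z,\tau)^2}\Big]_{\text{at the appropriate shift}},
\]
and extracting the $z^{2g-2}$-coefficient identifies the genus-$g$ piece with $f_g(\tau)$. The multiple cover factor $\sum_{k\mid \gcd(d)} k^{2g-3}$ and the divisibility rescaling $(\alpha,\alpha)/(2k^2)$ appear precisely from the imprimitive part of the KKV formula. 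Assembling the two ingredients gives
\[
N_{g,d}^{X} \;=\; (-1)^{g-1}\!\!\sum_{k\mid \gcd(d)} k^{2g-3}\,\big(\Phi^\pi\cdot f_g\big)\!\left[\gamma(d/k),\,\tfrac{(d,d)_{L^\vee}}{2k^2}\right],
\]
which is the claimed identity.

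The main obstacle, were one proving this from scratch, would be the KKV step for imprimitive classes: the Noether--Lefschetz degeneration and primitive KKV are by now fairly direct, but extending KKV to non-primitive classes requires the full machinery of stable pairs and Gromov--Witten/Pairs correspondence from \cite{PT_KVV}. In our setting we simply invoke it. A minor but careful bookkeeping issue is matching the discriminant form on $L^\vee/L$ produced by the Noether--Lefschetz coefficients with the one appearing in the KKV contribution; this amounts to checking that the image $\gamma(\alpha)$ depends only on the class of $\alpha$ modulo $L$, which follows from the integrality of the intersection pairing between $L$ and $L^\perp$ in $\Lambda_{K3}$.
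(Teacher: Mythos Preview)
Your proposal is correct and follows essentially the same approach as the paper: both combine the Maulik--Pandharipande Gromov--Witten/Noether--Lefschetz relation with the Pandharipande--Thomas evaluation of the (imprimitive) KKV Hodge integrals on K3 surfaces, then identify the resulting convolution of Fourier coefficients as those of $\Psi_{\pi,g}=\Phi^\pi\cdot f_g$. The only cosmetic difference is that the paper writes the intermediate Noether--Lefschetz shift via the determinant $-\tfrac{1}{2}\det\!\binom{h\ d^t}{d\ 2n}/\det(h)$ before observing it equals $(d,d)_{L^\vee}/2$.
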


In the theorem above, if $e_i$ is a basis of $L$ with matrix $(e_i,e_j)_L = h_{ij}$ and inverse $h^{ij}$, then we have identified $d \in L^{\vee}$ with the element $d = \sum_{i,j} d_i h^{ij} e_j$, such that $(d,d)_{L^{\vee}} = \sum_{i,j} h^{ij} d_i d_j$.

\begin{proof}
By \cite{PT}, the Hodge integrals in the Gromov-Witten theory of a K3 surface $S$ are evaluated by the Katz-Klemm-Vafa formula:
\[
\int_{[ \Mbar_{g}(S,\beta) ]^{\text{red}}}(-1)^{g} \lambda_g
=
(-1)^{g-1} \sum_{k|\beta} k^{2g-3} \omega_g\left( \frac{\beta^2}{2k^2} \right)
\]
where $\beta \in H_2(S,\BZ)$ and the $\omega_g(n)$ are defined by
\[ f_g(\tau) = \sum_{n} \omega_g(n) q^n. \]
It follows from the Gromov-Witten/Noether-Lefschetz relation proven in \cite{MPGWNL} (see also \cite{KMPS}) that if
	we define $N_{g,d}^{X,\text{mc}}$ by
	$N_{g,d}^{X} = \sum_{k|d} k^{2g-3} N_{g,d/k}^{X,\text{mc}}$ then 
we get
	\[
	N_{g,d}^{X,\text{mc}} = \sum_{n} (-1)^{g-1} \omega_g(n) \cdot\Phi^{\pi}\left[ -\frac{1}{2} \frac{ \det \binom{h\ d^{t}}{d\ 2n} }{\det(g)}, \gamma(d) \right].
	\]
	The claim follows by observing that this translates to
	\[N_{g,d}^{X,\text{mc}} = \Psi_{\pi,g}\left[ d h^{-1} d^t/2, \gamma(d) \right].
	\]
\end{proof}

Written in generating series form, the theorem says that
\[
F_{g} :=
\sum_{d=(d_1,\ldots,d_{\ell})>0} N_{g,d} Q^d
=
\sum_{\substack{\lambda \in L^{\vee} \\ \lambda>0}} \sum_{k \geq 1} k^{2g-3} \Psi_{\pi,g}[ \lambda, (\lambda,\lambda)/2 ] Q^{\lambda},
\]
where $Q^{\lambda} = e^{2 \pi i (z,\lambda)_L}$ for a formal variable $z \in L \otimes \BC$. 
Comparing with \eqref{expansion of constant term of lift},  the right hand side for $g \geq 2$ looks exactly like the Fourier expansion of the constant term of the lift of the almost-holomorphic completion of $\Psi_{\pi,g}$.
This lift has weight $\wt=2g-2$.
However,
$f_g$ is of depth $g$, and thus the depth-weight inequality
$\wt(f_g) \geq 2\cdot \text{depth}(f_g)$
of Theorem~\ref{thm:list of almost-holomorphic modular form} does not hold. Hence the natural completion of $F_g$,\
given by the lift of $\Psi_{\pi,g}^{\ast}$, is not almost-holomorphic.\footnote{Unfortunately we can not rule out that there is some other completion of $F_g$ that is almost-holomorphic. However, we believe that this is unlikely.}

\subsection{An example: the STU model}
To make the discussion more concrete, we specialize to the
well-studied \cite{KMPS} STU threefold. This is a particular Calabi-Yau threefold $X$ with an elliptic fibration 
\[ p : X \to \p^1 \times \p^1 \]
and section 
\[ \sigma : \p^1 \times \p^1 \to X. \]
Let $p_i : \p^1 \times \p^1 \to \p^1$ for $i=1,2$ be the projections to the factors.
The composition
\[ \pi = p_1 \circ p : X \to \p^1 \]
realizes $X$ as a $1$-parameter family of K3 surfaces.

By Lemma~\ref{lemma:intersections}(a) below, the fibration $\pi$ is $U$-polarized by the divisor classes
\[
\CL_1 = \omega_2 + k \omega_1, \quad \CL_2 = \Sigma + \omega_2 + \ell \omega_1
\]
for any $k,\ell \in \BQ$,
where $\Sigma = [ \sigma(\p^1 \times \p^1) ]$, $\omega_1 = p^{\ast} p_1^{\ast}(\omega)$, and $\omega_2 = p^{\ast} p_2^{\ast}(\omega)$,
with $\omega \in H^2(\p^1)$ the point class,
and we have identified line bundles with their images in $H^2(X,\BZ)$.

\begin{lemma} \label{lemma:intersections} We have $\Sigma^3 = 8$,\ $\Sigma^2 \cdot \omega_i = -2$,\ $\Sigma \omega_1 \omega_2 = 1$,\ $\omega_i^2=0$. Hence
	\begin{enumerate}
		\item[(a)] $\CL_1^2 \omega_1 = 0, \CL_1 \CL_2 \omega_1 = 1, \CL_2^2 \omega_1 = 0$
		\item[(b)] $\CL_1^3 = 0,  \CL_1^2 \CL_2 = 2k, \CL_1 \CL_2^2 = 2 \ell - 2, \CL_2^3 = 2$.
	\end{enumerate}
\end{lemma}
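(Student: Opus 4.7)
The strategy is to first establish the four ``atomic'' intersection numbers $\Sigma^3$, $\Sigma^2 \omega_i$, $\Sigma \omega_1 \omega_2$, $\omega_i^2$, and then obtain parts (a) and (b) by pure multilinear expansion of $\CL_1, \CL_2$ in the basis $\{\Sigma, \omega_1, \omega_2\}$.

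For the atomic intersections, the vanishing $\omega_i^2 = 0$ is immediate: $\omega_i = p^{\ast}p_i^{\ast}[\mathrm{pt}]$ and $p_i^{\ast}[\mathrm{pt}]^2 = 0$ on $\p^1$ for dimension reasons, so $p^{\ast}(p_i^{\ast}[\mathrm{pt}]^2) = 0$ on $X$. The equality $\Sigma \cdot \omega_1 \cdot \omega_2 = 1$ follows because $\omega_1 \omega_2 = p^{\ast}[\mathrm{pt}_{\p^1 \times \p^1}]$ is the class of a general scheme-theoretic fiber of $p$, which is an elliptic curve, and the section $\sigma$ meets every such fiber transversally in one point.

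The main step is computing $\Sigma^2 \omega_i$ and $\Sigma^3$, which I would handle by identifying the normal bundle of the section. Since $X \to \p^1 \times \p^1$ is a Weierstrass elliptic fibration with $X$ Calabi--Yau, the relative dualizing sheaf is $\omega_{X/B} = p^{\ast} L$ where $L = K_B^{-1} = \CO(2,2)$, forced by $K_X = p^{\ast}(K_B + L) = 0$. Via the section $\sigma : B \xrightarrow{\cong} \Sigma$ we identify $N_{\Sigma/X} = T_{X/B}|_{\Sigma} = \sigma^{\ast} p^{\ast} L^{-1} \cong L^{-1}$, so that
\[ \Sigma|_{\Sigma} \;=\; c_1(N_{\Sigma/X}) \;=\; -L \;=\; -2\,p_1^{\ast}[\mathrm{pt}] - 2\,p_2^{\ast}[\mathrm{pt}]. \]
Since $\omega_i|_{\Sigma} = p_i^{\ast}[\mathrm{pt}]$ and $p_1^{\ast}[\mathrm{pt}] \cdot p_j^{\ast}[\mathrm{pt}] = \delta_{1j}$ etc. on $\p^1 \times \p^1$, one reads off $\Sigma^2 \omega_i = (\Sigma|_{\Sigma}) \cdot (\omega_i|_{\Sigma}) = -2$ and $\Sigma^3 = (\Sigma|_{\Sigma})^2 = 8 \cdot p_1^{\ast}[\mathrm{pt}] \cdot p_2^{\ast}[\mathrm{pt}] = 8$. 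I expect this normal bundle identification to be the only substantive step; the remainder is bookkeeping.

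Given the atomic intersections, parts (a) and (b) follow by straightforward expansion. For (a): $\CL_1^2 = 2k\,\omega_1\omega_2$, so $\CL_1^2 \omega_1 = 0$; $\CL_1 \CL_2 = \Sigma\omega_2 + k\Sigma\omega_1 + (k+\ell)\omega_1\omega_2$, so $\CL_1\CL_2 \cdot \omega_1 = \Sigma\omega_1\omega_2 = 1$; and $\CL_2^2 \cdot \omega_1$ reduces (using $\omega_i^2 = 0$) to $\Sigma^2 \omega_1 + 2\Sigma\omega_1\omega_2 = -2 + 2 = 0$. For (b): $\CL_1^3 = 0$ and $\CL_1^2 \CL_2 = 2k\,\Sigma \omega_1 \omega_2 = 2k$ are immediate; $\CL_1 \CL_2^2 = \omega_2 \Sigma^2 + k \omega_1 \Sigma^2 + (\text{terms with }\omega_1 \omega_2 \Sigma) = -2 - 2k + 2\ell + 2k = 2\ell - 2$; and expanding $\CL_2^3$ term by term, the coefficients in $\Sigma^3, \Sigma^2\omega_i, \Sigma\omega_1\omega_2$ combine as $8 - 4 - 4\ell + 2\ell - 2 + 2\ell - 2\ell + 2\ell = 2$. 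No further input is required.
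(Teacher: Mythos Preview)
The paper states this lemma without proof, so there is nothing to compare against; your argument is correct and is the natural one. The identification $N_{\Sigma/X} \cong K_B$ via the Calabi--Yau condition is exactly the right input, and the expansions in (a) and (b) check out (your $\CL_2^3$ bookkeeping is a bit opaque but the total is right: the clean grouping is $\Sigma^3 + 3\Sigma^2\omega_2 + 3\ell\Sigma^2\omega_1 + 6\ell\Sigma\omega_1\omega_2 = 8 - 6 - 6\ell + 6\ell = 2$).
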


\begin{rmk}
For $k=\ell=1$, the divisors $\CL_1, \CL_2$ also define a $U$-polarization
for the K3 fibration $p_2 \circ p : X \to \p^1$, hence this seems the natural choice.
Moreover, the canonical (in the sense of \cite{RES}) divisor associated to the elliptic fibration $p$ is
$W = \Sigma - \frac{1}{2} c_1(N_{\sigma}) = \Sigma + \omega_1 + \omega_2$
so from this point of view it is also natural to choose $\ell=1$ above. 
Hence from now on we set $k=\ell=1$. \qed
\end{rmk}


The generating series of Noether-Lefschetz numbers of the family $\pi : X \to \p^1$ was computed in \cite{KMPS} to be
$\Phi^{\pi} = -2 E_{10}(q) = -2 ( 1 - 264 q + ... )$.
Define coefficients $c(n)$ by $\sum_{n} c(n) q^n = E_{10} / \Delta(q)$.
Then by Theorem~\ref{thm:eval K3 fibrations} we have
for $(d_1, d_2) \neq 0$:
\[
N_{0,(d_1,d_2)} q_1^{d_1} q_2^{d_2}
=
-2 \sum_{k|(d_1, d_2)} \frac{1}{k^3} c(d_1 d_2/k^2)
\]

Then we obtain the evaluation (with $\sum_n b(n) q^n = D_{\tau}^2( E_{10} / \Delta )$)
\begin{equation} \label{Dfsd3t311}
\begin{aligned}
\sum_{(d_1,d_2) > 0} \langle \tau_0(\CL_1)^2 \tau_0(\CL_2)^2 \rangle^{X}_{0,(d_1,d_2)}=
\sum_{\substack{ (d_1, d_2) \in \BZ^2_{\neq 0} \\ d_1 \geq 0, d_2 \geq -d_1}} d_1^2 d_2^2 N_{0,(d_1,d_2)} q_1^{d_1} q_2^{d_2} =
\sum_{(d_1, d_2)>0} k\cdot b(d_1 d_2/k^2) q_1^{d_1} q_2^{d_2}.
\end{aligned}
\end{equation}
The natural completion of this series is
the lift of the (completion of) $D_{\tau}^2(E_{10}/\Delta)$
which is not almost-holomorphic by Theorem~\ref{thm:list of almost-holomorphic modular form}, see also Section~\ref{example:U+U}. More strongly, by using that the algebra of quasimodular forms for $U\oplus U$ are given by $(\QMod \otimes \QMod)^{\BZ_2}$, one can show here directly that \eqref{Dfsd3t311} is not quasimodular (when we allow possible poles along the diagonal and cusps).

\subsection{The Banana Calabi-Yau threefold}
Let $\pi_R: R \to \p^1$ be a rational elliptic surface with $12$ nodal fibers and with fixed section $B \subset R$. The banana Calabi-Yau threefold $X$ is the small resolution of the fibered product $R \times_{\p^1} R$ obtained by blowing up the diagonal divisor,
\[ X = \mathrm{Bl}_{\Delta}(R \times_{\p^1} R). \]
Let $\pi_i : X \to R$ for $i=1,2$ be the projections to the factors.
The fibration
\[ \pi := \pi_R \circ \pi_1 = \pi_R \circ \pi_2 : X \to \p^1 \]
is an abelian surface fibration over $\p^1$
whose smooth fibers are squares of elliptic curves. It has $12$ singular fibers, called banana fibers  \cite{Banana}, which are obtained as blow-ups of the squares of the $12$ singular fibers of $\pi_R$.
There exists a lattice polarization of the fibers of $\pi$ by
\[ L = \BZ \omega_1 \oplus \BZ \omega_2 \oplus \BZ \omega_3 \subset \Pic(X) \]
where
\[ \omega_1 = \pi_1^{\ast}(B), \quad \omega_2 = \pi_2^{\ast}(B), \quad \omega_3 = [ \widetilde{\Delta} ] \]
where $\widetilde{\Delta}$ is the proper transform of the diagonal.
The intersection form of these divisor classes restricted to a generic fiber
is given by the matrix
\[
\begin{pmatrix}
0 & 1 & 1 \\
1 & 0 & 1 \\
1 & 1 & 0
\end{pmatrix}
\]
Hence $\pi$ is polarized by the lattice $U \oplus (-2)$.
Dually, the sublattice
$H_2(X,\BZ)_{\pi} \subset H_2(X,\BZ)$
 generated by effective classes that are contracted under $\pi$
 is
\[ H_2(X,\BZ)_{\pi} = \BZ [C_1] \oplus \BZ[C_2] \oplus \BZ[C_3] \]
where $C_i$ are the banana curves (of any singular fiber).
There is a natural isomorphism
\[ H_2(X,\BZ)_{\pi} \cong L^{\ast} \]
and the $C_i$ are the dual basis of the $\omega_i$.

The (unstable) genus $0$ Gromov-Witten fiber potential is
\[
F_0 = 
\sum_{(d_1,d_2,d_3)>0}
N_{0, d_1 C_1 + d_2 C_2 + d_3 C_3}
e^{2 \pi i (z_1 d_1 + z_2 d_2 + z_3 d_3)}
\]
where the sum is over non-negative integers $d_i$ not all zero.

By Bryan--Pietromonaco \cite[App.A.4]{Banana} it is evaluated to be
\[
F_0 = 12 \sum_{(d_1,d_2,d_3)>0} \sum_{k \geq 1} k^{-3} a( d_1 d_2 + d_1 d_3 + d_2 d_3 - d_1^2 - d_2^2 - d_3^2 ) e^{2 \pi i k (d_1 z_1 + d_2 z_2 + d_3 z_3)}
\]
where $a(n)$ are the Fourier coefficients of the following weight $-5/2$ modular form for $\Gamma(2)$:
\[
f(\tau) = \sum_{n\geq -1/2} a(n) q^n = \frac{16}{\theta_3(\tau) \theta_2(\tau)^4} = q^{-1/2} - 2 + 8 q^{3/2} - 12 q^2 + 39 q^{7/2} \pm \ldots
\]
where $\theta_3(\tau) = \sum_{n \in \BZ} q^{n^2/2}$ and $\theta_2(\tau) = \sum_{n \in \BZ + \frac{1}{2}} q^{n^2/2}$.

Using the isomorphism $H_2(X,\BZ)_{\pi} \cong L^{\ast}$,
we can rewrite this expression for $F_0$ as
\[ F_0 = \sum_{\substack{\lambda \in L^{\vee} \\ \lambda>0}}
a(\lambda^2) \sum_{k \geq 1} k^{-3}e^{2 \pi i k (d_1 z_1 + d_2 z_2 + d_3 z_3)}. \]
In other words, $F_0$ is formally the expansion of the theta lift (in this case, the Shimura lift) of $f$ for the lattice $M=U \oplus L$. (Of course, the theta lift is not well-defined in this case since the weight of $f$ is too low.)
For $a,b,c\in \{ 1,2, 3 \}$, 
consider the (stable) genus 0 potentials
\[ F_0(\tau_0(\omega_a) \tau_0(\omega_b) \tau_0(\omega_c))
= \int_{X} \omega_a \omega_b \omega_c + \frac{1}{(2 \pi i)^3}\frac{d}{dz_a} \frac{d}{dz_b} \frac{d}{dz_b} F_0. \]
A certain linear combination of these have as a natural completion the form $\RO( \Lift( \RM f))$. However, this completion is not almost-holomorphic by Theorem~\ref{thm:list of almost-holomorphic modular form}.

\bibliographystyle{plainnat}
\bibliofont
\bibliography{references}{}

\end{document}